\newtheorem{theorem}{Theorem}[section]
\newtheorem{lemma}[theorem]{Lemma}
\newtheorem{proposition}[theorem]{Proposition}
\newtheorem{remark}[theorem]{Remark}
\newtheorem{corollary}[theorem]{Corollary}
\newtheorem{example}[theorem]{Example}
\newcommand{\R}{\mathbb{R}}
\newcommand{\E}{\mathrm{E}}
\renewcommand{\P}{\mathrm{P}}
\renewcommand{\Re}{\operatorname{Re}}
\title[Local asymptotic properties for a jump-type CIR process]{Local asymptotic properties for the growth rate of a jump-type CIR process}
\date{\today}
\author[Mohamed Ben Alaya, Ahmed Kebaier, Gyula Pap and Ngoc Khue Tran]{Mohamed Ben Alaya, Ahmed Kebaier, Gyula Pap and Ngoc Khue Tran}
\address{Mohamed Ben Alaya, Univ Rouen Normandie, CNRS, Normandie Univ, LMRS UMR 6085, F-76000 Rouen, France}
\email{mohamed.ben-alaya@univ-rouen.fr}
\address{Ahmed Kebaier, Laboratoire de Math\'ematiques et Mod\'elisation d'Evry, CNRS, UMR 8071, Universit\'e d'Evry, Universit\'e Paris-Saclay, 91037, Evry, France}
\email{ ahmed.kebaier@univ-evry.fr}
\address{Gyula Pap, Bolyai Institute, University of Szeged,
	Aradi v\'ertan\'uk tere 1, H--6720 Szeged, Hungary}
\email{papgy@math.u-szeged.hu}
\address{Ngoc Khue Tran, Faculty of Mathematics and Informatics, Hanoi University of Science and Technology, 1 Dai Co Viet, Hai Ba Trung, Hanoi, Vietnam}
\email{khue.tranngoc@hust.edu.vn}
\thanks{This research benefited from the support of the ANR project ``Efficient inference for large and high-frequency data" (ANR-21-CE40-0021). Ahmed Kebaier benefited from the support of the chair Risques Financiers, Fondation du Risque. This research is funded by the Vietnam Ministry of Education and Training under grant number B2024-CTT-05. \textcolor{black}{The authors sincerely thank the anonymous reviewer and the editor for their valuable comments and suggestions, which have significantly improved the quality of this manuscript.}}
\subjclass[2010]{60H07; 65C30; 62F12; 62M05}
\keywords{jump-type Cox-Ingersoll-Ross process; high-frequency observation; local asymptotic (mixed) normality property; local asymptotic quadraticity property; Malliavin calculus; parametric estimation; subordinator}
\begin{document}
\maketitle
\begin{small}
	\begin{center}
		{\it In memoriam  Gyula Pap (1954 - 2019)}\footnote{  \url{http://www.math.u-szeged.hu/~barczy/Pap_Gyula_In_Memoriam_1954_2019.pdf}}	\end{center}
\end{small}

\begin{abstract} 
In this paper, we consider a one-dimensional jump-type Cox-Ingersoll-Ross process driven by a Brownian motion and a subordinator, whose growth rate is an unknown parameter. Considering the process observed continuously or discretely at high frequency, we derive the local asymptotic properties for the growth rate in both ergodic and non-ergodic cases. Local asymptotic normality (LAN) is proved in the subcritical case, local asymptotic quadraticity (LAQ) is derived in the critical case, and local asymptotic mixed normality (LAMN) is shown in the supercritical case. To obtain these results, techniques of Malliavin calculus and a subtle analysis on the jump structure of the subordinator involving the amplitude of jumps and number of jumps are essentially used.
\end{abstract}

\section{Introduction}

On a complete probability space $(\widehat{\Omega}, \widehat{\mathcal{F}},  \widehat{\P})$, we consider a one-dimensional jump-type Cox-Ingersoll-Ross (CIR) process $Y^b=(Y_t^b)_{t\in[0,\infty)}$ driven by a subordinator
\begin{equation}\label{eq1}
dY_t^b=(a-bY_t^b)dt +\sigma\sqrt{Y_t^b}dW_t+dJ_t,
\end{equation}
where $Y_0^b=y_0\in [0, \infty)$ is a given initial condition, $a\in [0, \infty)$, $b\in\R$ and $\sigma\in (0, \infty)$. Here, $W=(W_t)_{t\in[0,\infty)}$ is a one-dimensional standard Brownian motion, and $J=(J_t)_{t\in[0,\infty)}$ is a subordinator (an increasing L\'evy process) with zero drift and with L\'evy measure $m$ concentrated on $(0,\infty)$ and satisfying the following condition
\begin{list}{labelitemi}{\leftmargin=1cm}
	\item[\bf(A1)]\; \; $\int_{0}^{\infty}zm(dz)\in [0, \infty)$.
\end{list}
The Laplace transform of $J$ is given by
\begin{equation}\label{LK}
\widehat{\E}\left[e^{uJ_t}\right]=\exp\left\{t\int_0^{\infty}\left(e^{uz}-1\right)m(dz)\right\},
\end{equation}
for any $t \in [0, \infty)$ and for any complex number $u$ with
$\Re(u) \in (-\infty, 0]$, see, e.g. Sato \cite[proof of Theorem 24.11]{Sat}, where $\widehat{\E}$ denotes the expectation w.r.t. $\widehat{\P}$. We suppose that the processes $W$ and $J$ are independent. Note that the moment condition {\bf(A1)} implies that $m$ is a L\'evy measure since $\min(1, z^2) \leq z$ for $z \in (0, \infty)$. Moreover, the subordinator $J$ has sample paths of bounded variation on every compact time interval almost surely, see e.g. Sato \cite[Theorem 21.9]{Sat}. We point out the assumptions which assure the existence of a pathwise unique strong solution to the stochastic differential equation (SDE) \eqref{eq1} with $\widehat{\P}(Y_t^b \in [0, \infty) \ \textnormal{for all} \ t \in [0, \infty)) = 1$ (see Proposition \ref{Pro_jump_CIR}). In fact, $Y^b$ is a special continuous state and continuous time branching process with immigration (CBI process), see Proposition \ref{Pro_jump_CIR} below and \cite{DFS03, KM12}. Let $\R$, $\R_+$, $\R_{++}$, $\R_-$, $\R_{--}$ and $\mathbb{C}$ denote the sets of real numbers, non-negative real numbers, positive real numbers, non-positive real numbers, negative real numbers and complex numbers, respectively.  

In this paper, we will consider the jump-type CIR process $Y^b$ solution to equation \eqref{eq1} with known $a \in \R_+$, $\sigma \in \R_{++}$, $y_0 \in \R_+$ and L\'evy measure $m$ satisfying condition {\bf(A1)}, and we will consider $b \in \R$ as an unknown parameter to be estimated. Let $\{\widehat{\mathcal{F}}_t\}_{t\in\R_+}$ denote the natural filtration generated by two processes $W$ and $J$. We denote by $\widehat{\P}^{b}$ the probability measure induced by the process $Y^b$ on the canonical space $(D(\R_{+},\R),\mathcal{B}(D(\R_{+},\R)))$ endowed with the natural filtration $\{\widehat{\mathcal{F}}_t\}_{t\in\R_+}$. Here $D(\R_{+},\R)$ denotes the set of $\R$-valued c\`adl\`ag functions defined on $\R_{+}$, and $\mathcal{B}(D(\R_{+},\R))$ is its Borel $\sigma$-algebra. We denote by $\widehat{\E}^{b}$ the expectation with respect to (w.r.t.) $\widehat{\P}^{b}$. Let $\overset{\widehat{\P}^{b}}{\longrightarrow}$, $\overset{\mathcal{L}(\widehat{\P}^{b})}{\longrightarrow}$, $\widehat{\P}^{b}$-a.s., $\overset{\widehat{\P}}{\longrightarrow}$, and $\overset{\mathcal{L}(\widehat{\P})}{\longrightarrow}$ denote the convergence in $\widehat{\P}^{b}$-probability, in $\widehat{\P}^{b}$-law, in $\widehat{\P}^{b}$-almost surely, in $\widehat{\P}$-probability, and in $\widehat{\P}$-law, respectively. 

The L\'evy-It\^o decomposition of $J$ takes the form $J_t=\int_0^t\int_0^{\infty}zN(ds,dz)$ for any $t\in\R_+$, where $N(dt,dz):=\sum_{0\leq s\leq t}{\bf 1}_{\{\Delta J_s\neq 0\}}\delta_{(s,\Delta J_s)}(ds,dz)$ is an interger-valued Poisson random measure in $(\R_{+}\times\R_{+},\mathcal{B}(\R_{+}\times\R_{+}))$ with intensity measure $m(dz)dt$ associated with $J$. Here, the jump amplitude of $J$ is defined as $\Delta J_s:=J_s-J_{s-}$ for any $s\in \R_{++}$, $\Delta J_0:=0$, $\delta_{(s,z)}$ denotes the Dirac measure at the point $(s,z)\in\R_{+}\times\R_{+}$, and $\mathcal{B}(\R_{+}\times\R_{+})$ denotes the Borel $\sigma$-algebra on $\R_{+}\times\R_{+}$. Hence, \eqref{eq1} can be rewritten in the following integral form
\begin{equation}\label{eqintegral}
\begin{split}
Y_t^b=y_0+\int_0^t(a-bY_s^b)ds +\sigma\int_0^t\sqrt{Y_s^b}dW_s+\int_0^t\int_0^{\infty}zN(ds,dz),
\end{split}
\end{equation}
for any $t\in\R_+$. Observe that $\widehat{\E}\left[J_t\right]=t\int_0^{\infty}zm(dz)$, for any $t\in\R_+$.

The next proposition shows the existence and uniqueness of a strong solution of the SDE \eqref{eq1}, and also states that $Y^b$ is a CBI process.
\begin{proposition}\cite[Proposition 2.1]{BBKP17}\label{Pro_jump_CIR}
	For all $a \in \R_+$, $b \in \R$, $\sigma \in \R_{++}$, $y_0 \in \R_+$ and L\'evy measure $m$ on $\R_{++}$ satisfying condition {\bf(A1)}, there is a pathwise unique strong solution $Y^b=(Y_t^b)_{t\in\R_+}$ of the SDE \eqref{eq1} such that $\widehat{\P}(Y_0^b = y_0) = 1$ and $\widehat{\P}(\text{$Y_t^b \in \R_+$ for all $t \in \R_+$}) = 1$. Moreover, $Y^b$ is a CBI process having branching mechanism
	$$
	R(u)= \frac{\sigma^2}{2} u^2 - bu , \qquad
	\text{$u \in \mathbb{C}$ \ with \ $\Re(u) \leq 0$,}
	$$
	and immigration mechanism
	$$
	F(u) = a u + \int_0^\infty (e^{uz} - 1) m(dz) ,
	\qquad \text{$u \in \mathbb{C}$ \ with \ $\Re(u) \leq 0$.}
	$$
	Furthermore, the infinitesimal generator of $Y^b$ takes the form
	\begin{align*}
	\begin{split}
	(\mathcal{A} f)(y) = (a - by) f'(y) + \frac{\sigma^2}{2} y f''(y)	+ \int_0^\infty (f(y+z) - f(y)) m(dz),
	\end{split}
	\end{align*}
	where $y \in \R_+$, $f \in C^2_c(\R_+, \R)$, $f'$ and $f''$ denote the first and second order partial derivatives of $f$, and $C^2_c(\R_+, \R)$ denotes the set of twice continuously differentiable real-valued functions on $\R_+$ with compact support.
	
	If, in addition, $y_0 + a \in \R_{++}$, then
	$\widehat{\P}(\int_0^t Y_s^b ds \in \R_{++}) = 1$ for all $t \in \R_{++}$.
\end{proposition}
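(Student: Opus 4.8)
The plan is to prove the four assertions in order: pathwise uniqueness together with strong existence, the identification of $Y^b$ as a CBI process, the explicit form of the generator $\mathcal{A}$, and finally the positivity of the occupation time $\int_0^t Y_s^b\,ds$. Since the diffusion coefficient $y\mapsto\sigma\sqrt{y}$ is only H\"older continuous of order $1/2$, Lipschitz-based well-posedness is unavailable, so I would argue by Yamada--Watanabe. For pathwise uniqueness, if $Y$ and $\widetilde Y$ are two nonnegative solutions driven by the same $W$ and the same Poisson random measure $N$ (equivalently the same subordinator $J$), then the jump part $\int_0^\cdot\int_0^\infty z\,N(ds,dz)$ cancels in the difference, the drift $y\mapsto a-by$ is Lipschitz, and the inequality $|\sqrt{x}-\sqrt{y}|\le\sqrt{|x-y|}$ with $\int_{0+}u^{-1}\,du=+\infty$ lets one run the classical Yamada--Watanabe estimate, localised at the exit times of $Y$ and $\widetilde Y$ from compact subsets of $\R_+$ to secure integrability. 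Weak existence can be obtained either by solving the martingale problem for $\mathcal{A}$ via a tightness argument, or by invoking the general theory of CBI (one-dimensional affine) processes (see e.g.\ \cite{DFS03,KM12}), after checking that $R$ and $F$ satisfy the admissibility conditions (here {\bf(A1)} gives both $\int_0^\infty\min(1,z)\,m(dz)<\infty$ and $\int_0^\infty z\,m(dz)<\infty$). Combining weak existence with pathwise uniqueness, the Yamada--Watanabe theorem yields a pathwise unique strong solution. For nonnegativity I would use a comparison/stopping argument: at the level $0$ the drift equals $a\ge 0$, the diffusion coefficient vanishes, and all jumps of $J$ are strictly positive, so considering the stopping times $\tau_n=\inf\{t:Y_t^b\le-1/n\}$ and the dynamics of $Y^b$ below level $0$ forces $\P(Y_t^b\in\R_+ \text{ for all }t)=1$.

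For the generator, applying the It\^o formula for c\`adl\`ag semimartingales to $f(Y_t^b)$ with $f\in C_c^2(\R_+,\R)$: the finite-variation part of \eqref{eqintegral} produces $(a-by)f'(y)$, the continuous local martingale part contributes $\frac{\sigma^2}{2}yf''(y)$ through its quadratic variation $\sigma^2 Y_t^b\,dt$, and the Poisson random measure $N$ produces $\int_0^\infty(f(y+z)-f(y))\,m(dz)$; this is exactly the stated operator $\mathcal{A}$, and shows that $Y^b$ solves the martingale problem for $\mathcal{A}$. To identify $Y^b$ with the CBI process of branching mechanism $R$ and immigration mechanism $F$, fix $u\in\mathbb{C}$ with $\Re(u)\le 0$ and let $\psi(\cdot,u)$ solve the Riccati equation $\partial_t\psi(t,u)=R(\psi(t,u))$, $\psi(0,u)=u$ (which has a global solution with $\Re(\psi(t,u))\le 0$); applying It\^o to $t\mapsto\exp\{\psi(T-t,u)Y_t^b-\int_0^t F(\psi(T-s,u))\,ds\}$ one checks, using the defining ODE and the form of $R$, that the drift cancels, so this process is a local (hence, after localisation, a true) martingale, whence $\widehat{\E}^b[e^{uY_T^b}]=\exp\{\psi(T,u)y_0+\int_0^T F(\psi(s,u))\,ds\}$. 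This is precisely the Laplace functional of a CBI process with mechanisms $(R,F)$, which together with the pathwise uniqueness already established identifies $Y^b$. Alternatively, one may directly invoke the known one-to-one correspondence between CBI processes and SDEs of the form \eqref{eq1}.

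For the positivity of the occupation time, assume $y_0+a\in\R_{++}$ and suppose, for a contradiction, that $\P(\int_0^t Y_s^b\,ds=0)>0$ for some $t\in\R_{++}$. On that event $Y_s^b=0$ for Lebesgue-almost every $s\in[0,t]$, and since $s\mapsto Y_s^b$ is c\`adl\`ag, right-continuity upgrades this to $Y_s^b=0$ for \emph{every} $s\in[0,t)$, so in particular $y_0=Y_0^b=0$ and hence $a\in\R_{++}$. Substituting $Y^b\equiv 0$ on $[0,t)$ into \eqref{eqintegral} and using $\int_0^s\sqrt{Y_u^b}\,dW_u=0$ and $J_s\ge 0$, we obtain $0=Y_s^b=as+J_s\ge as>0$ for $s\in(0,t)$, a contradiction. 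Hence $\P(\int_0^t Y_s^b\,ds\in\R_{++})=1$ for every $t\in\R_{++}$.

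The crux of the argument is the well-posedness step: the non-Lipschitz, degenerate diffusion coefficient together with the jumps forces one to combine a Yamada--Watanabe-type pathwise-uniqueness estimate adapted to jump SDEs with a separate weak-existence input, and the nonnegativity statement requires a careful analysis of the boundary behaviour at $0$. Once this is in place, the CBI identification and the computation of $\mathcal{A}$ are essentially bookkeeping with the It\^o formula and the exponential-affine functional, and the last statement is an elementary contradiction argument.
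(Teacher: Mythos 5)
The paper does not actually prove this proposition; it is quoted verbatim as \cite[Proposition 2.1]{BBKP17}, so there is no internal proof to compare your argument against. Your sketch is a reasonable reconstruction of the standard route (Yamada--Watanabe for pathwise uniqueness with the jump part cancelling in the difference, weak existence, It\^o formula for the generator, the exponential-affine ansatz for the Laplace transform, and an elementary contradiction for $\int_0^t Y_s^b\,ds>0$), and the last two items are essentially correct as you state them.

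Two steps deserve more care than your sketch gives them. First, the nonnegativity step is the genuinely delicate one: the SDE itself involves $\sqrt{Y_t^b}$, so it is only well defined once you already know $Y^b\ge 0$, and your stopping-time argument ``using the dynamics of $Y^b$ below level $0$'' implicitly assumes the SDE extends there. The standard fix is to first solve the modified equation with diffusion coefficient $\sigma\sqrt{(Y_t^b)^+}$ (for which your Yamada--Watanabe estimate applies verbatim, since $|\sqrt{x^+}-\sqrt{y^+}|\le\sqrt{|x-y|}$), and then establish nonnegativity separately, e.g.\ by applying It\^o's formula to a $C^2$ approximation of $y\mapsto (y^-)^2$, or by comparison with the classical CIR process without jumps on each interval between jump times of $J$ and using that all jumps of $J$ are positive. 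Alternatively one simply invokes the general strong existence and nonnegativity theory for CBI-type SDEs (Fu--Li 2010, Dawson--Li 2012), which is what \cite{BBKP17} relies on. Second, using ``the known one-to-one correspondence between CBI processes and SDEs of the form \eqref{eq1}'' to get weak existence is slightly circular, since the identification of the solution of \eqref{eq1} as a CBI process with mechanisms $(R,F)$ is precisely one of the things the proposition asserts; your martingale-problem/tightness alternative, or the affine-transform computation you carry out afterwards, avoids this circularity and is the cleaner route.

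The remaining parts are fine. The It\^o computation for $\mathcal{A}$ is exactly as you describe. For the occupation-time positivity, your argument is correct, though it is worth noting explicitly that if $y_0>0$ the conclusion is immediate from right continuity of $Y^b$ at $0$, so one may assume $y_0=0$ and $a>0$, in which case $Y_s^b=0$ for all $s\in[0,t)$ would force $0=as+J_s\ge as>0$, the contradiction you want.
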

Next, we present the first moment of $Y^b$.
\begin{lemma}\cite[Proposition 2.2]{BBKP17}\label{Pro_moments}
	Assume condition {\bf(A1)}.	Then, for any $t \in \R_+$,
	\begin{align*}
	\widehat{\E}[Y_t^b]
	= \begin{cases}
	y_0 e^{-bt}	+ (a + \int_0^\infty z m(dz)) \frac{1-e^{-bt}}{b}
	& \text{if \ $b \ne 0$,} \\[1mm]
	y_0 + (a + \int_0^\infty z m(dz)) t & \text{if \ $b = 0$.}
	\end{cases}
	\end{align*}
	Consequently,
	\begin{align*}
	&\lim_{t\to\infty} \widehat{\E}[Y_t^b]
	= (a + \int_0^\infty z m(dz)) \frac{1}{b},&\textnormal{if}\quad b \in \R_{++},\\
	&\lim_{t\to\infty} t^{-1} \widehat{\E}[Y_t^b] = a + \int_0^\infty z m(dz),&\textnormal{if}\quad b = 0,\\
	&\lim_{t\to\infty} e^{bt} \widehat{\E}[Y_t^b]
	= y_0 - (a + \int_0^\infty z m(dz)) \frac{1}{b},&\textnormal{if}\quad b \in \R_{--}.
	\end{align*}	
\end{lemma}
Based on the asymptotic behavior of $\widehat{\E}[Y_t^b]$, one can introduce a classification of the jump-type CIR process \eqref{eq1}. We call $Y^b$ subcritical, critical or supercritical if $b \in \R_{++}$, $b = 0$ or $b \in \R_{--}$, respectively.

In mathematical finance, the CIR process and jump-type CIR process are used to describe the stochastic volatility of a price process of an asset in the Heston model or to model the evolution of short-term interest rates. They belong to a class of affine processes that can be found in the work of Teichmann et {\it al.} \cite{CFMT11},	Duffie, Filipovi{\'c} and Schachermayer \cite{DFS03}, Kallsen \cite{K06},  Keller-Ressel \cite{K08}, Keller-Ressel and Mijatovi{\'c} \cite{KM12},\ldots.

The concept of LAN, LAQ and LAMN can be found, e.g., in Le Cam and Yang \cite{CY90}, Jeganathan \cite{JP82} or in Subsection $7.1$ of H{\"{o}}pfner \cite{H14}. Most  research works on statistics for CIR process, CIR process with jumps and Heston models mainly focus on continuous observations. In case of the diffusion-type CIR process, Overbeck \cite{O98} examined local asymptotic properties for the drift parameters. Kutoyants \cite{K04} investigated statistical inference for one-dimensional ergodic diffusion processes. The LAMN property was addressed by Luschgy in \cite{L92} for semimartingale. Later, Ben Alaya and Kebaier \cite{BK12, BK13} show the asymptotic properties of maximum likelihood estimator (MLE) for the drift parameters of the diffusion-type CIR process in both ergodic and non-ergodic cases. More recently, Barczy et {\it al.} \cite{BBKP17} have investigated the asymptotic properties of MLE for the growth rate $b$ of the jump-type CIR process \eqref{eq1}, which provides the main inspiration for our current work. Barczy and Pap \cite{BP16}, Barczy et {\it al.} \cite{BBKP16, BBKP18} have studied the asymptotic properties of MLE for Heston models, jump-type Heston models, and stable CIR process, respectively. 

In the case of discrete observations, Malliavin calculus techniques have been applied to study diffusion processes and jump diffusion processes. Concretely, Gobet \cite{G01, G02} obtained the LAMN and LAN properties for multidimensional elliptic diffusions and ergodic diffusions. In the presence of jumps, several SDEs have been largely investigated, see e.g. Kawai \cite{K13}, Cl\'ement et {\it al.} \cite{CDG14, CG15, CGN19}, Kohatsu-Higa et {\it al.} \cite{KNT14, KNT15}, and Tran \cite{T15}. Notice that all these results deal with the SDEs whose coefficients are continuously differentiable and satisfy a global Lipschitz condition. The case where the coefficients of the SDEs do not satisfy these standard assumptions is less investigated. The first contribution in this direction can be found in Ben Alaya et {\it al.} \cite{BKT17} where the local asymptotic properties for the global drift parameters $(a,b)$ of the diffusion-type CIR process are studied.

The LAN, LAMN and LAQ properties for jump-type CIR processes on the basis of both continuous and discrete observations have never been addressed in the literature. Motivated by this fact and inspired by the recent paper \cite{BBKP17}, the main objective of this paper is to study the local asymptotic properties for the parameter $b$ of the jump-type CIR process \eqref{eq1} in both ergodic and non-ergodic cases. Precisely, LAN will be proved in the subcritical case, LAQ will be derived in the critical case and LAMN will be shown in the supercritical case. Let us mention here that the study of the parameter $a$ requires the asymptotic behavior of $\int_0^t\frac{1}{Y_s^b}ds$ which still remains an open problem.

For our purpose, in the case of continuous observations, we use an explicit expression of the Radon-Nikodym derivative, the asymptotic behaviors \eqref{ergo1}, \eqref{criticalconvergence1} and \eqref{supercriticalconvergence2} together with the central limit theorem for continuous local martingales. The jump-type CIR process does not possess an explicit expression for its transition density and then proving the local asymptotic properties based on discrete observations becomes challenging. To overcome this difficulty in the case of discrete observations, our strategy is first to prove the existence of a positive transition density, which is of class $C^1$ w.r.t. $b$  (see Proposition \ref{smoothdensity}) and then to use the Malliavin calculus approach developed by Gobet \cite{G01, G02} for regular SDEs in order to derive an explicit expression for the logarithm derivative of the transition density in terms of a conditional expectation of a Skorohod integral (see Proposition \ref{c2prop1}). This allows to obtain an appropriate stochastic expansion of the log-likelihood ratio (see \eqref{decompo}, \eqref{firstdeco} and \eqref{decompo2}). The main terms of the expansion can be treated by writing them in terms of the log-likelihood ratio based on the continuous observations (see \eqref{m1}).

To treat the negligible terms of the expansion, the first difficulty comes from the fact that the conditional expectations are taken under the measure $\widetilde{\P}_{t_k,x}^{b(\ell)}$ with the application of the Malliavin calculus whereas the convergence is evaluated under $\widehat{\P}_{t_k,x}^{b_0}$ with $\widehat{\P}_{t_k,x}^{b_0}\neq \widetilde{\P}_{t_k,x}^{b(\ell)}$ (see e.g. Lemma \ref{lemma1}). In \cite{G01, G02}, the authors use a change of transition density functions and the upper and lower bounds of Gaussian type of the transition density functions. For $Y^b$, the transition density estimates of Gaussian type may not exist. To overcome this difficulty, a change of measures is essentially used (see Lemma \ref{change}). Then a technical Lemma \ref{deviation1} is established to measure the deviation of the change of measure when the drift parameters change. The second difficulty is to deal with the jumps of the subordinator in the subcritical case since with the rate $\frac{1}{\sqrt{n\Delta_n}}$ in this case, the Burkholder-Davis-Gundy's inequality cannot be used in order to get the enough decreasing rate (see Lemma \ref{lemma4}). To resolve this problem, a subtle analysis on the jump structure of the subordinator involving the amplitude of jumps and number of jumps is mainly used. More concretely, as in \cite[Lemma A.14]{KNT15}, we condition on the number of big jumps outside and inside the conditional expectation. When the number of big jumps outside and inside the conditional expectation is different, one may use a large deviation principle in the estimate (see \eqref{M0} and \eqref{M01}) and otherwise, we rely our analysis on using the complementary set (see \eqref{M11}). All these arguments used together with the usual moment estimates lead us to get the exact large deviation type estimates where the decreasing rate is determined by the intensity of the big jumps and the asymptotic behavior of the small ones (see Lemma \ref{jumpestimate2}). It is worth noting that in \cite[Lemma A.14]{KNT15}, the authors needed to use lower bounds estimates for the transition density and prove upper bounds estimates for the transition density conditioned on the jump structure in order to obtain the large deviation type estimates. Besides, they only studied the finite L\'evy measure. In our study, both finite and infinite L\'evy measures are considered and the approach based on deriving estimates for the transition density conditioned on the jump structure is not needed. The techniques for analyzing the jumps in this paper can be viewed as an improvement on the approach developed in \cite{KNT15}. In the critical and supercritical cases, there is no difficulty in treating the jumps thanks to the better rates  $\frac{1}{n\Delta_n}$ and $e^{b_0\frac{n\Delta_n}{2}}$ for $b_0\in \R_{--}$, respectively.

To be able to apply the Malliavin calculus, some crucial estimates on the flow process in Lemma \ref{estimates} are needed to show an expression for the score function in Proposition \ref{c2prop1} and useful estimates in Lemma \ref{estimate}. In addition, to prove the convergence of the negligible terms, some positive and negative moment estimates of the jump-type CIR process in Lemma \ref{moment} are useful. As a consequence, condition {\bf(A3)} below turns out to be crucial. When following our aforementioned strategy, in the subcritical (ergodic) case, some additional assumptions on the decreasing rate of $\Delta_n$ such as $n\Delta_n^{p}\to 0$ for some $p>1$ are not required. However, in the nonergodic cases, we require $n\Delta_n^{\delta}\to 0$ for the critical case where $\delta>1$ is arbitrarily large, and $\Delta_n^{2-\varepsilon}e^{-b_0n\Delta_n}\to 0$ for the supercritical case where $\varepsilon>0$ is arbitrarily small.

The paper is organized as follows. In Section \ref{results}, we state our main results in Theorems \ref{Thm_subcritical}, \ref{Thm_critical}, \ref{Thm_supercritical}, \ref{theorem1}, \ref{theorem2}, and \ref{theorem3}. Furthermore, several examples on the subordinators are given. Section 3 is devoted to the proof of the main results. Section 4 presents technical results to obtain an explicit expression for the score function by using the Malliavin calculus. In Section 5, we focus on the study of the convergence of the remainder terms. To maintain the flow of the exposition, the proofs of some technical results are presented in Appendix A whereas some useful results are recalled in Appendix B.

\section{Main results}
\label{results}

In this section, we give a statement of our main results divided into three cases: subcritical, critical and supercritical for both continuous and discrete observations.
\subsection{Continuous observations}
For all  $T \in \R_{++}$, let
$\widehat{\P}_{T}^b := \widehat{\P}^{b}\vert_{\widehat{\mathcal{F}}_T}$ be the restriction of $\widehat{\P}^b$ on $\widehat{\mathcal{F}}_T$, and consider the continuous observation $Y^{T,b}:=(Y_t^b)_{t\in[0,T]}$ of the process $Y^b$ on the time interval $[0,T]$. The Radon--Nikodym derivative is given in the next lemma.
\begin{lemma}\cite[Proposition 4.1]{BBKP17}\label{RNb}
	Let $b, \widetilde{b}\in \R$. Then for all $T \in \R_{++}$, the probability measures $\widehat{\P}_{T}^b$ and $\widehat{\P}_{T}^{\widetilde{b}}$ are absolutely continuous w.r.t. each other, and
	\begin{equation*}\label{RNformulab}
	\begin{aligned}
	\log\frac{d\widehat{\P}_{T}^{\widetilde{b}}}{d\widehat{\P}_{T}^b}(Y^{T,b})
	&= - \frac{\widetilde{b}-b}{\sigma^2} (Y_T^b - y_0 - a T - J_T)
	- \frac{\widetilde{b}^2-b^2}{2\sigma^2} \int_0^T Y_s^b  d s \\
	&= - \frac{\widetilde{b}-b}{\sigma} \int_0^T \sqrt{Y_s^b}  dW_s
	- \frac{(\widetilde{b}-b)^2}{2\sigma^2} \int_0^T Y_s^b ds .
	\end{aligned}
	\end{equation*}
	Moreover, the process  $\Bigl(\frac{d\widehat{\P}_{T}^{\widetilde{b}}}{d\widehat{\P}_{T}^b}(Y^{T,b})\Bigr)_{T\in\R_+}$ is a martingale.
\end{lemma}
The martingale property of the process $\Bigl(\frac{d\widehat{\P}_{T}^{\widetilde{b}}}{d\widehat{\P}_{T}^b}(Y^{T,b})\Bigr)_{T\in\R_+}$ is a consequence of Theorem 3.4 in Chapter III of Jacod and Shiryaev \cite{JS03}.
\begin{corollary}\label{RN_Cor}
	For any $b \in \R$, $T \in \R_{++}$, $\varphi_T(b)\in\R$ and $u\in \R$, we have 
	$$
	\log\frac{d\widehat{\P}_{T}^{b+\varphi_T(b)u}}{d\widehat{\P}_{T}^b}(Y^{T,b})= uU_{T}(b) - \frac{u^2}{2}I_{T}(b),
	$$
	where
	\begin{align*}
	U_{T}(b)&:= - \frac{\varphi_T(b)}{\sigma^2}
	\biggl(Y_T^b - y_0 - a T + b \int_0^T Y_s^b ds - J_T\biggr)
	= - \frac{\varphi_T(b)}{\sigma} \int_0^T \sqrt{Y_s^b} dW_s , \\
	I_{T}(b)&:= \frac{(\varphi_T(b))^2}{\sigma^2} \int_0^T Y_s^b ds.
	\end{align*}
\end{corollary}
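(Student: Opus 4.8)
The plan is to apply Proposition \ref{RNb} directly with the choice $\widetilde{b} = b + \varphi_T(b)u$ and then rearrange the resulting identity as a quadratic polynomial in $u$, reading off $U_T(b)$ and $I_T(b)$ as the coefficients of $u$ and $-\frac{u^2}{2}$ respectively.

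First I would substitute $\widetilde{b} - b = \varphi_T(b)u$ into the first displayed formula of Proposition \ref{RNb}, using the factorization $\widetilde{b}^2 - b^2 = (\widetilde{b}-b)(\widetilde{b}+b) = \varphi_T(b)u\,(2b + \varphi_T(b)u)$. Expanding, the term $-\frac{\widetilde{b}^2-b^2}{2\sigma^2}\int_0^T Y_s^b\,ds$ splits into a piece linear in $u$, namely $-\frac{b\varphi_T(b)u}{\sigma^2}\int_0^T Y_s^b\,ds$, and a piece quadratic in $u$, namely $-\frac{u^2}{2}\,\frac{\varphi_T(b)^2}{\sigma^2}\int_0^T Y_s^b\,ds = -\frac{u^2}{2}I_T(b)$. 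Collecting the terms linear in $u$ yields exactly $uU_T(b)$ with $U_T(b) = -\frac{\varphi_T(b)}{\sigma^2}\bigl(Y_T^b - y_0 - aT + b\int_0^T Y_s^b\,ds - J_T\bigr)$, which is the first expression for $U_T(b)$ in the statement. To obtain the second, equivalent, expression I would repeat the substitution in the second displayed formula of Proposition \ref{RNb}: with $\widetilde{b} - b = \varphi_T(b)u$ and $(\widetilde{b}-b)^2 = \varphi_T(b)^2 u^2$, its right-hand side becomes $-\frac{\varphi_T(b)u}{\sigma}\int_0^T \sqrt{Y_s^b}\,dW_s - \frac{u^2}{2}I_T(b)$, so matching the coefficient of $u$ gives $U_T(b) = -\frac{\varphi_T(b)}{\sigma}\int_0^T \sqrt{Y_s^b}\,dW_s$; the consistency of the two formulas for $U_T(b)$ is precisely the equality of the two lines in Proposition \ref{RNb}.

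There is essentially no obstacle here: the corollary is just a local-reparametrization reformulation of Proposition \ref{RNb}, packaged so that the score $U_T(b)$ and the observed information $I_T(b)$ appear as the coefficients of the quadratic expansion in $u$. The only point worth a word is that keeping \emph{both} representations of $U_T(b)$ is deliberate — the first, in terms of $Y_T^b$, $J_T$ and $\int_0^T Y_s^b\,ds$, is convenient for moment computations, while the second exhibits $U_T(b)$ as a continuous local martingale (in $T$), which is what makes the central limit theorem of Lemma \ref{THM_Zanten} applicable when establishing the LAN, LAQ and LAMN properties in the continuous-observation case.
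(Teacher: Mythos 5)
Your proposal is correct and is exactly what the paper intends: the corollary has no separate proof in the paper because it is a direct substitution of $\widetilde b=b+\varphi_T(b)u$ into Proposition \ref{RNb}, expanding $\widetilde b^2-b^2=(\widetilde b-b)(\widetilde b+b)$ and collecting the coefficients of $u$ and $u^2$, precisely as you do. Your closing remark about why both representations of $U_T(b)$ are retained (one for moment computations, one exhibiting the continuous local-martingale structure needed for Lemma \ref{THM_Zanten}) is accurate and matches how the two forms are used in the proofs of Theorems \ref{Thm_subcritical}--\ref{Thm_supercritical}.
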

Observe that for any $t\in \R_{+}$, using equation \eqref{eq1}, we have
\begin{align*}
J_t=\sum_{s\in [0,t]}\Delta J_s=\sum_{s\in [0,t]}\Delta Y_s^b,
\end{align*}	
where the jump amplitude of $Y^b$ is defined as $\Delta Y_s^b:=Y_s^b-Y_{s-}^b$ for any $s\in \R_{++}$, $\Delta Y_0^b:=0$. Consequently, for all $t\in[0,T]$, $J_t$ is a measurable function of $(Y_u^b)_{u\in[0,T]}$. Therefore, $U_{T}(b)$ and $I_{T}(b)$ are measurable functions of $(Y_u^b)_{u\in[0,T]}$.

We now recall the asymptotic behaviors \eqref{ergo1}, \eqref{criticalconvergence1} and \eqref{supercriticalconvergence2} of $Y^b$ which follow from the joint Laplace transform of $Y_t^b$ and $\int_0^tY_s^bds$ in \cite[Theorem 3.1]{BBKP17} defined by
\begin{align}\label{Laplace}
	\widehat{\E}\Big[e^{uY_t^b+v\int_0^tY_s^bds}\Big]=\exp\Big\{\psi_{u,v}(t)y_0+\int_0^t\Big(a\psi_{u,v}(s)+\int_0^{\infty}\big(e^{z\psi_{u,v}(s)}-1\big)m(dz)\Big)ds\Big\},
\end{align}
for all $u, v\in\R_{-}$, $t\in\R_+$, where the function $\psi_{u,v}: \R_+\to \R_{-}$ takes the form
\begin{align*}
	\psi_{u,v}(t)=\begin{cases}
		\dfrac{u}{1-\frac{\sigma^2u}{2}t} &\textnormal{if}\quad v=0\quad\textnormal{and}\quad  b=0,\\
		\dfrac{u\gamma_v\cosh(\frac{\gamma_vt}{2})+(-ub+2v)\sinh(\frac{\gamma_vt}{2})}{\gamma_v\cosh(\frac{\gamma_vt}{2})+(-\sigma^2u+b)\sinh(\frac{\gamma_vt}{2})} &\textnormal{if}\quad v\in\R_{--}\quad\textnormal{or}\quad b\neq 0,
	\end{cases}
\end{align*}
with $\gamma_v:=\sqrt{b^2-2\sigma^2v}$. In particular, by \cite[Corollary 3.3]{BBKP17},
\begin{align}\label{Laplace2}
	&\widehat{\E}\left[e^{uY_t^b}\right]=\exp\left\{\psi_{u,0}(t)y_0+\int_0^t\left(a\psi_{u,0}(s)+\int_0^{\infty}\left(e^{z\psi_{u,0}(s)}-1\right)m(dz)\right)ds\right\}\notag\\
	&=\left(1-\frac{\sigma^2 u}{2}\zeta_b(t)\right)^{-\frac{2a}{\sigma^2}} \exp\left\{\psi_{u,0}(t)y_0\right\}\exp\left\{\int_0^t\int_0^{\infty}\left(e^{z\psi_{u,0}(s)}-1\right)m(dz)ds\right\},
\end{align}
for $t\in\R_+$, where the function $\psi_{u,0}: \R_+\to \R_{-}$ takes the form
\begin{align*}
	\psi_{u,0}(t)=\begin{cases}
		\frac{u}{1-\frac{\sigma^2u}{2}t} &\textnormal{if}\quad b=0,\\
		\frac{ue^{-bt}}{1-\frac{\sigma^2u}{2b}(1-e^{-bt})} &\textnormal{if}\quad b\neq 0,
	\end{cases}
\qquad\textnormal{and}\quad \zeta_b(t)=\begin{cases}
	t &\textnormal{if}\quad b=0,\\
	\frac{1}{b}(1-e^{-bt}) &\textnormal{if}\quad b\neq 0.
\end{cases}
\end{align*}
Indeed, following the same argument as in the proof of \cite[Proposition 1.2.4]{A15}, \eqref{Laplace2} is valid for $u<\frac{2}{\sigma^2\zeta_b(t)}$.
\subsubsection{Subcritical case}
\label{section_subcritical}

Let $b\in \R_{++}$. We recall the existence of a unique stationary distribution and the strong law of large numbers of the process $Y^b$.
\begin{proposition}\label{stationarydis} \cite[Theorems 2.4 and 5.1]{BBKP17}, \cite[Theorem 1.2 and Remark 5.1]{JKR17}
Assume $b \in \R_{++}$ and condition {\bf(A1)}. Then\\
	\textnormal{(i)}\; $Y^b$ has a unique stationary distribution denoted by $\pi_{b}(dy)$ which is given by
	$$
	\int_0^{\infty}e^{uy}\pi_{b}(dy)=\exp\left\{\int_u^0\dfrac{F(v)}{R(v)}dv\right\}=\exp\left\{\int_u^0\dfrac{av+\int_0^{\infty}(e^{vz}-1)m(dz)}{\frac{\sigma^2}{2}v^2-bv}dv\right\},
	$$
	for all $u\in \R_-$. Moreover, $\pi_{b}(dy)$ has a finite expectation given by
	\begin{align*}
	\int_{0}^{\infty}y\pi_{b}(dy)=\dfrac{1}{b}\left(a+\int_0^{\infty}zm(dz)\right)\in\R_+.
	\end{align*}	
	\textnormal{(ii)}\; As $t\to\infty$,
	\begin{align}\label{ergo1}
	&\dfrac{1}{t}\int_{0}^{t}Y_s^bds\overset{\widehat{\P}^{b}}{\longrightarrow}\int_{0}^{\infty}y\pi_{b}(dy)=\dfrac{1}{b}\left(a+\int_0^{\infty}zm(dz)\right)\in\R_+.
	\end{align}	
	\textnormal{(iii)}\; As $t\to\infty$,
	\begin{equation}\label{ergo3}
	\dfrac{1}{t}\int_{0}^{t}h(Y_s^b)ds\longrightarrow\int_{0}^{\infty}h(y)\pi_{b}(dy),\quad \widehat{\P}^{b}\textnormal{-a.s.},
	\end{equation}
for any $\pi_{b}(dy)$-integrable function $h: \R_{++} \to \R$. 
\end{proposition}

In \cite[Theorem 2.4]{BBKP17}, Barczy et {\it al.} show the strong law of large numbers \eqref{ergo3} for $Y^b$ which is obtained from the exponential ergodicity of $Y^b$ under two conditions on the L\'evy measure: {\bf(A1)} and $\int_{0}^{1}z\log(\frac{1}{z}) m(dz)<\infty$. More recently, in \cite[Theorem 1.2 and Remark 5.1]{JKR17}, Jin et {\it al.} show the validity of \eqref{ergo3} under the weaker condition: $\int_{1}^{\infty}\log zm(dz)<\infty$. This condition relaxes significantly the above two conditions used by Barczy et {\it al.} in \cite[Theorem 2.4]{BBKP17}. Notice that condition $\int_{1}^{\infty}\log zm(dz)<\infty$ is then satisfied under assumption {\bf(A1)}. 

For fixed $b_0\in \R_{++}$, we consider $Y^{T,b_0}=(Y_t^{b_0})_{t\in[0,T]}$ of $Y^{b_0}$ on $[0,T]$. The first result of this paper is the following LAN property in the subcritical case.
\begin{theorem}\label{Thm_subcritical}
	Assume $b_0 \in \R_{++}$, condition {\bf(A1)}, and $a \in \R_{++}$ or $m \ne 0$. Then, the LAN holds at $b_0$ with $\varphi_T(b_0)= \frac{1}{\sqrt{T}}$ and 
	\begin{equation}\label{conv_subcritical}
	(U_{T}(b_0),I_{T}(b_0))\overset{\mathcal{L}(\widehat{\P}^{b_0})}{\longrightarrow} (U(b_0),I(b_0)),
	\end{equation}
	as $T \to \infty$, where $U(b_0)=\mathcal{N}(0, I(b_0))$ is a centered $\R$-valued Gaussian random variable with variance
	$$
	I(b_0):= \frac{1}{\sigma^2b_0} \biggl(a + \int_0^\infty zm(dz)\biggr) \in \R_{++}. 
	$$
	\end{theorem}
\subsubsection{Critical case}
\label{section_critical}

Let $b=0$. We recall the asymptotic behavior of $Y^0=(Y_t^0)_{t\in\R_+}$.
\begin{proposition} \cite[Theorem 6.1]{BBKP17}
Assume $b =0$ and condition {\bf(A1)}. Then, as $t\to\infty$,
\begin{align}\label{criticalconvergence1}
\left(\dfrac{Y_t^0}{t},\dfrac{1}{t^2}\int_0^tY_s^0ds\right)\overset{\mathcal{L}(\widehat{\P}^{0})}{\longrightarrow} \left(\mathcal{Y}_1,\int_0^1\mathcal{Y}_sds\right).
\end{align}
Here, $\mathcal{Y}=(\mathcal{Y}_t)_{t\in\R_+}$ is the unique strong solution of a critical diffusion-type CIR process starting from $0$ defined by 
\begin{equation}\label{cirr0}
d\mathcal{Y}_t=\left(a+\int_0^{\infty}zm(dz)\right)dt+\sigma\sqrt{\mathcal{Y}_t}d\mathcal{W}_t,
\end{equation}
where $\mathcal{Y}_0=0$, and $\mathcal{W}=(\mathcal{W}_t)_{t\in\R_+}$ is a one-dimensional standard Brownian motion. Moreover, the Laplace transform of $(\mathcal{Y}_1,\int_0^1\mathcal{Y}_sds)$ takes the form
\begin{align*}
\widehat{\E}\left[e^{u\mathcal{Y}_1+v\int_0^1\mathcal{Y}_sds}\right]=\begin{cases}
\left(\cosh\frac{\gamma_v}{2}-\frac{\sigma^2 u}{\gamma_v}\sinh\frac{\gamma_v}{2}\right)^{-\frac{2}{\sigma^2}\left(a+\int_0^{\infty}zm(dz)\right)}\quad & \textnormal{if}\ \ v\in \R_{--},\\
\left(1-\frac{\sigma^2 u}{2}\right)^{-\frac{2}{\sigma^2}\left(a+\int_0^{\infty}zm(dz)\right)}\quad & \textnormal{if}\ \ v=0,
\end{cases}
\end{align*}
for all $u, v\in \R_{-}$, where $\gamma_v:=\sqrt{-2\sigma^2v}$.
\end{proposition}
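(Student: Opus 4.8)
The plan is to establish the weak convergence \eqref{criticalconvergence1} and the closed form of the limiting Laplace transform in one stroke, by working with Laplace transforms and exploiting the affine (CBI) structure of $Y^0$. Since $b=0$, the process $Y^0$ is a CBI process with branching mechanism $R(w)=\frac{\sigma^2}{2}w^2$ and immigration mechanism $F(w)=aw+\int_0^\infty(e^{wz}-1)\,m(dz)$ (Proposition \ref{Pro_jump_CIR}); enlarging the state by the running integral $\int_0^\cdot Y_s^0\,ds$ makes $\bigl(Y_t^0,\int_0^t Y_s^0\,ds\bigr)$ itself a two-dimensional affine process, so the affine/Feynman--Kac formula (see, e.g., \cite{DFS03, KM12}) gives, for every $t\in\R_{++}$ and all $\lambda,\theta\in\R_-$,
\begin{equation*}
\widehat{\E}^{0}\Bigl[\exp\Bigl(\lambda Y_t^0+\theta\int_0^t Y_s^0\,ds\Bigr)\Bigr]=\exp\Bigl(y_0\,\phi(t)+\int_0^t F(\phi(s))\,ds\Bigr),
\end{equation*}
where $\phi=\phi^{(\lambda,\theta)}$ solves the scalar Riccati equation $\phi'(s)=\frac{\sigma^2}{2}\phi(s)^2+\theta$, $\phi(0)=\lambda$; the use of \eqref{LK} is licit because $\phi$ remains $\R_-$-valued.

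Fix $u,v\in\R_-$ and specialise to $\lambda=u/t$, $\theta=v/t^2$. Let $A$ be the solution of $A'(r)=\frac{\sigma^2}{2}A(r)^2+v$ with $A(0)=u$; a phase-line analysis shows $A$ is globally defined and $[C_0,0]$-valued for some $C_0=C_0(u,v,\sigma)\le 0$. By uniqueness for ordinary differential equations, $\phi^{(u/t,v/t^2)}(s)=\frac{1}{t}A(s/t)$ on $[0,t]$, hence $y_0\phi(t)=\frac{y_0}{t}A(1)\to 0$, and, writing $\mu:=\int_0^\infty z\,m(dz)\in\R_+$ and substituting $s=tr$,
\begin{equation*}
\int_0^t F(\phi(s))\,ds=a\int_0^1 A(r)\,dr+\int_0^1\Bigl(t\int_0^\infty\bigl(e^{\frac{1}{t}A(r)z}-1\bigr)\,m(dz)\Bigr)dr.
\end{equation*}
Since $A(r)\le 0$, the elementary bound $|e^x-1|\le|x|$ for $x\le 0$ gives $\bigl|t\bigl(e^{\frac{1}{t}A(r)z}-1\bigr)\bigr|\le|A(r)|\,z$, which is $m(dz)$-integrable by \textbf{(A1)} and $t$-independent, while $t(e^{\frac{1}{t}A(r)z}-1)\to A(r)z$ pointwise; two applications of dominated convergence then yield $\int_0^t F(\phi(s))\,ds\to(a+\mu)\int_0^1 A(r)\,dr$, and therefore, for all $u,v\in\R_-$,
\begin{equation*}
\widehat{\E}^{0}\Bigl[\exp\Bigl(u\,\frac{Y_t^0}{t}+v\,\frac{1}{t^2}\int_0^t Y_s^0\,ds\Bigr)\Bigr]\longrightarrow\exp\Bigl((a+\mu)\int_0^1 A(r)\,dr\Bigr),\qquad t\to\infty.
\end{equation*}

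It remains to evaluate and identify the limit. Solving the Riccati equation for $A$ explicitly via the substitution $A=-\frac{2}{\sigma^2}\frac{\rho'}{\rho}$, which linearises it to $\rho''=\frac{\gamma_v^2}{4}\rho$ with $\gamma_v:=\sqrt{-2\sigma^2v}$ when $v\in\R_{--}$ (and to a quadrature when $v=0$), one obtains $\int_0^1 A(r)\,dr=-\frac{2}{\sigma^2}\log\frac{\rho(1)}{\rho(0)}$ with, after imposing $A(0)=u$, $\frac{\rho(1)}{\rho(0)}=\cosh\frac{\gamma_v}{2}-\frac{\sigma^2u}{\gamma_v}\sinh\frac{\gamma_v}{2}$ (respectively $1-\frac{\sigma^2u}{2}$ when $v=0$); this reproduces exactly the asserted formula. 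The identical computation for the diffusion $\mathcal{Y}$ of \eqref{cirr0} — a conservative CBI process with branching mechanism $\frac{\sigma^2}{2}w^2$, immigration mechanism $(a+\mu)w$ and $\mathcal{Y}_0=0$ — gives $\E[e^{u\mathcal{Y}_1+v\int_0^1\mathcal{Y}_s\,ds}]=\exp((a+\mu)\int_0^1 A(r)\,dr)$, so the limit law is that of $(\mathcal{Y}_1,\int_0^1\mathcal{Y}_s\,ds)$. Finally, since $(\frac{Y_t^0}{t},\frac{1}{t^2}\int_0^t Y_s^0\,ds)$ is $\R_+^2$-valued, the continuity theorem for multivariate Laplace transforms on $\R_+^2$ upgrades the pointwise convergence of Laplace transforms to the convergence in distribution \eqref{criticalconvergence1}.

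The step I expect to require the most care is the passage to the limit inside $\int_0^t F(\phi(s))\,ds$ when $m$ is an infinite L\'evy measure: a naive second-order Taylor estimate would demand $\int_0^\infty z^2\,m(dz)<\infty$, which is not assumed, so one must argue with the linear bound $|e^x-1|\le|x|$ and condition \textbf{(A1)}, an argument which simultaneously covers the finite and infinite cases. A more probabilistic alternative is to prove the functional limit theorem $(\frac{1}{t}Y_{ts}^0)_{s\ge 0}\overset{\mathcal{L}(\widehat{\P}^{0})}{\longrightarrow}\mathcal{Y}$ in $D(\R_+,\R)$ — rescaling \eqref{eqintegral}, using the strong law of large numbers for the subordinator (which, by monotonicity of $J$ and \textbf{(A1)}, converges uniformly on compacts to the deterministic drift $s\mapsto\mu s$), deducing tightness from the first-moment formula of Proposition \ref{Pro_moments} and a Burkholder--Davis--Gundy estimate, and identifying limit points through the well-posed (Yamada--Watanabe) martingale problem of \eqref{cirr0} — followed by the continuous mapping theorem applied to $\omega\mapsto(\omega(1),\int_0^1\omega(s)\,ds)$; in that route the main obstacle is the tightness-and-uniqueness analysis forced by the degenerate, non-Lipschitz diffusion coefficient $\sqrt{\,\cdot\,}$.
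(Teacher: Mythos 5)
Your proof is correct. Note that the paper does not actually prove this proposition — it states it as a citation of \cite[Theorem 6.1]{BBKP17} — so there is no in-paper argument to compare against; what you have produced is a complete, self-contained derivation of the cited result. The mechanism you use (the exponential-affine formula for $\bigl(Y_t^0,\int_0^t Y_s^0\,ds\bigr)$ with Riccati ODE $\phi'=\tfrac{\sigma^2}{2}\phi^2+\theta$, the exact scaling identity $\phi^{(u/t,v/t^2)}(s)=\tfrac{1}{t}A(s/t)$ which is available precisely because $b=0$ kills the linear term, the dominated-convergence step via $|e^x-1|\le|x|$ for $x\le 0$ so that only \textbf{(A1)} is needed even for infinite $m$, the linearisation $A=-\tfrac{2}{\sigma^2}\rho'/\rho$ to evaluate $\int_0^1 A$, and finally the Laplace continuity theorem on $\R_+^2$) is the standard affine route and, to my knowledge, essentially the one used in \cite{BBKP17} itself. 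The one place where a referee would push you is the joint Feynman--Kac/affine representation of the Laplace transform of $\bigl(Y_t^0,\int_0^t Y_s^0\,ds\bigr)$ for a CBI process under \textbf{(A1)} alone: it is a genuine theorem (not just the one-dimensional Laplace transform of $Y_t^0$), and you should pin it to a precise statement rather than a general pointer to \cite{DFS03,KM12}. Your alternative sketch via a functional scaling limit and continuous mapping is also viable, but as you correctly observe it forces one to address tightness and well-posedness for the degenerate $\sqrt{\cdot}$ diffusion, which the Laplace-transform route sidesteps entirely.
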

Now consider the diffusion-type CIR process $\mathcal{Y}=(\mathcal{Y}_t)_{t\in\R_+}$ starting from $0$ defined by 
\begin{equation}\label{eq2}
d\mathcal{Y}_t=\left(a+\int_0^{\infty}zm(dz)-b\mathcal{Y}_t\right)dt+\sigma\sqrt{\mathcal{Y}_t}d\mathcal{W}_t,
\end{equation}
where $\mathcal{Y}_0=0$ and $b\in\R$. We denote by $\P_{\mathcal{Y}}^{b}$ the probability measure induced by the CIR process $\mathcal{Y}$ solution to equation \eqref{eq2} on the canonical space $(C(\R_{+},\R),\mathcal{B}(C(\R_{+},\R))$ endowed with the natural filtration $\{\mathcal{G}_t\}_{t\in\R_+}$ generated by the Brownian motion $\mathcal{W}$. Here $C(\R_{+},\R)$ denotes the set of $\R$-valued continuous functions defined on $\R_{+}$, and $\mathcal{B}(C(\R_{+},\R))$ is its Borel $\sigma$-algebra. For any $T\in\R_{++}$, let $\P_{\mathcal{Y},T}^{b}$ be the restriction of $\P_{\mathcal{Y}}^{b}$ on $\mathcal{G}_T$. As a consequence of \cite[Proposition 4.1]{BBKP17}, under condition {\bf(A1)}, for any $b\in\R$, the probability measures $\P_{\mathcal{Y},T}^{b}$ and $\P_{\mathcal{Y},T}^{0}$ are absolutely continuous w.r.t. each other and 
\begin{equation}\label{RN}
\dfrac{d\P_{\mathcal{Y},T}^{b}}{d\P_{\mathcal{Y},T}^{0}}\left((\mathcal{Y}_s)_{s\in[0,T]}\right)=\exp\left\{-\dfrac{b}{\sigma}\int_0^T\sqrt{\mathcal{Y}_s}d\mathcal{W}_s-\dfrac{b^2}{2\sigma^2}\int_0^T\mathcal{Y}_sds\right\},
\end{equation}
where $\mathcal{Y}=(\mathcal{Y}_t)_{t\in\R_+}$ is the CIR process solution to equation \eqref{eq2} corresponding to the parameter $b=0$ (i.e., solution to equation \eqref{cirr0}).	As a consequence of Theorem 3.4 in Chapter III of Jacod and Shiryaev \cite{JS03}, the process $\Bigl(\frac{d\P_{\mathcal{Y},T}^{b}}{d\P_{\mathcal{Y},T}^{0}}\left((\mathcal{Y}_s)_{s\in[0,T]}\right)\Bigr)_{T\in\R_+}$
is a martingale w.r.t. the filtration $(\mathcal{G}_T)_{T\in\R_+}$.

For fixed $b_0=0$, we consider the continuous observation $Y^{T,0}=(Y_t^{0})_{t\in[0,T]}$ of $Y^{0}$. The second result of this paper is the following LAQ property in the critical case.
\begin{theorem}\label{Thm_critical}
	Assume $b_0 = 0$, $y_0+a \in \R_{++}$ and condition {\bf(A1)}. Then, the LAQ holds at $b_0 = 0$ with $\varphi_T(0)= \frac{1}{T}$ and 
	\begin{equation}\label{conv_critical}
	(U_{T}(0),I_{T}(0))\overset{\mathcal{L}(\widehat{\P}^{0})}{\longrightarrow}(U(0), I(0)),
	\end{equation}
	as $T\to\infty$, and
	\begin{align}\label{conv_critical2}
	\widehat{\E}\Bigl[e^{uU(0)-\frac{u^2}{2}I(0)}\Bigr]=1,
	\end{align}
	where
	$$
	U(0)
	:= \frac{a + \int_0^\infty  z m(dz) - \mathcal{Y}_1}{\sigma^2}
	= - \frac{1}{\sigma} \int_0^1 \sqrt{\mathcal{Y}_s} d \mathcal{W}_s, \qquad
	I(0):= \frac{1}{\sigma^2} \int_0^1 \mathcal{Y}_s ds ,
	$$
	and $(\mathcal{Y}_t)_{t\in\R_+}$ is the unique strong solution of the SDE \eqref{cirr0} with initial condition  $\mathcal{Y}_0 = 0$. 
\end{theorem}

\subsubsection{Supercritical case}
\label{section_supercritical}

Let $b\in \R_{--}$. We recall the asymptotic behavior of $Y^b$.
\begin{proposition}\cite[Theorem 7.1]{BBKP17}\label{superbehavior} Assume $b \in \R_{--}$ and condition {\bf(A1)}. Then, there exists a random variable $V$ with $\widehat{\P}(V\in \R_+)=1$ such that as $t\to\infty$,
\begin{align}
&e^{bt}Y_t^b\longrightarrow V,\quad \widehat{\P}^{b}\textnormal{-a.s.}\label{supercriticalconvergence1}\\
&e^{bt}\int_0^tY_s^bds\longrightarrow-\dfrac{V}{b},\quad \widehat{\P}^{b}\textnormal{-a.s.}\label{supercriticalconvergence2}
\end{align}
Moreover, the Laplace transform of $V$ takes the form
\begin{equation}\label{Vlimit}
\begin{split}
\widehat{\E}\left[e^{uV}\right]&=\exp\left\{\dfrac{uy_0}{1+\frac{\sigma^2u}{2b}}\right\}\left(1+\dfrac{\sigma^2u}{2b}\right)^{-\frac{2a}{\sigma^2}}\\
&\qquad\times\exp\left\{\int_0^{\infty}\left(\int_0^{\infty}\left(\exp\left\{\dfrac{zue^{by}}{1+\frac{\sigma^2u}{2b}e^{by}}\right\}-1\right)m(dz)\right)dy\right\},
\end{split}
\end{equation}
for all $u\in \R_-$, and consequently $V\overset{\mathcal{L}}{=}\widetilde{\mathcal{V}}+\widetilde{\widetilde{\mathcal{V}}}$, where $\widetilde{\mathcal{V}}$ and $\widetilde{\widetilde{\mathcal{V}}}$ are independent random variables such that $e^{bt}\widetilde{\mathcal{Y}}_t\overset{\textnormal{a.s.}}{\longrightarrow}\widetilde{\mathcal{V}}$ and $e^{bt}\widetilde{\widetilde{\mathcal{Y}}}_t\overset{\textnormal{a.s.}}{\longrightarrow}\widetilde{\widetilde{\mathcal{V}}}$ as $t\to\infty$, where $\widetilde{\mathcal{Y}}=(\widetilde{\mathcal{Y}}_t)_{t\in\R_+}$ and $\widetilde{\widetilde{\mathcal{Y}}}=(\widetilde{\widetilde{\mathcal{Y}}}_t)_{t\in\R_+}$ are the pathwise unique strong solutions of the supercritical CIR models
$$
d\widetilde{\mathcal{Y}}_t=\left(a-b\widetilde{\mathcal{Y}}_t\right)dt +\sigma\sqrt{\widetilde{\mathcal{Y}}_t}d\widetilde{\mathcal{W}}_t, \quad \textnormal{with} \ \widetilde{\mathcal{Y}}_0=y_0,
$$
and 
$$
d\widetilde{\widetilde{\mathcal{Y}}}_t=-b\widetilde{\widetilde{\mathcal{Y}}}_tdt +\sigma\sqrt{\widetilde{\widetilde{\mathcal{Y}}}_t}d\widetilde{\widetilde{\mathcal{W}}}_t+dJ_t, \quad \textnormal{with} \ \widetilde{\widetilde{\mathcal{Y}}}_0=0,
$$
respectively, where $\widetilde{\mathcal{W}}=(\widetilde{\mathcal{W}}_t)_{t\in\R_+}$ and $\widetilde{\widetilde{\mathcal{W}}}=(\widetilde{\widetilde{\mathcal{W}}}_t)_{t\in\R_+}$ are independent one-dimensional standard Brownian motions. Furthermore, $\widetilde{\mathcal{V}}\overset{\mathcal{L}}{=}\mathcal{Z}_{-\frac{1}{b}}$, where $\mathcal{Z}=(\mathcal{Z}_t)_{t\in\R_+}$ is the pathwise unique strong solution of the critical CIR model
$$
d\mathcal{Z}_t=adt+\sigma\sqrt{\mathcal{Z}_t}d\mathcal{W}_t, \quad \textnormal{with} \ \mathcal{Z}_0=y_0,
$$
where $\mathcal{W}=(\mathcal{W}_t)_{t\in\R_+}$ is a one-dimensional standard Brownian motion.

If, in addition, $a \in \R_{++}$, then $\widehat{\P}(V \in \R_{++}) = 1$.
\end{proposition}
For fixed $b_0\in \R_{--}$, we consider  $Y^{T,b_0}=(Y_t^{b_0})_{t\in[0,T]}$ of $Y^{b_0}$. The third result of this paper is the following LAMN property in the supercritical case.
\begin{theorem}\label{Thm_supercritical}
	Assume $b_0 \in \R_{--}$, $a \in \R_{++}$ and condition {\bf(A1)}. Then, the LAMN holds at $b_0$ with  $\varphi_T(b_0)=e^{\frac{b_0T}{2}}$ and 
	\begin{equation}\label{conv_supercritical}
	(U_{T}(b_0),I_{T}(b_0))\overset{\mathcal{L}(\widehat{\P}^{b_0})}{\longrightarrow} (U(b_0), I(b_0)),
	\end{equation}
	as $T\to\infty$, where $U(b_0):= \sqrt{I(b_0)}Z$ and the asymptotic random Fisher information is given by
	$$
	 I(b_0):= - \frac{V}{\sigma^2b_0}.
	$$
	Here, $V$ is the positive random variable whose Laplace transform is given by \eqref{Vlimit}, and $Z$ is a standard normally distributed random variable, independent of $V$. 
\end{theorem}

\subsection{Discrete observations}
Let us first add two following assumptions on equation \eqref{eq1} we shall work with. 
\begin{list}{labelitemi}{\leftmargin=1cm}
	\item[\bf(A2)] For any $p>1$, \; $\int_{1}^{\infty}z^pm(dz)<\infty$.
	\vskip 7pt
	\item[\bf(A3)] \; \; $\dfrac{a}{\sigma^2}>\dfrac{15+\sqrt{185}}{4}$.
\end{list}
Note that conditions {\bf(A1)} and {\bf(A2)} imply that $\int_{0}^{\infty}z^pm(dz)<\infty$ for any $p\geq 1$.

In order to avoid confusion with the observed process $Y^{b}$ driven by the Brownian motion $W$ and the subordinator $J$, we introduce a new probability space $(\widetilde{\Omega}, \widetilde{\mathcal{F}}, \widetilde{\P})$ as the extension of $(\widehat{\Omega}, \widehat{\mathcal{F}},\{\widehat{\mathcal{F}}_t\}_{t \geq 0}, \widehat{\P})$ on which we define an independent copy $X^{b}=(X_t^{b})_{t \geq 0}$ of $Y^{b}$ solution to 
\begin{equation}\label{c2eq1rajoute}
\begin{split}
X_t^{b}=y_0+\int_0^t (a-bX_s^{b})ds +\sigma\int_0^t\sqrt{X_s^{b}}dB_s+\widetilde{J}_t,
\end{split}
\end{equation}
where $B=(B_t)_{t\in\R_+}$ is a standard Brownian motion and $\widetilde{J}=(\widetilde{J}_t)_{t\in\R_+}$ is a subordinator with L\'evy measure $m$. Let  $M(dt,dz)$ be the Poisson random measure  with intensity measure $m(dz)dt$ associated with the subordinator $\widetilde{J}=(\widetilde{J}_t)_{t\in\R_+}$, i.e., $\widetilde{J}_t=\int_0^t\int_{0}^{\infty}zM(ds,dz)$. 
	
Now, let $\widetilde{\E}$ denote the expectation w.r.t. $\widetilde{\P}$. We denote by $\widetilde{\P}^{b}$ the probability measure induced by the process $X^b$ on the canonical space $(D(\R_{+},\R),\mathcal{B}(D(\R_{+},\R)))$ endowed with the natural filtration $\{\widetilde{\mathcal{F}}_t\}_{t\in\R_+}$. For all $t\geq 0$ and $x\in\R_{++}$, we denote by $\widetilde{\P}_{t,x}^{b}$ the probability law of $X^{b}$ starting at $x$ at time $t$, i.e., $\widetilde{\P}_{t,x}^{b}(A)=\widetilde{\E}[{\bf 1}_{A}\vert X_{t}^{b}=x]$ for all $A\in \widetilde{\mathcal{F}}$, and denote by $\widetilde{\E}_{t,x}^{b}$ the expectation w.r.t. $\widetilde{\P}_{t,x}^{b}$. That is, for all $\widetilde{\mathcal{F}}$-measurable random variables $V$, we have that $\widetilde{\E}_{t,x}^{b}[V]=\widetilde{\E}[V\vert X_{t}^{b}=x]$.

Given the process $Y^{b}=(Y_t^{b})_{t\in\R_+}$ and $n\geq 1$, we consider a discrete observation scheme of $Y^{b}$ at deterministic and equidistant times $t_k=k \Delta_n$ with $k \in \{0,\ldots,n\}$, which is denoted by $Y^{n,b}=(Y_{t_0}^{b}, Y_{t_1}^{b},\ldots,Y_{t_n}^{b})$ and satisfies the high-frequency and infinite horizon conditions. That is, $\Delta_n\rightarrow 0$ and $n\Delta_n\rightarrow\infty$ as $n\rightarrow\infty$. Let $\mathcal{F}_n=\sigma(Y_{t_0}^{b}, Y_{t_1}^{b},\ldots,Y_{t_n}^{b})$ be the $\sigma$-field of observable events, which are induced by the random vector $Y^{n,b}=(Y_{t_0}^{b}, Y_{t_1}^{b},\ldots,Y_{t_n}^{b})$.  We denote by $\P^{b}_n$ the probability law of the random vector $Y^{n,b}$.

 \begin{proposition}\label{measurabledecom}
For any $b_0 \in \R$, let $Y^{n,b_0}=(Y_{t_0}^{b_0}, Y_{t_1}^{b_0},\ldots,Y_{t_n}^{b_0})$ be the discrete observation of $Y^{b_0}$, for $\varphi_{n\Delta_n}(b_0)\in\R$ and $u\in \R$, we have 
\begin{align*}
\log\dfrac{d\P_n^{b_0+\varphi_{n\Delta_n}(b_0)u}}{d\P^{b_0}_n}(Y^{n,b_0})=uV_n(b_0)-\dfrac{u^2}{2}T_n(b_0)+o_{\widehat{\P}}(1),
\end{align*}
where $b(\ell):=b_0+\ell \varphi_{n\Delta_n}(b_0)u$ and
\begin{equation}\begin{split}\label{firstdeco}
V_n(b_0):&=-\frac{\varphi_{n\Delta_n}(b_0)}{\sigma^2}\sum_{k=0}^{n-1}\left(Y_{t_{k+1}}^{b_0}-Y_{t_{k}}^{b_0}-(a-b_0Y_{t_k}^{b_0})\Delta_n+U_{k,n}(b_0)\right),\\
U_{k,n}(b_0):&=-\int_0^1\widetilde{\E}_{t_k,Y_{t_{k}}^{b_0}}^{b(\ell)}\Big[X_{t_{k+1}}^{b(\ell)}-X_{t_{k}}^{b(\ell)}-(a-b(\ell)X_{t_k}^{b(\ell)})\Delta_n\\
&\qquad-\sigma\sqrt{X_{t_k}^{b(\ell)}}(B_{t_{k+1}}-B_{t_{k}})\vert X_{t_{k+1}}^{b(\ell)}=Y_{t_{k+1}}^{b_0}\Big]d\ell,\\
T_n(b_0):&=\dfrac{\Delta_n}{\sigma^2 }(\varphi_{n\Delta_n}(b_0))^2\sum_{k=0}^{n-1}Y_{t_k}^{b_0}.
\end{split}
\end{equation}
\end{proposition}
Observe that $V_n(b_0)$ and $T_n(b_0)$ are $\mathcal{F}_n$-measurable random variables.
\subsubsection{Subcritical case}
For fixed $b_0\in \R_{++}$, the next result of this paper is the following LAN property.
\begin{color}{black}
\begin{theorem}\label{theorem1} Assume $b_0 \in \R_{++}$, {\bf(A1)}-{\bf(A3)}, and $a \in \R_{++}$ or $m \ne 0$. Then, the LAN property holds at $b_0$ with rate of convergence $\varphi_{n\Delta_n}(b_0)=\frac{1}{\sqrt{n\Delta_n}}$ and 
	\begin{equation*}
	(V_n(b_0),T_n(b_0))\overset{\mathcal{L}(\widehat{\P}^{b_0})}{\longrightarrow} (U(b_0),I(b_0)),
	\end{equation*}
	as $n \to \infty$, where $U(b_0)=\mathcal{N}(0, I(b_0))$ and the asymptotic Fisher information is given by
	$$I(b_0)=\dfrac{1}{\sigma^2b_0}\left(a+\int_0^{\infty}zm(dz)\right).$$
\end{theorem}
\end{color}
\subsubsection{Critical case}
For fixed $b_0=0$, the next result of this paper is the following LAQ property.
\begin{theorem}\label{theorem2} 
	Assume $b_0 = 0$, $y_0+a \in \R_{++}$, {\bf(A1)}-{\bf(A3)} and that $n\Delta_n^{\delta}\to 0$ as $n\to\infty$ where $\delta>1$ is arbitrarily large. Then, the LAQ property holds at $b_0=0$ with rate of convergence $\varphi_{n\Delta_n}(0)=\frac{1}{n\Delta_n}$ and 
	\begin{equation*}
	(V_n(0),T_n(0))\overset{\mathcal{L}(\widehat{\P}^{0})}{\longrightarrow}(U(0), I(0)),
	\end{equation*}
	as $n\to\infty$, and
	$$\widehat{\E}[e^{uU(0)-\frac{u^2}{2}I(0)}]=1,$$
	where 
	$$
	U(0)=\dfrac{a+\int_0^{\infty}zm(dz)-\mathcal{Y}_1}{\sigma^2}	= - \frac{1}{\sigma} \int_0^1 \sqrt{\mathcal{Y}_s} d \mathcal{W}_s,\qquad I(0)=\frac{1}{\sigma^2}\int_0^1\mathcal{Y}_sds.
	$$
	Here, $\mathcal{Y}=(\mathcal{Y}_t)_{t\in\R_+}$ is a critical diffusion-type CIR process starting from $0$ defined by \eqref{cirr0}.
\end{theorem}

\subsubsection{Supercritical case}

For fixed $b_0\in \R_{--}$, the last result of this paper is the following LAMN property.
\begin{theorem}\label{theorem3}
	Assume $b_0 \in \R_{--}$, $a \in \R_{++}$, {\bf(A1)}-{\bf(A3)} and that $\Delta_n^{2-\varepsilon}e^{-b_0n\Delta_n}\to 0$ as $n\to\infty$ where $\varepsilon>0$ is arbitrarily small. Then, the LAMN property holds at $b_0$ with rate of convergence $\varphi_{n\Delta_n}(b_0)=e^{b_0\frac{n\Delta_n}{2}}$ and 
	\begin{equation*}
	(V_n(b_0),T_n(b_0))\overset{\mathcal{L}(\widehat{\P}^{b_0})}{\longrightarrow} (U(b_0), I(b_0)),
	\end{equation*}
	as $n\to\infty$, 
	where $U(b_0)=\sqrt{I(b_0)}Z$ and the asymptotic random Fisher information $I(b_0)$ is given by
	$$
	I(b_0)=-\frac{V}{\sigma^2 b_0}.
	$$
	Here, $V$ is the positive random variable whose Laplace transform is given by \eqref{Vlimit}, and $Z$ is a standard normal random variable, independent of $V$.
\end{theorem}
\begin{remark}
Theorems \ref{theorem1}, \ref{theorem2} and \ref{theorem3} extend the results in Theorems 1, 2 and 3 of \cite{BKT17} when $b$ is the only unknown parameter in the drift coefficient. However, condition {\bf(A3)} in \cite{BKT17} is less restrictive than the one in this paper (see Remark \ref{A3diffusionjumps}).
\end{remark}
\begin{remark}
	The technical condition {\bf(A3)} required in Theorems \ref{theorem1}, \ref{theorem2} and \ref{theorem3} comes from the techniques of Malliavin calculus used in this paper. The lower bound $\frac{15+\sqrt{185}}{4}$ appearing in {\bf(A3)} is fixed in an optimal way in the sense that the H\"older weights used to control the remainder terms in Lemma \ref{lemma1} are carefully chosen in order that the final condition {\bf (A3)}: $\frac{a}{\sigma^2}>\frac{15+\sqrt{185}}{4}$ still remains the same as the one required for the representation of the score function in Proposition \ref{c2prop1}.
\end{remark}
\begin{remark} Combining our main results Theorems \ref{Thm_subcritical} and \ref{Thm_supercritical} together with Theorems $5.2$ and $7.3$ in \cite{BBKP17}, the MLE $\widehat{b}_T$ of $b$ in \cite[Proposition 4.2]{BBKP17} based on continuous time observations $(Y_t^b)_{t\in[0,T]}$ defined by
\begin{align*}
\widehat{b}_T=-\dfrac{Y_T^b - y_0 - a T - J_T}{\int_0^T Y_s^b  d s},
\end{align*}
satisfies (see page 1152 and page 1159 of \cite{BBKP17})
\begin{align*}
\varphi_T(b_0)^{-1}\left(\widehat{b}_T-b_0\right)=G_T(b_0)^{-1} \Lambda_T(b_0),
\end{align*}
where
\begin{align*}
\Lambda_T(b_0)=-\dfrac{1}{\sigma}\varphi_T(b_0) \int_0^T\sqrt{Y_t^{b_0}}dW_t,\;\;\textnormal{and}\;
G_T(b_0) =\dfrac{1}{\sigma^2}\varphi_T^2(b_0)\int_0^TY_t^{b_0}dt.
\end{align*}
Therefore, due to (2.35) of Remark 3 on page 160 of \cite{L92}, $\widehat{b}_T$ is regular and asymptotically efficient in the sense of H{\'a}jek-Le Cam convolution theorem.
\end{remark}
\begin{example} The following subordinators whose L\'evy measure can be finite or infinite satisfy conditions {\bf(A1)} and {\bf(A2)}.\\
\noindent\textnormal{1)} $J$ is a Poisson process.\\
\noindent\textnormal{2)} $J$ is a compound Poisson process with exponentially distributed jump sizes. That is, $m(dz)=C\lambda e^{-\lambda z}{\bf 1}_{(0,\infty)}(z)dz$ for some constants $C\in (0,\infty)$ and $\lambda \in (0,\infty)$.\\
\noindent\textnormal{3)} $J$ is a Gamma process with L\'evy measure $m(dz)=\gamma z^{-1} e^{-\lambda z}{\bf 1}_{(0,\infty)}(z)dz$ where $\gamma$ and $\lambda$ are positive constants.\\
\noindent\textnormal{4)} $J$ is a subordinator whose L\'evy measure is given by the gamma probability distribution. That is, $m(dz)=\frac{\lambda^{\alpha}}{\Gamma(\alpha)} z^{\alpha-1} e^{-\lambda z}{\bf 1}_{(0,\infty)}(z)dz$ where $\alpha\in(-1,\infty)$ and $\lambda$ is a positive constant.\\
\noindent\textnormal{5)} $J$ is an inverse Gaussian process with $m(dz)=\frac{\delta}{\sqrt{2\pi z^3}} e^{-\frac{\gamma^2 z}{2}}{\bf 1}_{(0,\infty)}(z)dz$ for a positive constant $\delta$.
\end{example}
As usual, positive constants will be denoted by $C$ and will always be independent of time and $\Delta_n$. They may change of value from one line to the next.
\section{Proof of main results}
This section is devoted to the proof of the main results.
\subsection{Proof of Theorem \ref{Thm_subcritical}}
\label{con_subcritical}
\begin{proof}
	By Proposition \ref{stationarydis} \textnormal{(i)}, $(Y_t^{b_0})_{t\in\R_+}$ has a unique stationary distribution $\pi_{b_0}(dy)$ with
	$\int_0^\infty y \pi_{b_0}(dy)
	= \bigl(a + \int_0^\infty z m(dz)\bigr) \frac{1}{b_0} \in \R_{++}$.
	By Proposition \ref{stationarydis} \textnormal{(ii)}, we have
	$\frac{1}{T} \int_0^T Y_s^{b_0} ds\overset{\widehat{\P}^{b_0}}{\longrightarrow} \int_0^\infty y \pi_{b_0}(dy)$ as $T \to \infty$. Thus, as $T \to \infty$,
	\begin{equation}\label{conv_subcritical_J}
	I_{T}(b_0)= \frac{1}{\sigma^2T} \int_0^T Y_s^{b_0} ds\overset{\widehat{\P}^{b_0}}{\longrightarrow}
	\frac{1}{\sigma^2} \int_0^\infty y \pi_{b_0}(d y)= \frac{1}{\sigma^2b_0} \biggl(a + \int_0^\infty zm(dz)\biggr)	= I(b_0).
	\end{equation}
	The quadratic variation process of the square integrable martingale
	$\bigl(\int_0^T \sqrt{Y_s^{b_0}} dW_s\bigr)_{T\in\R_+}$  takes the form
	$\bigl(\int_0^T Y_s^{b_0} ds\bigr)_{T\in\R_+}$. Hence, applying Lemma \ref{THM_Zanten} with $\eta := \left(\int_0^\infty y \, \pi_{b_0}(dy)\right)^{1/2}$, we	obtain that as $T \to \infty$,
	\begin{align*}
	U_{T}(b_0)
	= - \frac{1}{\sigma\sqrt{T}} \int_0^T \sqrt{Y_s^{b_0}} dW_s&\overset{\mathcal{L}(\widehat{\P}^{b_0})}{\longrightarrow}
	- \frac{1}{\sigma} \left(\int_0^\infty y\pi_{b_0}(dy)\right)^{1/2}\mathcal{N}(0,1)\\
	&\qquad	= \mathcal{N}\biggl(0, \frac{1}{\sigma^2} \int_0^\infty y\pi_{b_0}(d y)\biggr)
	=\mathcal{N}(0, I(b_0))=U(b_0).
	\end{align*}
	Consequently, by \eqref{conv_subcritical_J} and Theorem 2.7 (v) of van der Vaart
	\cite{Vaart}, we obtain \eqref{conv_subcritical}. This, combined with Corollary \ref{RN_Cor}, finishes the proof.
\end{proof}
\subsection{Proof of Theorem \ref{Thm_critical}}
\label{con_critical}
\begin{proof}
	By strong law of large numbers for the L\'evy process $(J_t)_{t\in\R_+}$ (see, e.g., Kyprianou \cite[Exercise 7.2]{K14}), we have
	\begin{align}\label{help_Levy_SLLN}
	\widehat{\P}\left(\lim_{T\to\infty} \frac{J_T}{T} = \widehat{\E}[J_1] = \int_0^\infty z m(dz)\right)
	= 1 .
	\end{align}
	Hence, using \eqref{criticalconvergence1}, we obtain that as $T \to \infty$,
	\begin{align*}
	(U_{T}(0), I_{T}(0))
	&= \biggl(-\frac{1}{\sigma^2T} (Y_T^0 - y_0 - a T - J_T),
	\frac{1}{\sigma^2T^2} \int_0^T Y_s^0 d s\biggr) \\
	&\overset{\mathcal{L}(\widehat{\P}^{0})}{\longrightarrow}\biggl(-\frac{1}{\sigma^2} \biggl(\mathcal{Y}_1- a - \int_0^\infty z m(dz)\biggr),
	\frac{1}{\sigma^2} \int_0^1 \mathcal{Y}_s ds\biggr)
	= (U(0), I(0)),
	\end{align*}
	which deduces \eqref{conv_critical}. Proposition \ref{Pro_jump_CIR} implies
	$\widehat{\P}(I(0) \in \R_{++}) = \widehat{\P}(\int_0^1 \mathcal{Y}_s ds \in \R_{++}) = 1$.
	
	Finally, using equation \eqref{cirr0} and \eqref{RN}, we have that
	\begin{equation}\label{LAQpr3}
	\begin{split}
	&\widehat{\E}\left[e^{uU(0)-\frac{u^2}{2}I(0)}\right]=\widehat{\E}\left[\exp\left\{\dfrac{u}{\sigma^2}\left(a+\int_0^{\infty}zm(dz)-\mathcal{Y}_1\right)-\dfrac{u^2}{2\sigma^2}\int_0^1\mathcal{Y}_sds\right\}\right]\\
	&=\widehat{\E}\left[\exp\left\{-\dfrac{u}{\sigma}\int_0^1\sqrt{\mathcal{Y}_s}d\mathcal{W}_s-\dfrac{u^2}{2\sigma^2}\int_0^1\mathcal{Y}_sds\right\}\right]=\widehat{\E}\left[\dfrac{d\P_{\mathcal{Y},1}^{u}}{d\P_{\mathcal{Y},1}^{0}}\left((\mathcal{Y}_s)_{s\in[0,1]}\right)\right].
	\end{split}
	\end{equation}
	Then, using the fact that the Radon-Nikodym derivative process $	\Bigl(\frac{d\P_{\mathcal{Y},T}^{u}}{d\P_{\mathcal{Y},T}^{0}}\left((\mathcal{Y}_s)_{s\in[0,T]}\right)\Bigr)_{T\in\R_+}$
	is a martingale w.r.t. the filtration $(\mathcal{G}_T)_{T\in\R_+}$, we get that
	$$
	\widehat{\E}\left[\dfrac{d\P_{\mathcal{Y},1}^{u}}{d\P_{\mathcal{Y},1}^{0}}\left((\mathcal{Y}_s)_{s\in[0,1]}\right)\right]=\widehat{\E}\left[\dfrac{d\P_{\mathcal{Y},0}^{u}}{d\P_{\mathcal{Y},0}^{0}}\left(\mathcal{Y}_0\right)\right]=1.
	$$
	This, together with \eqref{LAQpr3}, concludes \eqref{conv_critical2}. Thus, the result follows thanks to Corollary \ref{RN_Cor}.
\end{proof}
\subsection{Proof of Theorem \ref{Thm_supercritical}}
\label{con_supercritical}
\begin{proof}
	By \eqref{supercriticalconvergence2},
	$e^{b_0T} \int_0^T Y_s^{b_0} ds\to-\frac{V}{b_0},\; \widehat{\P}^{b_0}\textnormal{-a.s.}$ as $T \to \infty$, and using Lemma \ref{THM_Zanten} with $\eta:= \sqrt{-\frac{V}{b_0}}$, we obtain that as $T \to \infty$,
	$$
	(U_T(b_0), I_T(b_0)) 
	= \biggl(-\frac{e^{b_0T/2}}{\sigma} \int_0^T \sqrt{Y_s^{b_0}} dW_s,
	\frac{e^{b_0T}}{\sigma^2} \int_0^T Y_s^{b_0}ds\biggr)\overset{\mathcal{L}(\widehat{\P}^{b_0})}{\longrightarrow} (U(b_0), I(b_0)),
	$$
	which deduces \eqref{conv_supercritical}. By Proposition \ref{superbehavior}, we have $\widehat{\P}(V \in \R_{++}) = 1$ since $a \in \R_{++}$. Thus, the result follows thanks to Corollary \ref{RN_Cor}.
\end{proof}
Let us now treat the case of discrete observations. For this, we first give the moment estimates for the jump-type CIR process \eqref{eq1}. 
\begin{lemma}\label{moment} Assume condition {\bf(A1)}. \\
	\noindent\textnormal{(i)} For all $0<p<\frac{2a}{\sigma^2}$ and $b>0$, we have  
	$$\sup_{t\geq 0}\widehat{\E}^{b}\Big[\frac{1}{(Y_t^{b})^{p}}\Big]<\infty.$$
	Assume further {\bf(A2)}. Then, for all $p>0$, $b>0$ and $t>0$, we have $\widehat{\E}^{b}[(Y_t^{b})^{p}]<\infty$.\\
	\noindent\textnormal{(ii)} For all $0<p<\frac{2a}{\sigma^2}$ and  $b=0$, we have 
	$$\sup_{0\leq t\leq 1}\widehat{\E}^{0}\Big[\frac{1}{(Y_t^{0})^{p}}\Big]<\infty\quad\textnormal{and}\quad \sup_{t\geq 1}\frac{\widehat{\E}^{0}[\frac{1}{(Y_t^{0})^{p}}]}{t^{^{-p}}}<\infty.$$
	\noindent\textnormal{(iii)} For all $0<p<\frac{2a}{\sigma^2}$ and $b<0$, we have 
	$$
	\sup_{0\leq t\leq 1}\widehat{\E}^{b}\Big[\frac{1}{(Y_t^{b})^{p}}\Big]<\infty\quad\textnormal{and}\quad\sup_{t\geq 1}\frac{\widehat{\E}^{b}[\frac{1}{(Y_t^{b})^{p}}]}{t^{^{-p}}}<\infty.$$
	\noindent\textnormal{(iv)} Assume further {\bf(A2)}. Then, for all $p>0$,
	\begin{equation*}\begin{split}
	&\sup_{t\geq 0}\dfrac{\widehat{\E}^{b}[(Y_t^{b})^{p}]}{(1+t)^{p}}<\infty,\quad\textnormal{if}\; b=0,\\
	&\sup_{t\geq 0}\widehat{\E}^{b}[(e^{bt}Y_t^{b})^{p}]<\infty,\quad\textnormal{if}\; b<0.
	\end{split}
	\end{equation*}
\end{lemma}
To control the negligible terms in the critical and supercritical cases, the following lemma will be needed.
	\begin{lemma}\label{auxi} Assume conditions {\bf(A1)} and {\bf(A2)}. \\ 
		\noindent\textnormal{(i)} Let $b_0= 0$. Then, for $n$ large enough there exist two positive constants $C_1, C_2$ such that
		\begin{align*}
		&\Delta_n\sum_{k=0}^{n-1}\widehat{\E}^{0}\Big[\frac{1}{(Y_{t_k}^{0})^{p}}\Big]\leq
		\sum_{k=0}^{n-1}\int_{t_k}^{t_{k+1}}\sup_{u\in[t_k,s]}\widehat{\E}^{0}\Big[\frac{1}{(Y_u^{0})^{p}}\Big]ds\leq C_1+C_2f(n\Delta_n),
		\end{align*}
		where $f(x)=\log x$ if $p=1$, $f(x)=\frac{1}{x^{p-1}}$ if $p\neq 1$ and $p<\frac{2a}{\sigma^2}$.\\
		\noindent\textnormal{(ii)} Let $b_0< 0$. Then, for $n$ large enough there exist two positive constants $C_1, C_2$ such that
		\begin{align*}
		&\Delta_n\sum_{k=0}^{n-1}\widehat{\E}^{b_0}\Big[\frac{1}{(Y_{t_k}^{b_0})^{p}}\Big]\leq
		\sum_{k=0}^{n-1}\int_{t_k}^{t_{k+1}}\sup_{u\in[t_k,s]}\widehat{\E}^{b_0}\Big[\frac{1}{(Y_u^{b_0})^{p}}\Big]ds\leq C_1+C_2f(n\Delta_n),
		\end{align*}
		where $f(x)=\log x$ if $p=1$, $f(x)=\frac{1}{x^{p-1}}$ if $p\neq 1$ and $0<p<\frac{2a}{\sigma^2}$, and $f(x)=e^{pb_0x}$ if $p< 0$.
	\end{lemma}
Next, we prove the existence and the smoothness of the density by using the affine structure of the jump-type CIR process and the inverse Fourier transform. 
\begin{proposition}\label{smoothdensity}
	Assume condition {\bf(A1)} and $2a> \sigma^2$. Then for any $t>0$ and $b\in \mathbb R$, the law of $Y_t^{b}$ admits a strictly positive density function $p^{b}(t,y_0,y)$. Moreover, assume further $\frac{a}{\sigma^2}>1+\frac{\sqrt{2}}{2}$, then  $p^{b}(t,y_0,y)$ is of class $C^1$ w.r.t. $b$ for all $t>0$ and $b\in \mathbb R$.
\end{proposition}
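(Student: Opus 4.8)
The plan is to exploit the affine structure of the jump-type CIR process, which yields an explicit formula for the characteristic function of $X_t^b$, and then to invert the Fourier transform to obtain the density together with its smoothness in $b$. First I would recall from Proposition~\ref{Pro_jump_CIR} that $X^b$ is a CBI process with branching mechanism $R(u) = \frac{\sigma^2}{2}u^2 - bu$ and immigration mechanism $F(u) = au + \int_0^\infty(e^{uz}-1)m(dz)$, so that by the general affine formula (see \cite{DFS03, KM12}) the Laplace transform of $X_t^b$ is $\widetilde{\E}[e^{uX_t^b}] = \exp\{y_0\psi(t,u) + \int_0^t F(\psi(s,u))\,ds\}$ for $\Re(u)\le 0$, where $\psi(t,u)$ solves $\partial_t\psi(t,u) = -R(\psi(t,u))$ with $\psi(0,u)=u$. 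Since $R$ is a quadratic polynomial with nonzero leading coefficient, this Riccati equation can be solved explicitly; in particular $\psi(t,\cdot)$ extends analytically to a neighborhood of the imaginary axis, giving an explicit formula for the characteristic function $\varphi_t^b(\xi) := \widetilde{\E}[e^{\mathrm{i}\xi X_t^b}]$ valid for all $\xi\in\R$.

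Next I would establish the integrability needed for Fourier inversion: namely that $\xi\mapsto\varphi_t^b(\xi)$ is integrable on $\R$ for each fixed $t>0$, $b\in\R$. The key point here is the Gaussian (diffusive) part. Writing $\varphi_t^b(\xi) = \exp\{y_0\psi(t,\mathrm{i}\xi) + \int_0^t F(\psi(s,\mathrm{i}\xi))\,ds\}$, one checks that $\Re\psi(s,\mathrm{i}\xi)$ behaves like $-c|\xi|$ or better for large $|\xi|$ uniformly in $s$ bounded away from $0$, while the immigration term $\int_0^\infty(e^{\psi z}-1)m(dz)$ has nonpositive real part under {\bf(A1)} and hence does not spoil the decay; combined with the boundary contribution $y_0\psi(t,\mathrm{i}\xi)$ this forces $|\varphi_t^b(\xi)|$ to decay at least polynomially, in fact exponentially, in $|\xi|$. (This is precisely where $2a>\sigma^2$ or, more generally, the diffusive smoothing is used to obtain a bona fide density.) Then $p^b(t,y_0,y) := \frac{1}{2\pi}\int_\R e^{-\mathrm{i}\xi y}\varphi_t^b(\xi)\,d\xi$ defines a continuous density, and its strict positivity follows from the support theorem for the CIR diffusion driven by $B$ together with the independence and positivity of the subordinator part $\widetilde{J}$ (adding a nonnegative independent jump contribution cannot create zeros of the density on $\R_{++}$).

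Finally, for the $C^1$ dependence on $b$, I would differentiate under the integral sign in the inversion formula. From the explicit Riccati solution $\psi(t,\mathrm{i}\xi)$ depends smoothly (indeed real-analytically) on $b$, and so does $F(\psi)$; a direct estimate shows that $\partial_b\varphi_t^b(\xi)$ and the relevant difference quotients are dominated, uniformly for $b$ in compact sets, by an integrable function of $\xi$ (the prefactor picked up by differentiation is at most polynomial in $\xi$ and is absorbed by the exponential decay of $|\varphi_t^b(\xi)|$ established above). Dominated convergence then yields that $p^b(t,y_0,y)$ is $C^1$ in $b$ with $\partial_b p^b(t,y_0,y) = \frac{1}{2\pi}\int_\R e^{-\mathrm{i}\xi y}\,\partial_b\varphi_t^b(\xi)\,d\xi$.

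The main obstacle is the uniform (in $s\in(0,t]$ and in $b$ on compacts) control of the real part of $\psi(s,\mathrm{i}\xi)$ for large $|\xi|$: one must show that the Riccati flow does not destroy the $O(-|\xi|)$ decay coming from the quadratic term $\frac{\sigma^2}{2}\psi^2$, and that the immigration integral $\int_0^\infty(e^{\psi z}-1)m(dz)$ stays harmless under only {\bf(A1)}. Once this decay estimate is in hand, both the existence of the density and the differentiation under the integral sign are routine applications of dominated convergence.
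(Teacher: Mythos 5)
Your overall strategy — exploiting the affine structure to get an explicit exponential-affine characteristic function, Fourier inversion to obtain the density, and differentiation under the integral sign for $C^1$-dependence on $b$ — is exactly the paper's approach. However, there is a concrete error in the decay analysis that would break the argument as written.

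You claim that $\Re\psi(s,\mathrm{i}\xi)$ behaves like $-c|\xi|$ for large $|\xi|$, so that $|\varphi_t^b(\xi)|$ decays exponentially. This is false. From the explicit Riccati solution $\psi_b(t,\mathrm{i}\xi) = \frac{\mathrm{i}\xi e^{-bt}}{1-\frac{\sigma^2 \mathrm{i}\xi}{2b}(1-e^{-bt})}$ one sees that $\psi_b(t,\mathrm{i}\xi)$ converges to a \emph{constant} (namely $-\frac{2be^{-bt}}{\sigma^2(1-e^{-bt})}$) as $|\xi|\to\infty$, so $\Re\psi_b(s,\mathrm{i}\xi)$ is merely bounded, not $O(-|\xi|)$, and the factor $e^{y_0\psi_b(t,\mathrm{i}\xi)}$ is only bounded. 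The actual decay comes from the immigration part $e^{\phi_b(t,\mathrm{i}\xi)}$, specifically the term $a\int_0^t\psi_b(s,\mathrm{i}\xi)\,ds$, and it is only \emph{polynomial}: $|\varphi_t^b(\xi)|\le C(1+|\xi|)^{-2a/\sigma^2}$ (the paper quotes Lemma C.6 of the Filipovi\'c--Mayerhofer--Schneider reference for this). This is precisely where the hypothesis $2a>\sigma^2$ enters — it ensures the exponent exceeds $1$ so the characteristic function is integrable. Under your (false) exponential-decay claim, the condition $2a>\sigma^2$ would be entirely unnecessary, which should have been a warning sign.

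This error propagates to the $C^1$-in-$b$ step. You justify differentiation under the integral by saying the prefactor from $\partial_b$ is "at most polynomial in $\xi$ and is absorbed by the exponential decay." With only $(1+|\xi|)^{-2a/\sigma^2}$ decay you cannot afford \emph{any} extra polynomial growth, so this argument collapses. The correct route, as in the paper, is to show directly from the explicit formulas that $\partial_b\psi_b(t,\mathrm{i}\xi)$ and $\partial_b\phi_b(t,\mathrm{i}\xi)$ are in fact \emph{bounded} in $\xi$ for $|\xi|$ large (the apparent $O(|\xi|^2)$ numerator in $\partial_b\psi_b$ is cancelled by the $O(|\xi|^2)$ denominator, and $\partial_b\phi_b$ is controlled uniformly using {\bf(A1)}). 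Then $|\partial_b\varphi_t^b(\xi)|$ inherits the same $(1+|\xi|)^{-2a/\sigma^2}$ bound, and dominated convergence applies. For small $|\xi|$ the paper instead bounds $|\partial_b\varphi_t^b(\xi)| \le |\xi|\,\widetilde{\E}[|\partial_b X_t^b|]$ using the moment estimate \eqref{e6}, a step you omitted. Finally, your positivity argument treats the jumps as an additive independent contribution to the CIR diffusion; this is not accurate since the jumps enter the SDE and affect the entire trajectory (not just a final additive term), so a cleaner justification would be needed there as well.
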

We denote by $p_n(\cdot;b)$ the density of the random vector $Y^{n,b}=(Y_{t_0}^{b}, Y_{t_1}^{b},\ldots,Y_{t_n}^{b})$. By Proposition \ref{smoothdensity}, for any $t>s$, the law of $Y_t^{b}$ conditioned on $Y_s^{b}=x$ admits a positive transition density $p^{b}(t-s,x,y)$ which is of class $C^1$ w.r.t. $b$. Then, for fixed $b_0\in \R$ and $\frac{a}{\sigma^2}>1+\frac{\sqrt{2}}{2}$, using the Markov property, the chain rule for Radon-Nikodym derivatives and the mean value theorem on the parameter space, the log-likelihood ratio can be written as follows
\begin{align}\label{decompo} 
	&\log\dfrac{d\P_n^{b_0+\varphi_{n\Delta_n}(b_0)u}}{d\P^{b_0}_n}(Y^{n,b_0})=\log\dfrac{p_n(Y^{n,b_0};b_0+\varphi_{n\Delta_n}(b_0)u)}{p_n(Y^{n,b_0};b_0)}\notag\\
	&=\sum_{k=0}^{n-1}\log\dfrac{p^{b_0+\varphi_{n\Delta_n}(b_0)u}}{p^{b_0}}(\Delta_n,Y_{t_k}^{b_0},Y_{t_{k+1}}^{b_0}) \notag\\
	&=\sum_{k=0}^{n-1}\varphi_{n\Delta_n}(b_0)u\int_0^1\dfrac{\partial_{b}p^{b(\ell)}}{p^{b(\ell)}}(\Delta_n,Y_{t_k}^{b_0},Y_{t_{k+1}}^{b_0})d\ell \notag\\ 
	&\textcolor{black}{=uV_n(b_0)-\dfrac{u^2}{2}T_n(b_0)+R_n(b_0),}
\end{align}
where $b(\ell):=b_0+\ell \varphi_{n\Delta_n}(b_0)u$, the random vector $(V_n(b_0),T_n(b_0))$ are the main terms whereas $R_n(b_0)$ are the remainder terms in the expansion. The triplet $(V_n(b_0),T_n(b_0),R_n(b_0))$ will appear from the stochastic analysis of the corresponding score function $\frac{\partial_{b}p^{b(\ell)}}{p^{b(\ell)}}(\Delta_n,Y_{t_k}^{b_0},Y_{t_{k+1}}^{b_0})$. The main terms are given by \eqref{firstdeco}
with $\varphi_{n\Delta_n}(b_0)=\frac{1}{\sqrt{n\Delta_n}}$ in the subcritical case ($b_0>0$), $\varphi_{n\Delta_n}(b_0)=\frac{1}{n\Delta_n}$ in the critical case ($b_0=0$) and $\varphi_{n\Delta_n}(b_0)=e^{b_0\frac{n\Delta_n}{2}}$ in the supercritical case ($b_0<0$). The remainder terms \textcolor{black}{$R_n(b_0)$} are implicitly given by
the decomposition \eqref{decompo} and they will be explicitly determined in the proof of Lemma \ref{negligibleterms}. Our
aim will be to study the convergence of the main terms \textcolor{black}{$(V_n(b_0),T_n(b_0))$}. Note that the convergence of the main terms requires a weaker condition than {\bf(A3)}, that is \textcolor{black}{$2a> 3\sigma^2$},  whereas the convergence of the
remainder term \textcolor{black}{$R_n(b_0)$} will require condition {\bf(A3)}.
\begin{lemma}\label{negligibleterms} Assume conditions {\bf(A1)}-{\bf(A3)} and the setting $\Delta_n\rightarrow 0$ and $n\Delta_n\rightarrow\infty$ as $n\rightarrow\infty$. Then, for the subcritical case, as $n\to\infty$,
	\begin{align}\label{remainder}
		\textcolor{black}{R_n(b_0)}\overset{\widehat{\P}^{b_0}}{\longrightarrow}0.
	\end{align}	
Furthermore, if we assume the additional condition $n\Delta_n^{\delta}\to 0$ for the critical case where $\delta>1$ is arbitrarily large and $\Delta_n^{2-\varepsilon}e^{-b_0n\Delta_n}\to 0$ for the supercritical case where $\varepsilon>0$ is arbitrarily small, then \eqref{remainder} remains valid for both cases.
\end{lemma}
The proof of this lemma will be given in Section $5$. 

\textcolor{black}{The proof of Proposition \ref{measurabledecom} follows from \eqref{decompo}  and Lemma \ref{negligibleterms}.}

\subsection{Proof of Theorems \ref{theorem1}, \ref{theorem2} and \ref{theorem3}}
\begin{proof}
	From the decomposition \eqref{decompo} and Lemma \ref{negligibleterms} \textcolor{black}{(Proposition \ref{measurabledecom})}, to prove Theorems \ref{theorem1}, \ref{theorem2} and \ref{theorem3}, it suffices to show that as $n\to\infty$,
	\begin{align}\label{main}
		\textcolor{black}{(V_n(b_0),T_n(b_0))\overset{\mathcal{L}(\widehat{\P}^{b_0})}{\longrightarrow} (U(b_0), I(b_0))},\qquad\textnormal{and}\quad \widehat{\E}\Bigl[e^{uU(0)-\frac{u^2}{2}I(0)}\Bigr]=1.
	\end{align}
\begin{color}{black}
For this, using equation \eqref{eq1}, 
\begin{align*} 
Y_{t_{k+1}}^{b_0}-Y_{t_{k}}^{b_0}&=(a-b_0Y_{t_k}^{b_0})\Delta_n+\sigma\sqrt{Y_{t_k}^{b_0}}(W_{t_{k+1}}-W_{t_{k}})-b_0\int_{t_k}^{t_{k+1}}(Y_s^{b_0}-Y_{t_k}^{b_0})ds\\
&\qquad+\sigma\int_{t_k}^{t_{k+1}}\big(\sqrt{Y_s^{b_0}}-\sqrt{Y_{t_k}^{b_0}}\big)dW_s+\int_{t_k}^{t_{k+1}}\int_{0}^{\infty}zN(ds,dz).
\end{align*}
Therefore, we write
\begin{align}\label{m1}
(V_n(b_0),T_n(b_0))=\left(U_{t_n}(b_0),I_{t_n}(b_0)\right)+\left(\sum_{k=0}^{n-1}H_{10}^{b_0}+L_n(b_0),\sum_{k=0}^{n-1}H_{11}^{b_0}\right),
\end{align}
where $t_n=n\Delta_n$ and 
\begin{align*}
&U_{t_n}(b_0)=- \frac{\varphi_{n\Delta_n}(b_0)}{\sigma} \int_0^{t_n} \sqrt{Y_s^{b_0}} dW_s,\qquad I_{t_n}(b_0)= \frac{(\varphi_{n\Delta_n}(b_0))^2}{\sigma^2} \int_0^{t_n} Y_s^{b_0} ds,\\
&H_{10}^{b_0}=\dfrac{\varphi_{n\Delta_n}(b_0)}{\sigma}\int_{t_k}^{t_{k+1}}\big(\sqrt{Y_s^{b_0}}-\sqrt{Y_{t_k}^{b_0}}\big)dW_s,\\
&H_{11}^{b_0}=\dfrac{1}{\sigma^2 }(\varphi_{n\Delta_n}(b_0))^2\int_{t_k}^{t_{k+1}}(Y_s^{b_0}-Y_{t_k}^{b_0})ds,\\
&L_n(b_0)=\frac{\varphi_{n\Delta_n}(b_0)}{\Delta_n}\sum_{k=0}^{n-1}\Big(H_7^{b_0}+H_8^{b_0}+H_9^{b_0}-\dfrac{\Delta_n}{\sigma^2}U_{k,n}(b_0)\Big),\\
&H_7^{b_0}=\frac{\Delta_n}{\sigma^2}b_0\int_{t_k}^{t_{k+1}}(Y_s^{b_0}-Y_{t_k}^{b_0})ds,\qquad H_8^{b_0}=-\frac{\Delta_n}{\sigma}\int_{t_k}^{t_{k+1}}(\sqrt{Y_s^{b_0}}-\sqrt{Y_{t_k}^{b_0}})dW_s,\\
&H_9^{b_0}=-\frac{\Delta_n}{\sigma^2}\int_{t_k}^{t_{k+1}}\int_{0}^{\infty}zN(ds,dz).
\end{align*}	
\end{color}
First, from \eqref{conv_subcritical}, \eqref{conv_critical} and \eqref{conv_critical2}, \eqref{conv_supercritical}, as $n\to \infty$,
	\begin{equation}\label{m0}
		(U_{t_n}(b_0),I_{t_n}(b_0))\overset{\mathcal{L}(\widehat{\P}^{b_0})}{\longrightarrow} (U(b_0),I(b_0)),\qquad\textnormal{and}\quad \widehat{\E}\Bigl[e^{uU(0)-\frac{u^2}{2}I(0)}\Bigr]=1,
	\end{equation}
	for three cases. Now, we apply Lemma \ref{zero} to $H_{10}^{b_0}$. Clearly, $\sum_{k=0}^{n-1}\widehat{\E}^{b_0}[H_{10}^{b_0}\vert \widehat{\mathcal{F}}_{t_k}]=0$. Next, applying It\^o's formula, we get
	\begin{align*}
		&\widehat{\E}^{b_0}\Big[\Big\vert\sum_{k=0}^{n-1}\widehat{\E}^{b_0}[(H_{10}^{b_0})^2\vert \widehat{\mathcal{F}}_{t_k}]\Big\vert\Big]=\sum_{k=0}^{n-1}\widehat{\E}^{b_0}[(H_{10}^{b_0})^2]\\
		&=\dfrac{1}{\sigma^2 }(\varphi_{n\Delta_n}(b_0))^2\sum_{k=0}^{n-1}\int_{t_k}^{t_{k+1}}\widehat{\E}^{b_0}\Big[\Big(\sqrt{Y_s^{b_0}}-\sqrt{Y_{t_k}^{b_0}}\Big)^2\Big]ds\\
		&=\dfrac{1}{\sigma^2 }(\varphi_{n\Delta_n}(b_0))^2\sum_{k=0}^{n-1}\int_{t_k}^{t_{k+1}}\widehat{\E}^{b_0}\Big[\Big(\int_{t_k}^{s}\Big((\frac{a}{2}-\frac{\sigma^2}{8})\frac{1}{\sqrt{Y_u^{b_0}}}-\frac{b_0}{2}\sqrt{Y_u^{b_0}}\Big)du+\frac{\sigma}{2}\int_{t_k}^{s}dW_u\\
		&\qquad+\int_{t_k}^{s}\int_0^{\infty}\textcolor{black}{(\sqrt{Y_{u-}^{b_0}+z}-\sqrt{Y_{u-}^{b_0}})}N(du,dz)\Big)^2\Big]ds\\
		&\leq C(\varphi_{n\Delta_n}(b_0))^2\sum_{k=0}^{n-1}\int_{t_k}^{t_{k+1}}(s-t_k)\Big((s-t_k)\big(\sup_{u\in[t_k,s]}\widehat{\E}^{b_0}\big[\frac{1}{Y_u^{b_0}}\big]+\vert b_0\vert \sup_{u\in[t_k,s]}\widehat{\E}^{b_0}[Y_u^{b_0}]\big)\\
		&\qquad+1+\int_0^{\infty}z^2m(dz)\sup_{u\in[t_k,s]}\widehat{\E}^{b_0}\big[\frac{1}{Y_u^{b_0}}\big]\Big)ds.
	\end{align*}
	Thus, for subcritical case $b_0>0$ with $\varphi_{n\Delta_n}(b_0)=\frac{1}{\sqrt{n\Delta_n}}$ (respectively critical case $b_0=0$ with $\varphi_{n\Delta_n}(0)=\frac{1}{n\Delta_n}$, respectively supercritical case $b_0<0$ with $\varphi_{n\Delta_n}(b_0)=e^{b_0\frac{n\Delta_n}{2}}$), using Lemma \ref{moment} \textnormal{(i)} (respectively \textcolor{black}{Lemma \ref{auxi} \textnormal{(i)}}, respectively \textcolor{black}{Lemma \ref{auxi} \textnormal{(ii)}}), Lemma \ref{Pro_moments} and standard calculations, under $2a> \sigma^2$ we get $\widehat{\E}^{b_0}[\vert\sum_{k=0}^{n-1}\widehat{\E}^{b_0}[(H_{10}^{b_0})^2\vert \widehat{\mathcal{F}}_{t_k}]\vert]\leq C\Delta_n$ for $b_0>0$, $b_0=0$ and $b_0<0$, which tends to zero. Thus, $\sum_{k=0}^{n-1}\widehat{\E}^{b_0}[(H_{10}^{b_0})^2\vert \widehat{\mathcal{F}}_{t_k}]\overset{\widehat{\P}^{b_0}}{\longrightarrow}0$. Therefore, as $n\to\infty$, 
	\begin{align}\label{m2}
	\sum_{k=0}^{n-1}H_{10}^{b_0}\overset{\widehat{\P}^{b_0}}{\longrightarrow}0.
	\end{align}	
	Finally, using equation \eqref{eqintegral}, we obtain
	\begin{align*}
		\widehat{\E}^{b_0}\big[\big\vert\sum_{k=0}^{n-1}H_{11}^{b_0}\big\vert\big]&\leq \dfrac{1}{\sigma^2 }(\varphi_{n\Delta_n}(b_0))^2\sum_{k=0}^{n-1}\int_{t_k}^{t_{k+1}}\widehat{\E}^{b_0}\big[\big\vert Y_s^{b_0}-Y_{t_k}^{b_0}\big\vert\big]ds\\
		&=\dfrac{1}{\sigma^2 }(\varphi_{n\Delta_n}(b_0))^2\sum_{k=0}^{n-1}\int_{t_k}^{t_{k+1}}\widehat{\E}^{b_0}\Big[\Big\vert \int_{t_k}^s(a-b_0Y_u^{b_0})du +\sigma\int_{t_k}^s\sqrt{Y_u^{b_0}}dW_u\\
		&\qquad+\int_{t_k}^s\int_0^{\infty}zN(du,dz)\Big\vert\Big]ds\\
		&\leq C(\varphi_{n\Delta_n}(b_0))^2\sum_{k=0}^{n-1}\int_{t_k}^{t_{k+1}}\Big(\int_{t_k}^s(a+\vert b_0\vert\widehat{\E}^{b_0}[Y_u^{b_0}])du\\
		&\qquad +\sigma\big(\int_{t_k}^s\widehat{\E}^{b_0}[Y_u^{b_0}]du\big)^{\frac{1}{2}}+(s-t_k)\int_0^{\infty}zm(dz)\Big)ds.
	\end{align*}
	Thus, for subcritical case $b_0>0$ with $\varphi_{n\Delta_n}(b_0)=\frac{1}{\sqrt{n\Delta_n}}$ (respectively critical case $b_0=0$ with $\varphi_{n\Delta_n}(0)=\frac{1}{n\Delta_n}$, respectively supercritical case $b_0<0$ with $\varphi_{n\Delta_n}(b_0)=e^{b_0\frac{n\Delta_n}{2}}$), using Lemma \ref{Pro_moments} and standard calculations, we get $\widehat{\E}^{b_0}[\vert\sum_{k=0}^{n-1}H_{11}^{b_0}\vert]\leq C\sqrt{\Delta_n}$ for $b_0>0$, $b_0=0$ and $b_0<0$, which tends to zero. Thus, we have shown that as $n\to\infty$,
	\begin{equation}\begin{split}\label{m3}	
			\sum_{k=0}^{n-1}H_{11}^{b_0}\overset{\widehat{\P}^{b_0}}{\longrightarrow}0.	
		\end{split}
	\end{equation}
	\begin{color}{black}
	Now, note that from equation \eqref{c2eq1rajoute}, we write
	\begin{equation}\begin{split}\label{splitequ}
	&X_{t_{k+1}}^{b}-X_{t_{k}}^{b}-(a-bX_{t_k}^{b})\Delta_n-\sigma\sqrt{X_{t_k}^{b}}(B_{t_{k+1}}-B_{t_{k}})\\
	&=-b\int_{t_k}^{t_{k+1}}(X_s^{b}-X_{t_k}^{b})ds+\sigma\int_{t_k}^{t_{k+1}}\big(\sqrt{X_s^{b}}-\sqrt{X_{t_k}^{b}}\big)dB_s+\int_{t_k}^{t_{k+1}}\int_{0}^{\infty}zM(ds,dz),
	\end{split}
	\end{equation}
	which implies that
	\begin{align*}
	L_n(b_0)&=\frac{\varphi_{n\Delta_n}(b_0)}{\Delta_n}\sum_{k=0}^{n-1}\Big(H_7^{b_0}+H_8^{b_0}+H_9^{b_0}\notag\\
	&\qquad-\int_0^1\widetilde{\E}_{t_k,Y_{t_{k}}^{b_0}}^{b(\ell)}\left[H_4^{b(\ell)}+H_5^{b(\ell)}+H_6^{b(\ell)}\big\vert X_{t_{k+1}}^{b(\ell)}=Y_{t_{k+1}}^{b_0}\right]d\ell\Big),
	\end{align*}
	where	
	\begin{align*}
	H_4^{b}&=\frac{\Delta_n}{\sigma^2}b\int_{t_k}^{t_{k+1}}(X_s^{b}-X_{t_k}^{b})ds,\qquad H_5^{b}=-\frac{\Delta_n}{\sigma}\int_{t_k}^{t_{k+1}}(\sqrt{X_s^{b}}-\sqrt{X_{t_k}^{b}})dB_s,\\
	H_6^{b}&=-\frac{\Delta_n}{\sigma^2}\int_{t_k}^{t_{k+1}}\int_{0}^{\infty}zM(ds,dz).
	\end{align*}
	\end{color}
	Consequently, from \eqref{m1}, \eqref{m0}, \eqref{m2},  \eqref{m3}, and \textcolor{black}{Lemmas \ref{lemma2}, \ref{lemma3} and \ref{lemma4}}, we conclude \eqref{main} for the subcritical, critical and supercritical cases. Thus, the result follows.
\end{proof}

\section{Technical results}
\label{sec:prelim}
The goal of this section is to use some tools of Malliavin calculus to represent the remainder terms $\textcolor{black}{R_n(b_0)}$ in \eqref{decompo} in terms of conditional expectation of Skorohod integral that will be analyzed in Section \ref{proofnegligible}. On the complete probability space $(\widetilde{\Omega}, \widetilde{\mathcal{F}}, \widetilde{\P})$, we consider the flow process $X^{b}(s,x)=(X_t^{b}(s,x), t\geq s)$, $x\in\R_{++}$, \textcolor{black}{associated to the solution of \eqref{c2eq1rajoute}}, on the time interval $[s,\infty)$ and with initial condition $X_{s}^{b}(s,x)=x$ satisfying
\begin{equation}\label{flow}
X_t^{b}(s,x)=x+\int_s^t (a-bX_u^{b}(s,x))du +\sigma\int_s^t\sqrt{X_u^{b}(s,x)}dB_u+\int_{s}^t\int_{0}^{\infty}z
M(du,dz),
\end{equation}
for any $t\geq s$. 

For any $k \in \{0,...,n-1\}$, the process $(X_t^{b}(t_k,x), t\in [t_k,t_{k+1}])$ satisfies
\begin{equation}\label{flowk}
X_t^{b}(t_k,x)=x+\int_{t_k}^t (a-bX_u^{b}(t_k,x))du +\sigma\int_{t_k}^t\sqrt{X_u^{b}(t_k,x)}dB_u+\int_{t_k}^t\int_{0}^{\infty}zM(du,dz).
\end{equation}
Condition $2a\geq \sigma^2$ and the fact that the subordinator admits only positive jumps imply that the jump-type CIR process $X^b=(X_t^b)_{t\in\R_+}$ remains almost surely strictly
positive. By \cite[Theorem V.39]{P05}, the process $(X_t^{b}(t_k,x), t\in [t_k,t_{k+1}])$ is differentiable w.r.t. $x$ that we denote by $(\partial_{x}X_t^{b}(t_k,x), t\in [t_k,t_{k+1}])$. Moreover, this process admits the derivative w.r.t. the parameter $b$ that we denote by $(\partial_{b}X_t^{b}(t_k,x), t\in [t_k,t_{k+1}])$ since this problem is similar to the derivative w.r.t. the initial condition (see e.g. \cite[Theorem 10.1 page 486]{P16}). Under condition $2a\geq \sigma^2$, these processes are solutions to the following equations
\begin{align}
&\partial_xX_t^{b}(t_k,x)=1-b\int_{t_k}^t \partial_xX_u^{b}(t_k,x)du +\frac{\sigma}{2}\int_{t_k}^t\frac{ \partial_xX_u^{b}(t_k,x)}{\sqrt{X_u^{b}(t_k,x)}}dB_u,\label{px}\\
&\partial_{b}X_t^{b}(t_k,x)=-\int_{t_k}^t\left(X_u^{b}(t_k,x)+b\partial_{b}X_u^{b}(t_k,x)\right)du +\frac{\sigma}{2}\int_{t_k}^t\frac{\partial_bX_u^{b}(t_k,x)}{\sqrt{X_u^{b}(t_k,x)}}dB_u.\label{pb}
\end{align}
Therefore, their solutions are explicitly given by
\begin{align}
\partial_xX_t^{b}(t_k,x)&=\exp{\left\{-b(t-t_k)-\dfrac{\sigma^2}{8}\int_{t_k}^t\dfrac{du}{X_u^{b}(t_k,x)}+\dfrac{\sigma}{2}\int_{t_k}^t\dfrac{dB_u}{\sqrt{X_u^{b}(t_k,x)}}\right\}},\label{dxe}\\
\partial_bX_t^{b}(t_k,x)&=-\int_{t_k}^{t}X_r^{b}(t_k,x)\exp{\bigg\{-b(t-r)-\frac{\sigma^2}{8}\int_{r}^t\frac{du}{X_u^{b}(t_k,x)}+\frac{\sigma}{2}\int_{r}^t\frac{dB_u}{\sqrt{X_u^{b}(t_k,x)}}\bigg\}}dr.\label{dxb}
\end{align}
Observe that from \eqref{dxe}, \eqref{dxb}, we can write
\begin{align}
\partial_bX_t^{b}(t_k,x)=-\int_{t_k}^{t}X_r^{b}(t_k,x)\partial_xX_t^{b}(t_k,x)(\partial_xX_r^{b}(t_k,x))^{-1}dr.\label{dxb2}
\end{align}
The following crucial estimates will be useful.
\begin{lemma}\label{estimates} \textnormal{(i)} Assume {\bf(A1)} and {\bf(A2)}. For any $p\geq 1$, there exists $C>0$ such that
	\begin{align}
		\widetilde{\E}_{t_k,x}^{b}\Big[\big(X_t^{b}(t_k,x)\big)^p\Big]\leq C\left(1+x^p\right).\label{e1}
	\end{align}		
		\noindent\textnormal{(ii)} Assume {\bf(A1)} and $2a\geq \sigma^2$. For any $p\in[1,\frac{2a}{\sigma^2}-1)$, there exists $C>0$ such that
		\begin{align}
		\widetilde{\E}_{t_k,x}^{b}\bigg[\dfrac{1}{(X_t^{b}(t_k,x))^p}\bigg]\leq\dfrac{C}{x^p}.\label{e2}
	\end{align}
		\noindent\textnormal{(iii)} Assume {\bf(A1)} and $2a\geq \sigma^2$. For any $p\geq -\frac{(\frac{2a}{\sigma^2}-1)^2}{2(\frac{2a}{\sigma^2}-\frac{1}{2})}$, there exists $C>0$ such that
		\begin{align}
		&\widetilde{\E}_{t_k,x}^{b}\Big[\big(\partial_xX_t^{b}(t_k,x)\big)^p\Big]\leq C\left(1+\dfrac{1}{x^{\frac{\frac{2a}{\sigma^2}-1+p}{2}}}\right),\label{e4}
	\end{align}		
	for any $k \in \{0,...,n-1\}$, $x\in\R_{++}$  and $t\in[t_k,t_{k+1}]$. 
\end{lemma}
Next, to derive a new representation for the score function, the Malliavin calculus on the Wiener space induced by $B$ will be applied. Let $D$ and $\delta$ denote the Malliavin derivative operator and the Skorohod integral w.r.t. $B$ on $[t_k,t_{k+1}]$. We denote by $\mathbb{D}^{1,2}$ the Sobolev space of random variables which are Malliavin differentiable w.r.t. $B$, and by $\textnormal{Dom}\ \delta$ the domain of $\delta$. Recall that the Malliavin calculus for CIR process is developed by Al\`os and Ewald \cite{AE08}, and Altmayer and Neuenkirch \cite{AN14} whereas the Malliavin calculus for CIR process with jumps is discussed e.g. in \cite[Example 1]{P08}, \cite[Section 4.4]{CM10} and \cite[page 419]{K12}. 
 
Under condition $2a\geq \sigma^2$, for any $t\in [t_k,t_{k+1}]$, the random variable $X_t^{b}(t_k,x)$ belongs to $\mathbb{D}^{1,2}$. From \eqref{flowk} and the chain rule, the Malliavin derivative satisfies
\begin{align}\label{MD}
D_sX_t^{b}(t_k,x)&=\sigma\sqrt{X_s^{b}(t_k,x)}-b\int_{s}^tD_sX_u^{b}(t_k,x)du +\frac{\sigma}{2}\int_{s}^t\frac{D_sX_u^{b}(t_k,x)}{\sqrt{X_u^{b}(t_k,x)}}dB_u,	
\end{align}
for $s\leq t$ a.e., and $D_sX_t^{b}(t_k,x)=0$ for $s>t$ a.e. Furthermore, proceeding as in \cite[(2.59)]{N} by using It\^o's formula and the uniqueness of the solution to equation \eqref{MD} (see also \cite[Proposition 7]{P08}), we obtain the following expression
\begin{equation}\label{expression1}
	D_sX_t^{b}(t_k,x)=\sigma\sqrt{X_s^{b}(t_k,x)}\partial_xX_t^{b}(t_k,x)(\partial_xX_s^{b}(t_k,x))^{-1}{\bf 1}_{[t_k,t]}(s).
\end{equation}
The following lemma shows the Malliavin differentiability of the flow process and its inverse.
\begin{lemma}\label{Malliderivable} Let $b\in\R$, $k \in \{0,...,n-1\}$, $t\in [t_k,t_{k+1}]$, and $x\in\R_{++}$.\\
		\noindent\textnormal{(i)} Assume condition $\frac{a}{\sigma^2}>\frac{5+3\sqrt{2}}{2}$. Then, $\partial_xX_t^{b}(t_k,x)\in \mathbb{D}^{1,2}$. Furthermore,
		\begin{align}\label{dmpx}
			D_s\big(\partial_xX_t^{b}(t_k,x)\big)&=\partial_xX_t^{b}(t_k,x)\bigg(\dfrac{\sigma}{2\sqrt{X_s^{b}(t_k,x)}}+\dfrac{\sigma^2}{8}\int_{s}^t\dfrac{1}{(X_u^{b}(t_k,x))^2}D_sX_u^{b}(t_k,x)du\notag\\
			&\qquad-\dfrac{\sigma}{4}\int_{s}^t\dfrac{1}{(X_u^{b}(t_k,x))^{\frac{3}{2}}}D_sX_u^{b}(t_k,x)dB_u\bigg){\bf 1}_{[t_k,t]}(s).
		\end{align} 
		\noindent\textnormal{(ii)} Assume condition $\frac{a}{\sigma^2}>\frac{7}{2}+\sqrt{10}$. Then, $(\partial_xX_t^{b}(t_k,x))^{-1}\in \mathbb{D}^{1,2}$ and $\frac{X_{t}^{b}(t_k,x)}{\partial_xX_{t}^{b}(t_k,x)}\in \mathbb{D}^{1,2}$. Furthermore,
		\begin{align}
			&D_s\Big(\dfrac{1}{\partial_xX_{t}^{b}(t_k,x)}\Big)=\frac{-1}{(\partial_xX_t^{b}(t_k,x))^2}D_s\big(\partial_xX_t^{b}(t_k,x)\big){\bf 1}_{[t_k,t]}(s),\label{Mallinveflow1}\\
			&D_s\Big(\dfrac{X_{t}^{b}(t_k,x)}{\partial_xX_{t}^{b}(t_k,x)}\Big)=\dfrac{1}{\partial_xX_{t}^{b}(t_k,x)}D_sX_{t}^{b}(t_k,x)+X_{t}^{b}(t_k,x)D_s\Big(\dfrac{1}{\partial_xX_{t}^{b}(t_k,x)}\Big).\label{Mallinveflow2}
		\end{align} 	
\end{lemma}
Following \cite[Proposition 4.1]{G01}, we have the following expression for the score function.
\begin{proposition} \label{c2prop1}
Assume condition {\bf(A1)} and $\frac{a}{\sigma^2}>\frac{7}{2}+\sqrt{10}$. Then, for all $k \in \{0,...,n-1\}$, $b\in\R$, and $x, y\in\R_{++}$,
\begin{align*}
\dfrac{\partial_{b}p^{b}}{p^{b}}\left(\Delta_n,x,y\right)=\dfrac{1}{\Delta_n}\widetilde{\E}_{t_k,x}^{b}\left[\delta\left(\partial_{b}X_{t_{k+1}}^{b}(t_k,x)U^{b}(t_k,x)\right)\big\vert X_{t_{k+1}}^{b}=y\right],
\end{align*}
where $U^{b}(t_k,x):=(U^{b}_t(t_k,x), t\in[t_k,t_{k+1}])$ with $U^{b}_t(t_k,x):=(D_tX_{t_{k+1}}^{b}(t_k,x))^{-1}$. 

Furthermore, under condition {\bf(A3)}: $\frac{a}{\sigma^2}>\frac{15+\sqrt{185}}{4}$, the Skorohod integral is decomposed as follows
	\begin{equation}\label{derib}\begin{split}
	\delta\left(\partial_{b}X_{t_{k+1}}^{b}(t_k,x)U^{b}(t_k,x)\right)=-\frac{\Delta_n}{\sigma}\sqrt{x} \left(B_{t_{k+1}}-B_{t_{k}}\right)+H_1^{b}+H_2^{b}+H_3^{b}.
	\end{split}
	\end{equation}
where 
\begin{align*}
&H_1^{b}=H_1^{b}(t_k,x)=-\Delta_n\dfrac{x}{\sigma}\int_{t_k}^{t_{k+1}}\left(\frac{\partial_{x}X_{s}^{b}(t_k,x)}{\sqrt{X_s^{b}(t_k,x)}}-\frac{\partial_{x}X_{t_k}^{b}(t_k,x)}{\sqrt{X_{t_k}^{b}(t_k,x)}}\right)dB_s,\\
&H_2^{b}=H_2^{b}(t_k,x)=-\int_{t_k}^{t_{k+1}}\left(\frac{X_{s}^{b}(t_k,x)}{\partial_{x}X_{s}^{b}(t_k,x)}-\frac{X_{t_k}^{b}(t_k,x)}{\partial_{x}X_{t_k}^{b}(t_k,x)}\right)ds\int_{t_k}^{t_{k+1}}\frac{\partial_{x}X_{s}^{b}(t_k,x)}{\sigma\sqrt{X_s^{b}(t_k,x)}}dB_s,\\
&H_3^{b}=H_3^{b}(t_k,x)=\int_{t_k}^{t_{k+1}}\int_{s}^{t_{k+1}}D_s\Big(\dfrac{X_{r}^{b}(t_k,x)}{\partial_xX_{r}^{b}(t_k,x)}\Big)dr\frac{\partial_{x}X_{s}^{b}(t_k,x)}{\sigma\sqrt{X_s^{b}(t_k,x)}}ds.
\end{align*}
\end{proposition}
The following crucial estimates will be needed.
\begin{lemma} \label{estimate} Let $k \in \{0,...,n-1\}$, $b\in\R$, and $x, y\in\R_{++}$. \\
	\noindent\textnormal{(i)} Assume condition {\bf(A1)} and  $\frac{a}{\sigma^2}>\frac{7}{2}+\sqrt{10}$. Then, we have
	\begin{align}
		\widetilde{\E}_{t_k,x}^{b}[H_1^{b}+H_2^{b}+H_3^{b}]=0.\label{es3}
	\end{align}	
	\noindent\textnormal{(ii)} \textcolor{black}{Let $q\geq 1$} and assume conditions {\bf(A1)}, {\bf(A2)} and  $\frac{a}{\sigma^2}>\frac{13q+2+\sqrt{169q^2+16q}}{4}$. Then, there exists a constant $C>0$ such that 
	\begin{align}
	\widetilde{\E}_{t_k,x}^{b}\Big[\big\vert H_1^{b}+H_2^{b}+H_3^{b}\big\vert^{2q}\Big]\leq C\Delta_n^{3q+\frac{1}{\overline{p}}}\Big(x^{q}+\dfrac{1}{x^{\frac{\frac{2a}{\sigma^2}-1}{2}+5q}}\Big),\label{es4}
	\end{align}	
	where $\overline{p}=\frac{5q+4+\sqrt{25q^2+8q}}{4(2q+1)}$.
\end{lemma}
\section{Proof of \textcolor{black}{negligible contributions}}
\label{proofnegligible}

First, using the decomposition \eqref{decompo} and Proposition \ref{c2prop1}, under condition {\bf(A3)}: $\frac{a}{\sigma^2}>\frac{15+\sqrt{185}}{4}$ , we obtain 
\begin{align*}
R_n(b_0)&=\sum_{k=0}^{n-1}\varphi_{n\Delta_n}(b_0)u\int_0^1\dfrac{\partial_{b}p^{b(\ell)}}{p^{b(\ell)}}(\Delta_n,Y_{t_k}^{b_0},Y_{t_{k+1}}^{b_0})d\ell\textcolor{black}{-uV_n(b_0)+\dfrac{u^2}{2}T_n(b_0)}\\
&=\sum_{k=0}^{n-1}\frac{\varphi_{n\Delta_n}(b_0)u}{\Delta_n}\int_0^1\widetilde{\E}_{t_k,Y_{t_{k}}^{b_0}}^{b(\ell)}\Big[-\frac{\Delta_n}{\sigma}\sqrt{Y_{t_k}^{b_0}} (B_{t_{k+1}}-B_{t_{k}})+H^{b(\ell)}\vert X_{t_{k+1}}^{b(\ell)}=Y_{t_{k+1}}^{b_0}\Big]d\ell\\
&\qquad\textcolor{black}{-uV_n(b_0)+\dfrac{u^2}{2}T_n(b_0),}
\end{align*}
where $H^{b(\ell)}=H_1^{b(\ell)}+H_2^{b(\ell)}+H_3^{b(\ell)}$. Then, from expression \eqref{splitequ}, we write
\begin{align}
R_n(b_0)&=\sum_{k=0}^{n-1}\frac{\varphi_{n\Delta_n}(b_0)u}{\Delta_n}\int_0^1\widetilde{\E}_{t_k,Y_{t_{k}}^{b_0}}^{b(\ell)}\bigg[-\frac{\Delta_n}{\sigma^2} \sqrt{\frac{Y_{t_k}^{b_0}}{X_{t_k}^{b(\ell)}}}\Big(X_{t_{k+1}}^{b(\ell)}-X_{t_{k}}^{b(\ell)}-(a-b(\ell)X_{t_k}^{b(\ell)})\Delta_n \notag\\
&\quad+b(\ell)\int_{t_k}^{t_{k+1}}(X_s^{b(\ell)}-X_{t_k}^{b(\ell)})ds-\sigma\int_{t_k}^{t_{k+1}}(\sqrt{X_s^{b(\ell)}}-\sqrt{X_{t_k}^{b(\ell)}})dB_s\notag\\
&\quad-\int_{t_k}^{t_{k+1}}\int_{0}^{\infty}zM(ds,dz)\Big)+H^{b(\ell)}\vert X_{t_{k+1}}^{b(\ell)}=Y_{t_{k+1}}^{b_0}\bigg]d\ell\textcolor{black}{-uV_n(b_0)+\dfrac{u^2}{2}T_n(b_0)} \notag\\
&=\sum_{k=0}^{n-1}\frac{\varphi_{n\Delta_n}(b_0)u}{\Delta_n}\int_0^1\bigg(-\frac{\Delta_n}{\sigma^2}\big(Y_{t_{k+1}}^{b_0}-Y_{t_{k}}^{b_0}-(a-b(\ell)Y_{t_k}^{b_0})\Delta_n\big)  \notag\\
&\quad+\widetilde{\E}_{t_k,Y_{t_{k}}^{b_0}}^{b(\ell)}\Big[-H_4^{b(\ell)}-H_5^{b(\ell)}-H_6^{b(\ell)}+H^{b(\ell)}\vert X_{t_{k+1}}^{b(\ell)}=Y_{t_{k+1}}^{b_0}\Big]\bigg)d\ell\textcolor{black}{-uV_n(b_0)+\dfrac{u^2}{2}T_n(b_0)} \notag\\
&=\frac{\varphi_{n\Delta_n}(b_0)u}{\Delta_n}\sum_{k=0}^{n-1}\int_0^1\widetilde{\E}_{t_k,Y_{t_{k}}^{b_0}}^{b(\ell)}\left[H^{b(\ell)}\big\vert X_{t_{k+1}}^{b(\ell)}=Y_{t_{k+1}}^{b_0}\right]d\ell. \label{decompo2}
\end{align}
This stochastic expansion, combined with Lemma \ref{lemma1} below, finishes the proof of Lemma \ref{negligibleterms}.

Now, to prove Lemmas \ref{lemma1}-\ref{lemma3} and \ref{lemma4} below, the two following preliminary results are needed. For this, we denote by $\widehat{\P}_{t_k,x}^{b}$ the probability law of $Y^{b}$ starting at $x$ at time $t_k$, i.e., $\widehat{\P}_{t_k,x}^{b}(A)=\widehat{\E}[{\bf 1}_{A}\vert Y_{t_{k}}^{b}=x]$ for all $A\in \widehat{\mathcal{F}}$, and by $\widehat{\E}_{t_k,x}^{b}$ the expectation w.r.t. $\widehat{\P}_{t_k,x}^{b}$. That is,  $\widehat{\E}_{t_k,x}^{b}[V]=\widehat{\E}[V\vert Y_{t_{k}}^{b}=x]$ for all $\widehat{\mathcal{F}}$-measurable random variables $V$. The change of measures on each interval $I_k:=[t_k, t_{k+1}]$ will be used. For all $b, b_1\in\R$, $x\in\R_{++}$ and $k\in\{0,...,n-1\}$, by \cite[Proposition 4.1]{BBKP17}, the probability measures $\widehat{\P}_{t_k,x}^{b_1}$ and $\widehat{\P}_{t_k,x}^{b}$ are absolutely continuous w.r.t. each other and its Radon-Nikodym derivative is given by
\begin{equation}\label{ratio2}
	\begin{split}
		\dfrac{d\widehat{\P}_{t_k,x}^{b_1}}{d\widehat{\P}_{t_k,x}^{b}}((Y_t^{b})_{t\in I_k})=\exp\left\{-\frac{b_1-b}{\sigma}\int_{t_k}^{t_{k+1}}\sqrt{Y_s^{b}}dW_s-\frac{(b_1-b)^2}{2\sigma^2}\int_{t_k}^{t_{k+1}}Y_s^{b}ds\right\}.
	\end{split}
\end{equation}
Let $V$ be a $\widetilde{\mathcal{F}}_{t_{k+1}}$-measurable random variable which will be $H^{b}$, $(H^{b})^2$, $H_4^{b}$, $(H_4^{b})^2$, $H_5^{b}$, $(H_5^{b})^2$, and let $\widehat{V}$ be a $\widehat{\mathcal{F}}_{t_{k+1}}$-measurable random variable which will be $\int_{t_k}^{t_{k+1}}(Y_s^{b_0}-Y_{t_k}^{b_0})ds$. As in \cite[Lemma 9]{BKT17}, we have the following lemma.
\begin{lemma}\label{change} Assume condition {\bf(A1)}. Then for any $k\in\{0,...,n-1\}$, $b\in\R$ and $x\in\R_{++}$,
	\begin{equation}\label{for1}\begin{split}
	\widehat{\E}_{t_k,x}^{b_0}\Big[\widetilde{\E}_{t_k,x}^{b}\big[V\vert X_{t_{k+1}}^{b}=
	Y_{t_{k+1}}^{b_0}\big]\Big]=\widetilde{\E}_{t_k,x}^{b}\left[V\right]+\widetilde{\E}_{t_k,x}^{b}\Big[\widetilde{\E}_{t_k,x}^{b}\big[V\vert X_{t_{k+1}}^{b}\big]\Big(\frac{d\widetilde{\P}_{t_k,x}^{b_0}}{d\widetilde{\P}_{t_k,x}^{b}}((X_t^{b})_{t\in I_k})-1\Big)\Big].
	\end{split}
	\end{equation}
	Similarly, we have
	\begin{equation}\label{for2}\begin{split}
	\widehat{\E}_{t_k,x}^{b_0}[\widehat{V}]=\widehat{\E}_{t_k,x}^{b}[\widehat{V}]+\widehat{\E}_{t_k,x}^{b}\Big[\widehat{V}\Big(\frac{d\widehat{\P}_{t_k,x}^{b_0}}{d\widehat{\P}_{t_k,x}^{b}}((Y_t^{b})_{t\in I_k})-1\Big)\Big].
	\end{split}
	\end{equation}	
\end{lemma}
The estimate for the second terms in Lemma \ref{change} is guaranteed by the following lemma.
\begin{lemma}\label{deviation1} Let $b=b(\ell)$. For any $q>1$, there exist constants $C_1, C_2>0$ such that for any $k\in\{0,...,n-1\}$, $x\in\R_{++}$ and $n$ large enough,
	\begin{equation}\label{for3}\begin{split}
	&\left\vert\widetilde{\E}_{t_k,x}^{b}\left[\widetilde{\E}_{t_k,x}^{b}\big[V\vert X_{t_{k+1}}^{b}\big]\Big(\frac{d\widetilde{\P}_{t_k,x}^{b_0}}{d\widetilde{\P}_{t_k,x}^{b}}((X_t^{b})_{t\in I_k})-1\Big)\right]\right\vert  \\
	&\leq C_1\sqrt{\Delta_n}\big(\widetilde{\E}_{t_k,x}^{b}[\vert V\vert^q]\big)^{\frac{1}{q}}e^{C_2(b_0-b)^2\Delta_nx}\vert b-b_0\vert\left(1+\sqrt{x}+\sqrt{\Delta_n}\vert b-b_0\vert x\right).
	\end{split}
	\end{equation}	
	Moreover,
	\begin{equation}\label{for4}\begin{split}
	&\left\vert\widehat{\E}_{t_k,x}^{b}\left[\widehat{V}\Big(\frac{d\widehat{\P}_{t_k,x}^{b_0}}{d\widehat{\P}_{t_k,x}^{b}}((Y_t^{b})_{t\in I_k})-1\Big)\right]\right\vert\\
	&\leq C_1\sqrt{\Delta_n}\big(\widehat{\E}_{t_k,x}^{b}[\vert \widehat{V}\vert^q]\big)^{\frac{1}{q}}e^{C_2(b_0-b)^2\Delta_nx} \vert b-b_0\vert\left(1+\sqrt{x}+\sqrt{\Delta_n}\vert b-b_0\vert x\right).
	\end{split}
	\end{equation}
\end{lemma}

\begin{lemma}\label{lemma1} Assume conditions {\bf(A1)}-{\bf(A3)}. Then, as $n\to\infty$,
\begin{align*} 
\sum_{k=0}^{n-1}\frac{\varphi_{n\Delta_n}(b_0)u}{\Delta_n}\int_0^1\widetilde{\E}_{t_k,Y_{t_k}^{b_0}}^{b(\ell)}\left[H^{b(\ell)}\big\vert X^{b(\ell)}_{t_{k+1}}=Y_{t_{k+1}}^{b_0}\right]d\ell\overset{\widehat{\P}^{b_0}}{\longrightarrow}0.
\end{align*}
\end{lemma}
\begin{proof}
We apply Lemma \ref{zero} with
$$
\zeta_{k,n}:=\frac{\varphi_{n\Delta_n}(b_0)u}{\Delta_n}\int_0^1\widetilde{\E}_{t_k,Y_{t_k}^{b_0}}^{b(\ell)}\left[H^{b(\ell)}\big\vert X^{b(\ell)}_{t_{k+1}}=Y_{t_{k+1}}^{b_0}\right]d\ell.
$$ 
\begin{color}{black}
First, using \eqref{for1} of Lemma \ref{change} with $V=H^{b(\ell)}$, the fact that  $\widetilde{\E}_{t_k,Y_{t_k}^{b_0}}^{b(\ell)}[H^{b(\ell)}]=0$ by \eqref{es3} of Lemma \ref{estimate}, \eqref{for3} of Lemma \ref{deviation1} with $q=2$, we obtain for $n$ large enough,	
\begin{align*} 
&\Big\vert\sum_{k=0}^{n-1}\widehat{\E}^{b_0}\big[\zeta_{k,n}\vert \widehat{\mathcal{F}}_{t_k}\big]\Big\vert=\bigg\vert\sum_{k=0}^{n-1}\frac{\varphi_{n\Delta_n}(b_0)u}{\Delta_n}\int_0^1\widehat{\E}_{t_k,Y_{t_k}^{b_0}}^{b_0}\left[\widetilde{\E}_{t_k,Y_{t_k}^{b_0}}^{b(\ell)}\left[H^{b(\ell)}\big\vert X^{b(\ell)}_{t_{k+1}}=Y_{t_{k+1}}^{b_0}\right]\right]d\ell\bigg\vert\\
&=\bigg\vert\sum_{k=0}^{n-1}\frac{\varphi_{n\Delta_n}(b_0)u}{\Delta_n}\int_0^1\bigg\{\widetilde{\E}_{t_k,Y_{t_k}^{b_0}}^{b(\ell)}\left[H^{b(\ell)}\right]\\
&\qquad+\widetilde{\E}_{t_k,Y_{t_k}^{b_0}}^{b(\ell)}\bigg[\widetilde{\E}_{t_k,Y_{t_k}^{b_0}}^{b(\ell)}\big[H^{b(\ell)}\vert X_{t_{k+1}}^{b(\ell)}\big]\Big(\frac{d\widetilde{\P}_{t_k,Y_{t_k}^{b_0}}^{b_0}}{d\widetilde{\P}_{t_k,Y_{t_k}^{b_0}}^{b(\ell)}}((X_t^{b(\ell)})_{t\in I_k})-1\Big)\bigg]\bigg\}d\ell\bigg\vert\\
&\leq\bigg\vert\sum_{k=0}^{n-1}\frac{\varphi_{n\Delta_n}(b_0)u}{\Delta_n}\int_0^1 \bigg\vert\widetilde{\E}_{t_k,Y_{t_k}^{b_0}}^{b(\ell)}\bigg[\widetilde{\E}_{t_k,Y_{t_k}^{b_0}}^{b(\ell)}\big[H^{b(\ell)}\vert X_{t_{k+1}}^{b(\ell)}\big]\Big(\frac{d\widetilde{\P}_{t_k,Y_{t_k}^{b_0}}^{b_0}}{d\widetilde{\P}_{t_k,Y_{t_k}^{b_0}}^{b(\ell)}}((X_t^{b(\ell)})_{t\in I_k})-1\Big)\bigg]\bigg\vert d\ell\bigg\vert\\
&\leq C_1\sum_{k=0}^{n-1}\dfrac{\vert u\vert\varphi_{n\Delta_n}(b_0)}{\Delta_n}\sqrt{\Delta_n}\int_0^1\big(\widetilde{\E}_{t_k,Y_{t_k}^{b_0}}^{b(\ell)}[\vert H^{b(\ell)}\vert^2]\big)^{\frac{1}{2}}e^{C_2u^2(\varphi_{n\Delta_n}(b_0))^2\Delta_nY_{t_k}^{b_0}} \\
&\qquad\times\vert u\vert\varphi_{n\Delta_n}(b_0)\big(1+\sqrt{Y_{t_k}^{b_0}}+\sqrt{\Delta_n}\vert u\vert\varphi_{n\Delta_n}(b_0) Y_{t_k}^{b_0}\big)d\ell.
\end{align*}
Next, using \eqref{es4} of Lemma \ref{estimate} with $q=1$, $\overline{p}=\frac{9+\sqrt{33}}{12}$, $3q+\frac{1}{\overline{p}}=3+\frac{12}{9+\sqrt{33}}$, condition {\bf(A3)}, and the decomposition ${\bf 1}_{\{Y_{t_k}^{b_0}\leq 1\}}+{\bf 1}_{\{Y_{t_k}^{b_0}>1\}}$, we obtain for $n$ large enough,	
\begin{align*} 
&\Big\vert\sum_{k=0}^{n-1}\widehat{\E}^{b_0}\big[\zeta_{k,n}\vert \widehat{\mathcal{F}}_{t_k}\big]\Big\vert\\
&\leq  Cu^2\frac{(\varphi_{n\Delta_n}(b_0))^2}{\sqrt{\Delta_n}}\Delta_n^{\frac{3}{2}+\frac{6}{9+\sqrt{33}}} \sum_{k=0}^{n-1}e^{C_2u^2(\varphi_{n\Delta_n}(b_0))^2\Delta_nY_{t_k}^{b_0}}\Big((Y_{t_k}^{b_0})^{\frac{1}{2}}+\dfrac{1}{(Y_{t_k}^{b_0})^{\frac{\frac{2a}{\sigma^2}-1}{4}+\frac{5}{2}}}\Big)\\
&\qquad\times  \big(1+\sqrt{Y_{t_k}^{b_0}}+\sqrt{\Delta_n}\vert u\vert\varphi_{n\Delta_n}(b_0) Y_{t_k}^{b_0}\big)\\
&\leq  Cu^2(\varphi_{n\Delta_n}(b_0))^2\Delta_n^{1+\frac{6}{9+\sqrt{33}}}\sum_{k=0}^{n-1}e^{C_2u^2(\varphi_{n\Delta_n}(b_0))^2\Delta_nY_{t_k}^{b_0}} \Big(Y_{t_k}^{b_0}+\sqrt{\Delta_n}\vert u\vert\varphi_{n\Delta_n}(b_0) (Y_{t_k}^{b_0})^{\frac{3}{2}}\\
&\qquad+\dfrac{1}{(Y_{t_k}^{b_0})^{\frac{\frac{2a}{\sigma^2}-1}{4}+\frac{5}{2}}}\Big),
\end{align*}
for some constant $C>0$. 
\end{color}

Then, using Young's inequality for products with $\frac{1}{\alpha}+\frac{1}{\beta}=1$ and $\beta$ close to $1$, we get
\begin{align*} 
	&\Big\vert\sum_{k=0}^{n-1}\widehat{\E}^{b_0}\big[\zeta_{k,n}\vert \widehat{\mathcal{F}}_{t_k}\big]\Big\vert\leq  Cu^2(\varphi_{n\Delta_n}(b_0))^2\Delta_n^{1+\frac{6}{9+\sqrt{33}}}\bigg(\frac{1}{\alpha}\sum_{k=0}^{n-1}e^{C_2\alpha u^2(\varphi_{n\Delta_n}(b_0))^2\Delta_nY_{t_k}^{b_0}} \\
	&\qquad+\frac{1}{\beta}\sum_{k=0}^{n-1} \Big((Y_{t_k}^{b_0})^{\beta}+(\sqrt{\Delta_n}\vert u\vert\varphi_{n\Delta_n}(b_0))^{\beta} (Y_{t_k}^{b_0})^{\frac{3}{2}\beta}+\dfrac{1}{(Y_{t_k}^{b_0})^{\beta(\frac{\frac{2a}{\sigma^2}-1}{4}+\frac{5}{2})}}\Big)\bigg).
\end{align*} 
Now using the Laplace transform \eqref{Laplace2} and conditions {\bf(A1)}, {\bf(A2)}, we get for $n$ large enough,
\begin{align}\label{boundedexp}
	R_n:=\widehat{\E}^{b_0}\Big[e^{C_2\alpha u^2(\varphi_{n\Delta_n}(b_0))^2\Delta_nY_{t_k}^{b_0}}\Big]\leq C,
\end{align}
for some constant $C>0$ for three cases. Indeed, we prove the convergence of $R_n$ towards $1$ as $n\to\infty$. For this, for subcritical case $b_0>0$ (respectively critical case $b_0=0$, respectively supercritical case $b_0<0$), we take $\varphi_{n\Delta_n}(b_0)=\frac{1}{\sqrt{n\Delta_n}}$ (respectively $\varphi_{n\Delta_n}(b_0)=\frac{1}{n\Delta_n}$, respectively   $\varphi_{n\Delta_n}(b_0)=e^{b_0\frac{n\Delta_n}{2}}$) and use that $1-e^{-b_0t_k}\leq b_0t_k$, $\frac{t_k}{n\Delta_n}\leq 1$ and $e^{-b_0t_k}\leq 1$ (respectively $\frac{t_k}{n\Delta_n}\leq 1$, respectively $e^{b_0(n\Delta_n-t_k)}\leq 1$ and $n\Delta_n\to\infty$).

\begin{color}{black}
Thus, for subcritical case $b_0>0$, taking $\varphi_{n\Delta_n}(b_0)=\frac{1}{\sqrt{n\Delta_n}}$, using \eqref{boundedexp}, Lemma \ref{moment} \textnormal{(i)} and standard calculations, we get $\widehat{\E}^{b_0}[\vert\sum_{k=0}^{n-1}\widehat{\E}^{b_0}[\zeta_{k,n}\vert \widehat{\mathcal{F}}_{t_k}]\vert]\leq C\Delta_n^{\frac{6}{9+\sqrt{33}}}$, which tends to zero. Here notice that $\beta(\frac{\frac{2a}{\sigma^2}-1}{4}+\frac{5}{2})<\frac{2a}{\sigma^2}$. 

For critical case $b_0=0$, taking $\varphi_{n\Delta_n}(b_0)=\frac{1}{n\Delta_n}$, using \eqref{boundedexp} and Lemma \ref{auxi} \textnormal{(i)} with $p\in\{-\beta, -\frac{3}{2}\beta, \beta(\frac{2a/\sigma^2-1}{4}+\frac{5}{2})\}$, we get 
\begin{align*}
&\widehat{\E}^{0}\Big[\big\vert\sum_{k=0}^{n-1}\widehat{\E}^{0}[\zeta_{k,n}\vert \widehat{\mathcal{F}}_{t_k}]\big\vert\Big]\\
&\leq  Cu^2\frac{1}{(n\Delta_n)^2}\Delta_n^{\frac{6}{9+\sqrt{33}}}\bigg(n\Delta_n+(n\Delta_n)^{\beta +1}+(\sqrt{\Delta_n}\frac{\vert u\vert}{n\Delta_n})^{\beta}(n\Delta_n)^{\frac{3}{2}\beta+1}+\dfrac{1}{(n\Delta_n)^{\beta(\frac{\frac{2a}{\sigma^2}-1}{4}+\frac{5}{2})-1}}\bigg)\\
&\leq  C\frac{1}{(n\Delta_n)^2}\Delta_n^{\frac{6}{9+\sqrt{33}}}\bigg(n\Delta_n+(n\Delta_n)^{\beta +1}\bigg)\\
&= C\left(\frac{1}{n\Delta_n}\Delta_n^{\frac{6}{9+\sqrt{33}}}+ (n\Delta_n^{1+\frac{6}{(9+\sqrt{33})(\beta-1)}})^{\beta-1}\right),
\end{align*}
which tends to zero by choosing $\beta$ close to $1$. This is because $n\Delta_n^{\delta}\to 0$ for the critical case where $\delta>1$ is arbitrarily large.

For supercritical case $b_0<0$, taking   $\varphi_{n\Delta_n}(b_0)=e^{b_0\frac{n\Delta_n}{2}}$, using \eqref{boundedexp} and Lemma \ref{auxi} \textnormal{(ii)} with $p\in\{-\beta, -\frac{3}{2}\beta, \beta(\frac{2a/\sigma^2-1}{4}+\frac{5}{2})\}$, we get 
\begin{align*}
&\widehat{\E}^{b_0}\Big[\big\vert\sum_{k=0}^{n-1}\widehat{\E}^{b_0}[\zeta_{k,n}\vert \widehat{\mathcal{F}}_{t_k}]\big\vert\Big]\\
&\leq Cu^2e^{b_0n\Delta_n}\Delta_n^{\frac{6}{9+\sqrt{33}}}\bigg(n\Delta_n+e^{-\beta b_0n\Delta_n}+(\sqrt{\Delta_n}\vert u\vert e^{b_0\frac{n\Delta_n}{2}})^{\beta}e^{-\frac{3}{2}\beta b_0n\Delta_n}+\dfrac{1}{(n\Delta_n)^{\beta(\frac{\frac{2a}{\sigma^2}-1}{4}+\frac{5}{2})-1}}\bigg)\\
&\leq C \Delta_n^{\frac{6}{9+\sqrt{33}}} e^{-(\beta-1)b_0n\Delta_n}\\
&= C(\Delta_n^{2+2\frac{12+\sqrt{33}-(9+\sqrt{33})\beta}{(9+\sqrt{33})(\beta-1)}}e^{-b_0n\Delta_n})^{\beta-1},
\end{align*}
which tends to zero by choosing $\beta\in (1,\frac{12+\sqrt{33}}{9+\sqrt{33}})$. This is because  $\Delta_n^{2-\varepsilon}e^{-b_0n\Delta_n}\to 0$ for the supercritical case where $\varepsilon>0$ is arbitrarily small. This completes the proof of condition (i) of Lemma \ref{zero}. 
\end{color}

Similarly, using Jensen's inequality, \eqref{for1} of Lemma \ref{change} with $V=(H^{b(\ell)})^2$, \eqref{for3} of Lemma \ref{deviation1} with $q=q_0>1$ and $q_0$ close to $1$, \eqref{es4} of Lemma \ref{estimate} with $q\in\{1,q_0\}$, and condition {\bf(A3)}, we get for $n$ large enough,
\begin{align*}
&\sum_{k=0}^{n-1}\widehat{\E}^{b_0}[\zeta_{k,n}^2\vert \widehat{\mathcal{F}}_{t_k}]\\
&\leq\sum_{k=0}^{n-1}\frac{u^2(\varphi_{n\Delta_n}(b_0))^2}{\Delta_n^2}\int_0^1\widehat{\E}_{t_k,Y_{t_k}^{b_0}}^{b_0}\left[\widetilde{\E}_{t_k,Y_{t_k}^{b_0}}^{b(\ell)}\left[(H^{b(\ell)})^2\vert X_{t_{k+1}}^{b(\ell)}=Y_{t_{k+1}}^{b_0}\right]\right]d\ell\\
&\leq \frac{u^2(\varphi_{n\Delta_n}(b_0))^2}{\Delta_n^2}\sum_{k=0}^{n-1}\int_0^1\bigg\{\widetilde{\E}_{t_k,Y_{t_k}^{b_0}}^{b(\ell)}\left[(H^{b(\ell)})^2\right]+C_1\sqrt{\Delta_n}\Big(\widetilde{\E}_{t_k,Y_{t_k}^{b_0}}^{b(\ell)}\big[\vert H^{b(\ell)}\vert^{2q_0}\big]\Big)^{\frac{1}{q_0}} \\
&\qquad\times e^{C_2u^2(\varphi_{n\Delta_n}(b_0))^2\Delta_nY_{t_k}^{b_0}} \vert u\vert\varphi_{n\Delta_n}(b_0)\big(1+\sqrt{Y_{t_k}^{b_0}}+\sqrt{\Delta_n}\vert u\vert\varphi_{n\Delta_n}(b_0) Y_{t_k}^{b_0}\big)\bigg\}d\ell\\
&\leq C\frac{u^2(\varphi_{n\Delta_n}(b_0))^2}{\Delta_n^2}\sum_{k=0}^{n-1}\bigg\{\Delta_n^{3+\frac{12}{9+\sqrt{33}}}\Big(Y_{t_k}^{b_0}+\dfrac{1}{(Y_{t_k}^{b_0})^{\frac{\frac{2a}{\sigma^2}-1}{2}+5}}\Big)+C_1\vert u\vert\Delta_n^{\frac{7}{2}+\frac{1}{q_0\overline{p}}}\varphi_{n\Delta_n}(b_0)\\
&\qquad\times e^{C_2u^2(\varphi_{n\Delta_n}(b_0))^2\Delta_nY_{t_k}^{b_0}}\Big(Y_{t_k}^{b_0}+\dfrac{1}{(Y_{t_k}^{b_0})^{\frac{\frac{2a}{\sigma^2}-1}{2q_0}+5}}\Big)  \big(1+\sqrt{Y_{t_k}^{b_0}}+\sqrt{\Delta_n}\vert u\vert\varphi_{n\Delta_n}(b_0) Y_{t_k}^{b_0}\big)\bigg\}\\
&\leq C\frac{u^2(\varphi_{n\Delta_n}(b_0))^2}{\Delta_n^2}\sum_{k=0}^{n-1}\bigg\{\Delta_n^{3+\frac{12}{9+\sqrt{33}}}\Big(Y_{t_k}^{b_0}+\dfrac{1}{(Y_{t_k}^{b_0})^{\frac{\frac{2a}{\sigma^2}-1}{2}+5}}\Big)+C_1\vert u\vert\Delta_n^{\frac{7}{2}+\frac{1}{q_0\overline{p}}}\varphi_{n\Delta_n}(b_0)\\
&\qquad\times e^{C_2u^2(\varphi_{n\Delta_n}(b_0))^2\Delta_nY_{t_k}^{b_0}}\Big((Y_{t_k}^{b_0})^{\frac{3}{2}}+\sqrt{\Delta_n}\vert u\vert\varphi_{n\Delta_n}(b_0) (Y_{t_k}^{b_0})^{2}+\dfrac{1}{(Y_{t_k}^{b_0})^{\frac{\frac{2a}{\sigma^2}-1}{2q_0}+5}}\Big)\bigg\},
\end{align*}
for some constant $C>0$, where $\overline{p}=\frac{5q_0+4+\sqrt{25q_0^2+8q_0}}{4(2q_0+1)}$. Then, using Young's inequality for products with $\frac{1}{\alpha}+\frac{1}{\beta}=1$ and $\beta$ close to $1$,
\begin{align*}
	\sum_{k=0}^{n-1}\widehat{\E}^{b_0}[\zeta_{k,n}^2\vert \widehat{\mathcal{F}}_{t_k}]&\leq  Cu^2(\varphi_{n\Delta_n}(b_0))^2\Delta_n^{1+\frac{12}{9+\sqrt{33}}}\sum_{k=0}^{n-1}\Big(Y_{t_k}^{b_0}+\dfrac{1}{(Y_{t_k}^{b_0})^{\frac{\frac{2a}{\sigma^2}-1}{2}+5}}\Big)\\
	&\quad+C\vert u\vert^3(\varphi_{n\Delta_n}(b_0))^3\Delta_n^{\frac{3}{2}+\frac{1}{q_0\overline{p}}}\bigg(\frac{1}{\alpha}\sum_{k=0}^{n-1} e^{C_2\alpha u^2(\varphi_{n\Delta_n}(b_0))^2\Delta_nY_{t_k}^{b_0}}\\
	&\quad+\frac{1}{\beta}\sum_{k=0}^{n-1} \Big((Y_{t_k}^{b_0})^{\frac{3}{2}\beta}+(\sqrt{\Delta_n}\vert u\vert\varphi_{n\Delta_n}(b_0))^{\beta} (Y_{t_k}^{b_0})^{2\beta}+\dfrac{1}{(Y_{t_k}^{b_0})^{\beta(\frac{\frac{2a}{\sigma^2}-1}{2q_0}+5)}}\Big)\bigg).
\end{align*}
\begin{color}{black}
	Thus, for subcritical case $b_0>0$, taking $\varphi_{n\Delta_n}(b_0)=\frac{1}{\sqrt{n\Delta_n}}$, using \eqref{boundedexp} and Lemma \ref{moment} \textnormal{(i)}, we get $\widehat{\E}^{b_0}[\vert\sum_{k=0}^{n-1}\widehat{\E}^{b_0}[\zeta_{k,n}^2\vert \widehat{\mathcal{F}}_{t_k}]\vert]\leq C\Delta_n^{\frac{12}{9+\sqrt{33}}}$, which tends to zero. Here notice that $\beta(\frac{\frac{2a}{\sigma^2}-1}{2q_0}+5)<\frac{2a}{\sigma^2}$.
	
	For critical case $b_0=0$, taking $\varphi_{n\Delta_n}(b_0)=\frac{1}{n\Delta_n}$, using \eqref{boundedexp} and Lemma \ref{auxi} \textnormal{(i)} with $p\in\{-1, \frac{2a/\sigma^2-1}{2}+5, -\frac{3}{2}\beta, -2\beta,  \beta(\frac{2a/\sigma^2-1}{2q_0}+5)\}$, we get 
	\begin{align*}
	&\widehat{\E}^{0}\Big[\big\vert\sum_{k=0}^{n-1}\widehat{\E}^{0}[\zeta_{k,n}^2\vert \widehat{\mathcal{F}}_{t_k}]\big\vert\Big]\leq  Cu^2\frac{1}{(n\Delta_n)^2}\Delta_n^{\frac{12}{9+\sqrt{33}}}\bigg((n\Delta_n)^2+\dfrac{1}{(n\Delta_n)^{\frac{\frac{2a}{\sigma^2}-1}{2}+4}}\bigg)\\
	&+C\frac{\vert u\vert^3}{(n\Delta_n)^3}\Delta_n^{\frac{1}{2}+\frac{1}{q_0\overline{p}}}\bigg(n\Delta_n+(n\Delta_n)^{\frac{3}{2}\beta+1}+(\sqrt{\Delta_n}\vert u\vert\frac{1}{n\Delta_n})^{\beta} (n\Delta_n)^{2\beta+1}+\dfrac{1}{(n\Delta_n)^{\beta(\frac{\frac{2a}{\sigma^2}-1}{2q_0}+5)-1}}\bigg)\\
	&\leq C\Delta_n^{\frac{12}{9+\sqrt{33}}},
	\end{align*}
	which tends to zero by choosing $\beta$ close to $1$. 
	
	For supercritical case $b_0<0$, taking   $\varphi_{n\Delta_n}(b_0)=e^{b_0\frac{n\Delta_n}{2}}$, using \eqref{boundedexp} and Lemma \ref{auxi} \textnormal{(ii)} with $p\in\{-1, \frac{2a/\sigma^2-1}{2}+5, -\frac{3}{2}\beta, -2\beta,  \beta(\frac{2a/\sigma^2-1}{2q_0}+5)\}$, we get 
	\begin{align*}
	&\widehat{\E}^{b_0}\Big[\big\vert\sum_{k=0}^{n-1}\widehat{\E}^{b_0}[\zeta_{k,n}^2\vert \widehat{\mathcal{F}}_{t_k}]\big\vert\Big]\leq  Cu^2e^{b_0n\Delta_n}\Delta_n^{\frac{12}{9+\sqrt{33}}}\bigg(e^{-b_0n\Delta_n}+\dfrac{1}{(n\Delta_n)^{\frac{\frac{2a}{\sigma^2}-1}{2}+4}}\bigg)\\
	&+C\vert u\vert^3e^{b_0\frac{3n\Delta_n}{2}}\Delta_n^{\frac{1}{2}+\frac{1}{q_0\overline{p}}}\bigg(n\Delta_n+e^{-\frac{3}{2}\beta b_0n\Delta_n}+(\sqrt{\Delta_n}\vert u\vert e^{b_0\frac{n\Delta_n}{2}})^{\beta} e^{-2\beta b_0n\Delta_n}\\
	&\qquad+\dfrac{1}{(n\Delta_n)^{\beta(\frac{\frac{2a}{\sigma^2}-1}{2q_0}+5)-1}}\bigg)\\
	&\leq C\left(\Delta_n^{\frac{12}{9+\sqrt{33}}}+e^{b_0\frac{3n\Delta_n}{2}}\Delta_n^{\frac{1}{2}+\frac{1}{q_0\overline{p}}}e^{-\frac{3}{2}\beta b_0n\Delta_n}\right)\\
	&= C\left(\Delta_n^{\frac{12}{9+\sqrt{33}}}+(\Delta_n^{2+\frac{7q_0\overline{p}+2-6q_0\overline{p}\beta}{3q_0\overline{p}(\beta-1)}}e^{-b_0n\Delta_n})^{\frac{3}{2}(\beta-1)}\right),
\end{align*}
which tends to zero by choosing $\beta\in (1,\frac{7q_0\overline{p}+2}{6q_0\overline{p}})$. This completes the proof.
\end{color}	 
\end{proof}
\begin{lemma}\label{lemma2} Assume conditions {\bf(A1)} and {\bf(A2)}. Then, as $n\to\infty$,
\begin{align*}
\sum_{k=0}^{n-1}\frac{\varphi_{n\Delta_n}(b_0)}{\Delta_n}\int_0^1\Big(H_8^{b_0}-\widetilde{\E}_{t_k,Y_{t_{k}}^{b_0}}^{b(\ell)}\big[H_5^{b(\ell)}\big\vert X_{t_{k+1}}^{b(\ell)}=Y_{t_{k+1}}^{b_0}\big]\Big)d\ell\overset{\widehat{\P}^{b_0}}{\longrightarrow}0.
\end{align*}
\end{lemma}
\begin{proof}
We apply Lemma \ref{zero} with
$$
\zeta_{k,n}:=\frac{\varphi_{n\Delta_n}(b_0)}{\Delta_n}\int_0^1\Big(H_8^{b_0}-\widetilde{\E}_{t_k,Y_{t_{k}}^{b_0}}^{b(\ell)}\big[H_5^{b(\ell)}\big\vert X_{t_{k+1}}^{b(\ell)}=Y_{t_{k+1}}^{b_0}\big]\Big)d\ell.
$$
Using \eqref{for1} of Lemma \ref{change} with $V=H_5^{b(\ell)}$,  $\widehat{\E}_{t_k,Y_{t_k}^{b_0}}^{b_0}[H^{b_0}_{8}]=0$ and $\widetilde{\E}_{t_k,Y_{t_{k}}^{b_0}}^{b(\ell)}[H_5^{b(\ell)}]=0$, \eqref{for3} of Lemma \ref{deviation1} with $q=2$, we obtain for $n$ large enough,
\begin{align*} 
\Big\vert\sum_{k=0}^{n-1}\widehat{\E}^{b_0}[\zeta_{k,n}\vert \widehat{\mathcal{F}}_{t_k}]\Big\vert &=\left\vert\sum_{k=0}^{n-1}\frac{\varphi_{n\Delta_n}(b_0)}{\Delta_n}\int_0^1\widehat{\E}_{t_k,Y_{t_k}^{b_0}}^{b_0}\left[\widetilde{\E}_{t_k,Y_{t_{k}}^{b_0}}^{b(\ell)}\left[H_5^{b(\ell)}\vert X_{t_{k+1}}^{b(\ell)}=Y_{t_{k+1}}^{b_0}\right]\right]d\ell\right\vert\\
&\leq C\dfrac{\vert u\vert(\varphi_{n\Delta_n}(b_0))^2 }{\sqrt{\Delta_n}}\sum_{k=0}^{n-1}\int_0^1\big(\widetilde{\E}_{t_k,Y_{t_k}^{b_0}}^{b(\ell)}[\vert H_5^{b(\ell)}\vert^2]\big)^{\frac{1}{2}}e^{C_2u^2(\varphi_{n\Delta_n}(b_0))^2\Delta_nY_{t_k}^{b_0}} \\
&\qquad\times\big(1+\sqrt{Y_{t_k}^{b_0}}+\sqrt{\Delta_n}\vert u\vert\varphi_{n\Delta_n}(b_0) Y_{t_k}^{b_0}\big)d\ell.
\end{align*}	
Now, using Burkholder-Davis-Gundy's (BDG's) inequality, It\^o's formula, H\"older's inequality, \eqref{e1}, \eqref{e2}, we get for $p\geq 1$, 
\begin{align}\label{sqrtincre}
\widetilde{\E}_{t_k,Y_{t_k}^{b_0}}^{b}\big[\vert H_5^{b}\vert^{2p}\big]&\leq C\Delta_n^{2p}\Delta_n^{p-1}\int_{t_k}^{t_{k+1}}\widetilde{\E}_{t_k,Y_{t_k}^{b_0}}^{b}\big[\vert\sqrt{X_s^{b}}-\sqrt{X_{t_k}^{b}}\vert^{2p}\big]ds\notag\\
&\leq C\Delta_n^{3p-1}\int_{t_k}^{t_{k+1}}\widetilde{\E}_{t_k,Y_{t_k}^{b_0}}^{b}\Big[\Big\vert\int_{t_k}^{s}\big((\frac{a}{2}-\frac{\sigma^2}{8})\frac{1}{\sqrt{X_u^{b}}}-\frac{b}{2}\sqrt{X_u^{b}}\big)du+\frac{\sigma}{2}\int_{t_k}^{s}dB_u\notag\\
&\qquad+\int_{t_k}^{s}\int_0^{\infty}(\sqrt{X_{u-}^{b}+z}-\sqrt{X_{u-}^{b}})M(du,dz)\Big\vert^{2p}\Big]ds\notag\\
&\leq C\Delta_n^{3p-1}\int_{t_k}^{t_{k+1}}\Big((s-t_k)^{2p-1}\int_{t_k}^{s}\big(\widetilde{\E}_{t_k,Y_{t_k}^{b_0}}^{b}\big[\frac{1}{(X_u^{b})^p}\big]+\vert b\vert\widetilde{\E}_{t_k,Y_{t_k}^{b_0}}^{b}[(X_u^{b})^p]\big)du\notag\\
&\qquad+(s-t_k)^p+\int_0^{\infty}z^{2p}m(dz)\int_{t_k}^{s}\widetilde{\E}_{t_k,Y_{t_k}^{b_0}}^{b}\big[\frac{1}{(X_u^{b})^p}\big]du\Big)ds\notag\\
&\leq  C\Delta_n^{3p+1}\Big(\frac{1}{(Y_{t_k}^{b_0})^{p}}+\vert b\vert(Y_{t_k}^{b_0})^p+1\Big),
\end{align}
provided that $p<\frac{2a}{\sigma^2}-1$. Then, applying \eqref{sqrtincre} with $p=1$, condition $\frac{a}{\sigma^2}>1$, we get
\begin{align*} 
&\Big\vert\sum_{k=0}^{n-1}\widehat{\E}^{b_0}[\zeta_{k,n}\vert \widehat{\mathcal{F}}_{t_k}]\Big\vert\\
&\leq C\dfrac{\vert u\vert(\varphi_{n\Delta_n}(b_0))^2 }{\sqrt{\Delta_n}}\sum_{k=0}^{n-1}\Delta_n^{2}\Big(\frac{1}{\sqrt{Y_{t_k}^{b_0}}}+(\vert b_0\vert+\vert u\vert \varphi_{n\Delta_n}(b_0))\sqrt{Y_{t_k}^{b_0}}+1\Big) \\
&\qquad\times e^{C_2u^2(\varphi_{n\Delta_n}(b_0))^2\Delta_nY_{t_k}^{b_0}}\big(1+\sqrt{Y_{t_k}^{b_0}}+\sqrt{\Delta_n}\vert u\vert\varphi_{n\Delta_n}(b_0) Y_{t_k}^{b_0}\big)\\
&\leq C\dfrac{\vert u\vert(\varphi_{n\Delta_n}(b_0))^2 }{\sqrt{\Delta_n}}\sum_{k=0}^{n-1}\Delta_n^{2}e^{C_2u^2(\varphi_{n\Delta_n}(b_0))^2\Delta_nY_{t_k}^{b_0}}\Big(\frac{1}{\sqrt{Y_{t_k}^{b_0}}}+(\vert b_0\vert+\vert u\vert \varphi_{n\Delta_n}(b_0))Y_{t_k}^{b_0}\\
&\qquad+(\vert b_0\vert+\vert u\vert \varphi_{n\Delta_n}(b_0))\sqrt{\Delta_n}\vert u\vert\varphi_{n\Delta_n}(b_0) (Y_{t_k}^{b_0})^{\frac{3}{2}}\Big). 
\end{align*}
Then, using Young's inequality for products with $\frac{1}{\alpha}+\frac{1}{\beta}=1$ and $\beta$ close to $1$,
\begin{align*} 
	&\Big\vert\sum_{k=0}^{n-1}\widehat{\E}^{b_0}[\zeta_{k,n}\vert \widehat{\mathcal{F}}_{t_k}]\Big\vert\leq C\vert u\vert(\varphi_{n\Delta_n}(b_0))^2 \Delta_n^{\frac{3}{2}}\bigg(\frac{1}{\alpha}\sum_{k=0}^{n-1}e^{C_2\alpha u^2(\varphi_{n\Delta_n}(b_0))^2\Delta_nY_{t_k}^{b_0}} +\frac{1}{\beta}\sum_{k=0}^{n-1}\Big(\frac{1}{(Y_{t_k}^{b_0})^{\frac{\beta}{2}}}\\
	&\qquad+(\vert b_0\vert+\vert u\vert \varphi_{n\Delta_n}(b_0))^{\beta}(Y_{t_k}^{b_0})^{\beta}+((\vert b_0\vert+\vert u\vert \varphi_{n\Delta_n}(b_0))\sqrt{\Delta_n}\vert u\vert\varphi_{n\Delta_n}(b_0))^{\beta} (Y_{t_k}^{b_0})^{\frac{3}{2}\beta}\Big)\bigg). 
\end{align*}
Thus, for subcritical case $b_0>0$ (respectively critical case $b_0=0$, respectively supercritical case $b_0<0$), using \eqref{boundedexp}, Lemma \ref{moment} \textnormal{(i)} (respectively \textcolor{black}{Lemma \ref{auxi} \textnormal{(i)}}, respectively \textcolor{black}{Lemma \ref{auxi} \textnormal{(ii)}}), we get $\widehat{\E}^{b_0}[\vert\sum_{k=0}^{n-1}\widehat{\E}^{b_0}[\zeta_{k,n}\vert \widehat{\mathcal{F}}_{t_k}]\vert]\leq C\sqrt{\Delta_n}$ (respectively $\widehat{\E}^{0}[\vert\sum_{k=0}^{n-1}\widehat{\E}^{0}[\zeta_{k,n}\vert \widehat{\mathcal{F}}_{t_k}]\vert]\leq C\sqrt{\Delta_n}$, respectively $\widehat{\E}^{b_0}[\vert\sum_{k=0}^{n-1}\widehat{\E}^{b_0}[\zeta_{k,n}\vert \widehat{\mathcal{F}}_{t_k}]\vert]\leq C(\Delta_n^{2+\frac{5-4\beta}{2(\beta-1)}}e^{-b_0n\Delta_n})^{\beta-1}$), which tends to zero by choosing $\beta\in (1,\frac{5}{4})$ in the supercritical case. Here notice that $\frac{\beta}{2}<\frac{2a}{\sigma^2}$. 

Similarly, we use Jensen's inequality, \eqref{for1} of Lemma \ref{change} with $V=(H_5^{b(\ell)})^2$, \eqref{for3} of Lemma \ref{deviation1} with $q=2$, \eqref{sqrtincre} with $p\in\{1,2\}$, and condition $\frac{a}{\sigma^2}>\frac{3}{2}$, we obtain for $n$ large enough, 
\begin{align*}
&\sum_{k=0}^{n-1}\widehat{\E}^{b_0}[\zeta_{k,n}^2\vert \widehat{\mathcal{F}}_{t_k}]\\
&\leq 2\frac{(\varphi_{n\Delta_n}(b_0))^2}{\Delta_n^2}\sum_{k=0}^{n-1}\int_0^1\bigg\{\widehat{\E}_{t_k,Y_{t_k}^{b_0}}^{b_0}\Big[(H^{b_0}_{8})^2\Big]+\widehat{\E}_{t_k,Y_{t_k}^{b_0}}^{b_0}\Big[\widetilde{\E}_{t_k,Y_{t_{k}}^{b_0}}^{b(\ell)}\big[(H_5^{b(\ell)})^2\vert X_{t_{k+1}}^{b(\ell)}=Y_{t_{k+1}}^{b_0}\big]\Big]\bigg\}d\ell\\	
&\leq 2\frac{(\varphi_{n\Delta_n}(b_0))^2}{\Delta_n^2}\sum_{k=0}^{n-1}\int_0^1\bigg\{\widehat{\E}_{t_k,Y_{t_k}^{b_0}}^{b_0}[(H^{b_0}_{8})^2]+\widetilde{\E}_{t_k,Y_{t_k}^{b_0}}^{b(\ell)}[(H_5^{b(\ell)})^2]+C_1\sqrt{\Delta_n}\big(\widetilde{\E}_{t_k,Y_{t_k}^{b_0}}^{b(\ell)}\big[\vert H_5^{b(\ell)}\vert^{4}\big]\big)^{\frac{1}{2}} \\
&\qquad\times e^{C_2u^2(\varphi_{n\Delta_n}(b_0))^2\Delta_nY_{t_k}^{b_0}} \vert u\vert\varphi_{n\Delta_n}(b_0)\big(1+\sqrt{Y_{t_k}^{b_0}}+\sqrt{\Delta_n}\vert u\vert\varphi_{n\Delta_n}(b_0) Y_{t_k}^{b_0}\big)\bigg\}d\ell\\
&\leq C\frac{(\varphi_{n\Delta_n}(b_0))^2}{\Delta_n^2}\sum_{k=0}^{n-1}\bigg\{\Delta_n^{4}\Big(\frac{1}{Y_{t_k}^{b_0}}+(\vert b_0\vert+\vert u\vert \varphi_{n\Delta_n}(b_0)) Y_{t_k}^{b_0}+1\Big)+C_1\sqrt{\Delta_n}\Delta_n^{\frac{7}{2}}\Big(\frac{1}{Y_{t_k}^{b_0}}\\
&\qquad+(\vert b_0\vert+\vert u\vert \varphi_{n\Delta_n}(b_0)) Y_{t_k}^{b_0}+1\Big) e^{C_2u^2(\varphi_{n\Delta_n}(b_0))^2\Delta_nY_{t_k}^{b_0}} \vert u\vert\varphi_{n\Delta_n}(b_0)\big(1+\sqrt{Y_{t_k}^{b_0}}+\sqrt{\Delta_n}\vert u\vert\\
&\qquad \times \varphi_{n\Delta_n}(b_0) Y_{t_k}^{b_0}\big)\bigg\}\\
&\leq C\frac{(\varphi_{n\Delta_n}(b_0))^2}{\Delta_n^2}\sum_{k=0}^{n-1}\bigg\{\Delta_n^{4}\Big(\frac{1}{Y_{t_k}^{b_0}}+(\vert b_0\vert+\vert u\vert \varphi_{n\Delta_n}(b_0)) Y_{t_k}^{b_0}+1\Big)\\
&\qquad+C_1\vert u\vert\Delta_n^{4}\varphi_{n\Delta_n}(b_0)e^{C_2u^2(\varphi_{n\Delta_n}(b_0))^2\Delta_nY_{t_k}^{b_0}}\Big(\frac{1}{Y_{t_k}^{b_0}}+(\vert b_0\vert+\vert u\vert \varphi_{n\Delta_n}(b_0))(Y_{t_k}^{b_0})^{\frac{3}{2}}\\
&\qquad+(\vert b_0\vert+\vert u\vert \varphi_{n\Delta_n}(b_0))\sqrt{\Delta_n}\vert u\vert\varphi_{n\Delta_n}(b_0) (Y_{t_k}^{b_0})^{2}\Big) \bigg\},
\end{align*}
where the estimate for $\widehat{\E}_{t_k,Y_{t_k}^{b_0}}^{b_0}[(H^{b_0}_{8})^2]$ is proceeded similarly as for \eqref{sqrtincre}. Then, using Young's inequality for products with $\frac{1}{\alpha}+\frac{1}{\beta}=1$ and $\beta$ close to $1$,
\begin{align*}
	&\sum_{k=0}^{n-1}\widehat{\E}^{b_0}[\zeta_{k,n}^2\vert \widehat{\mathcal{F}}_{t_k}]\leq C(\varphi_{n\Delta_n}(b_0))^2\Delta_n^2\sum_{k=0}^{n-1}\Big(\frac{1}{Y_{t_k}^{b_0}}+(\vert b_0\vert+\vert u\vert \varphi_{n\Delta_n}(b_0)) Y_{t_k}^{b_0}+1\Big)\\
	&\qquad+C\vert u\vert(\varphi_{n\Delta_n}(b_0))^3\Delta_n^2\bigg(\frac{1}{\alpha}\sum_{k=0}^{n-1}e^{C_2\alpha u^2(\varphi_{n\Delta_n}(b_0))^2\Delta_nY_{t_k}^{b_0}}+\frac{1}{\beta}\sum_{k=0}^{n-1}\Big(\frac{1}{(Y_{t_k}^{b_0})^{\beta}}\\
	&\qquad+(\vert b_0\vert+\vert u\vert \varphi_{n\Delta_n}(b_0))^{\beta}(Y_{t_k}^{b_0})^{\frac{3}{2}\beta}+((\vert b_0\vert+\vert u\vert \varphi_{n\Delta_n}(b_0))\sqrt{\Delta_n}\vert u\vert\varphi_{n\Delta_n}(b_0))^{\beta} (Y_{t_k}^{b_0})^{2\beta}\Big)\bigg). 
\end{align*}
Thus, for subcritical case $b_0>0$ (respectively critical case $b_0=0$, respectively supercritical case $b_0<0$), using \eqref{boundedexp}, Lemma \ref{moment} \textnormal{(i)} (respectively \textcolor{black}{Lemma \ref{auxi} \textnormal{(i)}}, respectively \textcolor{black}{Lemma \ref{auxi} \textnormal{(ii)}}) and standard calculations, we get $\widehat{\E}^{b_0}[\vert\sum_{k=0}^{n-1}\widehat{\E}^{b_0}[\zeta_{k,n}^2\vert \widehat{\mathcal{F}}_{t_k}]\vert]\leq C\Delta_n$ (respectively $\widehat{\E}^{0}[\vert\sum_{k=0}^{n-1}\widehat{\E}^{0}[\zeta_{k,n}^2\vert \widehat{\mathcal{F}}_{t_k}]\vert]\leq C\Delta_n$, respectively $\widehat{\E}^{b_0}[\vert\sum_{k=0}^{n-1}\widehat{\E}^{b_0}[\zeta_{k,n}^2\vert \widehat{\mathcal{F}}_{t_k}]\vert]\leq C(\Delta_n^{2+\frac{8-6\beta}{3(\beta-1)}}e^{-b_0n\Delta_n})^{\frac{3}{2}(\beta-1)}$), which tends to zero by choosing $\beta\in (1,\frac{4}{3})$ in the supercritical case. Here notice that $\beta<\frac{2a}{\sigma^2}$. This completes the proof.	
\end{proof}

\begin{lemma}\label{lemma3} Assume conditions {\bf(A1)} and {\bf(A2)}. Then, as $n\to\infty$,
\begin{align*}
\sum_{k=0}^{n-1}\frac{\varphi_{n\Delta_n}(b_0)}{\Delta_n}\int_0^1\Big(H_7^{b_0}-\widetilde{\E}_{t_k,Y_{t_{k}}^{b_0}}^{b(\ell)}\big[H_4^{b(\ell)}\big\vert X_{t_{k+1}}^{b(\ell)}=Y_{t_{k+1}}^{b_0}\big]\Big)d\ell\overset{\widehat{\P}^{b_0}}{\longrightarrow}0.
\end{align*}
\end{lemma}
\begin{proof}
	We write 
	\begin{align*}
		\frac{\varphi_{n\Delta_n}(b_0)}{\Delta_n}\int_0^1\Big(H_7^{b_0}-\widetilde{\E}_{t_k,Y_{t_{k}}^{b_0}}^{b(\ell)}\big[H_4^{b(\ell)}\big\vert X_{t_{k+1}}^{b(\ell)}=Y_{t_{k+1}}^{b_0}\big]\Big)d\ell=M_{k,n,1}+M_{k,n,2},
	\end{align*}	
	where	
	\begin{align*}
		M_{k,n,1}&=-\dfrac{u}{2\sigma^2 }(\varphi_{n\Delta_n}(b_0))^2\int_{t_k}^{t_{k+1}}(Y_s^{b_0}-Y_{t_k}^{b_0})ds,\\
		M_{k,n,2}&=\dfrac{1}{\sigma^2}\varphi_{n\Delta_n}(b_0)\int_0^1b(\ell)\Big(\int_{t_k}^{t_{k+1}}(Y_s^{b_0}-Y_{t_k}^{b_0})ds\\
		&\qquad-\widetilde{\E}_{t_k,Y_{t_{k}}^{b_0}}^{b(\ell)}\Big[\int_{t_k}^{t_{k+1}}(X_s^{b(\ell)}-X_{t_k}^{b(\ell)})ds\vert X_{t_{k+1}}^{b(\ell)}=Y_{t_{k+1}}^{b_0}\Big]\Big)d\ell.
	\end{align*}
First, as for $H_{8,k,n}^{b_0}$ in \eqref{m1}, using equation \eqref{eqintegral} and Lemma \ref{Pro_moments}, we get $\widehat{\E}^{b_0}[\vert\sum_{k=0}^{n-1} M_{k,n,1}\vert]\leq C\sqrt{\Delta_n}$ for $b_0>0$, $b_0=0$ and $b_0<0$, which tends to zero.
Thus, $\sum_{k=0}^{n-1}M_{k,n,1}\overset{\widehat{\P}^{b_0}}{\longrightarrow}0$. 

Next, we wish to show that $\sum_{k=0}^{n-1}M_{k,n,2}\overset{\widehat{\P}^{b_0}}{\longrightarrow}0$ by applying Lemma \ref{zero}. For this, using \eqref{for1} of Lemma \ref{change} with $V=V^{b(\ell)}=\int_{t_k}^{t_{k+1}}(X_s^{b(\ell)}-X_{t_k}^{b(\ell)})ds$, \eqref{for2} of Lemma \ref{change} with $\widehat{V}=\widehat{V}^{b_0}=\int_{t_k}^{t_{k+1}}(Y_s^{b_0}-Y_{t_k}^{b_0})ds$, \eqref{for3} and \eqref{for4} of Lemma \ref{deviation1} with $q=2$, we get for $n$ large enough,
\begin{align*}
	&\Big\vert\sum_{k=0}^{n-1}\widehat{\E}^{b_0}[M_{k,n,2}\vert \widehat{\mathcal{F}}_{t_k}]\Big\vert \leq C\vert u\vert(\varphi_{n\Delta_n}(b_0))^2\sqrt{\Delta_n}\sum_{k=0}^{n-1}\int_0^1\vert b(\ell)\vert  e^{C_2u^2(\varphi_{n\Delta_n}(b_0))^2\Delta_nY_{t_k}^{b_0}}\\
	&\qquad\times  \big(1+\sqrt{Y_{t_k}^{b_0}}+\sqrt{\Delta_n}\vert u\vert\varphi_{n\Delta_n}(b_0) Y_{t_k}^{b_0}\big)\Big(\Big(\widehat{\E}_{t_k,Y_{t_k}^{b_0}}^{b(\ell)}\big[\vert \widehat{V}^{b(\ell)}\vert^{2}\big]\Big)^{\frac{1}{2}}+\Big(\widetilde{\E}_{t_k,Y_{t_k}^{b_0}}^{b(\ell)}\big[\vert V^{b(\ell)}\vert^{2}\big]\Big)^{\frac{1}{2}}\Big)d\ell,
\end{align*}
where we use $\widehat{\E}_{t_k,Y_{t_k}^{b_0}}^{b(\ell)}[\widehat{V}^{b(\ell)}]-\widetilde{\E}_{t_k,Y_{t_k}^{b_0}}^{b(\ell)}[V^{b(\ell)}]=0$ since $X^{b(\ell)}$ is the independent copy of $Y^{b(\ell)}$.

Now, using equation \eqref{c2eq1rajoute}, BDG's inequality, H\"older's inequality and \eqref{e1}, we get for $p\geq 1$,
\begin{align}\label{incrementY}
	&\widetilde{\E}_{t_k,Y_{t_k}^{b_0}}^{b}\Big[\Big\vert \int_{t_k}^{t_{k+1}}(X_s^{b}-X_{t_k}^{b})ds\Big\vert^{2p}\Big] \notag\\
	&\leq \Delta_n^{2p-1}\int_{t_k}^{t_{k+1}}\widetilde{\E}_{t_k,Y_{t_k}^{b_0}}^{b}\Big[\big\vert X_s^{b}-X_{t_k}^{b}\big\vert^{2p}\Big]ds\notag\\
	&\leq C\Delta_n^{2p-1}\int_{t_k}^{t_{k+1}}\Big((s-t_k)^{2p-1}\int_{t_k}^{s}\big(1+\vert b\vert\widetilde{\E}_{t_k,Y_{t_k}^{b_0}}^{b}[\vert X_u^{b} \vert^{2p}]\big)du\notag\\
	&\qquad+(s-t_k)^{p-1}\int_{t_k}^{s}\widetilde{\E}_{t_k,Y_{t_k}^{b_0}}^{b}[\vert X_u^{b} \vert^{p}]du+(s-t_k)\int_0^{\infty}z^{2p}m(dz)\Big)ds\notag\\
	&\leq C\Delta_n^{2p+1}\big(1+\vert b\vert(Y_{t_k}^{b_0})^{2p}+(Y_{t_k}^{b_0})^{p}\big).
\end{align}
Then, applying \eqref{incrementY} with $p=1$, we get
\begin{align*}
	&\Big\vert\sum_{k=0}^{n-1}\widehat{\E}^{b_0}[M_{k,n,2}\vert \widehat{\mathcal{F}}_{t_k}]\Big\vert\\
	& \leq C\vert u\vert(\varphi_{n\Delta_n}(b_0))^2\Delta_n^{2}\sum_{k=0}^{n-1} e^{C_2u^2(\varphi_{n\Delta_n}(b_0))^2\Delta_nY_{t_k}^{b_0}}\big(1+\sqrt{Y_{t_k}^{b_0}}\\
	&\qquad  +\sqrt{\Delta_n}\vert u\vert\varphi_{n\Delta_n}(b_0) Y_{t_k}^{b_0}\big)\big(1+(\vert b_0\vert+\vert u\vert \varphi_{n\Delta_n}(b_0)) Y_{t_k}^{b_0}+\sqrt{Y_{t_k}^{b_0}}\big)\\
	&\leq C\vert u\vert(\varphi_{n\Delta_n}(b_0))^2\Delta_n^{2}\sum_{k=0}^{n-1} e^{C_2u^2(\varphi_{n\Delta_n}(b_0))^2\Delta_nY_{t_k}^{b_0}}\Big(1+(\vert b_0\vert+\vert u\vert \varphi_{n\Delta_n}(b_0))(Y_{t_k}^{b_0})^{\frac{3}{2}}\\
	&\qquad  +(\vert b_0\vert+\vert u\vert \varphi_{n\Delta_n}(b_0))\sqrt{\Delta_n}\vert u\vert\varphi_{n\Delta_n}(b_0) (Y_{t_k}^{b_0})^{2}\Big).
\end{align*}
Then, using Young's inequality for products with $\frac{1}{\alpha}+\frac{1}{\beta}=1$ and $\beta$ close to $1$,
\begin{align*}
	&\Big\vert\sum_{k=0}^{n-1}\widehat{\E}^{b_0}[M_{k,n,2}\vert \widehat{\mathcal{F}}_{t_k}]\Big\vert \leq  C\vert u\vert(\varphi_{n\Delta_n}(b_0))^2\Delta_n^{2}\bigg(\frac{1}{\alpha}\sum_{k=0}^{n-1} e^{C_2\alpha u^2(\varphi_{n\Delta_n}(b_0))^2\Delta_nY_{t_k}^{b_0}}+\frac{1}{\beta}\sum_{k=0}^{n-1} \Big(1\\
	&+(\vert b_0\vert+\vert u\vert \varphi_{n\Delta_n}(b_0))^{\beta}(Y_{t_k}^{b_0})^{\frac{3}{2}\beta}+((\vert b_0\vert+\vert u\vert \varphi_{n\Delta_n}(b_0))\sqrt{\Delta_n}\vert u\vert\varphi_{n\Delta_n}(b_0))^{\beta} (Y_{t_k}^{b_0})^{2\beta}\Big)\bigg).
\end{align*}
Thus, for subcritical case $b_0>0$ (respectively critical case $b_0=0$, respectively supercritical case $b_0<0$), using \eqref{boundedexp}, Lemma \ref{moment} \textnormal{(i)} (respectively \textcolor{black}{Lemma \ref{auxi} \textnormal{(i)}}, respectively \textcolor{black}{Lemma \ref{auxi} \textnormal{(ii)}}), we get $\widehat{\E}^{b_0}[\vert\sum_{k=0}^{n-1}\widehat{\E}^{b_0}[M_{k,n,2}\vert \widehat{\mathcal{F}}_{t_k}]\vert]\leq C\Delta_n$ (respectively $\widehat{\E}^{0}[\vert\sum_{k=0}^{n-1}\widehat{\E}^{0}[M_{k,n,2}\vert \widehat{\mathcal{F}}_{t_k}]\vert]\leq C\Delta_n$, respectively $\widehat{\E}^{b_0}[\vert\sum_{k=0}^{n-1}\widehat{\E}^{b_0}[M_{k,n,2}\vert \widehat{\mathcal{F}}_{t_k}]\vert]\leq C(\Delta_n^{2-\frac{6(\beta-1)}{3\beta-2}}e^{-b_0n\Delta_n})^{\frac{3}{2}\beta-1}$), which tends to zero by choosing $\beta$ close to $1$ in the supercritical case. This is because $\Delta_n^{2-\varepsilon}e^{-b_0n\Delta_n}\to 0$ for the supercritical case where $\varepsilon>0$ is arbitrarily small.

Next, using Jensen's inequality, \eqref{for1} of Lemma \ref{change} with $(V^{b(\ell)})^2$, \eqref{for3} of Lemma \ref{deviation1} with $q=2$, \eqref{incrementY} with $p\in\{1,2\}$, we get for $n$ large enough,
\begin{align*}
	&\sum_{k=0}^{n-1}\widehat{\E}^{b_0}[M_{k,n,2}^2\vert \widehat{\mathcal{F}}_{t_k}]\\
	&\leq C(\varphi_{n\Delta_n}(b_0))^2\sum_{k=0}^{n-1}\int_0^1b(\ell)^2\bigg\{\widehat{\E}_{t_k,Y_{t_k}^{b_0}}^{b_0}[(\widehat{V}^{b_0})^2]+\widetilde{\E}_{t_k,Y_{t_k}^{b_0}}^{b(\ell)}[(V^{b(\ell)})^2]\\
	&\qquad+C_1\sqrt{\Delta_n}\big(\widetilde{\E}_{t_k,Y_{t_k}^{b_0}}^{b(\ell)}[\vert V^{b(\ell)}\vert^4]\big)^{\frac{1}{2}}e^{C_2u^2(\varphi_{n\Delta_n}(b_0))^2\Delta_nY_{t_k}^{b_0}}\vert u\vert\varphi_{n\Delta_n}(b_0)\big(1+\sqrt{Y_{t_k}^{b_0}} +\sqrt{\Delta_n}\vert u\vert\\
	 &\qquad \times \varphi_{n\Delta_n}(b_0)Y_{t_k}^{b_0}\big) \bigg\}d\ell\\
	 &\leq C(\varphi_{n\Delta_n}(b_0))^2\sum_{k=0}^{n-1}\bigg\{\Delta_n^{3}\big(1+(\vert b_0\vert+\vert u\vert \varphi_{n\Delta_n}(b_0))(Y_{t_k}^{b_0})^{2}+Y_{t_k}^{b_0}\big)\\
	&\qquad+C_1\vert u\vert\Delta_n^3 \varphi_{n\Delta_n}(b_0)\big(1+(\vert b_0\vert+\vert u\vert \varphi_{n\Delta_n}(b_0))(Y_{t_k}^{b_0})^{2}+Y_{t_k}^{b_0}\big)e^{C_2u^2(\varphi_{n\Delta_n}(b_0))^2\Delta_nY_{t_k}^{b_0}}\\
	&\qquad\times \big(1+\sqrt{Y_{t_k}^{b_0}}+\sqrt{\Delta_n}\vert u\vert\varphi_{n\Delta_n}(b_0) Y_{t_k}^{b_0}\big) \bigg\} \\
	&\leq C(\varphi_{n\Delta_n}(b_0))^2\sum_{k=0}^{n-1}\bigg\{\Delta_n^{3}\big(1+Y_{t_k}^{b_0}+(\vert b_0\vert+\vert u\vert \varphi_{n\Delta_n}(b_0))(Y_{t_k}^{b_0})^{2}\big) \\
	&\qquad+C_1\vert u\vert\Delta_n^3 \varphi_{n\Delta_n}(b_0)e^{C_2u^2(\varphi_{n\Delta_n}(b_0))^2\Delta_nY_{t_k}^{b_0}}\\
	&\qquad\times  \Big(1+(\vert b_0\vert+\vert u\vert \varphi_{n\Delta_n}(b_0))(Y_{t_k}^{b_0})^{\frac{5}{2}}+(\vert b_0\vert+\vert u\vert \varphi_{n\Delta_n}(b_0))\sqrt{\Delta_n}\vert u\vert\varphi_{n\Delta_n}(b_0) (Y_{t_k}^{b_0})^{3}\Big) \bigg\}.
\end{align*}
Then, using Young's inequality for products with $\frac{1}{\alpha}+\frac{1}{\beta}=1$ and $\beta$ close to $1$, 
\begin{align*}
	&\sum_{k=0}^{n-1}\widehat{\E}^{b_0}[M_{k,n,2}^2\vert \widehat{\mathcal{F}}_{t_k}]\leq  C(\varphi_{n\Delta_n}(b_0))^2\Delta_n^{3}\sum_{k=0}^{n-1}\big(1+(\vert b_0\vert+\vert u\vert \varphi_{n\Delta_n}(b_0))(Y_{t_k}^{b_0})^{2}+Y_{t_k}^{b_0}\big)\\
	&+C\vert u\vert(\varphi_{n\Delta_n}(b_0))^3\Delta_n^3\bigg(\frac{1}{\alpha}\sum_{k=0}^{n-1}e^{C_2\alpha u^2(\varphi_{n\Delta_n}(b_0))^2\Delta_nY_{t_k}^{b_0}}+\frac{1}{\beta}\sum_{k=0}^{n-1}\Big(1+(\vert b_0\vert+\vert u\vert \varphi_{n\Delta_n}(b_0))^{\beta}\\
	&\qquad\times (Y_{t_k}^{b_0})^{\frac{5}{2}\beta}+((\vert b_0\vert+\vert u\vert \varphi_{n\Delta_n}(b_0))\sqrt{\Delta_n}\vert u\vert\varphi_{n\Delta_n}(b_0))^{\beta} (Y_{t_k}^{b_0})^{3\beta}\Big)\bigg). 
\end{align*}
Thus, for subcritical case $b_0>0$ (respectively critical case $b_0=0$, respectively supercritical case $b_0<0$), using \eqref{boundedexp}, Lemma \ref{moment} \textnormal{(i)} (respectively \textcolor{black}{Lemma \ref{auxi} \textnormal{(i)}}, respectively \textcolor{black}{Lemma \ref{auxi} \textnormal{(ii)}}), we get $\widehat{\E}^{b_0}[\vert\sum_{k=0}^{n-1}\widehat{\E}^{b_0}[M_{k,n,2}^2\vert \widehat{\mathcal{F}}_{t_k}]\vert]\leq C\Delta_n^2$ (respectively $\widehat{\E}^{0}[\vert\sum_{k=0}^{n-1}\widehat{\E}^{0}[M_{k,n,2}^2\vert \widehat{\mathcal{F}}_{t_k}]\vert]\leq C\Delta_n^2$, respectively $\widehat{\E}^{b_0}[\vert\sum_{k=0}^{n-1}\widehat{\E}^{b_0}[M_{k,n,2}^2\vert \widehat{\mathcal{F}}_{t_k}]\vert]\leq C(\Delta_n^{2-\frac{10(\beta-1)}{5\beta-3}}e^{-b_0n\Delta_n})^{\frac{5\beta-3}{2}}$), which tends to zero by choosing $\beta$ close to $1$ in the supercritical case. This completes the proof.	
\end{proof}
Finally, it remains to deal with the jump terms $H_6^{b(\ell)}$ and $H_9^{b_0}$. To treat the subcritical case, some notations are introduced. Let $(\upsilon_n)_{n\geq 1}$ be a positive sequence satisfying $\upsilon_n\to 0$ and $\frac{\Delta_n}{\upsilon_n}\to 0$ as $n\to\infty$. The process $J^{\upsilon_n}=(J_t^{\upsilon_n})_{t\in\R_+}$ defined by $J_t^{\upsilon_n}=\sum_{0\leq s\leq t}\Delta J_s{\bf 1}_{\{\Delta J_s>\upsilon_n\}}$ is a compound Poisson process with intensity of big jumps $\lambda_{\upsilon_n}:=\int_{z>\upsilon_n}m(dz)$ and distribution of big jumps $\frac{{\bf 1}_{z>\upsilon_n}m(dz)}{\lambda_{\upsilon_n}}$. Then, we can split the jump amplitudes of the subordinator $J_t$ into small jumps and big jumps as follows
\begin{align*}
\int_0^t\int_0^{\infty}zN(ds,dz)=\int_0^t\int_{z\leq\upsilon_n}z\widetilde{N}(ds,dz)+t\int_{z\leq\upsilon_n}zm(dz)+\int_0^t\int_{z>\upsilon_n}zN(ds,dz).
\end{align*}
Hence, from \eqref{eqintegral}, for any $t\in\R_+$, we write
\begin{equation}\label{splitY}
\begin{split}
Y_t^b&=y_0+\int_0^t(a-bY_s^b)ds +\sigma\int_0^t\sqrt{Y_s^b}dW_s+\int_0^t\int_{z\leq\upsilon_n}z\widetilde{N}(ds,dz)\\
&\qquad+t\int_{z\leq\upsilon_n}zm(dz)+\int_0^t\int_{z>\upsilon_n}zN(ds,dz).
\end{split}
\end{equation}
We then denote by $N^{\upsilon_n}=(N_t^{\upsilon_n})_{t\in\R_+}$ the Poisson process with intensity $\lambda_{\upsilon_n}$, which counts the big jumps of the compound Poisson process $J^{\upsilon_n}$.

Similarly, the process $\widetilde{J}^{\upsilon_n}=(\widetilde{J}_t^{\upsilon_n})_{t\in\R_+}$ defined by $\widetilde{J}_t^{\upsilon_n}=\sum_{0\leq s\leq t}\Delta \widetilde{J}_s{\bf 1}_{\{\Delta \widetilde{J}_s>\upsilon_n\}}$ is a compound Poisson process with intensity of big jumps $\lambda_{\upsilon_n}$ and distribution of big jumps $\frac{{\bf 1}_{z>\upsilon_n}m(dz)}{\lambda_{\upsilon_n}}$. Then, we can split the jump amplitudes of the subordinator $\widetilde{J}_t$ into small jumps and big jumps as follows
\begin{align*}
\int_0^t\int_0^{\infty}zM(ds,dz)=\int_0^t\int_{z\leq\upsilon_n}z\widetilde{M}(ds,dz)+t\int_{z\leq\upsilon_n}zm(dz)+\int_0^t\int_{z>\upsilon_n}zM(ds,dz).
\end{align*}
Hence, from \eqref{c2eq1rajoute}, for any $t\in\R_+$, we can write
\begin{equation}\label{splitX}
\begin{split}
X_t^b&=y_0+\int_0^t(a-bX_s^b)ds +\sigma\int_0^t\sqrt{X_s^b}dB_s+\int_0^t\int_{z\leq\upsilon_n}z\widetilde{M}(ds,dz)\\
&\qquad+t\int_{z\leq\upsilon_n}zm(dz)+\int_0^t\int_{z>\upsilon_n}zM(ds,dz).
\end{split}
\end{equation}
Let $M^{\upsilon_n}=(M_t^{\upsilon_n})_{t\in\R_+}$ denote the Poisson process with intensity $\lambda_{\upsilon_n}$ counting the big jumps of the compound Poisson process $\widetilde{J}^{\upsilon_n}$. Now, we need to show the following estimate.
\begin{lemma}\label{jumpestimate2} Let $b=b(\ell)$. Assume conditions {\bf(A1)} and {\bf(A2)}. Then, there exist constants $C, C_1, C_2>0$ such that for all $k \in\{0,...,n-1\}$, $q>1$, $q_0>1$, $q_1\in (1,2]$, $p_2, q_2>1$ satisfying $\frac{1}{p_2}+\frac{1}{q_2}=1$, and for $n$ large enough,
	\begin{align*}
	&\widehat{\E}_{t_k,Y_{t_k}^{b_0}}^{b_0}\left[\Big(\int_{t_k}^{t_{k+1}}\int_{0}^{\infty}zN(ds,dz)-\widetilde{\E}_{t_k,Y_{t_{k}}^{b_0}}^{b}\Big[\int_{t_k}^{t_{k+1}}\int_{0}^{\infty}zM(ds,dz)\big\vert X_{t_{k+1}}^{b}=Y_{t_{k+1}}^{b_0}\Big]\Big)^2\right]\\
	&\leq C(1+(Y_{t_k}^{b_0})^2)\Delta_n\bigg(\left(\lambda_{\upsilon_n}\Delta_n\right)^{\frac{1}{q}}+\left(\lambda_{\upsilon_n}\Delta_n\right)^{\frac{1}{q_2}}+\Big((\sqrt{\Delta_n}\varphi_{n\Delta_n}(b_0))^{\frac{1}{q}}\left(\lambda_{\upsilon_n}\Delta_n\right)^{\frac{1}{q_0q}}+\left(\lambda_{\upsilon_n}\Delta_n\right)^{\frac{1}{q_2}}\\
	& \times  (\sqrt{\Delta_n}\varphi_{n\Delta_n}(b_0))^{\frac{1}{p_2}}\Big) e^{C_1u^2(\varphi_{n\Delta_n}(b_0))^2\Delta_nY_{t_k}^{b_0}}\big(1+\sqrt{Y_{t_k}^{b_0}}+\sqrt{\Delta_n}\varphi_{n\Delta_n}(b_0) Y_{t_k}^{b_0}\big)\bigg)\notag\\
	&  +C\Delta_n\bigg(\int_{z\leq\upsilon_n}z^2m(dz)+\Delta_n\Big(\int_{z\leq\upsilon_n}zm(dz)\Big)^2\bigg)+C\varphi_{n\Delta_n}(b_0)\Delta_n^{\frac{1}{2}+\frac{1}{q_1}}\Big(\int_{z\leq\upsilon_n}z^{2q_1}m(dz)\Big)^{\frac{1}{q_1}} \notag\\ 
	&\times e^{C_2u^2(\varphi_{n\Delta_n}(b_0))^2\Delta_nY_{t_k}^{b_0}} \big(1+\sqrt{Y_{t_k}^{b_0}}+\sqrt{\Delta_n}\varphi_{n\Delta_n}(b_0) Y_{t_k}^{b_0}\big).
	\end{align*}
\end{lemma}

\begin{lemma}\label{lemma4} Assume conditions {\bf(A1)} and {\bf(A2)}. Then, as $n\to\infty$,
	\begin{align*}
	\sum_{k=0}^{n-1}\frac{\varphi_{n\Delta_n}(b_0)}{\Delta_n}\int_0^1\Big(H_9^{b_0}-\widetilde{\E}_{t_k,Y_{t_{k}}^{b_0}}^{b(\ell)}\big[H_6^{b(\ell)}\big\vert X_{t_{k+1}}^{b(\ell)}=Y_{t_{k+1}}^{b_0}\big]\Big)d\ell\overset{\widehat{\P}^{b_0}}{\longrightarrow}0.
	\end{align*}
\end{lemma}
\begin{proof} 
	We apply Lemma \ref{zero} with
	$$
	\zeta_{k,n}:=\frac{\varphi_{n\Delta_n}(b_0)}{\Delta_n}\int_0^1\Big(H_9^{b_0}-\widetilde{\E}_{t_k,Y_{t_{k}}^{b_0}}^{b(\ell)}\big[H_6^{b(\ell)}\big\vert X_{t_{k+1}}^{b(\ell)}=Y_{t_{k+1}}^{b_0}\big]\Big)d\ell.
	$$
	First, using \eqref{for1} of Lemma \ref{change} with $V=\int_{t_k}^{t_{k+1}}\int_{0}^{\infty}z\widetilde{M}(ds,dz)$, $\widetilde{\E}_{t_k,Y_{t_{k}}^{b_0}}^{b(\ell)}[V]=0$, \eqref{for3} of Lemma \ref{deviation1} with $q=q_0\in (1,2)$, BDG's inequality and {\bf(A1)}, {\bf(A2)}, we obtain for $n$ large enough,
	\begin{align*} 
		&\Big\vert\sum_{k=0}^{n-1}\widehat{\E}^{b_0}[\zeta_{k,n}\vert \widehat{\mathcal{F}}_{t_k}]\Big\vert\\
		&=\Big\vert\sum_{k=0}^{n-1}\frac{\varphi_{n\Delta_n}(b_0)}{\sigma^2}\int_0^1\widehat{\E}_{t_k,Y_{t_k}^{b_0}}^{b_0}\Big[\widetilde{\E}_{t_k,Y_{t_{k}}^{b_0}}^{b(\ell)}\Big[\int_{t_k}^{t_{k+1}}\int_{0}^{\infty}z\widetilde{M}(ds,dz)\vert X_{t_{k+1}}^{b(\ell)}=Y_{t_{k+1}}^{b_0}\Big]\Big]d\ell\Big\vert\\
		&\leq C\vert u\vert(\varphi_{n\Delta_n}(b_0))^2\sqrt{\Delta_n}\sum_{k=0}^{n-1}\int_0^1\Big(\widetilde{\E}_{t_k,Y_{t_k}^{b_0}}^{b(\ell)}\Big[\Big\vert \int_{t_k}^{t_{k+1}}\int_{0}^{\infty}z\widetilde{M}(ds,dz)\Big\vert^{q_0}\Big]\Big)^{\frac{1}{q_0}}e^{C_2u^2(\varphi_{n\Delta_n}(b_0))^2\Delta_nY_{t_k}^{b_0}} \\
		&\qquad\times\big(1+\sqrt{Y_{t_k}^{b_0}}+\sqrt{\Delta_n}\vert u\vert\varphi_{n\Delta_n}(b_0) Y_{t_k}^{b_0}\big)d\ell\\
		&\leq C\vert u\vert(\varphi_{n\Delta_n}(b_0))^2\Delta_n^{\frac{1}{2}+\frac{1}{q_0}}\sum_{k=0}^{n-1}e^{C_2u^2(\varphi_{n\Delta_n}(b_0))^2\Delta_nY_{t_k}^{b_0}} \big(1+\sqrt{Y_{t_k}^{b_0}}+\sqrt{\Delta_n}\vert u\vert\varphi_{n\Delta_n}(b_0) Y_{t_k}^{b_0}\big).
	\end{align*}
Then, using Young's inequality for products with $\frac{1}{\alpha}+\frac{1}{\beta}=1$ and $\beta$ close to $1$, 
\begin{align*} 
	&\Big\vert\sum_{k=0}^{n-1}\widehat{\E}^{b_0}[\zeta_{k,n}\vert \widehat{\mathcal{F}}_{t_k}]\Big\vert\leq C\vert u\vert(\varphi_{n\Delta_n}(b_0))^2\Delta_n^{\frac{1}{2}+\frac{1}{q_0}}\bigg(\frac{1}{\alpha}\sum_{k=0}^{n-1}e^{C_2\alpha u^2(\varphi_{n\Delta_n}(b_0))^2\Delta_nY_{t_k}^{b_0}} \\
	&\qquad+\frac{1}{\beta}\sum_{k=0}^{n-1} \big(1+(Y_{t_k}^{b_0})^{\frac{\beta}{2}}+(\sqrt{\Delta_n}\vert u\vert\varphi_{n\Delta_n}(b_0))^{\beta} (Y_{t_k}^{b_0})^{\beta}\big)\bigg).
\end{align*}
Thus, for subcritical case $b_0>0$ (respectively critical case $b_0=0$, respectively supercritical case $b_0<0$), using \eqref{boundedexp}, Lemma \ref{moment} \textnormal{(i)} (respectively \textcolor{black}{Lemma \ref{auxi} \textnormal{(i)}}, respectively \textcolor{black}{Lemma \ref{auxi} \textnormal{(ii)}}), we get $\widehat{\E}^{b_0}[\vert\sum_{k=0}^{n-1}\widehat{\E}^{b_0}[\zeta_{k,n}\vert \widehat{\mathcal{F}}_{t_k}]\vert]\leq C\Delta_n^{\frac{1}{q_0}-\frac{1}{2}}$ for $b_0>0$, $b_0=0$ and $b_0<0$, which tends to zero since $q_0\in (1,2)$. 

Next, we have
\begin{align}
	\sum_{k=0}^{n-1}\widehat{\E}^{b_0}[\zeta_{k,n}^2\vert \widehat{\mathcal{F}}_{t_k}]&\leq \frac{(\varphi_{n\Delta_n}(b_0))^2}{\sigma^4}\sum_{k=0}^{n-1}\int_0^1\widehat{\E}_{t_k,Y_{t_k}^{b_0}}^{b_0}\Big[\Big(\int_{t_k}^{t_{k+1}}\int_{0}^{\infty}zN(ds,dz) \notag\\
	&\qquad-\widetilde{\E}_{t_k,Y_{t_{k}}^{b_0}}^{b(\ell)}\Big[\int_{t_k}^{t_{k+1}}\int_{0}^{\infty}zM(ds,dz)\big\vert X_{t_{k+1}}^{b(\ell)}=Y_{t_{k+1}}^{b_0}\Big]\Big)^2\Big]d\ell. \label{estime}
\end{align}
For critical and supercritical cases, using \eqref{estime}, \eqref{for1} of Lemma \ref{change}, \eqref{for3} of Lemma \ref{deviation1} with $q=q_0\in (1,2)$, BDG's inequality, {\bf(A1)}, {\bf(A2)}, Young's inequality for products with $\frac{1}{\alpha}+\frac{1}{\beta}=1$ and $\beta$ close to $1$, we get for $n$ large enough,
\begin{align*}
	&\sum_{k=0}^{n-1}\widehat{\E}^{b_0}[\zeta_{k,n}^2\vert \widehat{\mathcal{F}}_{t_k}]\\
	&\leq  2\frac{(\varphi_{n\Delta_n}(b_0))^2}{\sigma^4}\sum_{k=0}^{n-1}\int_0^1\bigg\{\widehat{\E}_{t_k,Y_{t_k}^{b_0}}^{b_0}\Big[\Big(\int_{t_k}^{t_{k+1}}\int_{0}^{\infty}zN(ds,dz)\Big)^2\Big]\\
	&\qquad+\widetilde{\E}_{t_k,Y_{t_k}^{b_0}}^{b(\ell)}\Big[\Big(\int_{t_k}^{t_{k+1}}\int_{0}^{\infty}zM(ds,dz)\Big)^2\Big]+C_1\sqrt{\Delta_n}\Big(\widetilde{\E}_{t_k,Y_{t_k}^{b_0}}^{b(\ell)}\Big[\Big\vert \int_{t_k}^{t_{k+1}}\int_{0}^{\infty}zM(ds,dz)\Big\vert^{2q_0}\Big]\Big)^{\frac{1}{q_0}}\\
	&\qquad\times e^{C_2u^2(\varphi_{n\Delta_n}(b_0))^2\Delta_nY_{t_k}^{b_0}}\vert u\vert\varphi_{n\Delta_n}(b_0)\big(1+\sqrt{Y_{t_k}^{b_0}}+\sqrt{\Delta_n}\vert u\vert\varphi_{n\Delta_n}(b_0) Y_{t_k}^{b_0}\big) \bigg\}d\ell\\
	&\leq C(\varphi_{n\Delta_n}(b_0))^2\sum_{k=0}^{n-1}\bigg\{\Delta_n+\Delta_n^{\frac{1}{2}+\frac{1}{q_0}} e^{C_2u^2(\varphi_{n\Delta_n}(b_0))^2\Delta_nY_{t_k}^{b_0}}\vert u\vert\varphi_{n\Delta_n}(b_0)\\
	&\qquad\times\big(1+\sqrt{Y_{t_k}^{b_0}}+\sqrt{\Delta_n}\vert u\vert\varphi_{n\Delta_n}(b_0) Y_{t_k}^{b_0}\big) \bigg\}\\
	&\leq C(\varphi_{n\Delta_n}(b_0))^2n\Delta_n+ C\vert u\vert(\varphi_{n\Delta_n}(b_0))^3\Delta_n^{\frac{1}{2}+\frac{1}{q_0}}\bigg(\frac{1}{\alpha}\sum_{k=0}^{n-1} e^{C_2\alpha u^2(\varphi_{n\Delta_n}(b_0))^2\Delta_nY_{t_k}^{b_0}}\\
	&\qquad +  \frac{1}{\beta}\sum_{k=0}^{n-1}\big(1+(Y_{t_k}^{b_0})^{\frac{\beta}{2}}+(\sqrt{\Delta_n}\vert u\vert\varphi_{n\Delta_n}(b_0))^{\beta} (Y_{t_k}^{b_0})^{\beta}\big)\bigg). 
\end{align*}
Thus, for critical case $b_0=0$ (respectively supercritical case $b_0<0$), using \eqref{boundedexp}, \textcolor{black}{Lemma \ref{auxi} \textnormal{(i)}} (respectively \textcolor{black}{Lemma \ref{auxi} \textnormal{(ii)}}), we get  $\widehat{\E}^{0}[\vert\sum_{k=0}^{n-1}\widehat{\E}^{0}[\zeta_{k,n}^2\vert \widehat{\mathcal{F}}_{t_k}]\vert]\leq \frac{C}{n\Delta_n}$ (respectively $\widehat{\E}^{b_0}[\vert\sum_{k=0}^{n-1}\widehat{\E}^{b_0}[\zeta_{k,n}^2\vert \widehat{\mathcal{F}}_{t_k}]\vert]\leq C\Delta_n^{\frac{1}{q_0}-\frac{1}{2}}e^{\frac{1}{2}(3-\beta)b_0n\Delta_n}$), which tends to zero since $q_0\in (1,2)$ and $\beta$ is close to $1$. 

For subcritical case with $\varphi_{n\Delta_n}(b_0)=\frac{1}{\sqrt{n\Delta_n}}$, using \eqref{estime} and Lemma \ref{jumpestimate2}, we get for $q>1$, $q_0>1$, $q_1\in (1,2]$, $p_2, q_2>1$ satisfying $\frac{1}{p_2}+\frac{1}{q_2}=1$, and for $n$ large enough,
\begin{align*}
&\sum_{k=0}^{n-1}\widehat{\E}^{b_0}[\zeta_{k,n}^2\vert \widehat{\mathcal{F}}_{t_k}]\leq \dfrac{C}{n}\sum_{k=0}^{n-1}(1+(Y_{t_k}^{b_0})^2)\bigg(\left(\lambda_{\upsilon_n}\Delta_n\right)^{\frac{1}{q}}+\left(\lambda_{\upsilon_n}\Delta_n\right)^{\frac{1}{q_2}}+\Big(\frac{1}{n^{\frac{1}{2q}}}\left(\lambda_{\upsilon_n}\Delta_n\right)^{\frac{1}{q_0q}}\\
&\qquad+\left(\lambda_{\upsilon_n}\Delta_n\right)^{\frac{1}{q_2}} \frac{1}{n^{\frac{1}{2p_2}}}\Big) e^{C_1u^2(\varphi_{n\Delta_n}(b_0))^2\Delta_nY_{t_k}^{b_0}}\big(1+\sqrt{Y_{t_k}^{b_0}}+\frac{1}{\sqrt{n}} Y_{t_k}^{b_0}\big)\bigg)\\
&\qquad  +C\bigg(\int_{z\leq\upsilon_n}z^2m(dz)+\Delta_n\Big(\int_{z\leq\upsilon_n}zm(dz)\Big)^2\bigg)+\frac{C}{\sqrt{n\Delta_n}}\Delta_n^{\frac{1}{q_1}-\frac{1}{2}}\Big(\int_{z\leq\upsilon_n}z^{2q_1}m(dz)\Big)^{\frac{1}{q_1}}\\
&\qquad\times \frac{1}{n}\sum_{k=0}^{n-1} e^{C_2u^2(\varphi_{n\Delta_n}(b_0))^2\Delta_nY_{t_k}^{b_0}} \big(1+\sqrt{Y_{t_k}^{b_0}}+\frac{1}{\sqrt{n}} Y_{t_k}^{b_0}\big).
\end{align*}
Then, using \eqref{boundedexp} and Lemma \ref{moment} \textnormal{(i)}, we obtain that
\begin{align*}
&\widehat{\E}^{b_0}\Big[\Big\vert\sum_{k=0}^{n-1}\widehat{\E}^{b_0}\big[\zeta_{k,n}^2\vert \widehat{\mathcal{F}}_{t_k}\big]\Big\vert\Big]\leq C\bigg(\left(\lambda_{\upsilon_n}\Delta_n\right)^{\frac{1}{q}}+\left(\lambda_{\upsilon_n}\Delta_n\right)^{\frac{1}{q_2}}+\frac{1}{n^{\frac{1}{2q}}}\left(\lambda_{\upsilon_n}\Delta_n\right)^{\frac{1}{q_0q}}+\left(\lambda_{\upsilon_n}\Delta_n\right)^{\frac{1}{q_2}} \frac{1}{n^{\frac{1}{2p_2}}}\\
&\qquad+\int_{z\leq\upsilon_n}z^2m(dz)+\Delta_n\Big(\int_{z\leq\upsilon_n}zm(dz)\Big)^2+\frac{1}{\sqrt{n\Delta_n}}\Delta_n^{\frac{1}{q_1}-\frac{1}{2}}\Big(\int_{z\leq\upsilon_n}z^{2q_1}m(dz)\Big)^{\frac{1}{q_1}}\bigg),
\end{align*}
which tends to zero. Here, we have used the fact that
\begin{enumerate}
	\item $	\lambda_{\upsilon_n}\Delta_n=\int_{z>\upsilon_n}m(dz)\Delta_n\leq \dfrac{\Delta_n}{\upsilon_n}\int_{z>\upsilon_n}z m(dz)\to 0$ since $\frac{\Delta_n}{\upsilon_n}\to 0$ and $\int_{z>\upsilon_n}z m(dz)\leq \int_{0}^{\infty}zm(dz)<\infty$ by {\bf(A1)}. Therefore, $\lambda_{\upsilon_n}\Delta_n\to 0$ as $n\to\infty$.
	\item $\int_{z\leq\upsilon_n}z^{p}m(dz)\to 0$ as $n\to\infty$ for all $p\geq 1$ by using Lebesgue's dominated convergence theorem and the fact that $\upsilon_n\to 0$ and $\int_{0}^{\infty}z^pm(dz)<\infty$.
\end{enumerate}
This completes the proof.
\end{proof}

\section{Appendix A: Proof of technical results}

\subsection{Proof of Lemma \ref{moment}}
\begin{proof} The proof of (i), (ii) and (iii) follows from \cite[Lemma 1]{BKT17}, the Comparision Theorem (see Lemma \ref{comparisontheorem}) and \cite[Theorem 1.1]{JKR17}. \begin{color}{black} Now, it remains to prove (iv). Indeed, it suffices to show that for any $p>0$, there exists a positive constant $C_p$ such that for any $t\geq 0$,
\begin{align}\label{momentpY}
\widehat{\E}\big[(Y_t^b)^p\big]\leq\begin{cases}
 C_p (1+t)^{p} & \text{ if \;\;} b  = 0,\\
C_p e^{-pbt} & \text{ if \;\;} b < 0. 
\end{cases}
\end{align}	
We first show \eqref{momentpY} for any natural number $p\geq 1$. Observe that from Lemma \ref{Pro_moments}, we have
\begin{align}
\begin{cases}\label{firstmoment}
\widehat{\E}[Y_t^b]
=y_0 + (a + \int_0^\infty z m(dz)) t & \text{ if \;\;} b  = 0,\\
\widehat{\E}[Y_t^b]\leq\left(y_0-\frac{1}{b}\big(a + \int_0^\infty z m(dz)\big)\right)e^{-bt}  & \text{ if \;\;} b < 0. 
\end{cases}
\end{align}
Thus, \eqref{momentpY} holds for $p=1$. 

Now,  we define $\tau_N:=\inf\{t\geq0: Y_t^b\geq N\}$ for each $N>0$. Then, using \eqref{eqintegral}, we write 
\begin{equation*}
\begin{split}
Y_{t\wedge \tau_N}^b&=y_0+\int_0^{t\wedge \tau_N}(a-bY_s^b)ds +\sigma\int_0^{t\wedge \tau_N}\sqrt{Y_s^b}dW_s+\int_0^{t\wedge \tau_N}\int_0^{\infty}z(N(ds,dz)-m(dz)ds)\\
&\qquad+\int_0^{t\wedge \tau_N}\int_0^{\infty}zm(dz)ds.
\end{split}
\end{equation*}
Therefore, taking the expectation on both sides and using \eqref{firstmoment}, we obtain for $b\leq 0$,
\begin{align*}
\widehat{\E}^{b}\left[Y_{t\wedge \tau_N}^b\right]\leq y_0+\int_0^{t}(a-b\widehat{\E}^{b}[Y_s^b])ds+t\int_0^{\infty}zm(dz)\leq c(t),
\end{align*}
where
\begin{align*}
c(t)=\begin{cases}
y_0 + (a + \int_0^\infty z m(dz)) t  & \text{ if \;\;} b  = 0,\\
y_0 + (a + \int_0^\infty z m(dz)) t+\left(y_0-\frac{1}{b}\big(a + \int_0^\infty z m(dz)\big)\right)(e^{-bt}-1)& \text{ if \;\;} b < 0,
\end{cases}
\end{align*}
which does not depend on $N$. 

Next, observe that
\begin{align*}
\{\tau_N<t\}=\{\tau_N<t,Y_{t\wedge \tau_N}^b\geq N\} \subset \{Y_{t\wedge \tau_N}^b\geq N\},
\end{align*}
which, together with Markov's inequality, implies that
\begin{align*}
\widehat{\P}^{b}(\tau_N<t)\leq \widehat{\P}^{b}(Y_{t\wedge \tau_N}^b\geq N)&\leq \dfrac{\widehat{\E}^{b}\left[Y_{t\wedge \tau_N}^b\right]}{N}\leq \dfrac{c(t)}{N}.
\end{align*}
This fact, together with the monotonicity of $\tau_N$ with respect to $N$,  implies that $\tau_N\uparrow \infty$ a.s. as $N\uparrow \infty$. 

Now, we suppose that \eqref{momentpY} is valid for all natural number $q\in \{1,\ldots,p-1\}$. That is, 
\begin{align}\label{induc}
\widehat{\E}\big[(Y_t^b)^q\big]\leq\begin{cases}
C_q (1+t)^{q} & \text{ if \;\;} b  = 0,\\
C_q e^{-qbt} & \text{ if \;\;} b < 0. 
\end{cases}
\end{align}	
We shall show that \eqref{momentpY} holds for $p$. For this, using It\^o's formula, we have for all $p\geq 1$,
\begin{align*}
e^{pbt}(Y_t^b)^p&=y_0^p+\left(pa+\frac{1}{2}p(p-1)\sigma^2\right)\int_0^te^{pbs}(Y_s^b)^{p-1}ds +p\sigma\int_0^t e^{pbs}(Y_s^b)^{p-1}\sqrt{Y_s^b}dW_s \\
&\qquad+\int_0^t\int_0^{\infty}e^{pbs}\left((Y_{s-}^b+z)^p-(Y_{s-}^b)^p\right)N(ds,dz). 
\end{align*}
Thus, replacing $t$ by $t\wedge \tau_N$, we have
\begin{align*}
&e^{pb(t\wedge \tau_N)}(Y_{t\wedge \tau_N}^b)^p=y_0^p+\left(pa+\frac{1}{2}p(p-1)\sigma^2\right)\int_0^{t\wedge \tau_N}e^{pbs}(Y_s^b)^{p-1}ds\\
&\quad +p\sigma\int_0^{t\wedge \tau_N} e^{pbs}(Y_s^b)^{p-1}\sqrt{Y_s^b}dW_s + \int_0^{t\wedge \tau_N}\int_0^{\infty}e^{pbs}\left((Y_{s}^b+z)^p-(Y_{s}^b)^p\right)m(dz)ds\\
&\quad+\int_0^{t\wedge \tau_N}\int_0^{\infty}e^{pbs}\left((Y_{s-}^b+z)^p-(Y_{s-}^b)^p\right)(N(ds,dz)-m(dz)ds). 
\end{align*}
Therefore, taking expectation on both sides, and using the binomial theorem, we obtain 
\begin{align*}
\widehat{\E}^{b}\left[e^{pb(t\wedge \tau_N)}(Y_{t\wedge \tau_N}^b)^p\right]&\leq y_0^p+\left(pa+\frac{1}{2}p(p-1)\sigma^2\right)\int_0^te^{pbs}\widehat{\E}^{b}[(Y_s^b)^{p-1}]ds\\
&\qquad+\int_0^te^{pbs}\sum_{i=0}^{p-1}{p \choose i}\int_0^{\infty}z^{p-i}m(dz) \widehat{\E}^{b}[(Y_s^b)^{i}]ds.
\end{align*}	
Then, using the fact that $\tau_N\uparrow \infty$ a.s. as $N\uparrow  \infty$, letting $N\uparrow  \infty$ on the left hand side and using the dominated convergence theorem, we get
\begin{align*}
\widehat{\E}^{b}\left[e^{pbt}(Y_{t}^b)^p\right]&\leq y_0^p+\left(pa+\frac{1}{2}p(p-1)\sigma^2\right)\int_0^te^{pbs}\widehat{\E}^{b}[(Y_s^b)^{p-1}]ds\\
&\qquad+\int_0^te^{pbs}\sum_{i=0}^{p-1}{p \choose i}\int_0^{\infty}z^{p-i}m(dz) \widehat{\E}^{b}[(Y_s^b)^{i}]ds.
\end{align*}	
Therefore, when $b=0$, using {\bf(A1)}, {\bf(A2)} and the inductive assumption \eqref{induc}, we get
\begin{align*}
\widehat{\E}^{0}\left[(Y_t^0)^p\right]&\leq y_0^p+C_p \sum_{i=0}^{p-1}\int_0^t\widehat{\E}^{0}\left[(Y_s^0)^i\right] ds\\
&\leq y_0^p+C_p \sum_{i=0}^{p-1}\int_0^t(1+s)^i ds\\
&\leq C_p(1+t)^p,
\end{align*}
for some positive constant $C_p$ whose values may change from one line to the next.
	
When $b<0$, using again {\bf(A1)}, {\bf(A2)} and the inductive assumption \eqref{induc}, we get
\begin{align*}
\widehat{\E}^{b}\left[e^{pbt}(Y_t^b)^p\right]&\leq y_0^p+C_p\sum_{i=0}^{p-1}\int_0^te^{pbs} \widehat{\E}^{b}[(Y_s^b)^{i}]ds\\
&\leq y_0^p+C_p\sum_{i=0}^{p-1}\int_0^te^{pbs} e^{-ibs}ds\\
&= y_0^p+C_p \sum_{i=0}^{p-1}\dfrac{1}{(p-i)b}\left(e^{(p-i)bt}-1\right),
\end{align*}	
which implies that
\begin{align*}
\widehat{\E}^{b}\left[(Y_t^b)^p\right]\leq C_pe^{-pbt}.
\end{align*}	
Therefore, \eqref{momentpY} is valid for $p$. By the induction principle, \eqref{momentpY} holds for any natural number $p\geq 1$. Finally, using the monotonicity of the $L^p(\widehat{\P})$-norms as a function of $p$, we get the result for any $p>0$.
\end{color}	
\end{proof}	

\begin{color}{black} 
\subsection{Proof of Lemma \ref{auxi}}
	\begin{proof}
		The first inequality is straightforward for both cases $b_0=0$ and $b_0<0$. Now, for the second inequality, we first write 
		\begin{align}
		\sum_{k=0}^{n-1}\int_{t_k}^{t_{k+1}}\sup_{u\in[t_k,s]}\widehat{\E}^{b_0}\big[\frac{1}{(Y_u^{b_0})^{p}}\big]ds&=\sum_{k=0}^{[\frac{1}{\Delta_n}]}\int_{t_k}^{t_{k+1}}\sup_{u\in[t_k,s]}\widehat{\E}^{b_0}\big[\frac{1}{(Y_u^{b_0})^{p}}\big]ds \notag\\
		&\qquad+\sum_{k=[\frac{1}{\Delta_n}]+1}^{n-1}\int_{t_k}^{t_{k+1}}\sup_{u\in[t_k,s]}\widehat{\E}^{b_0}\big[\frac{1}{(Y_u^{b_0})^{p}}\big]ds.\label{decomsum}
		\end{align}
		\noindent\textnormal{(i)}  For $b_0=0$, we use \eqref{decomsum}, combined with \textnormal{(ii)} of Lemma \ref{moment} for the case $0<p<\frac{2a}{\sigma^2}$. For the case $p<0$, we also use \eqref{decomsum}, combined with the first assertion of \textnormal{(iv)} in Lemma \ref{moment}.
		
		\noindent\textnormal{(ii)} For $b_0<0$, we use \eqref{decomsum}, combined with \textnormal{(iii)} of Lemma \ref{moment} for the case $0<p<\frac{2a}{\sigma^2}$. For the case $p<0$, we use in the same way \eqref{decomsum}, combined with the second assertion of \textnormal{(iv)} in Lemma \ref{moment}. The result follows by standard calculations with distinguishing the cases $p\neq 1$ and $p=1$.
	\end{proof}	
\end{color}	

\subsection{Proof of Proposition \ref{smoothdensity}}
\begin{proof}		
	First we recall that since the jump-type CIR process  $Y^b=(Y_t^b)_{t\in\R_+}$ is an affine process, the corresponding characteristic function of $Y_t^b$ is of exponential-affine form (see page 287 and 288 of \cite{JRT16}, Section 3 of \cite{JKR17}, Section 4.1 of \cite{FMS13}, \eqref{Laplace2}). That is, for all $(t,u)\in \R_+\times \mathcal{U}$ with $\mathcal{U}:=\{u\in \mathbb{C}: \textup{Re}\; u\leq 0\}$,
	\begin{align*}
		\widehat{\E}\left[e^{uY_t^{b}}\right]=e^{\phi_b(t,u)+y_0\psi_b(t,u)},
	\end{align*}
	where $\textup{Re}\; u$ denotes the real part of $u$ and the functions $\phi_b(t,u)$ and $\psi_b(t,u)$ are solutions to the generalized Riccati equations
	\begin{align*}
		\begin{cases}
			\partial_t\phi_b(t,u)=F(\psi_b(t,u)),\; \phi_b(0,u)=0,\\
			\partial_t\psi_b(t,u)=R(\psi_b(t,u)),\; \psi_b(0,u)=u\in \mathcal{U},
		\end{cases}
	\end{align*}
	with the functions $F$ and $R$ given by
	\begin{align*}
		F(u)=au+\int_0^{\infty}(e^{uz}-1)m(dz),\qquad R(u)=\dfrac{\sigma^2u^2}{2}-bu.
	\end{align*}
	Solving the system above, we get the following explicit form 	
\begin{align*}
	\psi_b(t,u)\equiv\psi_{u,0}(t)=\begin{cases}
		\frac{u}{1-\frac{\sigma^2u}{2}t} &\textnormal{if}\quad b=0,\\
		\frac{ue^{-bt}}{1-\frac{\sigma^2u}{2b}(1-e^{-bt})} &\textnormal{if}\quad b\neq 0.
	\end{cases}
\end{align*}
	In what follows, the notation constant $C$  will designate a generic constant which can change values from one bound to another.
	Now, using Lemma C.6 of \cite{FMS13}, there exist constants $C>0$ and $\overline{R}>0$ such that
	\begin{align}\label{ineq}
		\left\vert e^{\phi_b(t,iu)+y_0\psi_b(t,iu)}\right\vert\leq \dfrac{C}{(1+\vert u\vert)^{\frac{2a}{\sigma^2}}},
	\end{align}
	for all $u\in\R$ with $\vert u\vert\geq \overline{R}$. Hence,  as $\frac{2a}{\sigma^2}>1$ we get
	\begin{align*}
		\int_{\R}\left\vert \widehat{\E}\left[e^{iuY_t^{b}}\right]\right\vert du&=\int_{\vert u\vert\leq \overline{R}}\left\vert \widehat{\E}\left[e^{iuY_t^{b}}\right]\right\vert du+\int_{\vert u\vert\geq \overline{R}}\left\vert e^{\phi_b(t,iu)+y_0\psi_b(t,iu)}\right\vert du<+\infty.
	\end{align*}	
	Therefore, by the inversion Fourier theorem, we obtain the existence of the density 
	\begin{align*}
		p^{b}(t,y_0,y)=\dfrac{1}{2\pi}\int_{\R}e^{-iy u}\widehat{\E}\left[e^{iuY_t^{b}}\right]du=\dfrac{1}{2\pi}\int_{\R}e^{-iy u}e^{\phi_b(t,iu)+y_0\psi_b(t,iu)}du.
	\end{align*}	
	Next, in order to prove the smoothness of the density w.r.t. $b$, we are going to show that 
	\begin{align}\label{integrable}
		\int_{\R}\left\vert e^{-iy u}\dfrac{\partial}{\partial b}\widehat{\E}\left[e^{iuY_t^{b}}\right]\right\vert du=\int_{\R}\left\vert e^{-iy u}\dfrac{\partial}{\partial b}e^{\phi_b(t,iu)+y_0\psi_b(t,iu)}\right\vert du<+\infty, \;\forall b\in \mathbb R.
	\end{align}
	First, observe that	
	\begin{align} \label{deri}
		\left\vert e^{-iy u}\dfrac{\partial}{\partial b}e^{\phi_b(t,iu)+y_0\psi_b(t,iu)}\right\vert=\left\vert\dfrac{\partial}{\partial b}\phi_b(t,iu)+y_0\dfrac{\partial}{\partial b}\psi_b(t,iu)\right\vert\left\vert e^{\phi_b(t,iu)+y_0\psi_b(t,iu)}\right\vert,
	\end{align}	
	and
	\begin{align*}
		\dfrac{\partial}{\partial b}\psi_b(t,iu)=\dfrac{-iute^{-bt}\left(1-\frac{\sigma^2iu}{2b}(1-e^{-bt})\right)-iue^{-bt}\left(\frac{\sigma^2iu}{2b^2}(1-e^{-bt})-\frac{\sigma^2iu}{2b}te^{-bt}\right)}{\left(1-\frac{\sigma^2iu}{2b}(1-e^{-bt})\right)^2},
	\end{align*}
	which is continuous w.r.t. $b$ for all $b\neq0$ and  we can easily see that it is continuous for all $b\in\mathbb R$.	
	Therefore, by standard calculations there exists $R'>0$ such that for all $\vert u\vert\geq R'$,
	\begin{equation}\begin{split}\label{dpsi}
			\left\vert \dfrac{\partial}{\partial b}\psi_b(t,iu)\right\vert&\leq \dfrac{C\vert u\vert^2}{1+\frac{\sigma^4u^2}{4b^2}(1-e^{-bt})^2}\leq C.
		\end{split}
	\end{equation}
	Furthermore,
	\begin{align*}
		\phi_b(t,iu)=a\int_0^t\psi_b(s,iu)ds+\int_0^t\int_0^{\infty}(e^{\psi_b(s,iu)z}-1)m(dz)ds.
	\end{align*} 
	Now, for all $\vert u\vert\geq R'$, we have $\vert \frac{\partial}{\partial b}\psi_b(t,iu)\vert\leq C$ and
	\begin{align*}
		\left\vert z\frac{\partial}{\partial b}\psi_b(s,iu)e^{\psi_b(s,iu)z}\right\vert \leq C ze^{z\textup{Re}\; \psi_b(s,iu)}\leq Cz,
	\end{align*}
	since 
	$$
	\textup{Re}\; \psi_b(s,iu)=-\dfrac{\frac{\sigma^2u^2}{2b}e^{-bt}(1-e^{-bt})}{1+\frac{\sigma^4u^2}{4b^2}(1-e^{-bt})^2}\leq 0.
	$$
	Then, for all $\vert u\vert\geq R'$, we have $\int_0^t\vert \frac{\partial}{\partial b}\psi_b(s,iu)\vert ds\leq Ct$ and using condition {\bf(A1)},
	\begin{align*}
		\int_0^t\int_0^{\infty}\left\vert z\frac{\partial}{\partial b}\psi_b(s,iu)e^{\psi_b(s,iu)z}\right\vert m(dz)ds\leq C\int_0^t\int_0^{\infty} zm(dz)ds=Ct\int_0^{\infty} zm(dz)<+\infty.
	\end{align*}
	Thus, we have shown that for all $\vert u\vert\geq R'$,
	\begin{align*}
		\frac{\partial}{\partial b}\phi_b(t,iu)=a\int_0^t\frac{\partial}{\partial b}\psi_b(s,iu)ds+\int_0^t\int_0^{\infty}z\frac{\partial}{\partial b}\psi_b(s,iu)e^{\psi_b(s,iu)z}m(dz)ds,
	\end{align*}
	which is also continuous w.r.t. $b$ for all $b\in\mathbb R$ and then
	\begin{align}\label{dphi}
		\left\vert \frac{\partial}{\partial b}\phi_b(t,iu)\right\vert\leq Ct.
	\end{align}
	Hence, from \eqref{deri}, \eqref{ineq}, \eqref{dpsi} and \eqref{dphi}, for all $\vert u\vert\geq \overline{R}\vee R'$, 
	\begin{align*}
		\left\vert e^{-iy u}\dfrac{\partial}{\partial b}e^{\phi_b(t,iu)+y_0\psi_b(t,iu)}\right\vert\leq \dfrac{C}{(1+\vert u\vert)^{\frac{2a}{\sigma^2}}},
	\end{align*}
	which implies that
	\begin{align}\label{big}
		\int_{\vert u\vert\geq \overline{R}\vee R'}\left\vert e^{-iy u}\dfrac{\partial}{\partial b}e^{\phi_b(t,iu)+y_0\psi_b(t,iu)}\right\vert du\leq \int_{\vert u\vert\geq \overline{R}\vee R'}\dfrac{C}{(1+\vert u\vert)^{\frac{2a}{\sigma^2}}}du <+\infty.
	\end{align}
	On the other hand, for all $\vert u\vert\leq \overline{R}\vee R'$, using \eqref{dxb2} for $Y^b$, H\"older's inequality with $\frac{1}{p}+\frac{1}{q}+\frac{1}{r}=1$ where $r$ is close to $1$, \eqref{e1} and \eqref{e4} for $Y^b$, we get
	\begin{align*}
		&\left\vert e^{-iy u}\dfrac{\partial}{\partial b}e^{\phi_b(t,iu)+y_0\psi_b(t,iu)}\right\vert=\left\vert \dfrac{\partial}{\partial b}\widehat{\E}\left[e^{iuY_t^{b}}\right]\right\vert=\left\vert \widehat{\E}\left[iu\dfrac{\partial}{\partial b}Y_t^{b}e^{iuY_t^{b}}\right]\right\vert\\
		&\leq  \vert u\vert \widehat{\E}\left[\left\vert\partial_bY_t^{b}\right\vert\right]\leq (\overline{R}\vee R') \int_0^t\widehat{\E}\left[\left\vert Y_s^{b}\partial_xY_t^{b}(\partial_xY_s^{b})^{-1}\right\vert\right]ds\\
		&\leq (\overline{R}\vee R') \int_0^t\left(\widehat{\E}\big[( Y_s^{b})^p\big]\right)^{\frac{1}{p}}\left(\widehat{\E}\big[(\partial_xY_t^{b})^q\big]\right)^{\frac{1}{q}}\left(\widehat{\E}\big[ (\partial_xY_s^{b})^{-r}\big]\right)^{\frac{1}{r}}ds\\
		&\leq Ct(\overline{R}\vee R')(1+y_0)\left(1+\dfrac{1}{y_0^{\frac{a}{\sigma^2}}}\right)\\
		&\leq Ct.
	\end{align*}
Here, $-r\geq -\frac{(\frac{2a}{\sigma^2}-1)^2}{2(\frac{2a}{\sigma^2}-\frac{1}{2})}$ and $r$ is close to $1$, which implies that $\frac{a}{\sigma^2}>1+\frac{\sqrt{2}}{2}$.
	Consequently, under $\frac{a}{\sigma^2}>1+\frac{\sqrt{2}}{2}$, we get
	\begin{align}\label{small}
		\int_{\vert u\vert\leq \overline{R}\vee R'}\left\vert e^{-iy u}\dfrac{\partial}{\partial b}e^{\phi_b(t,iu)+y_0\psi_b(t,iu)}\right\vert du\leq Ct\int_{\vert u\vert\leq \overline{R}\vee R'}du <+\infty,\; \forall b\in \mathbb R.
	\end{align}
	From \eqref{big} and \eqref{small}, we conclude \eqref{integrable}, which implies that the $p^{b}(t,y_0,y)$ is of class $C^1$ w.r.t. $b$ for all $b\in\mathbb R$ under $\frac{a}{\sigma^2}>1+\frac{\sqrt{2}}{2}$ and its derivative is given by
	\begin{align*}
		\partial_bp^{b}(t,y_0,y)&=\dfrac{1}{2\pi}\int_{\R}e^{-iy u}\dfrac{\partial}{\partial b}e^{\phi_b(t,iu)+y_0\psi_b(t,iu)}du\\
		&=\dfrac{1}{2\pi}\int_{\R}e^{-iy u}\left(\dfrac{\partial}{\partial b}\phi_b(t,iu)+y_0\dfrac{\partial}{\partial b}\psi_b(t,iu)\right) e^{\phi_b(t,iu)+y_0\psi_b(t,iu)}du.
	\end{align*}
	Thus, the result follows.
\end{proof}	

\subsection{Proof of Lemma \ref{estimates}}
\begin{proof}
	{\it Proof of \eqref{e1}.} This estimate can be obtained from equation \eqref{flowk}, inequality $\vert a+b+c\vert^p\leq 3^{p-1}(\vert a\vert^p+\vert b\vert^p+\vert c\vert^p)$, BDG's inequality and Gronwall's inequality together with conditions {\bf(A1)}-{\bf(A2)}.	
	
	{\it Proof of \eqref{e4}.} We consider on the probability space $(\widetilde{\Omega}, \widetilde{\mathcal{F}}, \widetilde{\P})$ the diffusion-type CIR process $\overline{X}^{a,b}=(\overline{X}_t^{a,b})_{t\in\R_+}$ defined by
	\begin{equation*}
	\overline{X}_t^{a,b}=y_0+\int_0^t (a-b\overline{X}_s^{a,b})ds +\sigma\int_0^t\sqrt{\overline{X}_s^{a,b}}dB_s.
	\end{equation*}
	The flow process $\overline{X}^{a,b}(s,x)=(\overline{X}_t^{a,b}(s,x), t\geq s)$ on the time interval $[s,\infty)$ and with initial condition $\overline{X}_{s}^{a,b}(s,x)=x\in\R_{++}$ is defined by
	\begin{equation*}
	\overline{X}_t^{a,b}(s,x)=x+\int_s^t \big(a-b\overline{X}_u^{a,b}(s,x)\big)du +\sigma \int_s^t\sqrt{\overline{X}_u^{a,b}(s,x)}dB_u,
	\end{equation*}	
	for any $t\geq s$. Applying \cite[Lemma 3.1]{BD07}, under $\frac{a}{\sigma^2}>1$, for any $\mu\leq(\frac{2a}{\sigma^2}-1)^2\frac{\sigma^2}{8}$ and $t\in[t_k,t_{k+1}]$, we have 
	\begin{align*}
	\widetilde{\E}_{t_k,x}^{a,b}\left[\exp\left\{\mu\int_{t_k}^t\dfrac{du}{\overline{X}_u^{a,b}(t_k,x)}\right\}\right]\leq C\left(1+\dfrac{1}{x^{\frac{1}{2}(\frac{2a}{\sigma^2}-1)}}\right).
	\end{align*}
	Indeed, the proof for $b\geq 0$ is given in \cite[Lemma A.2]{BD07}. The proof for $b<0$ is based on the comparison theorem.
	
	The comparison theorem (see \cite[Proposition A.1]{BBKP17} or Lemma \ref{comparisontheorem}) gives
	$$
	\widetilde{\P}\left(X_t^{a,b}\geq \overline{X}_t^{a,b},\ \forall t\in\R_+\right)=1,
	$$
	which implies that 
	\begin{align}\label{compar}
	\widetilde{\P}\left(X_t^{a,b}(t_k,x)\geq \overline{X}_t^{a,b}(t_k,x),\ \forall t\geq t_k\right)=1.
	\end{align}
	Consequently, under $\frac{a}{\sigma^2}>1$, for any $\mu\leq(\frac{2a}{\sigma^2}-1)^2\frac{\sigma^2}{8}$ and $t\in[t_k,t_{k+1}]$, we get
	\begin{align}
	\widetilde{\E}_{t_k,x}^{a,b}\left[\exp\left\{\mu\int_{t_k}^t\dfrac{du}{X_u^{a,b}(t_k,x)}\right\}\right]&\leq\widetilde{\E}_{t_k,x}^{a,b}\left[\exp\left\{\mu\int_{t_k}^t\dfrac{du}{\overline{X}_u^{a,b}(t_k,x)}\right\}\right]\notag\\
	&\leq C\left(1+\dfrac{1}{x^{\frac{1}{2}(\frac{2a}{\sigma^2}-1)}}\right),\label{expmoment}
	\end{align}
	for some constant $C>0$.	
	
	Now, for all $a, a_1\in\R_+$, $b, b_1\in\R$, $x\in\R_{++}$ and $k\in\{0,...,n-1\}$, the probability measures $\widetilde{\P}_{t_k,x}^{a_1,b_1}$ and $\widetilde{\P}_{t_k,x}^{a,b}$ are absolutely continuous w.r.t. each other and its Radon-Nikodym derivative is given by
	\begin{align*}
	&\dfrac{d\widetilde{\P}_{t_k,x}^{a_1,b_1}}{d\widetilde{\P}_{t_k,x}^{a,b}}\big((X_s^{a,b}(t_k,x))_{s\in I_k}\big)=\exp\Bigg\{\frac{1}{\sigma}\int_{t_k}^{t_{k+1}}\frac{a_1-a-(b_1-b)X_s^{a,b}(t_k,x)}{\sqrt{X_s^{a,b}(t_k,x)}}dB_s\\
	&\qquad-\frac{1}{2\sigma^2}\int_{t_k}^{t_{k+1}}\frac{(a_1-a-(b_1-b)X_s^{a,b}(t_k,x))^2}{X_s^{a,b}(t_k,x)}ds\Bigg\},
	\end{align*}
	where $I_k:=[t_k, t_{k+1}]$. Using \eqref{dxe} and the change of measures, we have that for any $p\in\R$,
	\begin{align}
		&\widetilde{\E}_{t_k,x}^{a,b}\big[(\partial_xX_t^{a,b}(t_k,x))^p\big]\notag\\
		&=e^{-bp(t-t_k)}\widetilde{\E}_{t_k,x}^{a,b}\bigg[\exp{\Big\{\frac{-p\sigma^2}{8}\int_{t_k}^t\frac{du}{X_u^{a,b}(t_k,x)}+\frac{p\sigma}{2}\int_{t_k}^t\dfrac{dB_u}{\sqrt{X_u^{a,b}(t_k,x)}}\Big\}}\bigg]\notag\\
		&=e^{-bp(t-t_k)}\widetilde{\E}_{t_k,x}^{a,b}\Bigg[\exp{\bigg\{\dfrac{p(p-1)\sigma^2}{8}\int_{t_k}^t\dfrac{du}{X_u^{a,b}(t_k,x)}\bigg\}}\notag	\\
		&\qquad\times\exp{\bigg\{\dfrac{p\sigma}{2}\int_{t_k}^t\dfrac{dB_u}{\sqrt{X_u^{a,b}(t_k,x)}}-\dfrac{p^2\sigma^2}{8}\int_{t_k}^t\dfrac{du}{X_u^{a,b}(t_k,x)}\bigg\}}\Bigg]\notag\\
		&=e^{-bp(t-t_k)}\widetilde{\E}_{t_k,x}^{a,b}\left[\exp{\bigg\{\dfrac{p(p-1)\sigma^2}{8}\int_{t_k}^t\dfrac{du}{X_u^{a,b}(t_k,x)}\bigg\}}\dfrac{d\widetilde{\P}_{t_k,x}^{a+p\frac{\sigma^2}{2},b}}{d\widetilde{\P}_{t_k,x}^{a,b}}\big((X_s^{a,b}(t_k,x))_{s\in [t_k,t]}\big)\right]\notag\\
		&=e^{-bp(t-t_k)}\widetilde{\E}_{t_k,x}^{a+p\frac{\sigma^2}{2},b}\left[\exp{\bigg\{\dfrac{p(p-1)\sigma^2}{8}\int_{t_k}^t\dfrac{du}{X_u^{a+p\frac{\sigma^2}{2},b}(t_k,x)}\bigg\}}\right].\label{measureq}
	\end{align}
	Then, applying the estimate \ref{expmoment} to the probability measure $\widetilde{\P}_{t_k,x}^{a+p\frac{\sigma^2}{2},b}$ with $\mu=\frac{p(p-1)\sigma^2}{8}$, we get that for any $p\geq -\frac{(\frac{2a}{\sigma^2}-1)^2}{2(\frac{2a}{\sigma^2}-\frac{1}{2})}$ and $t\in[t_k,t_{k+1}]$, 
	\begin{align*}
		\widetilde{\E}_{t_k,x}^{a+p\frac{\sigma^2}{2},b}\left[\exp{\bigg\{\dfrac{p(p-1)\sigma^2}{8}\int_{t_k}^t\dfrac{du}{X_u^{a+p\frac{\sigma^2}{2},b}(t_k,x)}\bigg\}}\right]\leq C_p\left(1+\dfrac{1}{x^{\frac{\frac{2a}{\sigma^2}-1+p}{2}}}\right).
	\end{align*}
	This, combined with \eqref{measureq}, gives the desired estimate \eqref{e4}.	
	
	{\it Proof of \eqref{e2}.} From \cite[Lemma 3.1]{BD07}, we have that for any $p\in[1,\frac{2a}{\sigma^2}-1)$, 
	\begin{align*}
	\widetilde{\E}_{t_k,x}^{b}\left[\dfrac{1}{ (\overline{X}_t^{b}(t_k,x))^p}\right]\leq\dfrac{C_p}{x^p},
	\end{align*}
	for a constant $C_p>0$. Then, using  \eqref{compar}, we get
	\begin{align*}
	\widetilde{\E}_{t_k,x}^{b}\left[\dfrac{1}{(X_t^{b}(t_k,x))^p}\right]\leq\widetilde{\E}_{t_k,x}^{b}\left[\dfrac{1}{( \overline{X}_t^{b}(t_k,x))^p}\right]\leq\dfrac{C_p}{x^p}.
	\end{align*}
	Thus, the result follows.	
\end{proof}

\subsection{Useful lemma}
\textcolor{black}{In order to prove (ii) of Lemma \ref{Malliderivable},  the following lemma will be needed.}
\begin{lemma}\label{Malliflow} Let $p\geq 2$. Assume condition $\frac{a}{\sigma^2}>\frac{1}{2}(3p+1+\sqrt{9p^2+2p})$ for \eqref{dmif1} and \eqref{dmif2}, and condition $\frac{a}{\sigma^2}>\frac{1}{2}(2p+1+\sqrt{2p(2p+1)})$ for \eqref{dmif3}. Then for any $b\in\R$, $k \in \{0,...,n-1\}$, $t_k\leq s\leq t\leq t_{k+1}$, and $x\in\R_{++}$, there exists a constant $C>0$ which does not depend on $x$ such that 
		\begin{align}
		&\widetilde{\E}_{t_k,x}^{b}\Big[\Big\vert \frac{D_s(\partial_xX_t^{b}(t_k,x))}{(\partial_xX_t^{b}(t_k,x))^2}\Big\vert^p\Big]\leq C\Big(1+\dfrac{1}{x^{\frac{\frac{2a}{\sigma^2}-1}{p_5}-\frac{p}{2}}}\Big)\Big(\dfrac{1}{x^{\frac{p}{2}}}+\dfrac{1}{x^{3p}}\Big),\label{dmif1}\\
		&\widetilde{\E}_{t_k,x}^{b}\Big[\Big\vert \dfrac{D_sX_{t}^{b}(t_k,x)}{\partial_xX_{t}^{b}(t_k,x)}\Big\vert^p\Big]\leq C(1+x^{\frac{p}{2}})\Big(1+\dfrac{1}{x^{\frac{\frac{2a}{\sigma^2}-1}{2p_7}-p}}\Big)\Big(1+\dfrac{1}{x^{\frac{\frac{2a}{\sigma^2}-1}{2q_7q_8}+\frac{p}{2}}}\Big),\label{dmif3}\\
		&\widetilde{\E}_{t_k,x}^{b}\Big[\Big\vert X_{t}^{b}(t_k,x)D_s\Big(\dfrac{1}{\partial_xX_{t}^{b}(t_k,x)}\Big)\Big\vert^p\Big]\leq \frac{C}{x^{2p}}(1+x^{3p})\Big(1+\dfrac{1}{x^{\frac{\frac{2a}{\sigma^2}-1}{p_4p_5}-p}}\Big)\Big(1+\dfrac{1}{x^{\frac{\frac{2a}{\sigma^2}-1}{2q_4q_6}+\frac{p}{2}}}\Big),\label{dmif2}
		\end{align}
		where $p_5=\frac{6p+2+2\sqrt{9p^2+2p}}{4p+1}$, $\frac{1}{p_7}+\frac{1}{q_7}=1$ with $p_7>1$ and $p_7$  close to $1$, $q_8>1$, $\frac{1}{p_4}+\frac{1}{q_4}=1$ with  $p_4>1$ and $p_4$ close to $1$, and $q_6>1$.
	\end{lemma}
\begin{proof}
	First, we treat \eqref{dmif1}. For $t_k\leq s\leq t \leq t_{k+1}$, from \eqref{dmpx} and \eqref{expression1}, under $\frac{a}{\sigma^2}>\frac{5+3\sqrt{2}}{2}$ we have	
	\begin{align}\label{deriinver}
	&\frac{-1}{(\partial_xX_t^{b}(t_k,x))^2}D_s\big(\partial_xX_t^{b}(t_k,x)\big) \notag\\
	&=\frac{-1}{\partial_xX_t^{b}(t_k,x)}\bigg(\dfrac{\sigma}{2\sqrt{X_s^{b}(t_k,x)}} +\dfrac{\sigma^2}{8}\int_{s}^t\dfrac{D_sX_u^{b}(t_k,x)}{(X_u^{b}(t_k,x))^2}du -\dfrac{\sigma}{4}\int_{s}^t\dfrac{D_sX_u^{b}(t_k,x)}{(X_u^{b}(t_k,x))^{\frac{3}{2}}}dB_u\bigg)\notag\\ &=\frac{-1}{\partial_xX_t^{b}(t_k,x)}\bigg(\dfrac{\sigma}{2\sqrt{X_s^{b}(t_k,x)}} +\dfrac{\sigma^3}{8}\int_{s}^t\dfrac{\sqrt{X_s^{b}(t_k,x)}\partial_xX_u^{b}(t_k,x)}{(X_u^{b}(t_k,x))^2\partial_xX_s^{b}(t_k,x)}du\notag\\
	&\qquad-\dfrac{\sigma^2}{4}\int_{s}^t\dfrac{\sqrt{X_s^{b}(t_k,x)}\partial_xX_u^{b}(t_k,x)}{(X_u^{b}(t_k,x))^{\frac{3}{2}}\partial_xX_s^{b}(t_k,x)}dB_u\bigg).
	\end{align} 	
	Then, using BDG's and H\"older's inequalities with $\frac{1}{p_4}+\frac{1}{q_4}=1$, $\frac{1}{p_5}+\frac{1}{p_5}+\frac{1}{q_5}=1$, $\frac{1}{p_6}+\frac{1}{q_6}=1$, \eqref{e1}, \eqref{e2} and \eqref{e4}, we get that
	\begin{align*}
	&\widetilde{\E}_{t_k,x}^{b}\Big[\Big\vert \frac{D_s(\partial_xX_t^{b}(t_k,x))}{(\partial_xX_t^{b}(t_k,x))^2}\Big\vert^{p}\Big]\leq C\widetilde{\E}_{t_k,x}^{b}\Big[\Big\vert \dfrac{1}{\partial_xX_t^{b}(t_k,x)\sqrt{X_s^{b}(t_k,x)}}\Big\vert^{p}\Big]\\
	&\qquad+C\widetilde{\E}_{t_k,x}^{b}\Big[\Big\vert \int_{s}^t\dfrac{\sqrt{X_s^{b}(t_k,x)}\partial_xX_u^{b}(t_k,x)}{\partial_xX_t^{b}(t_k,x)(X_u^{b}(t_k,x))^2\partial_xX_s^{b}(t_k,x)}du\Big\vert^{p}\Big]\\
	&\qquad+C\widetilde{\E}_{t_k,x}^{b}\Big[\Big\vert \int_{s}^t\Big(\dfrac{\sqrt{X_s^{b}(t_k,x)}\partial_xX_u^{b}(t_k,x)}{\partial_xX_t^{b}(t_k,x)(X_u^{b}(t_k,x))^{\frac{3}{2}}\partial_xX_s^{b}(t_k,x)}\Big)^2du\Big\vert^{\frac{p}{2}}\Big]\\
	&\leq C\widetilde{\E}_{t_k,x}^{b}\Big[\Big\vert \dfrac{1}{\partial_xX_t^{b}(t_k,x)\sqrt{X_s^{b}(t_k,x)}}\Big\vert^{p}\Big]\\
	&\qquad+C\Delta_n^{p-1} \int_{s}^t\widetilde{\E}_{t_k,x}^{b}\Big[\Big\vert\dfrac{\sqrt{X_s^{b}(t_k,x)}\partial_xX_u^{b}(t_k,x)}{\partial_xX_t^{b}(t_k,x)(X_u^{b}(t_k,x))^2\partial_xX_s^{b}(t_k,x)}\Big\vert^{p}\Big]du\\
	&\qquad+C\Delta_n^{\frac{p}{2}-1} \int_{s}^t\widetilde{\E}_{t_k,x}^{b}\Big[\Big\vert\dfrac{\sqrt{X_s^{b}(t_k,x)}\partial_xX_u^{b}(t_k,x)}{\partial_xX_t^{b}(t_k,x)(X_u^{b}(t_k,x))^{\frac{3}{2}}\partial_xX_s^{b}(t_k,x)}\Big\vert^{p}\Big]du\\
	&\leq C\Big(\widetilde{\E}_{t_k,x}^{b}\Big[\dfrac{1}{(\partial_xX_t^{b}(t_k,x))^{p\frac{p_5}{2}}}\Big]\Big)^{\frac{2}{p_5}}\Big(\widetilde{\E}_{t_k,x}^{b}\Big[\dfrac{1}{(X_s^{b}(t_k,x))^{p\frac{q_5}{2}}}\Big]\Big)^{\frac{1}{q_5}}\\
	&\qquad+C\Delta_n^{p-1} \int_{s}^t\Big(\widetilde{\E}_{t_k,x}^{b}\Big[\dfrac{1}{(\partial_xX_t^{b}(t_k,x)(X_u^{b}(t_k,x))^2\partial_xX_s^{b}(t_k,x))^{pp_4}}\Big]\Big)^{\frac{1}{p_4}}\\
	&\qquad\times \Big(\widetilde{\E}_{t_k,x}^{b}\Big[\Big(\sqrt{X_s^{b}(t_k,x)}\partial_xX_u^{b}(t_k,x)\Big)^{pq_4}\Big]\Big)^{\frac{1}{q_4}}du\\
	&\qquad+C\Delta_n^{\frac{p}{2}-1} \int_{s}^t\Big(\widetilde{\E}_{t_k,x}^{b}\Big[\dfrac{1}{(\partial_xX_t^{b}(t_k,x)(X_u^{b}(t_k,x))^{\frac{3}{2}}\partial_xX_s^{b}(t_k,x))^{pp_4}}\Big]\Big)^{\frac{1}{p_4}}\\
	&\qquad\times \Big(\widetilde{\E}_{t_k,x}^{b}\Big[\Big(\sqrt{X_s^{b}(t_k,x)}\partial_xX_u^{b}(t_k,x)\Big)^{pq_4}\Big]\Big)^{\frac{1}{q_4}}du\\
	&\leq \dfrac{C}{x^{\frac{p}{2}}}\Big(1+\dfrac{1}{x^{\frac{\frac{2a}{\sigma^2}-1}{p_5}-\frac{p}{2}}}\Big)+C\Delta_n^{p-1} \int_{s}^t\Big(\widetilde{\E}_{t_k,x}^{b}\Big[\dfrac{1}{(\partial_xX_t^{b}(t_k,x))^{pp_4p_5}}\Big]\Big)^{\frac{1}{p_4p_5}}\\
	&\qquad\times\Big(\widetilde{\E}_{t_k,x}^{b}\Big[\dfrac{1}{(\partial_xX_s^{b}(t_k,x))^{pp_4p_5}}\Big]\Big)^{\frac{1}{p_4p_5}}\Big(\widetilde{\E}_{t_k,x}^{b}\Big[\dfrac{1}{(X_u^{b}(t_k,x))^{2pp_4q_5}}\Big]\Big)^{\frac{1}{p_4q_5}}\\
	&\qquad\times\Big(\widetilde{\E}_{t_k,x}^{b}\Big[ (\sqrt{X_s^{b}(t_k,x)})^{pq_4p_6}\Big]\Big)^{\frac{1}{q_4p_6}}\Big(\widetilde{\E}_{t_k,x}^{b}\Big[(\partial_xX_u^{b}(t_k,x))^{pq_4q_6}\Big]\Big)^{\frac{1}{q_4q_6}}du\\
	&\qquad+C\Delta_n^{\frac{p}{2}-1} \int_{s}^t\Big(\widetilde{\E}_{t_k,x}^{b}\Big[\dfrac{1}{(\partial_xX_t^{b}(t_k,x))^{pp_4p_5}}\Big]\Big)^{\frac{1}{p_4p_5}}\\
	&\qquad\times\Big(\widetilde{\E}_{t_k,x}^{b}\Big[\dfrac{1}{(\partial_xX_s^{b}(t_k,x))^{pp_4p_5}}\Big]\Big)^{\frac{1}{p_4p_5}}\Big(\widetilde{\E}_{t_k,x}^{b}\Big[\dfrac{1}{(X_u^{b}(t_k,x))^{\frac{3}{2}pp_4q_5}}\Big]\Big)^{\frac{1}{p_4q_5}}\\
	&\qquad\times\Big(\widetilde{\E}_{t_k,x}^{b}\Big[ (\sqrt{X_s^{b}(t_k,x)})^{pq_4p_6}\Big]\Big)^{\frac{1}{q_4p_6}}\Big(\widetilde{\E}_{t_k,x}^{b}\Big[(\partial_xX_u^{b}(t_k,x))^{pq_4q_6}\Big]\Big)^{\frac{1}{q_4q_6}}du\\
	&\leq \dfrac{C}{x^{\frac{p}{2}}}\Big(1+\dfrac{1}{x^{\frac{\frac{2a}{\sigma^2}-1}{p_5}-\frac{p}{2}}}\Big)+C\frac{\Delta_n^{p}}{x^{2p}}(1+x^{\frac{p}{2}})\Big(1+\dfrac{1}{x^{\frac{\frac{2a}{\sigma^2}-1}{2p_4p_5}-\frac{p}{2}}}\Big)^2\Big(1+\dfrac{1}{x^{\frac{\frac{2a}{\sigma^2}-1}{2q_4q_6}+\frac{p}{2}}}\Big)\\
	&\qquad +C\frac{\Delta_n^{p}}{x^{\frac{3}{2}p}}(1+x^{\frac{p}{2}})\Big(1+\dfrac{1}{x^{\frac{\frac{2a}{\sigma^2}-1}{2p_4p_5}-\frac{p}{2}}}\Big)^2\Big(1+\dfrac{1}{x^{\frac{\frac{2a}{\sigma^2}-1}{2q_4q_6}+\frac{p}{2}}}\Big)\\
	&\leq C\Big(1+\dfrac{1}{x^{\frac{\frac{2a}{\sigma^2}-1}{p_5}-\frac{p}{2}}}\Big)\Big(\dfrac{1}{x^{\frac{p}{2}}}+\dfrac{1}{x^{3p}}\Big),
	\end{align*}
	where $p_4>1$ and $p_4$ is close to $1$. Here, conditions are required as follows
	\begin{align*}
	2pp_4q_5<\dfrac{2a}{\sigma^2}-1,\; -pp_4p_5\geq -\frac{(\frac{2a}{\sigma^2}-1)^2}{2(\frac{2a}{\sigma^2}-\frac{1}{2})}.	
	\end{align*}
	This implies that
	\begin{align*}
	2pq_5<\dfrac{2a}{\sigma^2}-1,\; pp_5<\frac{(\frac{2a}{\sigma^2}-1)^2}{2(\frac{2a}{\sigma^2}-\frac{1}{2})}.
	\end{align*}
	Thus,
	\begin{align*}
	\frac{2a}{\sigma^2}>2pq_5+1=2p\frac{p_5}{p_5-2}+1,\; \frac{2a}{\sigma^2}> pp_5+\sqrt{pp_5\left(pp_5+1\right)}+1.
	\end{align*}
	Hence, the optimal choice for $p_5$ is solution to
	$$
	2p\frac{p_5}{p_5-2}=pp_5+\sqrt{pp_5\left(pp_5+1\right)}.
	$$	
	The unique positive solution is given by $p_5=\frac{6p+2+2\sqrt{9p^2+2p}}{4p+1}$. Thus, \eqref{dmif1} is valid under condition $\frac{a}{\sigma^2}>\frac{1}{2}(3p+1+\sqrt{9p^2+2p})$.
	
	Next, we treat \eqref{dmif3}. For $t_k\leq s\leq t \leq t_{k+1}$, using \eqref{expression1},  H\"older's inequality with $\frac{1}{p_7}+\frac{1}{q_7}=1$, $\frac{1}{p_8}+\frac{1}{q_8}=1$, Cauchy-Schwarz’s inequality, \eqref{e1} and \eqref{e4}, under condition $2a\geq \sigma^2$ we get that
	\begin{align*}
	&\widetilde{\E}_{t_k,x}^{b}\Big[\Big\vert\dfrac{D_sX_{t}^{b}(t_k,x)}{\partial_xX_{t}^{b}(t_k,x)}\Big\vert^p\Big]=\sigma^p\widetilde{\E}_{t_k,x}^{b}\Big[\Big\vert\frac{\sqrt{X_s^{b}(t_k,x)}\partial_xX_t^{b}(t_k,x)}{\partial_xX_{t}^{b}(t_k,x)\partial_xX_s^{b}(t_k,x)}\Big\vert^p\Big]\\
	&\leq \sigma^p \Big(\widetilde{\E}_{t_k,x}^{b}\Big[\frac{1}{(\partial_xX_{t}^{b}(t_k,x)\partial_xX_s^{b}(t_k,x))^{pp_7}}\Big]\Big)^{\frac{1}{p_7}}\Big(\widetilde{\E}_{t_k,x}^{b}\Big[(\sqrt{X_s^{b}(t_k,x)}\partial_xX_t^{b}(t_k,x))^{pq_7}\Big]\Big)^{\frac{1}{q_7}}\\
	&\leq \sigma^p \Big(\widetilde{\E}_{t_k,x}^{b}\Big[\frac{1}{(\partial_xX_{t}^{b}(t_k,x))^{2pp_7}}\Big]\Big)^{\frac{1}{2p_7}}\Big(\widetilde{\E}_{t_k,x}^{b}\Big[\frac{1}{(\partial_xX_s^{b}(t_k,x))^{2pp_7}}\Big]\Big)^{\frac{1}{2p_7}}\\
	&\qquad\times\Big(\widetilde{\E}_{t_k,x}^{b}\Big[(\sqrt{X_s^{b}(t_k,x)})^{pq_7p_8}\Big]\Big)^{\frac{1}{q_7p_8}}\Big(\widetilde{\E}_{t_k,x}^{b}\Big[(\partial_xX_t^{b}(t_k,x))^{pq_7q_8}\Big]\Big)^{\frac{1}{q_7q_8}}\\
	&\leq C(1+x^{\frac{p}{2}})\Big(1+\dfrac{1}{x^{\frac{\frac{2a}{\sigma^2}-1}{2p_7}-p}}\Big)\Big(1+\dfrac{1}{x^{\frac{\frac{2a}{\sigma^2}-1}{2q_7q_8}+\frac{p}{2}}}\Big),
	\end{align*}
	where $p_7>1$ and $p_7$ is close to $1$. Indeed, the condition required here is $-2pp_7\geq -\frac{(\frac{2a}{\sigma^2}-1)^2}{2(\frac{2a}{\sigma^2}-\frac{1}{2})}$ and $\frac{2a}{\sigma^2}>2p+1+\sqrt{2p(2p+1)}$, thus we only need to choose $p_7\in(1,\frac{(\frac{2a}{\sigma^2}-1)^2}{4p(\frac{2a}{\sigma^2}-\frac{1}{2})})$. Hence, \eqref{dmif3} is valid under condition $\frac{a}{\sigma^2}>\frac{1}{2}(2p+1+\sqrt{2p(2p+1)})$.	
	
	Finally, we treat \eqref{dmif2}. For $t_k\leq s\leq t \leq t_{k+1}$, using \eqref{Mallinveflow1} and \eqref{deriinver}, under $\frac{a}{\sigma^2}>\frac{7}{2}+\sqrt{10}$, we have
	\begin{align}\label{deriinver2}
	&X_{t}^{b}(t_k,x)D_s\Big(\dfrac{1}{\partial_xX_{t}^{b}(t_k,x)}\Big)=-\frac{X_{t}^{b}(t_k,x)}{\partial_xX_t^{b}(t_k,x)}\bigg(\dfrac{\sigma}{2\sqrt{X_s^{b}(t_k,x)}}\notag\\
	&\qquad+\dfrac{\sigma^3}{8}\int_{s}^t\dfrac{\sqrt{X_s^{b}(t_k,x)}\partial_xX_u^{b}(t_k,x)}{(X_u^{b}(t_k,x))^2\partial_xX_s^{b}(t_k,x)}du-\dfrac{\sigma^2}{4}\int_{s}^t\dfrac{\sqrt{X_s^{b}(t_k,x)}\partial_xX_u^{b}(t_k,x)}{(X_u^{b}(t_k,x))^{\frac{3}{2}}\partial_xX_s^{b}(t_k,x)}dB_u\bigg).
	\end{align}
	Then, using BDG's and H\"older's inequalities with $\frac{1}{p_4}+\frac{1}{q_4}=1$, $\frac{1}{p_5}+\frac{1}{p_5}+\frac{1}{q_5}=1$, $\frac{1}{p_6}+\frac{1}{q_6}+\frac{1}{r_6}=1$, \eqref{e1}, \eqref{e2} and \eqref{e4}, we get that
	\begin{align*}
	&\widetilde{\E}_{t_k,x}^{b}\Big[\Big\vert X_{t}^{b}(t_k,x)D_s\Big(\dfrac{1}{\partial_xX_{t}^{b}(t_k,x)}\Big)\Big\vert^p\Big]\leq C\widetilde{\E}_{t_k,x}^{b}\Big[\Big\vert \dfrac{X_{t}^{b}(t_k,x)}{\partial_xX_t^{b}(t_k,x)\sqrt{X_s^{b}(t_k,x)}}\Big\vert^{p}\Big]\\
	&\qquad+C\widetilde{\E}_{t_k,x}^{b}\Big[\Big\vert \int_{s}^t\dfrac{X_{t}^{b}(t_k,x)\sqrt{X_s^{b}(t_k,x)}\partial_xX_u^{b}(t_k,x)}{\partial_xX_t^{b}(t_k,x)(X_u^{b}(t_k,x))^2\partial_xX_s^{b}(t_k,x)}du\Big\vert^{p}\Big]\\
	&\qquad+C\widetilde{\E}_{t_k,x}^{b}\Big[\Big\vert \int_{s}^t\Big(\dfrac{X_{t}^{b}(t_k,x)\sqrt{X_s^{b}(t_k,x)}\partial_xX_u^{b}(t_k,x)}{\partial_xX_t^{b}(t_k,x)(X_u^{b}(t_k,x))^{\frac{3}{2}}\partial_xX_s^{b}(t_k,x)}\Big)^2du\Big\vert^{\frac{p}{2}}\Big]\\
	&\leq C\widetilde{\E}_{t_k,x}^{b}\Big[\Big\vert \dfrac{X_{t}^{b}(t_k,x)}{\partial_xX_t^{b}(t_k,x)\sqrt{X_s^{b}(t_k,x)}}\Big\vert^{p}\Big]\\
	&\qquad+C\Delta_n^{p-1} \int_{s}^t\widetilde{\E}_{t_k,x}^{b}\Big[\Big\vert\dfrac{X_{t}^{b}(t_k,x)\sqrt{X_s^{b}(t_k,x)}\partial_xX_u^{b}(t_k,x)}{\partial_xX_t^{b}(t_k,x)(X_u^{b}(t_k,x))^2\partial_xX_s^{b}(t_k,x)}\Big\vert^{p}\Big]du\\
	&\qquad+C\Delta_n^{\frac{p}{2}-1} \int_{s}^t\widetilde{\E}_{t_k,x}^{b}\Big[\Big\vert\dfrac{X_{t}^{b}(t_k,x)\sqrt{X_s^{b}(t_k,x)}\partial_xX_u^{b}(t_k,x)}{\partial_xX_t^{b}(t_k,x)(X_u^{b}(t_k,x))^{\frac{3}{2}}\partial_xX_s^{b}(t_k,x)}\Big\vert^{p}\Big]du\\
	&\leq C\Big(\widetilde{\E}_{t_k,x}^{b}\Big[\dfrac{1}{(\partial_xX_t^{b}(t_k,x)\sqrt{X_s^{b}(t_k,x)})^{pp_4}}\Big]\Big)^{\frac{1}{p_4}}\Big(\widetilde{\E}_{t_k,x}^{b}\Big[(X_{t}^{b}(t_k,x))^{pq_4}\Big]\Big)^{\frac{1}{q_4}}\\
	&\qquad+C\Delta_n^{p-1} \int_{s}^t\Big(\widetilde{\E}_{t_k,x}^{b}\Big[\dfrac{1}{(\partial_xX_t^{b}(t_k,x)(X_u^{b}(t_k,x))^2\partial_xX_s^{b}(t_k,x))^{pp_4}}\Big]\Big)^{\frac{1}{p_4}}\\
	&\qquad\times \Big(\widetilde{\E}_{t_k,x}^{b}\Big[\Big(X_{t}^{b}(t_k,x)\sqrt{X_s^{b}(t_k,x)}\partial_xX_u^{b}(t_k,x)\Big)^{pq_4}\Big]\Big)^{\frac{1}{q_4}}du\\
	&\qquad+C\Delta_n^{\frac{p}{2}-1} \int_{s}^t\Big(\widetilde{\E}_{t_k,x}^{b}\Big[\dfrac{1}{(\partial_xX_t^{b}(t_k,x)(X_u^{b}(t_k,x))^{\frac{3}{2}}\partial_xX_s^{b}(t_k,x))^{pp_4}}\Big]\Big)^{\frac{1}{p_4}}\\
	&\qquad\times \Big(\widetilde{\E}_{t_k,x}^{b}\Big[\Big(X_{t}^{b}(t_k,x)\sqrt{X_s^{b}(t_k,x)}\partial_xX_u^{b}(t_k,x)\Big)^{pq_4}\Big]\Big)^{\frac{1}{q_4}}du\\
	&\leq C\Big(\widetilde{\E}_{t_k,x}^{b}\Big[\dfrac{1}{(\partial_xX_t^{b}(t_k,x))^{pp_4\frac{p_5}{2}}}\Big]\Big)^{\frac{2}{p_4p_5}}\Big(\widetilde{\E}_{t_k,x}^{b}\Big[\dfrac{1}{(X_s^{b}(t_k,x))^{\frac{1}{2}pp_4q_5}}\Big]\Big)^{\frac{1}{p_4q_5}}\Big(\widetilde{\E}_{t_k,x}^{b}\Big[(X_{t}^{b}(t_k,x))^{pq_4}\Big]\Big)^{\frac{1}{q_4}}\\
	&\qquad+C\Delta_n^{p-1} \int_{s}^t\Big(\widetilde{\E}_{t_k,x}^{b}\Big[\dfrac{1}{(\partial_xX_t^{b}(t_k,x))^{pp_4p_5}}\Big]\Big)^{\frac{1}{p_4p_5}}\\
	&\qquad\times\Big(\widetilde{\E}_{t_k,x}^{b}\Big[\dfrac{1}{(\partial_xX_s^{b}(t_k,x))^{pp_4p_5}}\Big]\Big)^{\frac{1}{p_4p_5}}\Big(\widetilde{\E}_{t_k,x}^{b}\Big[\dfrac{1}{(X_u^{b}(t_k,x))^{2pp_4q_5}}\Big]\Big)^{\frac{1}{p_4q_5}}\\
	&\qquad\times\Big(\widetilde{\E}_{t_k,x}^{b}\Big[ (\sqrt{X_s^{b}(t_k,x)})^{pq_4p_6}\Big]\Big)^{\frac{1}{q_4p_6}}\Big(\widetilde{\E}_{t_k,x}^{b}\Big[(\partial_xX_u^{b}(t_k,x))^{pq_4q_6}\Big]\Big)^{\frac{1}{q_4q_6}}\\
	&\qquad\times \Big(\widetilde{\E}_{t_k,x}^{b}\Big[(X_t^{b}(t_k,x))^{pq_4r_6}\Big]\Big)^{\frac{1}{q_4r_6}}du+C\Delta_n^{\frac{p}{2}-1} \int_{s}^t\Big(\widetilde{\E}_{t_k,x}^{b}\Big[\dfrac{1}{(\partial_xX_t^{b}(t_k,x))^{pp_4p_5}}\Big]\Big)^{\frac{1}{p_4p_5}}\\
	&\qquad\times\Big(\widetilde{\E}_{t_k,x}^{b}\Big[\dfrac{1}{(\partial_xX_s^{b}(t_k,x))^{pp_4p_5}}\Big]\Big)^{\frac{1}{p_4p_5}}\Big(\widetilde{\E}_{t_k,x}^{b}\Big[\dfrac{1}{(X_u^{b}(t_k,x))^{\frac{3}{2}pp_4q_5}}\Big]\Big)^{\frac{1}{p_4q_5}}\\
	&\qquad\times\Big(\widetilde{\E}_{t_k,x}^{b}\Big[ (\sqrt{X_s^{b}(t_k,x)})^{pq_4p_6}\Big]\Big)^{\frac{1}{q_4p_6}}\Big(\widetilde{\E}_{t_k,x}^{b}\Big[(\partial_xX_u^{b}(t_k,x))^{pq_4q_6}\Big]\Big)^{\frac{1}{q_4q_6}}\\
	&\qquad\times \Big(\widetilde{\E}_{t_k,x}^{b}\Big[(X_t^{b}(t_k,x))^{pq_4r_6}\Big]\Big)^{\frac{1}{q_4r_6}}du\\
	&\leq C(1+x^p)\dfrac{1}{x^{\frac{p}{2}}}\Big(1+\dfrac{1}{x^{\frac{\frac{2a}{\sigma^2}-1}{p_4p_5}-\frac{p}{2}}}\Big)+C\frac{\Delta_n^{p}}{x^{2p}}(1+x^{\frac{3p}{2}})\Big(1+\dfrac{1}{x^{\frac{\frac{2a}{\sigma^2}-1}{2p_4p_5}-\frac{p}{2}}}\Big)^2\Big(1+\dfrac{1}{x^{\frac{\frac{2a}{\sigma^2}-1}{2q_4q_6}+\frac{p}{2}}}\Big)\\
	&\qquad +C\frac{\Delta_n^{p}}{x^{\frac{3}{2}p}}(1+x^{\frac{3p}{2}})\Big(1+\dfrac{1}{x^{\frac{\frac{2a}{\sigma^2}-1}{2p_4p_5}-\frac{p}{2}}}\Big)^2\Big(1+\dfrac{1}{x^{\frac{\frac{2a}{\sigma^2}-1}{2q_4q_6}+\frac{p}{2}}}\Big)\\
	&\leq C\Big(\frac{1}{x^{\frac{p}{2}}}+\frac{1}{x^{2p}}\Big)(1+x^{\frac{3p}{2}})\Big(1+\dfrac{1}{x^{\frac{\frac{2a}{\sigma^2}-1}{2p_4p_5}-\frac{p}{2}}}\Big)^2\Big(1+\dfrac{1}{x^{\frac{\frac{2a}{\sigma^2}-1}{2q_4q_6}+\frac{p}{2}}}\Big)\\
	&\leq C\frac{1}{x^{2p}}(1+x^{3p})\Big(1+\dfrac{1}{x^{\frac{\frac{2a}{\sigma^2}-1}{p_4p_5}-p}}\Big)\Big(1+\dfrac{1}{x^{\frac{\frac{2a}{\sigma^2}-1}{2q_4q_6}+\frac{p}{2}}}\Big),
	\end{align*}
	where $p_4>1$ and $p_4$ is close to $1$. Here, conditions are required as follows
	\begin{align*}
	2pp_4q_5<\dfrac{2a}{\sigma^2}-1,\; -pp_4p_5\geq -\frac{(\frac{2a}{\sigma^2}-1)^2}{2(\frac{2a}{\sigma^2}-\frac{1}{2})}.	
	\end{align*}
	This implies that
	\begin{align*}
	2pq_5<\dfrac{2a}{\sigma^2}-1,\; pp_5<\frac{(\frac{2a}{\sigma^2}-1)^2}{2(\frac{2a}{\sigma^2}-\frac{1}{2})}.
	\end{align*}
	Thus,
	\begin{align*}
	\frac{2a}{\sigma^2}>2pq_5+1=2p\frac{p_5}{p_5-2}+1,\; \frac{2a}{\sigma^2}> pp_5+\sqrt{pp_5\left(pp_5+1\right)}+1.
	\end{align*}
	Hence, the optimal choice for $p_5$ is solution to
	$$
	2p\frac{p_5}{p_5-2}=pp_5+\sqrt{pp_5\left(pp_5+1\right)}.
	$$	
	The unique positive solution is given by $p_5=\frac{6p+2+2\sqrt{9p^2+2p}}{4p+1}$. Thus, \eqref{dmif2} is valid under condition $\frac{a}{\sigma^2}>\frac{1}{2}(3p+1+\sqrt{9p^2+2p})$.
\end{proof}

\subsection{Proof of Lemma \ref{Malliderivable}}
\begin{proof}
	\textnormal{(i)} For $t_k\leq s\leq t \leq t_{k+1}$, using \eqref{expression1}, we have	
	\begin{align*}
	&\partial_xX_t^{b}(t_k,x)\bigg(\dfrac{\sigma}{2\sqrt{X_s^{b}(t_k,x)}}+\dfrac{\sigma^2}{8}\int_{s}^t\dfrac{D_sX_u^{b}(t_k,x)}{(X_u^{b}(t_k,x))^2}du-\dfrac{\sigma}{4}\int_{s}^t\dfrac{D_sX_u^{b}(t_k,x)}{(X_u^{b}(t_k,x))^{\frac{3}{2}}}dB_u\bigg)\notag\\
	&=\partial_xX_t^{b}(t_k,x)\bigg(\dfrac{\sigma}{2\sqrt{X_s^{b}(t_k,x)}}+\dfrac{\sigma^3}{8}\int_{s}^t\dfrac{\sqrt{X_s^{b}(t_k,x)}\partial_xX_u^{b}(t_k,x)}{(X_u^{b}(t_k,x))^2\partial_xX_s^{b}(t_k,x)}du\\
	&\qquad-\dfrac{\sigma^2}{4}\int_{s}^t\dfrac{\sqrt{X_s^{b}(t_k,x)}\partial_xX_u^{b}(t_k,x)}{(X_u^{b}(t_k,x))^{\frac{3}{2}}\partial_xX_s^{b}(t_k,x)}dB_u\bigg).
	\end{align*}
	We will show that under condition $\frac{a}{\sigma^2}>\frac{5+3\sqrt{2}}{2}$, this expression is contained in $L^2(\widetilde{\Omega}\times [t_k,t_{k+1}])$. In fact, using BDG's and H\"older's inequalities with $\frac{1}{p}+\frac{1}{q}=1$, $\frac{1}{p_1}+\frac{1}{q_1}=1$, $\frac{1}{p_2}+\frac{1}{q_2}+\frac{1}{r_2}=1$, \eqref{e1}, \eqref{e2} and \eqref{e4}, we get 
	\begin{align*}
	&\widetilde{\E}_{t_k,x}^{b}\bigg[\bigg\vert \partial_xX_t^{b}(t_k,x)\bigg(\dfrac{\sigma}{2\sqrt{X_s^{b}(t_k,x)}}+\dfrac{\sigma^2}{8}\int_{s}^t\dfrac{D_sX_u^{b}(t_k,x)}{(X_u^{b}(t_k,x))^2}du-\dfrac{\sigma}{4}\int_{s}^t\dfrac{D_sX_u^{b}(t_k,x)}{(X_u^{b}(t_k,x))^{\frac{3}{2}}}dB_u\bigg)\bigg\vert^2\bigg]\\
	&\leq C\widetilde{\E}_{t_k,x}^{b}\Big[\Big\vert \frac{\partial_xX_t^{b}(t_k,x)}{\sqrt{X_s^{b}(t_k,x)}}\Big\vert^2\Big]+C\widetilde{\E}_{t_k,x}^{b}\Big[\Big\vert \partial_xX_t^{b}(t_k,x)\int_{s}^t\dfrac{\sqrt{X_s^{b}(t_k,x)}\partial_xX_u^{b}(t_k,x)}{(X_u^{b}(t_k,x))^2\partial_xX_s^{b}(t_k,x)}du\Big\vert^2\Big]\\
	&\qquad+C \int_{s}^t\widetilde{\E}_{t_k,x}^{b}\Big[\Big\vert\dfrac{\partial_xX_t^{b}(t_k,x)\sqrt{X_s^{b}(t_k,x)}\partial_xX_u^{b}(t_k,x)}{(X_u^{b}(t_k,x))^{\frac{3}{2}}\partial_xX_s^{b}(t_k,x)}\Big\vert^2\Big]du\\
	&\leq C\widetilde{\E}_{t_k,x}^{b}\Big[\Big\vert \frac{\partial_xX_t^{b}(t_k,x)}{\sqrt{X_s^{b}(t_k,x)}}\Big\vert^2\Big]+C\Delta_n \int_{s}^t\widetilde{\E}_{t_k,x}^{b}\Big[\Big\vert\dfrac{\partial_xX_t^{b}(t_k,x)\sqrt{X_s^{b}(t_k,x)}\partial_xX_u^{b}(t_k,x)}{(X_u^{b}(t_k,x))^2\partial_xX_s^{b}(t_k,x)}\Big\vert^2\Big]du\\
	&\qquad+C \int_{s}^t\widetilde{\E}_{t_k,x}^{b}\Big[\Big\vert\dfrac{\partial_xX_t^{b}(t_k,x)\sqrt{X_s^{b}(t_k,x)}\partial_xX_u^{b}(t_k,x)}{(X_u^{b}(t_k,x))^{\frac{3}{2}}\partial_xX_s^{b}(t_k,x)}\Big\vert^2\Big]du\\
	&\leq C\Big(\widetilde{\E}_{t_k,x}^{b}\Big[\dfrac{1}{\vert X_s^{b}(t_k,x)\vert^p}\Big]\Big)^{\frac{1}{p}}\Big(\widetilde{\E}_{t_k,x}^{b}\Big[\big\vert \partial_xX_t^{b}(t_k,x)\big\vert^{2q}\Big]\Big)^{\frac{1}{q}}\\
	&\qquad+C\Delta_n\int_{s}^t\Big(\widetilde{\E}_{t_k,x}^{b}\Big[\frac{1}{\vert (X_u^{b}(t_k,x))^2\partial_xX_s^{b}(t_k,x) \vert^{2p}}\Big]\Big)^{\frac{1}{p}}\\
	&\qquad\times\Big(\widetilde{\E}_{t_k,x}^{b}\Big[\big\vert \partial_xX_t^{b}(t_k,x)\sqrt{X_s^{b}(t_k,x)}\partial_xX_u^{b}(t_k,x)\big\vert^{2q}\Big]\Big)^{\frac{1}{q}}du\\
	&\qquad+C\int_{s}^t\Big(\widetilde{\E}_{t_k,x}^{b}\Big[\frac{1}{\vert (X_u^{b}(t_k,x))^{\frac{3}{2}}\partial_xX_s^{b}(t_k,x) \vert^{2p}}\Big]\Big)^{\frac{1}{p}}\\
	&\qquad\times\Big(\widetilde{\E}_{t_k,x}^{b}\Big[\big\vert \partial_xX_t^{b}(t_k,x)\sqrt{X_s^{b}(t_k,x)}\partial_xX_u^{b}(t_k,x)\big\vert^{2q}\Big]\Big)^{\frac{1}{q}}du\\
	&\leq \dfrac{C}{x}\Big(1+\dfrac{1}{x^{\frac{\frac{2a}{\sigma^2}-1}{2q}+1}}\Big)+C\Delta_n\int_{s}^t\Big(\widetilde{\E}_{t_k,x}^{b}\Big[\frac{1}{(X_u^{b}(t_k,x))^{4pp_1}}\Big]\Big)^{\frac{1}{pp_1}}\Big(\widetilde{\E}_{t_k,x}^{b}\Big[\frac{1}{(\partial_xX_s^{b}(t_k,x))^{2pq_1}}\Big]\Big)^{\frac{1}{pq_1}}\\
	&\qquad\times\Big(\widetilde{\E}_{t_k,x}^{b}\Big[(\partial_xX_t^{b}(t_k,x))^{2qp_2}\Big]\Big)^{\frac{1}{qp_2}}\Big(\widetilde{\E}_{t_k,x}^{b}\Big[ (X_s^{b}(t_k,x))^{qq_2}\Big]\Big)^{\frac{1}{qq_2}}\\
	&\qquad\times\Big(\widetilde{\E}_{t_k,x}^{b}\Big[(\partial_xX_u^{b}(t_k,x))^{2qr_2}\Big]\Big)^{\frac{1}{qr_2}}du+C\int_{s}^t\Big(\widetilde{\E}_{t_k,x}^{b}\Big[\frac{1}{(X_u^{b}(t_k,x))^{3pp_1}}\Big]\Big)^{\frac{1}{pp_1}}\\
	&\qquad\times\Big(\widetilde{\E}_{t_k,x}^{b}\Big[\frac{1}{(\partial_xX_s^{b}(t_k,x))^{2pq_1}}\Big]\Big)^{\frac{1}{pq_1}}\Big(\widetilde{\E}_{t_k,x}^{b}\Big[(\partial_xX_t^{b}(t_k,x))^{2qp_2}\Big]\Big)^{\frac{1}{qp_2}}\\
	&\qquad\times\Big(\widetilde{\E}_{t_k,x}^{b}\Big[ (X_s^{b}(t_k,x))^{qq_2}\Big]\Big)^{\frac{1}{qq_2}}\Big(\widetilde{\E}_{t_k,x}^{b}\Big[(\partial_xX_u^{b}(t_k,x))^{2qr_2}\Big]\Big)^{\frac{1}{qr_2}}du\\
	&\leq \dfrac{C}{x}\Big(1+\dfrac{1}{x^{\frac{\frac{2a}{\sigma^2}-1}{2q}+1}}\Big)+C\dfrac{\Delta_n^2}{x^4}(1+x)\Big(1+\dfrac{1}{x^{\frac{\frac{2a}{\sigma^2}-1}{2pq_1}-1}}\Big)\Big(1+\dfrac{1}{x^{\frac{\frac{2a}{\sigma^2}-1}{2qp_2}+1}}\Big)\Big(1+\dfrac{1}{x^{\frac{\frac{2a}{\sigma^2}-1}{2qr_2}+1}}\Big)\\
	&\qquad +C\dfrac{\Delta_n}{x^3}(1+x)\Big(1+\dfrac{1}{x^{\frac{\frac{2a}{\sigma^2}-1}{2pq_1}-1}}\Big)\Big(1+\dfrac{1}{x^{\frac{\frac{2a}{\sigma^2}-1}{2qp_2}+1}}\Big)\Big(1+\dfrac{1}{x^{\frac{\frac{2a}{\sigma^2}-1}{2qr_2}+1}}\Big),
	\end{align*}
	where $p>1$ and $p$ is close to $1$. Here, conditions are required as follows
	\begin{align*}
4pp_1<\dfrac{2a}{\sigma^2}-1,\; -2pq_1\geq -\frac{(\frac{2a}{\sigma^2}-1)^2}{2(\frac{2a}{\sigma^2}-\frac{1}{2})}.	
\end{align*}
	This implies that
	\begin{align*}
	4p_1<\dfrac{2a}{\sigma^2}-1,\; 2q_1<\frac{(\frac{2a}{\sigma^2}-1)^2}{2(\frac{2a}{\sigma^2}-\frac{1}{2})}.
	\end{align*}
	Thus,
	\begin{align*}
	\frac{2a}{\sigma^2}>4p_1+1=4\frac{q_1}{q_1-1}+1,\; \frac{2a}{\sigma^2}> 2q_1+\sqrt{2q_1\left(2q_1+1\right)}+1.
	\end{align*}
	Hence, the optimal choice for $p_1$ is solution to
	$$
	4\frac{q_1}{q_1-1}=2q_1+\sqrt{2q_1\left(2q_1+1\right)}.
	$$	
	The unique positive solution is $q_1=1+\frac{2\sqrt{2}}{3}$. Thus, $\frac{a}{\sigma^2}>\frac{5+3\sqrt{2}}{2}$.
	Thus, under $\frac{a}{\sigma^2}>\frac{5+3\sqrt{2}}{2}$,	
	\begin{align*}
	&\widetilde{\E}_{t_k,x}^{b}\bigg[\int_{t_k}^{t_{k+1}}\Big\vert \partial_xX_t^{b}(t_k,x)\Big(\dfrac{\sigma}{2\sqrt{X_s^{b}(t_k,x)}}+\dfrac{\sigma^2}{8}\int_{s}^t\dfrac{D_sX_u^{b}(t_k,x)}{(X_u^{b}(t_k,x))^2}du\\
	&\qquad-\dfrac{\sigma}{4}\int_{s}^t\dfrac{D_sX_u^{b}(t_k,x)}{(X_u^{b}(t_k,x))^{\frac{3}{2}}}dB_u\Big)\Big\vert^2ds\bigg]<+\infty.
	\end{align*}	
	This combined with \textcolor{black}{\eqref{dxe} and} \eqref{e4} with $p=2$ guarantees that $\partial_xX_t^{b}(t_k,x)\in \mathbb{D}^{1,2}$ under $\frac{a}{\sigma^2}>\frac{5+3\sqrt{2}}{2}$ thanks to Theorem \ref{existenceMallideri}, and furthermore, its Malliavin derivative is given by \eqref{dmpx}.

	\textnormal{(ii)} In the same way 
	we show that the expression $\frac{-1}{(\partial_xX_t^{b}(t_k,x))^2}D(\partial_xX_t^{b}(t_k,x))$ is contained in $L^2(\widetilde{\Omega}\times [t_k,t_{k+1}])$ under condition $\frac{a}{\sigma^2}>\frac{7}{2}+\sqrt{10}$. Indeed, applying \eqref{dmif1} of Lemma \ref{Malliflow} with $p=2$, condition $\frac{a}{\sigma^2}>\frac{7}{2}+\sqrt{10}$ ensures that
	\begin{align*}
	\widetilde{\E}_{t_k,x}^{b}\Big[\Big\vert \frac{D_s(\partial_xX_t^{b}(t_k,x))}{(\partial_xX_t^{b}(t_k,x))^2}\Big\vert^2\Big]&\leq C\Big(1+\dfrac{1}{x^{\frac{\frac{2a}{\sigma^2}-1}{p_5}-1}}\Big)\Big(\dfrac{1}{x}+\dfrac{1}{x^{6}}\Big),
	\end{align*}
	where $p_5=\frac{15+4\sqrt{10}}{9}$. This implies that
	\begin{align*}
	&\widetilde{\E}_{t_k,x}^{b}\Big[\int_{t_k}^{t_{k+1}}\Big\vert \frac{D_s(\partial_xX_t^{b}(t_k,x))}{(\partial_xX_t^{b}(t_k,x))^2}\Big\vert^2ds\Big]\leq C\Delta_n\Big(1+\dfrac{1}{x^{\frac{\frac{2a}{\sigma^2}-1}{p_5}-1}}\Big)\Big(\dfrac{1}{x}+\dfrac{1}{x^{6}}\Big)<+\infty.
	\end{align*}
	Moreover, applying Lemma \ref{estimates} with $p=-2$, \eqref{e4} is satisfied under condition $\frac{a}{\sigma^2}>\frac{3+\sqrt{6}}{2}$. This guarantees that  $(\partial_xX_t^{b}(t_k,x))^{-1}\in \mathbb{D}^{1,2}$ under condition $\frac{a}{\sigma^2}>\frac{7}{2}+\sqrt{10}$ thanks to Lemma \ref{existenceMallideri} and, furthermore, its Malliavin derivative is given by \eqref{Mallinveflow1}. 
	
	Finally, in the same way 
	we show that under condition $\frac{a}{\sigma^2}>\frac{7}{2}+\sqrt{10}$,
	$$
	\dfrac{1}{\partial_xX_{t}^{b}(t_k,x)}DX_{t}^{b}(t_k,x)+X_{t}^{b}(t_k,x)D\Big(\dfrac{1}{\partial_xX_{t}^{b}(t_k,x)}\Big)
	$$
	is contained in $L^2(\widetilde{\Omega}\times [t_k,t_{k+1}])$. Indeed, applying \eqref{dmif3} and \eqref{dmif2} of Lemma \ref{Malliflow} with $p=2$, condition $\frac{a}{\sigma^2}>\frac{7}{2}+\sqrt{10}$ ensures that
	\begin{align*}
	&\widetilde{\E}_{t_k,x}^{b}\Big[\Big\vert \dfrac{1}{\partial_xX_{t}^{b}(t_k,x)}D_sX_{t}^{b}(t_k,x)+X_{t}^{b}(t_k,x)D_s\Big(\dfrac{1}{\partial_xX_{t}^{b}(t_k,x)}\Big)\Big\vert^2\Big]\\
	&\leq 2\widetilde{\E}_{t_k,x}^{b}\Big[\Big\vert \dfrac{1}{\partial_xX_{t}^{b}(t_k,x)}D_sX_{t}^{b}(t_k,x)\Big\vert^2\Big]+2\widetilde{\E}_{t_k,x}^{b}\Big[\Big\vert X_{t}^{b}(t_k,x)D_s\Big(\dfrac{1}{\partial_xX_{t}^{b}(t_k,x)}\Big)\Big\vert^2\Big]\\
	&\leq C(1+x)\Big(1+\dfrac{1}{x^{\frac{\frac{2a}{\sigma^2}-1}{2p_7}-2}}\Big)\Big(1+\dfrac{1}{x^{\frac{\frac{2a}{\sigma^2}-1}{2q_7q_8}+1}}\Big)+\frac{C}{x^{4}}(1+x^{6})\Big(1+\dfrac{1}{x^{\frac{\frac{2a}{\sigma^2}-1}{p_4p_5}-2}}\Big)\Big(1+\dfrac{1}{x^{\frac{\frac{2a}{\sigma^2}-1}{2q_4q_6}+1}}\Big).
	\end{align*}
	This implies that under condition $\frac{a}{\sigma^2}>\frac{7}{2}+\sqrt{10}$,
	\begin{align*}
	&\widetilde{\E}_{t_k,x}^{b}\Big[\int_{t_k}^{t_{k+1}}\Big\vert \dfrac{1}{\partial_xX_{t}^{b}(t_k,x)}D_sX_{t}^{b}(t_k,x)+X_{t}^{b}(t_k,x)D_s\Big(\dfrac{1}{\partial_xX_{t}^{b}(t_k,x)}\Big)\Big\vert^2ds\Big]<+\infty.
	\end{align*}
	On the other hand, using H\"older's inequality with $\frac{1}{p_3}+\frac{1}{q_3}=1$, we get
	\begin{align*} \widetilde{\E}_{t_k,x}^{b}\Big[\Big\vert\dfrac{X_{t}^{b}(t_k,x)}{\partial_xX_{t}^{b}(t_k,x)}\Big\vert^2\Big]&\leq \Big(\widetilde{\E}_{t_k,x}^{b}\Big[\dfrac{1}{(\partial_xX_{t}^{b}(t_k,x))^{2p_3}}\Big]\Big)^{\frac{1}{p_3}}\Big(\widetilde{\E}_{t_k,x}^{b}\Big[(X_{t}^{b}(t_k,x))^{2q_3}\Big]\Big)^{\frac{1}{q_3}}\\
	&\leq C\Big(1+\dfrac{1}{x^{\frac{\frac{2a}{\sigma^2}-1}{2p_3}-1}}\Big)(1+x^2)<+\infty,
	\end{align*}
	where $p_3>1$ and $p_3$ is close to $1$. Indeed, the condition required here is $-2p_3\geq -\frac{(\frac{2a}{\sigma^2}-1)^2}{2(\frac{2a}{\sigma^2}-\frac{1}{2})}$ and $\frac{a}{\sigma^2}>\frac{3+\sqrt{6}}{2}$, thus we only need to choose $p_3\in(1,\frac{(\frac{2a}{\sigma^2}-1)^2}{4(\frac{2a}{\sigma^2}-\frac{1}{2})})$. Hence $\frac{X_{t}^{b}(t_k,x)}{\partial_xX_{t}^{b}(t_k,x)}\in L^2(\widetilde{\Omega})$ under $\frac{a}{\sigma^2}>\frac{3+\sqrt{6}}{2}$. This guarantees thanks to Theorem \ref{existenceMallideri} that  $\frac{X_{t}^{b}(t_k,x)}{\partial_xX_{t}^{b}(t_k,x)}\in \mathbb{D}^{1,2}$ under condition $\frac{a}{\sigma^2}>\frac{7}{2}+\sqrt{10}$ and, furthermore, its Malliavin derivative is given by \eqref{Mallinveflow2}. Thus, the result follows.	
\end{proof}

\subsection{Proof of Proposition \ref{c2prop1}}
\begin{proof}
	We are going to apply Theorem \ref{ruleproNualart}. First, we wish to show  $\partial_{b}X_{t_{k+1}}^{b}(t_k,x)U^{b}(t_k,x)\in \textnormal{Dom}\ \delta$ under condition $\frac{a}{\sigma^2}>\frac{7}{2}+\sqrt{10}$. For this, we write 
	\begin{align}\label{Fu2}
	\partial_{b}X_{t_{k+1}}^{b}(t_k,x)U_{\cdot}^{b}(t_k,x)&=\frac{\partial_{b}X_{t_{k+1}}^{b}(t_k,x)}{\sigma\sqrt{X_{\cdot}^{b}(t_k,x)}}(\partial_xX_{t_{k+1}}^{b}(t_k,x))^{-1}\partial_xX_{\cdot}^{b}(t_k,x)=Fu_{\cdot},
	\end{align}
	where $F=\partial_{b}X_{t_{k+1}}^{b}(t_k,x)(\partial_xX_{t_{k+1}}^{b}(t_k,x))^{-1}$ and $u_{\cdot}=\frac{1}{\sigma\sqrt{X_{\cdot}^{b}(t_k,x)}}\partial_xX_{\cdot}^{b}(t_k,x)$. Here $u=(u_t, t\in [t_k,t_{k+1}])$ is an adapted process then it belongs to $\textnormal{Dom}\ \delta$. By Theorem \ref{ruleproNualart}, it suffices to show that $F\in \mathbb{D}^{1,2}$ and $Fu\in L^2(\widetilde{\Omega};H)\cong L^2([t_k,t_{k+1}]\times \widetilde{\Omega},\R)$, where $H=L^2([t_k,t_{k+1}],\R)$. From \eqref{dxb2}, we have
	\begin{align*}
	F=\dfrac{\partial_{b}X_{t_{k+1}}^{b}(t_k,x)}{\partial_xX_{t_{k+1}}^{b}(t_k,x)}=-\int_{t_k}^{t_{k+1}}\dfrac{X_{r}^{b}(t_k,x)}{\partial_xX_{r}^{b}(t_k,x)}dr.
	\end{align*}	
	By assertion (ii) of Lemma \ref{Malliderivable}, under condition $\frac{a}{\sigma^2}>\frac{7}{2}+\sqrt{10}$, $\frac{X_{t}^{b}(t_k,x)}{\partial_xX_{t}^{b}(t_k,x)}\in \mathbb{D}^{1,2}$ for any $t\in [t_k,t_{k+1}]$. Thus, it is straightforward that
	$F\in \mathbb{D}^{1,2}$ under condition $\frac{a}{\sigma^2}>\frac{7}{2}+\sqrt{10}$. Furthermore, for $t_k\leq s\leq r \leq t_{k+1}$,
	\begin{align*}
	D_sF=-\int_{s}^{t_{k+1}}D_s\Big(\dfrac{X_{r}^{b}(t_k,x)}{\partial_xX_{r}^{b}(t_k,x)}\Big)dr.
	\end{align*}	
	Next, we check $Fu\in L^2([t_k,t_{k+1}]\times \widetilde{\Omega},\R)$. For this, using H\"older's inequality repeatedly with $\frac{1}{p}+\frac{1}{q}=1$, $\frac{1}{p_1}+\frac{1}{q_1}=1$  and $\frac{1}{p_2}+\frac{1}{q_2}=1$, together with \eqref{e1}-\eqref{e4}, we get that
	\begin{align*}
	&\widetilde{\E}_{t_k,x}^{b}\big[(Fu_t)^2\big]=\widetilde{\E}_{t_k,x}^{b}\Big[\Big(\int_{t_k}^{t_{k+1}}\dfrac{X_{r}^{b}(t_k,x)}{\partial_xX_{r}^{b}(t_k,x)}dr\frac{1}{\sigma\sqrt{X_{t}^{b}(t_k,x)}}\partial_xX_{t}^{b}(t_k,x)\Big)^{2}\Big]\\
	&\leq\dfrac{\Delta_n}{\sigma^2}\int_{t_k}^{t_{k+1}}\widetilde{\E}_{t_k,x}^{b}\Big[\Big\vert\dfrac{X_{r}^{b}(t_k,x)\partial_xX_{t}^{b}(t_k,x)}{\partial_xX_{r}^{b}(t_k,x)\sqrt{X_{t}^{b}(t_k,x)}}\Big\vert^2\Big]dr\\
	&\leq\dfrac{\Delta_n}{\sigma^2}\int_{t_k}^{t_{k+1}}\Big(\widetilde{\E}_{t_k,x}^{b}\Big[\dfrac{1}{(\partial_xX_{r}^{b}(t_k,x)\sqrt{X_{t}^{b}(t_k,x)})^{2p}}\Big]\Big)^{\frac{1}{p}}\Big(\widetilde{\E}_{t_k,x}^{b}\Big[(X_{r}^{b}(t_k,x)\partial_xX_{t}^{b}(t_k,x))^{2q}\Big]\Big)^{\frac{1}{q}}dr\\
		&\leq\dfrac{\Delta_n}{\sigma^2}\int_{t_k}^{t_{k+1}}\Big(\widetilde{\E}_{t_k,x}^{b}\Big[\dfrac{1}{(\partial_xX_{r}^{b}(t_k,x))^{2pp_1}}\Big]\Big)^{\frac{1}{pp_1}}\Big(\widetilde{\E}_{t_k,x}^{b}\Big[\dfrac{1}{(X_{t}^{b}(t_k,x))^{pq_1}}\Big]\Big)^{\frac{1}{pq_1}}\\
		&\qquad\times\Big(\widetilde{\E}_{t_k,x}^{b}\Big[(X_{r}^{b}(t_k,x))^{2qp_2}\Big]\Big)^{\frac{1}{qp_2}}\Big(\widetilde{\E}_{t_k,x}^{b}\Big[(\partial_xX_{t}^{b}(t_k,x))^{2qq_2}\Big]\Big)^{\frac{1}{qq_2}}dr\\
		&\leq \dfrac{C\Delta_n^2}{x}(1+x^2)\Big(1+\dfrac{1}{x^{\frac{\frac{2a}{\sigma^2}-1}{2pp_1}-1}}\Big)\Big(1+\dfrac{1}{x^{\frac{\frac{2a}{\sigma^2}-1}{2qq_2}+1}}\Big),
	\end{align*}
	for some constant $C>0$, where $p>1$ and $p$ is close to $1$. This shows that
	\begin{align*}
	\widetilde{\E}_{t_k,x}^{b}\Big[\int_{t_k}^{t_{k+1}}\left(Fu_t\right)^2dt\Big]&=\int_{t_k}^{t_{k+1}}\widetilde{\E}_{t_k,x}^{b}\Big[(Fu_t)^2\Big]dt<+\infty.
	\end{align*}
	All conditions required here are as follows
	\begin{align*}
	-2pp_1\geq -\frac{(\frac{2a}{\sigma^2}-1)^2}{2(\frac{2a}{\sigma^2}-\frac{1}{2})},\;  pq_1<\dfrac{2a}{\sigma^2}-1.
	\end{align*}
	That is,
	\begin{align*}
	-2p_1>-\frac{(\frac{2a}{\sigma^2}-1)^2}{2(\frac{2a}{\sigma^2}-\frac{1}{2})},\;  q_1<\dfrac{2a}{\sigma^2}-1.
	\end{align*}
	This implies that
	\begin{align*}
	\frac{2a}{\sigma^2}> 2p_1+\sqrt{2p_1\left(2p_1+1\right)}+1,\;
	\frac{2a}{\sigma^2}>q_1+1=\frac{p_1}{p_1-1}+1.
	\end{align*}	
	Hence, the optimal choice for $p_1$ is solution to
	$$
	2p_1+\sqrt{2p_1\left(2p_1+1\right)}=\frac{p_1}{p_1-1}.
	$$	
	The unique positive solution is given by $p_1=\frac{9+\sqrt{33}}{12}$. Thus, $Fu\in L^2([t_k,t_{k+1}]\times \widetilde{\Omega},\R)$ under $\frac{a}{\sigma^2}>\frac{7+\sqrt{33}}{4}$. Hence, we have 
	shown that $\partial_{b}X_{t_{k+1}}^{b}(t_k,x)U^{b}(t_k,x)\in \textnormal{Dom}\ \delta$ under condition $\frac{a}{\sigma^2}>\frac{7}{2}+\sqrt{10}$. 	
	
	Next, under condition $\frac{a}{\sigma^2}>\frac{7}{2}+\sqrt{10}$ we proceed as in the proof of Proposition 3.1 of Kohatsu-Higa {\it et al.} (2017) with $\beta=b$ (see pages 441 and 442) to get the following representation of the score function
	\begin{align*}
	\dfrac{\partial_{b}p^{b}}{p^{b}}\left(\Delta_n,x,y\right)=\dfrac{1}{\Delta_n}\widetilde{\E}_{t_k,x}^{b}\left[\delta\left(\partial_{b}X_{t_{k+1}}^{b}(t_k,x)U^{b}(t_k,x)\right)\big\vert X_{t_{k+1}}^{b}=y\right].
\end{align*}	
	Finally, we show \eqref{derib}. In fact, using condition $\frac{a}{\sigma^2}>\frac{7}{2}+\sqrt{10}$ and the fact that the Skorohod integral and the It\^o integral of an adapted process coincide $\delta(u)=\int_{t_k}^{t_{k+1}}\frac{\partial_{x}X_{s}^{b}(t_k,x)}{\sigma\sqrt{X_s^{b}(t_k,x)}}dB_s$, we have		
	\begin{equation*}\begin{split}
	F\delta(u)-\left<DF,u\right>_H&=-\int_{t_k}^{t_{k+1}}\frac{X_r^{b}(t_k,x)}{\partial_xX_r^{b}(t_k,x)}dr\int_{t_k}^{t_{k+1}}\frac{\partial_{x}X_{s}^{b}(t_k,x)}{\sigma\sqrt{X_s^{b}(t_k,x)}}dB_s\\
	&\qquad+\int_{t_k}^{t_{k+1}}\int_{s}^{t_{k+1}}D_s\Big(\dfrac{X_{r}^{b}(t_k,x)}{\partial_xX_{r}^{b}(t_k,x)}\Big)dr\frac{\partial_{x}X_{s}^{b}(t_k,x)}{\sigma\sqrt{X_s^{b}(t_k,x)}}ds.
	\end{split}
	\end{equation*}
	We next add and subtract the term $
	\frac{X_{t_k}^{b}(t_k,x)}{\partial_{x}X_{t_k}^{b}(t_k,x)}$ in the first integral, and the term $\frac{\partial_{x}X_{t_k}^{b}(t_k,x)}{\sigma\sqrt{X_{t_k}^{b}(t_k,x)}}$ in the second integral. This, together with $X_{t_k}^{b}(t_k,x)=x$, shows that
	\begin{equation}\label{e0}
	F\delta(u)-\left<DF,u\right>_H
	=-\frac{\Delta_n}{\sigma}\sqrt{x} \left(B_{t_{k+1}}-B_{t_{k}}\right)+H_1^{b}+H_2^{b}+H_3^{b},
	\end{equation}	
where $H_1^{b}$, $H_2^{b}$, $H_3^{b}$ are given in Proposition \ref{c2prop1}. Then, applying \eqref{es4} of Lemma \ref{estimate} with $q=1$, under condition $\frac{a}{\sigma^2}>\frac{15+\sqrt{185}}{4}$, we get 
	\begin{align*}
	\widetilde{\E}_{t_k,x}^{b}\Big[\big(F\delta(u)-\left<DF,u\right>_H\big)^2\Big]&\leq 2\frac{\Delta_n^2}{\sigma^2 }x\widetilde{\E}_{t_k,x}^{b}\Big[\big(B_{t_{k+1}}-B_{t_{k}}\big)^2\Big]+2\widetilde{\E}_{t_k,x}^{b}\Big[\big(H_1^{b}+H_2^{b}+H_3^{b}\big)^2\Big]\\
	&\leq 2\frac{\Delta_n^3}{\sigma^2}x+C	\Delta_n^{3+\frac{12}{9+\sqrt{33}}}\Big(x+\dfrac{1}{x^{\frac{\frac{2a}{\sigma^2}-1}{2}+5}}\Big)<+\infty.
	\end{align*}
	Thus, we have shown that $F\delta(u)-\left<DF,u\right>_H$ is square integrable under condition $\frac{a}{\sigma^2}>\frac{15+\sqrt{185}}{4}$. Consequently, by Theorem \ref{ruleproNualart}, under condition $\frac{a}{\sigma^2}>\frac{15+\sqrt{185}}{4}$ we have 
	\begin{align*}
	\delta(Fu)=F\delta(u)-\left<DF,u\right>_H.
	\end{align*}
	This, together with \eqref{Fu2} and \eqref{e0}, gives \eqref{derib} under condition $\frac{a}{\sigma^2}>\frac{15+\sqrt{185}}{4}$. Thus, the result follows.		
\end{proof}

\subsection{Proof of Lemma \ref{estimate}}
\label{Alowerbound}
\begin{proof}
	{\it Proof of \eqref{es3}.} This equality follows from the decomposition \eqref{derib} and the fact that
	\begin{align*}
		\widetilde{\E}_{t_k,x}^{b}\left[\delta\left(\partial_{b}X_{t_{k+1}}^{b}(t_k,x)U^{b}(t_k,x)\right)\right]=\widetilde{\E}_{t_k,x}^{b}\left[B_{t_{k+1}}-B_{t_{k}}\right]=0.
	\end{align*}
	\vskip 5pt
	
	{\it Proof of \eqref{es4}.} Observe that \textcolor{black}{for any $q\geq 1$,}
	\begin{equation}\label{r}
	\begin{split}
	\widetilde{\E}_{t_k,x}^{b}\big[\big\vert H_1^{b}+H_2^{b}+H_3^{b}\big\vert^{2q}\big]
	\leq 3^{2q-1}\big(\widetilde{\E}_{t_k,x}^{b}\big[\big\vert H_1^{b}\big\vert^{2q}\big]+\widetilde{\E}_{t_k,x}^{b}\big[\big\vert H_2^{b}\big\vert^{2q}\big]+\widetilde{\E}_{t_k,x}^{b}\big[\big\vert H_3^{b}\big\vert^{2q}\big]\big).
	\end{split}
	\end{equation}
	First, using BDG's inequality, we have 
	\begin{align*}
	\widetilde{\E}_{t_k,x}^{b}\big[\big\vert H_1^{b}\big\vert^{2q}\big]\leq   C\Delta_n^{2q}x^{2q}\Delta_n^{q-1}\int_{t_k}^{t_{k+1}}H_{11}^{b}ds,
	\end{align*}
	where 
	$$
	H_{11}^{b}=\widetilde{\E}_{t_k,x}^{b}\left[\left\vert\frac{\partial_{x}X_{s}^{b}(t_k,x)}{\sqrt{X_s^{b}(t_k,x)}}-\frac{\partial_{x}X_{t_k}^{b}(t_k,x)}{\sqrt{X_{t_k}^{b}(t_k,x)}}\right\vert^{2q}\right].
	$$
	By It\^o's formula, 
	\begin{align*}
	&\frac{\partial_{x}X_{s}^{b}(t_k,x)}{\sqrt{X_s^{b}(t_k,x)}}-\frac{\partial_{x}X_{t_k}^{b}(t_k,x)}{\sqrt{X_{t_k}^{b}(t_k,x)}}=\int_{t_k}^{s}\partial_{x}X_{u}^{b}(t_k,x)\left(\frac{-\frac{a}{2}+\frac{\sigma^2}{8}}{(X_{u}^{b}(t_k,x))^{\frac{3}{2}}}-\frac{b}{2\sqrt{X_{u}^{b}(t_k,x)}}\right)du\\
	&\qquad+\int_{t_k}^{s}\int_0^{\infty}\partial_{x}X_{u-}^{b}(t_k,x)\left(\frac{1}{\sqrt{X_{u-}^{b}(t_k,x)+z}}-\frac{1}{\sqrt{X_{u-}^{b}(t_k,x)}}\right)M(du,dz),
	\end{align*}
	which, together with BDG's and H\"older's inequalities with $\frac{1}{p_0}+\frac{1}{q_0}=1$, \eqref{e2} and \eqref{e4}, and  {\bf(A2)}, implies that
	\begin{align*}
	H_{11}^{b}&\leq C\Delta_n^{2q-1}\int_{t_k}^{s}\left\{\widetilde{\E}_{t_k,x}^{b}\left[\left\vert\frac{\partial_{x}X_{u}^{b}(t_k,x)}{(X_{u}^{b}(t_k,x))^{\frac{3}{2}}}\right\vert^{2q}\right]+\widetilde{\E}_{t_k,x}^{b}\left[\left\vert\frac{\partial_{x}X_{u}^{b}(t_k,x)}{\sqrt{X_{u}^{b}(t_k,x)}}\right\vert^{2q}\right]\right\}du\\
	&\qquad+C\int_{t_k}^{s}\int_0^{\infty}\widetilde{\E}_{t_k,x}^{b}\left[\left\vert\partial_{x}X_{u}^{b}(t_k,x)\left(\frac{1}{\sqrt{X_{u}^{b}(t_k,x)+z}}-\frac{1}{\sqrt{X_{u}^{b}(t_k,x)}}\right)\right\vert^{2q}\right]m(dz)du\\
	&\leq C\Delta_n^{2q-1}\int_{t_k}^{s}\bigg\{\left(\widetilde{\E}_{t_k,x}^{b}\left[(\partial_{x}X_{u}^{b}(t_k,x))^{2qp_0}\right]\right)^{\frac{1}{p_0}}\left(\widetilde{\E}_{t_k,x}^{b}\left[\frac{1}{(X_{u}^{b}(t_k,x))^{3qq_0}}\right]\right)^{\frac{1}{q_0}}\\
	&\qquad+\left(\widetilde{\E}_{t_k,x}^{b}\left[(\partial_{x}X_{u}^{b}(t_k,x))^{2qp_0}\right]\right)^{\frac{1}{p_0}}\left(\widetilde{\E}_{t_k,x}^{b}\left[\frac{1}{(X_{u}^{b}(t_k,x))^{qq_0}}\right]\right)^{\frac{1}{q_0}}\bigg\}du\\
	&\qquad+C\int_{t_k}^{s}\int_0^{\infty}\widetilde{\E}_{t_k,x}^{b}\left[\left\vert\frac{\partial_{x}X_{u}^{b}(t_k,x)}{(X_{u}^{b}(t_k,x))^{\frac{3}{2}}}\right\vert^{2q}\right]z^{2q}m(dz)du\\
	&\leq C\Delta_n^{2q-1}\int_{t_k}^{s}\left\{\left(1+\dfrac{1}{x^{\frac{\frac{2a}{\sigma^2}-1+2qp_0}{2}}}\right)^{\frac{1}{p_0}}\left(\dfrac{1}{x^{3qq_0}}\right)^{\frac{1}{q_0}}+\left(1+\dfrac{1}{x^{\frac{\frac{2a}{\sigma^2}-1+2qp_0}{2}}}\right)^{\frac{1}{p_0}}\left(\dfrac{1}{x^{qq_0}}\right)^{\frac{1}{q_0}}\right\}du\\
	&\qquad+ C\int_{t_k}^{s}\left(1+\dfrac{1}{x^{\frac{\frac{2a}{\sigma^2}-1+2qp_0}{2}}}\right)^{\frac{1}{p_0}}\left(\dfrac{1}{x^{3qq_0}}\right)^{\frac{1}{q_0}}du\\
	&\leq C\Delta_n\left(1+\dfrac{1}{x^{\frac{\frac{2a}{\sigma^2}-1}{2p_0}+q}}\right)\left(\dfrac{1}{x^{3q}}+\dfrac{1}{x^q}\right),
	\end{align*}
	where $q_0$ should be chosen close to $1$ in order that $3qq_0<\frac{2a}{\sigma^2}-1$. Therefore, under condition $\frac{a}{\sigma^2}>\frac{1}{2}(3q+1)$, we have shown that
	\begin{equation}\label{r1}
	\begin{split}
	\widetilde{\E}_{t_k,x}^{b}\big[\big\vert H_1^{b}\big\vert^{2q}\big]&\leq C\Delta_n^{3q+1}\left(x^q+\frac{1}{x^q}\right)\left(1+\dfrac{1}{x^{\frac{\frac{2a}{\sigma^2}-1}{2p_0}+q}}\right) \\
	&\leq C\Delta_n^{3q+1}\left(x^q+\dfrac{1}{x^{\frac{\frac{2a}{\sigma^2}-1}{2}+2q}}\right).
	\end{split}
	\end{equation}	
	Next, using H\"older's inequality with $\frac{1}{\overline{p}}+\frac{1}{\overline{q}}=1$, we get
	\begin{align*}
	\widetilde{\E}_{t_k,x}^{b}\big[\big\vert H_2^{b}\big\vert^{2q}\big]\leq\left(H_{21}^{b}\right)^{\frac{1}{\overline{p}}}\left(H_{22}^{b}\right)^{\frac{1}{\overline{q}}},
	\end{align*}
	where
	\begin{align*}
	H_{21}^{b}&=\widetilde{\E}_{t_k,x}^{b}\bigg[\bigg\vert \int_{t_k}^{t_{k+1}}\bigg(\frac{X_{s}^{b}(t_k,x)}{\partial_{x}X_{s}^{b}(t_k,x)}-\frac{X_{t_k}^{b}(t_k,x)}{\partial_{x}X_{t_k}^{b}(t_k,x)}\bigg)ds\bigg\vert^{2q\overline{p}}\bigg],\\
	H_{22}^{b}&=\widetilde{\E}_{t_k,x}^{b}\left[\left\vert \int_{t_k}^{t_{k+1}}\frac{\partial_{x}X_{s}^{b}(t_k,x)}{\sigma\sqrt{X_s^{b}(t_k,x)}}dB_s\right\vert^{2q\overline{q}}\right].
	\end{align*}
	First, observe that
	\begin{align*}
	H_{21}^{b}\leq  C\Delta_n^{2q\overline{p}-1}\int_{t_k}^{t_{k+1}}H_{212}^{b}ds,
	\end{align*}
	where
	\begin{align*}	
	H_{212}^{b}=\widetilde{\E}_{t_k,x}^{b}\bigg[\bigg\vert\frac{X_{s}^{b}(t_k,x)}{\partial_{x}X_{s}^{b}(t_k,x)}-\frac{X_{t_k}^{b}(t_k,x)}{\partial_{x}X_{t_k}^{b}(t_k,x)}\bigg\vert^{2q\overline{p}}\bigg].
	\end{align*}
	Then, using \eqref{flowk}, \eqref{px} and It\^o's formula, we get 
	\begin{equation*}\begin{split}
	\frac{X_{s}^{b}(t_k,x)}{\partial_{x}X_{s}^{b}(t_k,x)}-\frac{X_{t_k}^{b}(t_k,x)}{\partial_{x}X_{t_k}^{b}(t_k,x)}&=a\int_{t_k}^s\frac{du}{\partial_{x}X_{u}^{b}(t_k,x)}+\frac{\sigma }{2}\int_{t_k}^s\frac{\sqrt{X_{u}^{b}(t_k,x)}}{\partial_{x}X_{u}^{b}(t_k,x)}dB_u\\
	&\qquad+\int_{t_k}^{s}\int_0^{\infty}\frac{z}{\partial_{x}X_{u-}^{b}(t_k,x)}M(du,dz).
	\end{split}
	\end{equation*}
	Therefore,
	\begin{align*}
	H_{212}^{b}\leq C\left(H_{2121}^{b}+H_{2122}^{b}+H_{2123}^{b}\right),
	\end{align*}
	where
	\begin{align*}
	H_{2121}^{b}&=\widetilde{\E}_{t_k,x}^{b}\bigg[\bigg\vert\int_{t_k}^s\frac{du}{\partial_{x}X_{u}^{b}(t_k,x)}\bigg\vert^{2q\overline{p}}\bigg],\; H_{2122}^{b}=\widetilde{\E}_{t_k,x}^{b}\bigg[\bigg\vert\int_{t_k}^s\frac{\sqrt{X_{u}^{b}(t_k,x)}}{\partial_{x}X_{u}^{b}(t_k,x)}dB_u\bigg\vert^{2q\overline{p}}\bigg],\\
	H_{2123}^{b}&=\widetilde{\E}_{t_k,x}^{b}\bigg[\bigg\vert\int_{t_k}^{s}\int_0^{\infty}\frac{z}{\partial_{x}X_{u-}^{b}(t_k,x)}M(du,dz)\bigg\vert^{2q\overline{p}}\bigg].
	\end{align*}
	Using \eqref{e4}, we obtain
	\begin{align*}
	H_{2121}^{b}&\leq \Delta_n^{2q\overline{p}-1}\int_{t_k}^s\widetilde{\E}_{t_k,x}^{b}\bigg[\frac{1}{(\partial_{x}X_{u}^{b}(t_k,x))^{2q\overline{p}}}\bigg]du\\
	&\leq  C\Delta_n^{2q\overline{p}}\left(1+\dfrac{1}{x^{\frac{\frac{2a}{\sigma^2}-1}{2}-q\overline{p}}}\right).
	\end{align*}	
	Using BDG's and H\"older's inequalities with $\frac{1}{p_1}+\frac{1}{q_1}=1$, \eqref{e1} and \eqref{e4}, we have
	\begin{align*}
	H_{2122}^{b}&\leq C\Delta_n^{q\overline{p}-1}\int_{t_k}^s\widetilde{\E}_{t_k,x}^{b}\bigg[\bigg\vert\frac{\sqrt{X_{u}^{b}(t_k,x)}}{\partial_{x}X_{u}^{b}(t_k,x)}\bigg\vert^{2q\overline{p}}\bigg]du\\
	&\leq C\Delta_n^{q\overline{p}-1}\int_{t_k}^s\left(\widetilde{\E}_{t_k,x}^{b}\bigg[(X_{u}^{b}(t_k,x))^{p_1q\overline{p}}\bigg]\right)^{\frac{1}{p_1}}\left(\widetilde{\E}_{t_k,x}^{b}\bigg[\frac{1}{(\partial_{x}X_{u}^{b}(t_k,x))^{2q_1q\overline{p}}}\bigg]\right)^{\frac{1}{q_1}}du\\
	&\leq C\Delta_n^{q\overline{p}}\left(1+x^{q\overline{p}}\right)\left(1+\dfrac{1}{x^{\frac{\frac{2a}{\sigma^2}-1}{2q_1}-q\overline{p}}}\right),
	\end{align*}
	where $q_1$ should be chosen close to $1$. Finally, using BDG's inequality and {\bf(A2)}, we get
	\begin{align*}
	H_{2123}^{b}&\leq C\int_{t_k}^{s}\int_0^{\infty}\widetilde{\E}_{t_k,x}^{b}\bigg[\frac{1}{(\partial_{x}X_{u}^{b}(t_k,x))^{2q\overline{p}}}\bigg]z^{2q\overline{p}}m(dz)du\\
	&\leq  C\Delta_n\left(1+\dfrac{1}{x^{\frac{\frac{2a}{\sigma^2}-1}{2}-q\overline{p}}}\right).
	\end{align*}	
	Thus, we have shown that
	\begin{align*}
	H_{212}^{b}&\leq C\Delta_n^{2q\overline{p}}\left(1+\dfrac{1}{x^{\frac{\frac{2a}{\sigma^2}-1}{2}-q\overline{p}}}\right)+ C\Delta_n^{q\overline{p}}\left(1+x^{q\overline{p}}\right)\left(1+\dfrac{1}{x^{\frac{\frac{2a}{\sigma^2}-1}{2q_1}-q\overline{p}}}\right)+C\Delta_n\left(1+\dfrac{1}{x^{\frac{\frac{2a}{\sigma^2}-1}{2}-q\overline{p}}}\right)\\
	&\leq C\Delta_n\left(1+x^{q\overline{p}}\right)\left(1+\dfrac{1}{x^{\frac{\frac{2a}{\sigma^2}-1}{2}-q\overline{p}}}\right),
	\end{align*}
	which implies that
	\begin{align*}
	H_{21}^{b}&\leq C\Delta_n^{2q\overline{p}}\Delta_n\left(1+x^{q\overline{p}}\right)\left(1+\dfrac{1}{x^{\frac{\frac{2a}{\sigma^2}-1}{2}-q\overline{p}}}\right).
	\end{align*}
	Next, using BDG's and H\"older's inequalities with $\frac{1}{p_2}+\frac{1}{q_2}=1$,  \eqref{e2} and \eqref{e4}, we have
	\begin{align*}
	H_{22}^{b}&\leq C\widetilde{\E}_{t_k,x}^{b}\left[\left\vert \int_{t_k}^{t_{k+1}}\frac{(\partial_{x}X_{s}^{b}(t_k,x))^2}{X_s^{b}(t_k,x)}ds\right\vert^{q\overline{q}}\right]\\
	&\leq C\Delta_n^{q\overline{q}-1}\int_{t_k}^{t_{k+1}}\widetilde{\E}_{t_k,x}^{b}\left[\left\vert \frac{\partial_{x}X_{s}^{b}(t_k,x)}{\sqrt{X_s^{b}(t_k,x)}}\right\vert^{2q\overline{q}}\right]ds\\
	&\leq C\Delta_n^{q\overline{q}-1}\int_{t_k}^{t_{k+1}}\left(\widetilde{\E}_{t_k,x}^{b}\left[\frac{1}{(X_s^{b}(t_k,x))^{p_2q\overline{q}}}\right]\right)^{\frac{1}{p_2}}\left(\widetilde{\E}_{t_k,x}^{b}\left[(\partial_{x}X_{s}^{b}(t_k,x))^{2q_2q\overline{q}}\right]\right)^{\frac{1}{q_2}}ds\\
	&\leq C\Delta_n^{q\overline{q}}\dfrac{1}{x^{q\overline{q}}}\left(1+\dfrac{1}{x^{\frac{\frac{2a}{\sigma^2}-1}{2q_2}+q\overline{q}}}\right),
	\end{align*}
	where $p_2$ should be chosen close to $1$ in order that $p_2q\overline{q}<\frac{2a}{\sigma^2}-1$. 
	
	To be able to apply \eqref{e2} and \eqref{e4} to estimate two terms above $H_{21}^{b}$ and $H_{22}^{b}$, all conditions required here are the following
	\begin{align*}
	-2q\overline{p}> -\frac{(\frac{2a}{\sigma^2}-1)^2}{2(\frac{2a}{\sigma^2}-\frac{1}{2})},\;
	q\overline{q}<\dfrac{2a}{\sigma^2}-1.
	\end{align*}
	This implies that
	\begin{align*}
	\begin{cases}
	\frac{2a}{\sigma^2}> 2q\overline{p}+\sqrt{2q\overline{p}\left(2q\overline{p}+1\right)}+1\\
	\frac{2a}{\sigma^2}>q\frac{\overline{p}}{\overline{p}-1}+1.
	\end{cases}
	\end{align*}	
	Here, the optimal choice for $\overline{p}$ corresponds to choose it in a way which gives minimal restrictions on the ratio $\frac{2a}{\sigma^2}$. That is,
	$$
	2q\overline{p}+\sqrt{2q\overline{p}\left(2q\overline{p}+1\right)}=q\frac{\overline{p}}{\overline{p}-1}.
	$$	
	Thus, the unique solution is  $\overline{p}=\frac{5q+4+\sqrt{25q^2+8q}}{4(2q+1)}$, which implies  $\frac{a}{\sigma^2}>\frac{5q+2+\sqrt{25q^2+8q}}{4}$. Therefore, under condition $\frac{a}{\sigma^2}>\frac{5q+2+\sqrt{25q^2+8q}}{4}$, we have shown that
	\begin{align}\label{r2}
	\widetilde{\E}_{t_k,x}^{b}\big[\big\vert H_2^{b}\big\vert^{2q}\big]&\leq C\bigg(\Delta_n^{2q\overline{p}}\Delta_n\left(1+x^{q\overline{p}}\right)\left(1+\dfrac{1}{x^{\frac{\frac{2a}{\sigma^2}-1}{2}-q\overline{p}}}\right)\bigg)^{\frac{1}{\overline{p}}}\left(\Delta_n^{q\overline{q}}\dfrac{1}{x^{q\overline{q}}}\left(1+\dfrac{1}{x^{\frac{\frac{2a}{\sigma^2}-1}{2q_2}+q\overline{q}}}\right)\right)^{\frac{1}{\overline{q}}}\notag\\
	&\leq C\Delta_n^{3q+\frac{1}{\overline{p}}}\left(1+\dfrac{1}{x^{\frac{\frac{2a}{\sigma^2}-1}{2}+2q}}\right),
	\end{align}
	where $\overline{p}=\frac{5q+4+\sqrt{25q^2+8q}}{4(2q+1)}$.
	
	Finally, we treat the term $H_3^{b}$. Using H\"older's inequality with $\frac{1}{p_3}+\frac{1}{q_3}=1$, we obtain
	\begin{align*}
		&\widetilde{\E}_{t_k,x}^{b}[\vert H_3^{b}\vert^{2q}]\leq \Delta_n^{2q-1}\int_{t_k}^{t_{k+1}}\widetilde{\E}_{t_k,x}^{b}\Big[\Big\vert \int_{s}^{t_{k+1}}D_s\Big(\dfrac{X_{r}^{b}(t_k,x)}{\partial_xX_{r}^{b}(t_k,x)}\Big)dr\frac{\partial_{x}X_{s}^{b}(t_k,x)}{\sigma\sqrt{X_s^{b}(t_k,x)}}\Big\vert^{2q}\Big]ds\\
		&\leq C\Delta_n^{2q-1}\int_{t_k}^{t_{k+1}}\Big(\widetilde{\E}_{t_k,x}^{b}\Big[\Big\vert \int_{s}^{t_{k+1}}D_s\Big(\dfrac{X_{r}^{b}(t_k,x)}{\partial_xX_{r}^{b}(t_k,x)}\Big)dr\Big\vert^{2qp_3}\Big]\Big)^{\frac{1}{p_3}}\Big(\widetilde{\E}_{t_k,x}^{b}\Big[\Big\vert \frac{\partial_{x}X_{s}^{b}(t_k,x)}{\sqrt{X_s^{b}(t_k,x)}}\Big\vert^{2qq_3}\Big]\Big)^{\frac{1}{q_3}}ds\\
		&\leq C\Delta_n^{2q-1}\int_{t_k}^{t_{k+1}}\Big(\Delta_n^{2qp_3-1} \int_{s}^{t_{k+1}}\widetilde{\E}_{t_k,x}^{b}\Big[\Big\vert D_s\Big(\dfrac{X_{r}^{b}(t_k,x)}{\partial_xX_{r}^{b}(t_k,x)}\Big)\Big\vert^{2qp_3}\Big]dr\Big)^{\frac{1}{p_3}}\\
		&\qquad\times\Big(\widetilde{\E}_{t_k,x}^{b}\Big[\Big\vert \frac{\partial_{x}X_{s}^{b}(t_k,x)}{\sqrt{X_s^{b}(t_k,x)}}\Big\vert^{2qq_3}\Big]\Big)^{\frac{1}{q_3}}ds.
	\end{align*}
	Using \eqref{Mallinveflow2} and the same computations as in the proof of \eqref{dmif3} and \eqref{dmif2} with $p=2qp_3$, we get
	\begin{align*}
		&\widetilde{\E}_{t_k,x}^{b}\Big[\Big\vert D_s\Big(\dfrac{X_{r}^{b}(t_k,x)}{\partial_xX_{r}^{b}(t_k,x)}\Big)\Big\vert^{2qp_3}\Big]\leq C\Big(\widetilde{\E}_{t_k,x}^{b}\Big[\Big\vert \dfrac{D_sX_{r}^{b}(t_k,x)}{\partial_xX_{r}^{b}(t_k,x)}\Big\vert^{2qp_3}\Big]\\
		&\qquad+\widetilde{\E}_{t_k,x}^{b}\Big[\Big\vert X_{r}^{b}(t_k,x)D_s\Big(\dfrac{1}{\partial_xX_{r}^{b}(t_k,x)}\Big)\Big\vert^{2qp_3}\Big]\Big)\\
		&\leq C(1+x^{qp_3})\Big(1+\dfrac{1}{x^{\frac{\frac{2a}{\sigma^2}-1}{2p_7}-2qp_3}}\Big)  \Big(1+\dfrac{1}{x^{\frac{\frac{2a}{\sigma^2}-1}{2q_7q_8}+qp_3}}\Big)\\
		&\qquad+\frac{C}{x^{4qp_3}}(1+x^{6qp_3})\Big(1+\dfrac{1}{x^{\frac{\frac{2a}{\sigma^2}-1}{p_4p_5}-2qp_3}}\Big)\Big(1+\dfrac{1}{x^{\frac{\frac{2a}{\sigma^2}-1}{2q_4q_6}+qp_3}}\Big),
		\end{align*}
	where $\frac{1}{p_5}+\frac{1}{p_5}+\frac{1}{q_5}=1$ which is given in the proof of \eqref{dmif2},  $\frac{1}{p_7}+\frac{1}{q_7}=1$ with $p_7>1$ and $p_7$ close to $1$, $\frac{1}{p_4}+\frac{1}{q_4}=1$ with $p_4>1$ and $p_4$  close to $1$, $q_6>1$ and $q_8>1$.	
	
	Next, using H\"older's inequality with $\frac{1}{p_9}+\frac{1}{q_9}=1$, \eqref{e2} and \eqref{e4},
	\begin{align*}
		\widetilde{\E}_{t_k,x}^{b}\Big[\Big\vert \frac{\partial_{x}X_{s}^{b}(t_k,x)}{\sqrt{X_s^{b}(t_k,x)}}\Big\vert^{2qq_3}\Big]&\leq \Big(\widetilde{\E}_{t_k,x}^{b}\Big[(\partial_{x}X_{s}^{b}(t_k,x))^{2qq_3p_9}\Big]\Big)^{\frac{1}{p_9}}\Big(\widetilde{\E}_{t_k,x}^{b}\Big[\frac{1}{(X_s^{b}(t_k,x))^{qq_3q_9}}\Big]\Big)^{\frac{1}{q_9}}\\
		&\leq C\Big(1+\dfrac{1}{x^{\frac{\frac{2a}{\sigma^2}-1}{2p_9}+qq_3}}\Big)\frac{1}{x^{qq_3}}.
	\end{align*}
	Here, $q_9$ should be chosen close to $1$ in order that $qq_3q_9<\frac{2a}{\sigma^2}-1$. In order to apply \eqref{e2} and \eqref{e4} to estimate the term $R_{3}^{a,b}$, all conditions required here are as follows
	\begin{align*}
		4qp_3q_5<\dfrac{2a}{\sigma^2}-1,\; 2qp_3p_5<\frac{(\frac{2a}{\sigma^2}-1)^2}{2(\frac{2a}{\sigma^2}-\frac{1}{2})},\; qq_3<\dfrac{2a}{\sigma^2}-1.
	\end{align*}
	This implies that
	\begin{align*}
		\frac{2a}{\sigma^2}>\frac{4qp_3p_5}{p_5-2}+1,\; \frac{2a}{\sigma^2}> 2qp_3p_5+\sqrt{2qp_3p_5\left(2qp_3p_5+1\right)}+1,\;  \frac{2a}{\sigma^2}>\frac{qp_3}{p_3-1}+1.
	\end{align*}	
	Here, the optimal choice for $p_3$ and $p_5$ corresponds to choose them in a way which gives minimal restrictions on the ratio $\frac{2a}{\sigma^2}$. That is,
	$$
	2qp_3p_5+\sqrt{2qp_3p_5\left(2qp_3p_5+1\right)}=\frac{4qp_3p_5}{p_5-2}=\frac{qp_3}{p_3-1}.
	$$	
	Thus, the unique solution is $p_3=\frac{65q+8+5\sqrt{169q^2+16q}}{4(17q+2+\sqrt{169q^2+16q})}$ and $p_5=\frac{17q+2+\sqrt{169q^2+16q}}{10q+1}$, which implies that
	$\frac{a}{\sigma^2}>\frac{13q+2+\sqrt{169q^2+16q}}{4}$. Therefore, under $\frac{a}{\sigma^2}>\frac{13q+2+\sqrt{169q^2+16q}}{4}$, we obtain
	\begin{align}\label{r3}
		\widetilde{\E}_{t_k,x}^{b}[\vert H_3^{b}\vert^{2q}]&\leq  C\Delta_n^{4q}\Big\{(1+x^{q})\Big(1+\dfrac{1}{x^{\frac{\frac{2a}{\sigma^2}-1}{2p_7p_3}-2q}}\Big)\Big(1+\dfrac{1}{x^{\frac{\frac{2a}{\sigma^2}-1}{2q_7q_8p_3}+q}}\Big)\notag\\
		&\qquad +\frac{1}{x^{4q}}(1+x^{6q})\Big(1+\dfrac{1}{x^{\frac{\frac{2a}{\sigma^2}-1}{p_4p_5p_3}-2q}}\Big)\Big(1+\dfrac{1}{x^{\frac{\frac{2a}{\sigma^2}-1}{2q_4q_6p_3}+q}}\Big)\Big\}\Big(1+\dfrac{1}{x^{\frac{\frac{2a}{\sigma^2}-1}{2p_9q_3}+q}}\Big)\frac{1}{x^q}\notag\\
		&\leq C\Delta_n^{4q}\Big(x^{q}+\dfrac{1}{x^{\frac{\frac{2a}{\sigma^2}-1}{2}+5q}}\Big),
	\end{align}
	where $\frac{1}{p_7}+\frac{1}{q_7}=1$ with $p_7>1$ and $p_7$ close to $1$, $\frac{1}{p_4}+\frac{1}{q_4}=1$ with $p_4>1$ and $p_4$  close to $1$, $q_6>1$, $q_8>1$,  $p_3=\frac{65q+8+5\sqrt{169q^2+16q}}{4(17q+2+\sqrt{169q^2+16q})}$ and $p_5=\frac{17q+2+\sqrt{169q^2+16q}}{10q+1}$.
	
	From \eqref{r}, \eqref{r1}, \eqref{r2} and \eqref{r3}, under condition $\frac{a}{\sigma^2}>\frac{13q+2+\sqrt{169q^2+16q}}{4}$, we obtain 
	\begin{align*}
		\widetilde{\E}_{t_k,x}^{b}\big[\big\vert H_1^{b}+H_2^{b}+H_3^{b}\big\vert^{2q}\big]\leq C\Delta_n^{3q+\frac{1}{\overline{p}}}\Big(x^{q}+\dfrac{1}{x^{\frac{\frac{2a}{\sigma^2}-1}{2}+5q}}\Big),
	\end{align*}
	where $\overline{p}=\frac{5q+4+\sqrt{25q^2+8q}}{4(2q+1)}$. 	Thus, we conclude the desired estimate \eqref{es4}.	
\end{proof}
\begin{remark}\label{minimal}
	When we use Cauchy-Schwarz's inequality instead of H\"older's inequality to estimate $\big\vert H_1^{b}+H_2^{b}+H_3^{b}\big\vert^{2q}$, the required condition will be $\frac{a}{\sigma^2}>\frac{1}{2}(16q+1+4\sqrt{q(16q+1)})$ which is actually bigger than $\frac{13q+2+\sqrt{169q^2+16q}}{4}$.
\end{remark}

\begin{remark}\label{A3diffusionjumps} In the case of CIR process without jumps studied in \cite{BKT17} when the subordinator is degenerate, using the explicit expression for the Malliavin derivative obtained by Al\`os and Ewald in \cite[Corollary 4.2]{AE08} instead of the expression \eqref{expression1} in order to estimate the term $H_3^{b}$, condition {\bf(A3)} will be $\frac{a}{\sigma^2}>\frac{11+\sqrt{89}}{4}$.
\end{remark}

\subsection{Proof of Lemma \ref{change}}
\begin{proof} We proceed as in the proof of \cite[Lemma 9]{BKT17}.
\end{proof}

\subsection{Proof of Lemma \ref{deviation1}}
\begin{proof} 
	Using \eqref{ratio2} for $X$, we have that
	\begin{align}\label{decom}
	&\dfrac{d\widetilde{\P}_{t_k,x}^{b_0}}{d\widetilde{\P}_{t_k,x}^{b}}((X_t^{b})_{t\in I_k})-1=\dfrac{d\widetilde{\P}_{t_k,x}^{b_0}-d\widetilde{\P}_{t_k,x}^{b}}{d\widetilde{\P}_{t_k,x}^{b}}((X_t^{b})_{t\in I_k})\notag\\
	&=\int_{b}^{b_0}\dfrac{\partial}{\partial \beta}\left(\dfrac{d\widetilde{\P}_{t_k,x}^{\beta}}{d\widetilde{\P}_{t_k,x}^{b}}\right)((X_t^{b})_{t\in I_k})d\beta \notag\\
	&=\dfrac{-1}{\sigma^2}\int_{b}^{b_0}\int_{t_k}^{t_{k+1}}\Big(\sigma\sqrt{X_s^{b}}dB_s+(\beta-b)X_s^{b}ds\Big)\dfrac{d\widetilde{\P}_{t_k,x}^{\beta}}{d\widetilde{\P}_{t_k,x}^{b}}((X_t^{b})_{t\in I_k})d\beta.
	\end{align}
	Applying H\"older's inequality with $\frac{1}{p}+\frac{1}{q}+\frac{1}{r}=1$, BDG's and Jensen's inequalities, \eqref{e1} for $X^{b}$, we get 
	\begin{align}
	&\left\vert\widetilde{\E}_{t_k,x}^{b}\left[\widetilde{\E}_{t_k,x}^{b}\big[V\vert X_{t_{k+1}}^{b}\big]\Big(\frac{d\widetilde{\P}_{t_k,x}^{b_0}}{d\widetilde{\P}_{t_k,x}^{b}}((X_t^{b})_{t\in I_k})-1\Big)\right]\right\vert \notag\\
	&=\dfrac{1}{\sigma^2}\left\vert\int_{b}^{b_0}\widetilde{\E}_{t_k,x}^{b}\left[\widetilde{\E}_{t_k,x}^{b}\big[V\vert X_{t_{k+1}}^{b}\big]\int_{t_k}^{t_{k+1}}\Big(\sigma\sqrt{X_s^{b}}dB_s+(\beta-b)X_s^{b}ds\Big)\dfrac{d\widetilde{\P}_{t_k,x}^{\beta}}{d\widetilde{\P}_{t_k,x}^{b}}((X_t^{b})_{t\in I_k})\right]d\beta\right\vert \notag\\
	&\leq \dfrac{1}{\sigma^2}\bigg\vert\int_{b}^{b_0} \Big(\widetilde{\E}_{t_k,x}^{b}\big[\widetilde{\E}_{t_k,x}^{b}\big[\vert V\vert^{q}\vert X_{t_{k+1}}^{b}\big]\big]\Big)^{\frac{1}{q}}\Big(\widetilde{\E}_{t_k,x}^{b}\Big[\Big\vert \int_{t_k}^{t_{k+1}}\Big(\sigma\sqrt{X_s^{b}}dB_s+(\beta-b)X_s^{b})ds\Big)\Big\vert^{p}\Big]\Big)^{\frac{1}{p}} \notag\\
	&\qquad \times \Big(\widetilde{\E}_{t_k,x}^{b}\Big[\Big( \frac{d\widetilde{\P}_{t_k,x}^{\beta}}{d\widetilde{\P}_{t_k,x}^{b}}((X_t^{b})_{t\in I_k})\Big)^{r}\Big]\Big)^{\frac{1}{r}} d\beta\bigg\vert \notag\\
	&\leq C\sqrt{\Delta_n}\bigg\vert\int_{b}^{b_0} \big(\widetilde{\E}_{t_k,x}^{b}\big[\vert V\vert^{q}\big]\big)^{\frac{1}{q}} \left(1+\sqrt{x}+\sqrt{\Delta_n}\vert b-b_0\vert x\right)\notag\\
	&\qquad\times\Big(\widetilde{\E}_{t_k,x}^{b}\Big[\Big( \frac{d\widetilde{\P}_{t_k,x}^{\beta}}{d\widetilde{\P}_{t_k,x}^{b}}((X_t^{b})_{t\in I_k})\Big)^{r}\Big]\Big)^{\frac{1}{r}} d\beta\bigg\vert, \label{a1}
	\end{align}
for some constant $C>0$. Then, using Cauchy-Schwarz and Jensen's inequalities and \eqref{Laplace2} applied to $X^b$, provided that we have an exponential martingale, we get for $n$ large enough,
	\begin{align}
	&\widetilde{\E}_{t_k,x}^{b}\Big[\Big( \frac{d\widetilde{\P}_{t_k,x}^{\beta}}{d\widetilde{\P}_{t_k,x}^{b}}((X_t^{b})_{t\in I_k})\Big)^{r}\Big]\leq \left(\widetilde{\E}_{t_k,x}^{b}\Big[\exp\Big\{r(2r-1)\frac{1}{\sigma^2}(b_0-b)^2\int_{t_k}^{t_{k+1}}X_s^{b}ds\Big\}\Big]\right)^{\frac{1}{2}} \notag\\
	&\leq\left(\frac{1}{\Delta_n}\int_{t_k}^{t_{k+1}}\widetilde{\E}_{t_k,x}^{b}\Big[\exp\Big\{r(2r-1)\frac{1}{\sigma^2}(b_0-b)^2\Delta_nX_s^{b}\Big\}\Big]ds\right)^{\frac{1}{2}} \notag\\
	&\leq C\left(\sup_{s\in [t_k,t_{k+1}]}\widetilde{\E}_{t_k,x}^{b}\Big[\exp\Big\{r(2r-1)\frac{1}{\sigma^2}(b_0-b)^2\Delta_nX_s^{b}\Big\}\Big]\right)^{\frac{1}{2}} \notag\\
	&=C\left(\sup_{s\in [0,\Delta_n]}\widetilde{\E}_{0,x}^{b}\Big[\exp\Big\{r(2r-1)\frac{1}{\sigma^2}(b_0-b)^2\Delta_nX_s^{b}\Big\}\Big]\right)^{\frac{1}{2}} \notag\\
	&\leq Ce^{c(b_0-b)^2\Delta_nx},\label{a2}
	\end{align}
	for some constants $C, c>0$. 
	
	The same arguments can be used to check Novikov's condition and deduce the validity of the exponential martingale property used above. Thus, \eqref{for3} follows from \eqref{a1} and \eqref{a2}.		
	
	Finally, using \eqref{decom} for $Y$, H\"older's inequality with $\frac{1}{p}+\frac{1}{q}+\frac{1}{r}=1$, BDG's and Jensen's inequalities, \eqref{e1} for $Y^{b}$, and \eqref{a2} for $\widehat{\P}$,  we get for $n$ large enough,
	\begin{align*}
		&\left\vert\widehat{\E}_{t_k,x}^{b}\left[\widehat{V}\Big(\frac{d\widehat{\P}_{t_k,x}^{b_0}}{d\widehat{\P}_{t_k,x}^{b}}((Y_t^{b})_{t\in I_k})-1\Big)\right]\right\vert \\
		&=\dfrac{1}{\sigma^2}\left\vert\int_{b}^{b_0}\widehat{\E}_{t_k,x}^{b}\left[\widehat{V}\int_{t_k}^{t_{k+1}}\Big(\sigma\sqrt{Y_s^{b}}dW_s+(\beta-b)Y_s^{b}ds\Big)\dfrac{d\widehat{\P}_{t_k,x}^{\beta}}{d\widehat{\P}_{t_k,x}^{b}}((Y_t^{b})_{t\in I_k})\right]d\beta\right\vert \\
		&\leq \dfrac{1}{\sigma^2}\bigg\vert\int_{b}^{b_0} \Big(\widehat{\E}_{t_k,x}^{b}\big[\vert \widehat{V}\vert^{q}\big]\Big)^{\frac{1}{q}}\Big(\widehat{\E}_{t_k,x}^{b}\Big[\Big\vert \int_{t_k}^{t_{k+1}}\Big(\sigma\sqrt{Y_s^{b}}dW_s+(\beta-b)Y_s^{b})ds\Big)\Big\vert^{p}\Big]\Big)^{\frac{1}{p}} \\
		&\qquad \times \Big(\widehat{\E}_{t_k,x}^{b}\Big[\Big( \frac{d\widehat{\P}_{t_k,x}^{\beta}}{d\widehat{\P}_{t_k,x}^{b}}((Y_t^{b})_{t\in I_k})\Big)^{r}\Big]\Big)^{\frac{1}{r}} d\beta\bigg\vert \\
		&\leq C\sqrt{\Delta_n}\bigg\vert\int_{b}^{b_0} \big(\widehat{\E}_{t_k,x}^{b}\big[\vert \widehat{V}\vert^{q}\big]\big)^{\frac{1}{q}}  \left(1+\sqrt{x}+\sqrt{\Delta_n}\vert b-b_0\vert x\right)\\
		&\qquad\times\Big(\widehat{\E}_{t_k,x}^{b}\Big[\Big( \frac{d\widehat{\P}_{t_k,x}^{\beta}}{d\widehat{\P}_{t_k,x}^{b}}((Y_t^{b})_{t\in I_k})\Big)^{r}\Big]\Big)^{\frac{1}{r}} d\beta\bigg\vert\\
		&\leq C_1\sqrt{\Delta_n}\big(\widetilde{\E}_{t_k,x}^{b}[\vert V\vert^q]\big)^{\frac{1}{q}}e^{C_2(b_0-b)^2\Delta_nx} \vert b-b_0\vert\left(1+\sqrt{x}+\sqrt{\Delta_n}\vert b-b_0\vert x\right).
	\end{align*}
	Thus, \eqref{for4} follows.	This completes the proof.
\end{proof}

\subsection{Proof of Lemma \ref{jumpestimate2}}
\label{large}
\begin{proof} Splitting the jump amplitudes into small jumps and big jumps, we get 
	\begin{align} 
	&\widehat{\E}_{t_k,Y_{t_k}^{b_0}}^{b_0}\left[\Big(\int_{t_k}^{t_{k+1}}\int_{0}^{\infty}zN(ds,dz)-\widetilde{\E}_{t_k,Y_{t_{k}}^{b_0}}^{b}\Big[\int_{t_k}^{t_{k+1}}\int_{0}^{\infty}zM(ds,dz)\big\vert X_{t_{k+1}}^{b}=Y_{t_{k+1}}^{b_0}\Big]\Big)^2\right]\notag\\
	&=\widehat{\E}_{t_k,Y_{t_k}^{b_0}}^{b_0}\Big[\Big(\int_{t_k}^{t_{k+1}}\int_{z\leq\upsilon_n}z\widetilde{N}(ds,dz)+\Delta_n\int_{z\leq\upsilon_n}zm(dz)+\int_{t_k}^{t_{k+1}}\int_{z>\upsilon_n}zN(ds,dz)\notag\\
	&\qquad-\widetilde{\E}_{t_k,Y_{t_{k}}^{b_0}}^{b}\Big[\int_{t_k}^{t_{k+1}}\int_{z\leq\upsilon_n}z\widetilde{M}(ds,dz)+\Delta_n\int_{z\leq\upsilon_n}zm(dz)\notag\\
	&\qquad+\int_{t_k}^{t_{k+1}}\int_{z>\upsilon_n}zM(ds,dz)\big\vert X_{t_{k+1}}^{b}=Y_{t_{k+1}}^{b_0}\Big]\Big)^2\Big] \notag\\
	&\leq 3\left(D_{1,k,n}+D_{2,k,n}+D_{3,k,n}\right),\label{D}
	\end{align}
	where 
	\begin{align*} 
	D_{1,k,n}&=\widehat{\E}_{t_k,Y_{t_k}^{b_0}}^{b_0}\Big[\Big(\int_{t_k}^{t_{k+1}}\int_{z\leq\upsilon_n}z\widetilde{N}(ds,dz)\Big)^2\Big],\\
	D_{2,k,n}&=\widehat{\E}_{t_k,Y_{t_k}^{b_0}}^{b_0}\Big[\Big(\widetilde{\E}_{t_k,Y_{t_{k}}^{b_0}}^{b}\Big[\int_{t_k}^{t_{k+1}}\int_{z\leq\upsilon_n}z\widetilde{M}(ds,dz)\big\vert X_{t_{k+1}}^{b}=Y_{t_{k+1}}^{b_0}\Big]\Big)^2\Big],\\
	D_{3,k,n}&=\widehat{\E}_{t_k,Y_{t_k}^{b_0}}^{b_0}\Big[\Big(\int_{t_k}^{t_{k+1}}\int_{z>\upsilon_n}zN(ds,dz)\\
	&\qquad-\widetilde{\E}_{t_k,Y_{t_{k}}^{b_0}}^{b}\Big[\int_{t_k}^{t_{k+1}}\int_{z>\upsilon_n}zM(ds,dz)\big\vert X_{t_{k+1}}^{b}=Y_{t_{k+1}}^{b_0}\Big]\Big)^2\Big].
	\end{align*}
	First, using BDG's inequality, we get
	\begin{align} \label{D1}
	D_{1,k,n}\leq C\int_{t_k}^{t_{k+1}}\int_{z\leq\upsilon_n}z^2m(dz)ds= C\Delta_n\int_{z\leq\upsilon_n}z^2m(dz).
	\end{align}
	Next, using Jensen's inequality, \eqref{for1} of Lemma \ref{change}, \eqref{for3} of Lemma \ref{deviation1} with $q=q_1\in (1,2]$ and BDG's inequality, we have
	\begin{align}\label{D2}
	D_{2,k,n}&\leq\widehat{\E}_{t_k,Y_{t_k}^{b_0}}^{b_0}\Big[\widetilde{\E}_{t_k,Y_{t_{k}}^{b_0}}^{b}\Big[\Big(\int_{t_k}^{t_{k+1}}\int_{z\leq\upsilon_n}z\widetilde{M}(ds,dz)\Big)^2\big\vert X_{t_{k+1}}^{b}=Y_{t_{k+1}}^{b_0}\Big]\Big] \notag\\
	&\leq\widetilde{\E}_{t_k,Y_{t_{k}}^{b_0}}^{b}\Big[\Big(\int_{t_k}^{t_{k+1}}\int_{z\leq\upsilon_n}z\widetilde{M}(ds,dz)\Big)^2\Big]+C_1\vert u\vert\sqrt{\Delta_n}\varphi_{n\Delta_n}(b_0)e^{C_2u^2(\varphi_{n\Delta_n}(b_0))^2\Delta_nY_{t_k}^{b_0}}\notag\\
	&\qquad\times\Big(\widetilde{\E}_{t_k,Y_{t_{k}}^{b_0}}^{b}\Big[\Big\vert\int_{t_k}^{t_{k+1}}\int_{z\leq\upsilon_n}z\widetilde{M}(ds,dz)\Big\vert^{2q_1}\Big]\Big)^{\frac{1}{q_1}}\big(1+\sqrt{Y_{t_k}^{b_0}}+\sqrt{\Delta_n}\vert u\vert\varphi_{n\Delta_n}(b_0) Y_{t_k}^{b_0}\big)\notag\\
	&\leq C\Delta_n\int_{z\leq\upsilon_n}z^2m(dz)+C_1\varphi_{n\Delta_n}(b_0)\Delta_n^{\frac{1}{2}+\frac{1}{q_1}}\Big(\int_{z\leq\upsilon_n}z^{2q_1}m(dz)\Big)^{\frac{1}{q_1}}\notag\\ &\qquad\times e^{C_2u^2(\varphi_{n\Delta_n}(b_0))^2\Delta_nY_{t_k}^{b_0}}\big(1+\sqrt{Y_{t_k}^{b_0}}+\sqrt{\Delta_n}\vert u\vert\varphi_{n\Delta_n}(b_0) Y_{t_k}^{b_0}\big).
	\end{align}
	To treat $D_{3,k,n}$, for $k \in \{0,...,n-1\}$, we consider the events $\widehat{N}_{0,k}(\upsilon_n):=\{N_{t_{k+1}}^{\upsilon_n}-N_{t_{k}}^{\upsilon_n}=0\}$ which have no big jumps of $J^{\upsilon_n}$ in the interval $[t_k,t_{k+1})$ and $\widehat{N}_{\geq1,k}(\upsilon_n):=\{N_{t_{k+1}}^{\upsilon_n}-N_{t_{k}}^{\upsilon_n}\geq1\}$ which have one or
	more than one big jump of $J^{\upsilon_n}$ in the interval $[t_k,t_{k+1})$. Similarly, we consider the events $\widetilde{N}_{0,k}(\upsilon_n):=\{M_{t_{k+1}}^{\upsilon_n}-M_{t_{k}}^{\upsilon_n}=0\}$ which have no big jumps of $\widetilde{J}^{\upsilon_n}$ in $[t_k,t_{k+1})$ and $\widetilde{N}_{\geq1,k}(\upsilon_n):=\{M_{t_{k+1}}^{\upsilon_n}-M_{t_{k}}^{\upsilon_n}\geq1\}$ which have one or
	more than one big jump of $\widetilde{J}^{\upsilon_n}$ in $[t_k,t_{k+1})$. 
	
	Then, multiplying the random variable outside the conditional expectation of $D_{3,k,n}$ by ${\bf 1}_{\widehat{N}_{0,k}(\upsilon_n)} +{\bf 1}_{\widehat{N}_{\geq1,k}(\upsilon_n)}$, we get 
	\begin{equation}\label{D3}
	\begin{split}
	&D_{3,k,n}=\widehat{\E}_{t_k,Y_{t_k}^{b_0}}^{b_0}\Big[\big({\bf 1}_{\widehat{N}_{0,k}(\upsilon_n)}+{\bf 1}_{\widehat{N}_{\geq1,k}(\upsilon_n)}\big)\Big(\int_{t_k}^{t_{k+1}}\int_{z>\upsilon_n}zN(ds,dz)\\
	&\qquad-\widetilde{\E}_{t_k,Y_{t_{k}}^{b_0}}^{b}\Big[\int_{t_k}^{t_{k+1}}\int_{z>\upsilon_n}zM(ds,dz)\big\vert X_{t_{k+1}}^{b}=Y_{t_{k+1}}^{b_0}\Big]\Big)^2\Big]=M_{0,k,n}^b+M_{\geq1,k,n}^b,
	\end{split}
	\end{equation}
	where
	\begin{align*}
	M_{0,k,n}^b&=\widehat{\E}_{t_k,Y_{t_k}^{b_0}}^{b_0}\Big[{\bf 1}_{\widehat{N}_{0,k}(\upsilon_n)}\Big(\int_{t_k}^{t_{k+1}}\int_{z>\upsilon_n}zN(ds,dz)\\
	&\qquad-\widetilde{\E}_{t_k,Y_{t_{k}}^{b_0}}^{b}\Big[\int_{t_k}^{t_{k+1}}\int_{z>\upsilon_n}zM(ds,dz)\big\vert X_{t_{k+1}}^{b}=Y_{t_{k+1}}^{b_0}\Big]\Big)^2\Big],\\
	M_{\geq1,k,n}^b&=\widehat{\E}_{t_k,Y_{t_k}^{b_0}}^{b_0}\Big[{\bf 1}_{\widehat{N}_{\geq1,k}(\upsilon_n)}\Big(\int_{t_k}^{t_{k+1}}\int_{z>\upsilon_n}zN(ds,dz)\\
	&\qquad-\widetilde{\E}_{t_k,Y_{t_{k}}^{b_0}}^{b}\Big[\int_{t_k}^{t_{k+1}}\int_{z>\upsilon_n}zM(ds,dz)\big\vert X_{t_{k+1}}^{b}=Y_{t_{k+1}}^{b_0}\Big]\Big)^2\Big].
	\end{align*}	
	We start treating $M_{0,k,n}^b$. For this, multiplying the random variable inside the conditional expectation of $M_{0,k,n}^b$ by ${\bf 1}_{\widetilde{N}_{0,k}(\upsilon_n)} +{\bf 1}_{\widetilde{N}_{\geq1,k}(\upsilon_n)}$ and using equation \eqref{splitX}, we get 
	\begin{align}
	&M_{0,k,n}^b=\widehat{\E}_{t_k,Y_{t_k}^{b_0}}^{b_0}\Big[{\bf 1}_{\widehat{N}_{0,k}(\upsilon_n)}\Big(\widetilde{\E}_{t_k,Y_{t_{k}}^{b_0}}^{b}\Big[({\bf 1}_{\widetilde{N}_{0,k}(\upsilon_n)} +{\bf 1}_{\widetilde{N}_{\geq1,k}(\upsilon_n)}) \notag\\
	&\qquad\times\int_{t_k}^{t_{k+1}}\int_{z>\upsilon_n}zM(ds,dz)\big\vert X_{t_{k+1}}^{b}=Y_{t_{k+1}}^{b_0}\Big]\Big)^2\Big]\notag\\
	&=\widehat{\E}_{t_k,Y_{t_k}^{b_0}}^{b_0}\Big[{\bf 1}_{\widehat{N}_{0,k}(\upsilon_n)}\Big(\widetilde{\E}_{t_k,Y_{t_{k}}^{b_0}}^{b}\Big[{\bf 1}_{\widetilde{N}_{\geq1,k}(\upsilon_n)}\int_{t_k}^{t_{k+1}}\int_{z>\upsilon_n}zM(ds,dz)\big\vert X_{t_{k+1}}^{b}=Y_{t_{k+1}}^{b_0}\Big]\Big)^2\Big]\notag\\
	&=\widehat{\E}_{t_k,Y_{t_k}^{b_0}}^{b_0}\Big[{\bf 1}_{\widehat{N}_{0,k}(\upsilon_n)}\Big(\widetilde{\E}_{t_k,Y_{t_{k}}^{b_0}}^{b}\Big[{\bf 1}_{\widetilde{N}_{\geq1,k}(\upsilon_n)}\Big(X_{t_{k+1}}^{b}-X_{t_{k}}^{b}-\int_{t_k}^{t_{k+1}}(a-bX_s^{b})ds -\sigma\int_{t_k}^{t_{k+1}}\sqrt{X_s^{b}}dB_s\notag\\
	&\qquad-\int_{t_k}^{t_{k+1}}\int_{z\leq\upsilon_n}z\widetilde{M}(ds,dz)-\Delta_n\int_{z\leq\upsilon_n}zm(dz)\Big)\big\vert X_{t_{k+1}}^{b}=Y_{t_{k+1}}^{b_0}\Big]\Big)^2\Big] \notag\\
	&\leq 5\sum_{i=1}^{5}M_{0,i,k,n}^b,\label{M0}
	\end{align}
	where 
	\begin{align*}
	M_{0,1,k,n}^b&=\widehat{\E}_{t_k,Y_{t_k}^{b_0}}^{b_0}\Big[{\bf 1}_{\widehat{N}_{0,k}(\upsilon_n)}\Big(\widetilde{\E}_{t_k,Y_{t_{k}}^{b_0}}^{b}\Big[{\bf 1}_{\widetilde{N}_{\geq1,k}(\upsilon_n)}(X_{t_{k+1}}^{b}-X_{t_{k}}^{b})\big\vert X_{t_{k+1}}^{b}=Y_{t_{k+1}}^{b_0}\Big]\Big)^2\Big],\\
	M_{0,2,k,n}^b&=\widehat{\E}_{t_k,Y_{t_k}^{b_0}}^{b_0}\Big[{\bf 1}_{\widehat{N}_{0,k}(\upsilon_n)}\Big(\widetilde{\E}_{t_k,Y_{t_{k}}^{b_0}}^{b}\Big[{\bf 1}_{\widetilde{N}_{\geq1,k}(\upsilon_n)}\int_{t_k}^{t_{k+1}}(a-bX_s^{b})ds \big\vert X_{t_{k+1}}^{b}=Y_{t_{k+1}}^{b_0}\Big]\Big)^2\Big],\\
	M_{0,3,k,n}^b&=\widehat{\E}_{t_k,Y_{t_k}^{b_0}}^{b_0}\Big[{\bf 1}_{\widehat{N}_{0,k}(\upsilon_n)}\Big(\widetilde{\E}_{t_k,Y_{t_{k}}^{b_0}}^{b}\Big[{\bf 1}_{\widetilde{N}_{\geq1,k}(\upsilon_n)}\sigma\int_{t_k}^{t_{k+1}}\sqrt{X_s^{b}}dB_s\big\vert X_{t_{k+1}}^{b}=Y_{t_{k+1}}^{b_0}\Big]\Big)^2\Big],\\
	M_{0,4,k,n}^b&=\widehat{\E}_{t_k,Y_{t_k}^{b_0}}^{b_0}\Big[{\bf 1}_{\widehat{N}_{0,k}(\upsilon_n)}\Big(\widetilde{\E}_{t_k,Y_{t_{k}}^{b_0}}^{b}\Big[{\bf 1}_{\widetilde{N}_{\geq1,k}(\upsilon_n)}\int_{t_k}^{t_{k+1}}\int_{z\leq\upsilon_n}z\widetilde{M}(ds,dz)\big\vert X_{t_{k+1}}^{b}=Y_{t_{k+1}}^{b_0}\Big]\Big)^2\Big],\\
	M_{0,5,k,n}^b&=\widehat{\E}_{t_k,Y_{t_k}^{b_0}}^{b_0}\Big[{\bf 1}_{\widehat{N}_{0,k}(\upsilon_n)}\Big(\widetilde{\E}_{t_k,Y_{t_{k}}^{b_0}}^{b}\Big[{\bf 1}_{\widetilde{N}_{\geq1,k}(\upsilon_n)}\Delta_n\int_{z\leq\upsilon_n}zm(dz)\big\vert X_{t_{k+1}}^{b}=Y_{t_{k+1}}^{b_0}\Big]\Big)^2\Big].
	\end{align*}
	First, using equation \eqref{splitY} and the fact that there is no big jump of $J^{\upsilon_n}$ in $[t_k,t_{k+1})$, we get 
	\begin{align}
	&M_{0,1,k,n}^b=\widehat{\E}_{t_k,Y_{t_k}^{b_0}}^{b_0}\Big[{\bf 1}_{\widehat{N}_{0,k}(\upsilon_n)}\Big((Y_{t_{k+1}}^{b_0}-Y_{t_{k}}^{b_0})\widetilde{\E}_{t_k,Y_{t_{k}}^{b_0}}^{b}\Big[{\bf 1}_{\widetilde{N}_{\geq1,k}(\upsilon_n)}\big\vert X_{t_{k+1}}^{b}=Y_{t_{k+1}}^{b_0}\Big]\Big)^2\Big]\notag\\
	&=\widehat{\E}_{t_k,Y_{t_k}^{b_0}}^{b_0}\Big[{\bf 1}_{\widehat{N}_{0,k}(\upsilon_n)}\Big(\Big(\int_{t_k}^{t_{k+1}}(a-b_0Y_s^{b_0})ds+\sigma\int_{t_k}^{t_{k+1}}\sqrt{Y_s^{b_0}}dW_s+\Delta_n\int_{z\leq\upsilon_n}zm(dz)\notag\\
	&+\int_{t_k}^{t_{k+1}}\int_{z\leq\upsilon_n}z\widetilde{N}(ds,dz)\Big)\widetilde{\E}_{t_k,Y_{t_{k}}^{b_0}}^{b}\big[{\bf 1}_{\widetilde{N}_{\geq1,k}(\upsilon_n)}\vert X_{t_{k+1}}^{b}=Y_{t_{k+1}}^{b_0}\big]\Big)^2\Big] \notag\\
	&\leq 4\sum_{i=1}^{4}M_{0,1,i,k,n}^b,\label{M01}
	\end{align}
	where
	\begin{align*}
	&M_{0,1,1,k,n}^b=\widehat{\E}_{t_k,Y_{t_k}^{b_0}}^{b_0}\Big[{\bf 1}_{\widehat{N}_{0,k}(\upsilon_n)}\Big(\int_{t_k}^{t_{k+1}}(a-b_0Y_s^{b_0})ds\widetilde{\E}_{t_k,Y_{t_{k}}^{b_0}}^{b}\big[{\bf 1}_{\widetilde{N}_{\geq1,k}(\upsilon_n)}\vert X_{t_{k+1}}^{b}=Y_{t_{k+1}}^{b_0}\big]\Big)^2\Big],\\
	&M_{0,1,2,k,n}^b=\widehat{\E}_{t_k,Y_{t_k}^{b_0}}^{b_0}\Big[{\bf 1}_{\widehat{N}_{0,k}(\upsilon_n)}\Big(\sigma\int_{t_k}^{t_{k+1}}\sqrt{Y_s^{b_0}}dW_s\widetilde{\E}_{t_k,Y_{t_{k}}^{b_0}}^{b}\big[{\bf 1}_{\widetilde{N}_{\geq1,k}(\upsilon_n)}\vert X_{t_{k+1}}^{b}=Y_{t_{k+1}}^{b_0}\big]\Big)^2\Big],\\
	&M_{0,1,3,k,n}^b=\widehat{\E}_{t_k,Y_{t_k}^{b_0}}^{b_0}\Big[{\bf 1}_{\widehat{N}_{0,k}(\upsilon_n)}\Big(\int_{t_k}^{t_{k+1}}\int_{z\leq\upsilon_n}z\widetilde{N}(ds,dz)\widetilde{\E}_{t_k,Y_{t_{k}}^{b_0}}^{b}\big[{\bf 1}_{\widetilde{N}_{\geq1,k}(\upsilon_n)}\vert X_{t_{k+1}}^{b}=Y_{t_{k+1}}^{b_0}\big]\Big)^2\Big],\\
	&M_{0,1,4,k,n}^b=\widehat{\E}_{t_k,Y_{t_k}^{b_0}}^{b_0}\Big[{\bf 1}_{\widehat{N}_{0,k}(\upsilon_n)}\Big(\Delta_n\int_{z\leq\upsilon_n}zm(dz)\widetilde{\E}_{t_k,Y_{t_{k}}^{b_0}}^{b}\big[{\bf 1}_{\widetilde{N}_{\geq1,k}(\upsilon_n)}\vert X_{t_{k+1}}^{b}=Y_{t_{k+1}}^{b_0}\big]\Big)^2\Big].
	\end{align*}
	Using H\"older's inequality with $\frac{1}{p}+\frac{1}{q}=1$, Jensen's inequality, \eqref{for1} of Lemma \ref{change}, \eqref{for3} of Lemma \ref{deviation1} with $q=q_0>1$, we obtain
	\begin{align}
	&M_{0,1,1,k,n}^b\leq\big(\widehat{\E}_{t_k,Y_{t_k}^{b_0}}^{b_0}\big[\big\vert\int_{t_k}^{t_{k+1}}(a-b_0Y_s^{b_0})ds\big\vert^{2p}\big]\big)^{\frac{1}{p}}\big(\widehat{\E}_{t_k,Y_{t_k}^{b_0}}^{b_0}\big[\widetilde{\E}_{t_k,Y_{t_{k}}^{b_0}}^{b}\big[{\bf 1}_{\widetilde{N}_{\geq1,k}(\upsilon_n)}\vert X_{t_{k+1}}^{b}=Y_{t_{k+1}}^{b_0}\big]\big]\big)^{\frac{1}{q}}\notag\\
	&\leq\Big(\Delta_n^{2p-1}\int_{t_k}^{t_{k+1}}\widehat{\E}_{t_k,Y_{t_k}^{b_0}}^{b_0}\Big[\vert a-b_0Y_s^{b_0}\vert^{2p}\Big]ds\Big)^{\frac{1}{p}}\bigg(\widetilde{\P}_{t_k,Y_{t_{k}}^{b_0}}^{b}\big(\widetilde{N}_{\geq1,k}(\upsilon_n)\big)+C_1\vert u\vert\sqrt{\Delta_n}\varphi_{n\Delta_n}(b_0)\notag\\
	&\qquad\times\big(\widetilde{\P}_{t_k,Y_{t_{k}}^{b_0}}^{b}\big(\widetilde{N}_{\geq1,k}(\upsilon_n)\big)\big)^{\frac{1}{q_0}}e^{C_2u^2(\varphi_{n\Delta_n}(b_0))^2\Delta_nY_{t_k}^{b_0}} \big(1+\sqrt{Y_{t_k}^{b_0}}+\sqrt{\Delta_n}\vert u\vert\varphi_{n\Delta_n}(b_0) Y_{t_k}^{b_0}\big)\bigg)^{\frac{1}{q}}\notag\\
	&\leq C\Big(\Delta_n^{2p-1}\int_{t_k}^{t_{k+1}}\big(a^{2p}+\vert b_0\vert^{2p}\widehat{\E}_{t_k,Y_{t_k}^{b_0}}^{b_0}[\vert Y_s^{b_0}\vert^{2p}]\big)ds\Big)^{\frac{1}{p}}\bigg(\lambda_{\upsilon_n}\Delta_n+C_1\sqrt{\Delta_n}\varphi_{n\Delta_n}(b_0)\notag\\
	&\qquad\times\big(\lambda_{\upsilon_n}\Delta_n\big)^{\frac{1}{q_0}}e^{C_2u^2(\varphi_{n\Delta_n}(b_0))^2\Delta_nY_{t_k}^{b_0}}\big(1+\sqrt{Y_{t_k}^{b_0}}+\sqrt{\Delta_n}\vert u\vert\varphi_{n\Delta_n}(b_0) Y_{t_k}^{b_0}\big)\bigg)^{\frac{1}{q}}\notag\\		
	&\leq C(1+(Y_{t_k}^{b_0})^{2})\Delta_n^2\bigg(\left(\lambda_{\upsilon_n}\Delta_n\right)^{\frac{1}{q}}+(\sqrt{\Delta_n}\varphi_{n\Delta_n}(b_0))^{\frac{1}{q}}\big(\lambda_{\upsilon_n}\Delta_n\big)^{\frac{1}{q_0q}}\notag\\
	&\qquad \times e^{C_2u^2(\varphi_{n\Delta_n}(b_0))^2\Delta_nY_{t_k}^{b_0}}\big(1+\sqrt{Y_{t_k}^{b_0}}+\sqrt{\Delta_n}\vert u\vert\varphi_{n\Delta_n}(b_0) Y_{t_k}^{b_0}\big)^{\frac{1}{q}}\bigg).\label{M011}
	\end{align}
Here, we have used the fact that
\begin{align*} 
	\widetilde{\P}_{t_k,Y_{t_{k}}^{b_0}}^{b}\big(\widetilde{N}_{\geq1,k}(\upsilon_n)\big)=1-\widetilde{\P}_{t_k,Y_{t_{k}}^{b_0}}^{b}(\widetilde{N}_{0,k}(\upsilon_n))=1-e^{-\lambda_{\upsilon_n}\Delta_n}\leq \lambda_{\upsilon_n}\Delta_n.
\end{align*}
	Next, proceeding as for the term $M_{0,1,1,k,n}^b$, we get
	\begin{align}
	M_{0,1,2,k,n}^b&\leq C(1+Y_{t_k}^{b_0})\Delta_n\bigg(\left(\lambda_{\upsilon_n}\Delta_n\right)^{\frac{1}{q}}+(\sqrt{\Delta_n}\varphi_{n\Delta_n}(b_0))^{\frac{1}{q}}\big(\lambda_{\upsilon_n}\Delta_n\big)^{\frac{1}{q_0q}}\notag\\
	&\qquad \times e^{C_2u^2(\varphi_{n\Delta_n}(b_0))^2\Delta_nY_{t_k}^{b_0}}\big(1+\sqrt{Y_{t_k}^{b_0}}+\sqrt{\Delta_n}\vert u\vert\varphi_{n\Delta_n}(b_0) Y_{t_k}^{b_0}\big)^{\frac{1}{q}}\bigg).\label{M012}
	\end{align}
	Using BDG's inequality, we obtain
	\begin{align}
	M_{0,1,3,k,n}^b\leq\widehat{\E}_{t_k,Y_{t_k}^{b_0}}^{b_0}\Big[\Big(\int_{t_k}^{t_{k+1}}\int_{z\leq\upsilon_n}z\widetilde{N}(ds,dz)\Big)^2\Big]\leq C\Delta_n\int_{z\leq\upsilon_n}z^2m(dz).\label{M013}
	\end{align}
	Observe that
	\begin{align}
	M_{0,1,4,k,n}^b\leq\Big(\Delta_n\int_{z\leq\upsilon_n}zm(dz)\Big)^2.\label{M014}
	\end{align}
	Therefore, from \eqref{M01}-\eqref{M014}, we have shown that for $q>1$, $q_0>1$,
	\begin{align}
	M_{0,1,k,n}^b&\leq C(1+(Y_{t_k}^{b_0})^{2})\Delta_n\bigg(\left(\lambda_{\upsilon_n}\Delta_n\right)^{\frac{1}{q}}+(\sqrt{\Delta_n}\varphi_{n\Delta_n}(b_0))^{\frac{1}{q}}\big(\lambda_{\upsilon_n}\Delta_n\big)^{\frac{1}{q_0q}}\notag\\
	&\qquad \times e^{C_2u^2(\varphi_{n\Delta_n}(b_0))^2\Delta_nY_{t_k}^{b_0}} \big(1+\sqrt{Y_{t_k}^{b_0}}+\sqrt{\Delta_n}\vert u\vert\varphi_{n\Delta_n}(b_0) Y_{t_k}^{b_0}\big)^{\frac{1}{q}}\bigg)\notag\\
	&\qquad+C\Delta_n\bigg(\int_{z\leq\upsilon_n}z^2m(dz)+\Delta_n\Big(\int_{z\leq\upsilon_n}zm(dz)\Big)^2\bigg).\label{M01estimate}
	\end{align}
	Next, we treat $M_{0,2,k,n}^b$. Using Jensen's inequality, \eqref{for1} of Lemma \ref{change}, \eqref{for3} of Lemma \ref{deviation1} with $q=q_0>1$ and H\"older's inequality with $\frac{1}{p}+\frac{1}{q}=1$, we obtain
	\begin{align}
	&M_{0,2,k,n}^b\leq\widehat{\E}_{t_k,Y_{t_k}^{b_0}}^{b_0}\Big[\widetilde{\E}_{t_k,Y_{t_{k}}^{b_0}}^{b}\Big[{\bf 1}_{\widetilde{N}_{\geq1,k}(\upsilon_n)}\Big(\int_{t_k}^{t_{k+1}}(a-bX_s^{b})ds\Big)^2\big\vert X_{t_{k+1}}^{b}=Y_{t_{k+1}}^{b_0}\Big]\Big]\notag\\
	&=\widetilde{\E}_{t_k,Y_{t_{k}}^{b_0}}^{b}\Big[{\bf 1}_{\widetilde{N}_{\geq1,k}(\upsilon_n)}\Big(\int_{t_k}^{t_{k+1}}(a-bX_s^{b})ds\Big)^2\Big] +C_1\vert u\vert\sqrt{\Delta_n}\varphi_{n\Delta_n}(b_0)  e^{C_2u^2(\varphi_{n\Delta_n}(b_0))^2\Delta_nY_{t_k}^{b_0}}\notag\\ &\qquad\times \Big(\widetilde{\E}_{t_k,Y_{t_{k}}^{b_0}}^{b}\Big[{\bf 1}_{\widetilde{N}_{\geq1,k}(\upsilon_n)}\Big\vert\int_{t_k}^{t_{k+1}}(a-bX_s^{b})ds\Big\vert^{2q_0}\Big]\Big)^{\frac{1}{q_0}} \big(1+\sqrt{Y_{t_k}^{b_0}}+\sqrt{\Delta_n}\vert u\vert\varphi_{n\Delta_n}(b_0) Y_{t_k}^{b_0}\big)\notag\\
	&\leq \Big(\widetilde{\E}_{t_k,Y_{t_k}^{b_0}}^{b}\Big[\Big\vert\int_{t_k}^{t_{k+1}}(a-bX_s^{b})ds\Big\vert^{2p}\Big]\Big)^{\frac{1}{p}}\Big(\widetilde{\P}_{t_k,Y_{t_{k}}^{b_0}}^{b}\big(\widetilde{N}_{\geq1,k}(\upsilon_n)\big)\Big)^{\frac{1}{q}}+C_1\vert u\vert\sqrt{\Delta_n}\varphi_{n\Delta_n}(b_0)\notag\\ &\qquad\times \Big(\widetilde{\E}_{t_k,Y_{t_k}^{b_0}}^{b}\Big[\Big\vert\int_{t_k}^{t_{k+1}}(a-bX_s^{b})ds\Big\vert^{2q_0p}\Big]\Big)^{\frac{1}{q_0p}}\Big(\widetilde{\P}_{t_k,Y_{t_{k}}^{b_0}}^{b}\big(\widetilde{N}_{\geq1,k}(\upsilon_n)\big)\Big)^{\frac{1}{q_0q}} e^{C_2u^2(\varphi_{n\Delta_n}(b_0))^2\Delta_nY_{t_k}^{b_0}}\notag\\
	&\qquad\times \big(1+\sqrt{Y_{t_k}^{b_0}}+\sqrt{\Delta_n}\vert u\vert\varphi_{n\Delta_n}(b_0) Y_{t_k}^{b_0}\big)\notag\\
	&\leq C(1+(Y_{t_k}^{b_0})^{2})\Delta_n^2\bigg(\left(\lambda_{\upsilon_n}\Delta_n\right)^{\frac{1}{q}}+\sqrt{\Delta_n}\varphi_{n\Delta_n}(b_0) \left(\lambda_{\upsilon_n}\Delta_n\right)^{\frac{1}{q_0q}} e^{C_2u^2(\varphi_{n\Delta_n}(b_0))^2\Delta_nY_{t_k}^{b_0}}\notag\\
	&\qquad\times\big(1+\sqrt{Y_{t_k}^{b_0}}+\sqrt{\Delta_n}\vert u\vert\varphi_{n\Delta_n}(b_0) Y_{t_k}^{b_0}\big)\bigg).\label{M02}
	\end{align}
	Proceeding as for the term $M_{0,2,k,n}^b$, we get for $q>1$, $q_0>1$,
	\begin{align}
	M_{0,3,k,n}^b&\leq C(1+Y_{t_k}^{b_0})\Delta_n\bigg(\left(\lambda_{\upsilon_n}\Delta_n\right)^{\frac{1}{q}}+\sqrt{\Delta_n}\varphi_{n\Delta_n}(b_0)\left(\lambda_{\upsilon_n}\Delta_n\right)^{\frac{1}{q_0q}}\notag\\
	&\qquad\times e^{C_2u^2(\varphi_{n\Delta_n}(b_0))^2\Delta_nY_{t_k}^{b_0}}\big(1+\sqrt{Y_{t_k}^{b_0}}+\sqrt{\Delta_n}\vert u\vert\varphi_{n\Delta_n}(b_0) Y_{t_k}^{b_0}\big)\bigg).\label{M03}
	\end{align}
	Using Jensen's inequality and proceeding as for the term $D_{2,k,n}$, we get for $q_1\in (1,2]$,
	\begin{align}
	M_{0,4,k,n}^b&\leq\widehat{\E}_{t_k,Y_{t_k}^{b_0}}^{b_0}\Big[\widetilde{\E}_{t_k,Y_{t_{k}}^{b_0}}^{b}\Big[\Big(\int_{t_k}^{t_{k+1}}\int_{z\leq\upsilon_n}z\widetilde{M}(ds,dz)\Big)^2\big\vert X_{t_{k+1}}^{b}=Y_{t_{k+1}}^{b_0}\Big]\Big]\notag\\
	&\leq C\Delta_n\int_{z\leq\upsilon_n}z^2m(dz)+C_1\varphi_{n\Delta_n}(b_0)\Delta_n^{\frac{1}{2}+\frac{1}{q_1}}\Big(\int_{z\leq\upsilon_n}z^{2q_1}m(dz)\Big)^{\frac{1}{q_1}}\notag\\ &\qquad\times e^{C_2u^2(\varphi_{n\Delta_n}(b_0))^2\Delta_nY_{t_k}^{b_0}}\big(1+\sqrt{Y_{t_k}^{b_0}}+\sqrt{\Delta_n}\vert u\vert\varphi_{n\Delta_n}(b_0) Y_{t_k}^{b_0}\big).\label{M04}
	\end{align}
	Finally, observe that
	\begin{align}
	M_{0,5,k,n}^b\leq\Big(\Delta_n\int_{z\leq\upsilon_n}zm(dz)\Big)^2.\label{M05}
	\end{align}
	Thus, from \eqref{M0} and \eqref{M01estimate}-\eqref{M05}, we have shown that for $n$ large enough, $q_1\in (1,2]$, $q>1$, $q_0>1$,
	\begin{align} 
	M_{0,k,n}^b&\leq C(1+(Y_{t_k}^{b_0})^{2})\Delta_n\bigg(\left(\lambda_{\upsilon_n}\Delta_n\right)^{\frac{1}{q}}+(\sqrt{\Delta_n}\varphi_{n\Delta_n}(b_0))^{\frac{1}{q}}\big(\lambda_{\upsilon_n}\Delta_n\big)^{\frac{1}{q_0q}} e^{C_2u^2(\varphi_{n\Delta_n}(b_0))^2\Delta_nY_{t_k}^{b_0}}\notag\\
	&\qquad \times  \big(1+\sqrt{Y_{t_k}^{b_0}}+\sqrt{\Delta_n}\vert u\vert\varphi_{n\Delta_n}(b_0) Y_{t_k}^{b_0}\big) \bigg)+C\Delta_n\bigg(\int_{z\leq\upsilon_n}z^2m(dz)\notag\\
	&\qquad+\Delta_n\Big(\int_{z\leq\upsilon_n}zm(dz)\Big)^2\bigg)+C\varphi_{n\Delta_n}(b_0)\Delta_n^{\frac{1}{2}+\frac{1}{q_1}}\Big(\int_{z\leq\upsilon_n}z^{2q_1}m(dz)\Big)^{\frac{1}{q_1}} \notag\\ 
	&\qquad\times e^{C_2u^2(\varphi_{n\Delta_n}(b_0))^2\Delta_nY_{t_k}^{b_0}} \big(1+\sqrt{Y_{t_k}^{b_0}}+\sqrt{\Delta_n}\vert u\vert\varphi_{n\Delta_n}(b_0) Y_{t_k}^{b_0}\big). \label{c3m1}
	\end{align}		
	Finally, we treat $M_{\geq1,k,n}^b$. Multiplying the random variable inside the conditional expectation of $M_{\geq1,k,n}^b$ by ${\bf 1}_{\widetilde{N}_{0,k}(\upsilon_n)} +{\bf 1}_{\widetilde{N}_{\geq1,k}(\upsilon_n)}$ and using equations \eqref{splitY} and \eqref{splitX}, we get 
	\begin{align}
	M_{\geq1,k,n}^b&=\widehat{\E}_{t_k,Y_{t_k}^{b_0}}^{b_0}\Big[{\bf 1}_{\widehat{N}_{\geq1,k}(\upsilon_n)}\Big(\int_{t_k}^{t_{k+1}}\int_{z>\upsilon_n}zN(ds,dz)\notag\\
	&\quad-\widetilde{\E}_{t_k,Y_{t_{k}}^{b_0}}^{b}\Big[\big({\bf 1}_{\widetilde{N}_{0,k}(\upsilon_n)} +{\bf 1}_{\widetilde{N}_{\geq1,k}(\upsilon_n)}\big)\int_{t_k}^{t_{k+1}}\int_{z>\upsilon_n}zM(ds,dz)\big\vert X_{t_{k+1}}^{b}=Y_{t_{k+1}}^{b_0}\Big]\Big)^2\Big]\notag\\
	&=\widehat{\E}_{t_k,Y_{t_k}^{b_0}}^{b_0}\Big[{\bf 1}_{\widehat{N}_{\geq1,k}(\upsilon_n)}\Big(\int_{t_k}^{t_{k+1}}\int_{z>\upsilon_n}zN(ds,dz)\notag\\
	&\quad-\widetilde{\E}_{t_k,Y_{t_{k}}^{b_0}}^{b}\Big[{\bf 1}_{\widetilde{N}_{\geq1,k}(\upsilon_n)}\int_{t_k}^{t_{k+1}}\int_{z>\upsilon_n}zM(ds,dz)\big\vert X_{t_{k+1}}^{b}=Y_{t_{k+1}}^{b_0}\Big]\Big)^2\Big]\notag\\
	&=\widehat{\E}_{t_k,Y_{t_k}^{b_0}}^{b_0}\Big[{\bf 1}_{\widehat{N}_{\geq1,k}(\upsilon_n)}\Big(Y_{t_{k+1}}^{b_0}-Y_{t_{k}}^{b_0}-\int_{t_k}^{t_{k+1}}(a-b_0Y_s^{b_0})ds-\sigma\int_{t_k}^{t_{k+1}}\sqrt{Y_s^{b_0}}dW_s\notag\\
	&\quad-\int_{t_k}^{t_{k+1}}\int_{z\leq\upsilon_n}z\widetilde{N}(ds,dz)-\Delta_n\int_{z\leq\upsilon_n}zm(dz)-\widetilde{\E}_{t_k,Y_{t_{k}}^{b_0}}^{b}\Big[{\bf 1}_{\widetilde{N}_{\geq1,k}(\upsilon_n)}\Big(X_{t_{k+1}}^{b}-X_{t_{k}}^{b}\notag\\
	&\quad-\int_{t_k}^{t_{k+1}}(a-bX_s^{b})ds-\sigma\int_{t_k}^{t_{k+1}}\sqrt{X_s^{b}}dB_s-\int_{t_k}^{t_{k+1}}\int_{z\leq\upsilon_n}z\widetilde{M}(ds,dz)\notag\\
	&\quad-\Delta_n\int_{z\leq\upsilon_n}zm(dz)\Big)\big\vert X_{t_{k+1}}^{b}=Y_{t_{k+1}}^{b_0}\Big]\Big)^2\Big] \notag\\
	&\leq 9\sum_{i=1}^{9}M_{\geq1,i,k,n}^b,\label{M1}
	\end{align} 
	where 
	\begin{align*}
	M_{\geq1,1,k,n}^b&=\widehat{\E}_{t_k,Y_{t_k}^{b_0}}^{b_0}\Big[{\bf 1}_{\widehat{N}_{\geq1,k}(\upsilon_n)}\big(Y_{t_{k+1}}^{b_0}-Y_{t_{k}}^{b_0}-\widetilde{\E}_{t_k,Y_{t_{k}}^{b_0}}^{b}\big[{\bf 1}_{\widetilde{N}_{\geq1,k}(\upsilon_n)}(X_{t_{k+1}}^{b}-X_{t_{k}}^{b})\big\vert X_{t_{k+1}}^{b}=Y_{t_{k+1}}^{b_0}\big]\big)^2\Big],\\
	M_{\geq1,2,k,n}^b&=\widehat{\E}_{t_k,Y_{t_k}^{b_0}}^{b_0}\Big[{\bf 1}_{\widehat{N}_{\geq1,k}(\upsilon_n)}\Big(\int_{t_k}^{t_{k+1}}(a-b_0Y_s^{b_0})ds\Big)^2\Big],\\
	M_{\geq1,3,k,n}^b&=\widehat{\E}_{t_k,Y_{t_k}^{b_0}}^{b_0}\Big[{\bf 1}_{\widehat{N}_{\geq1,k}(\upsilon_n)}\Big(\sigma\int_{t_k}^{t_{k+1}}\sqrt{Y_s^{b_0}}dW_s\Big)^2\Big],\\
	M_{\geq1,4,k,n}^b&=\widehat{\E}_{t_k,Y_{t_k}^{b_0}}^{b_0}\Big[{\bf 1}_{\widehat{N}_{\geq1,k}(\upsilon_n)}\Big(\int_{t_k}^{t_{k+1}}\int_{z\leq\upsilon_n}z\widetilde{N}(ds,dz)\Big)^2\Big],\\
	M_{\geq1,5,k,n}^b&=\widehat{\E}_{t_k,Y_{t_k}^{b_0}}^{b_0}\Big[{\bf 1}_{\widehat{N}_{\geq1,k}(\upsilon_n)}\Big(\Delta_n\int_{z\leq\upsilon_n}zm(dz)\Big)^2\Big],\\
	M_{\geq1,6,k,n}^b&=\widehat{\E}_{t_k,Y_{t_k}^{b_0}}^{b_0}\Big[{\bf 1}_{\widehat{N}_{\geq1,k}(\upsilon_n)}\Big(\widetilde{\E}_{t_k,Y_{t_{k}}^{b_0}}^{b}\Big[{\bf 1}_{\widetilde{N}_{\geq1,k}(\upsilon_n)}\int_{t_k}^{t_{k+1}}(a-bX_s^{b})ds\big\vert X_{t_{k+1}}^{b}=Y_{t_{k+1}}^{b_0}\Big]\Big)^2\Big],\\
	M_{\geq1,7,k,n}^b&=\widehat{\E}_{t_k,Y_{t_k}^{b_0}}^{b_0}\Big[{\bf 1}_{\widehat{N}_{\geq1,k}(\upsilon_n)}\Big(\widetilde{\E}_{t_k,Y_{t_{k}}^{b_0}}^{b}\Big[{\bf 1}_{\widetilde{N}_{\geq1,k}(\upsilon_n)}\sigma\int_{t_k}^{t_{k+1}}\sqrt{X_s^{b}}dB_s\big\vert X_{t_{k+1}}^{b}=Y_{t_{k+1}}^{b_0}\Big]\Big)^2\Big],\\
	M_{\geq1,8,k,n}^b&=\widehat{\E}_{t_k,Y_{t_k}^{b_0}}^{b_0}\Big[{\bf 1}_{\widehat{N}_{\geq1,k}(\upsilon_n)}\Big(\widetilde{\E}_{t_k,Y_{t_{k}}^{b_0}}^{b}\Big[{\bf 1}_{\widetilde{N}_{\geq1,k}(\upsilon_n)}\int_{t_k}^{t_{k+1}}\int_{z\leq\upsilon_n}z\widetilde{M}(ds,dz)\big\vert X_{t_{k+1}}^{b}=Y_{t_{k+1}}^{b_0}\Big]\Big)^2\Big],\\
	M_{\geq1,9,k,n}^b&=\widehat{\E}_{t_k,Y_{t_k}^{b_0}}^{b_0}\Big[{\bf 1}_{\widehat{N}_{\geq1,k}(\upsilon_n)}\Big(\widetilde{\E}_{t_k,Y_{t_{k}}^{b_0}}^{b}\Big[{\bf 1}_{\widetilde{N}_{\geq1,k}(\upsilon_n)}\Delta_n\int_{z\leq\upsilon_n}zm(dz)\big\vert X_{t_{k+1}}^{b}=Y_{t_{k+1}}^{b_0}\Big]\Big)^2\Big].
	\end{align*}
	First, we treat the term $M_{\geq1,1,k,n}^b$. Using equation \eqref{splitX} and the fact that there is no big jump of $\widetilde{J}^{\upsilon_n}$ in $[t_k,t_{k+1})$, we get 
	\begin{align}
	M_{\geq1,1,k,n}^b&=\widehat{\E}_{t_k,Y_{t_k}^{b_0}}^{b_0}\Big[{\bf 1}_{\widehat{N}_{\geq1,k}(\upsilon_n)}\big(Y_{t_{k+1}}^{b_0}-Y_{t_{k}}^{b_0}-(Y_{t_{k+1}}^{b_0}-Y_{t_{k}}^{b_0})\widetilde{\E}_{t_k,Y_{t_{k}}^{b_0}}^{b}\big[{\bf 1}_{\widetilde{N}_{\geq1,k}(\upsilon_n)}\big\vert X_{t_{k+1}}^{b}=Y_{t_{k+1}}^{b_0}\big]\big)^2\Big]\notag\\
	&=\widehat{\E}_{t_k,Y_{t_k}^{b_0}}^{b_0}\Big[{\bf 1}_{\widehat{N}_{\geq1,k}(\upsilon_n)}\Big(\big(Y_{t_{k+1}}^{b_0}-Y_{t_{k}}^{b_0}\big)\Big(1-\widetilde{\E}_{t_k,Y_{t_{k}}^{b_0}}^{b}\Big[{\bf 1}_{\widetilde{N}_{\geq1,k}(\upsilon_n)}\big\vert X_{t_{k+1}}^{b}=Y_{t_{k+1}}^{b_0}\Big]\Big)\Big)^2\Big]\notag\\
	&=\widehat{\E}_{t_k,Y_{t_k}^{b_0}}^{b_0}\Big[{\bf 1}_{\widehat{N}_{\geq1,k}(\upsilon_n)}\Big(\big(Y_{t_{k+1}}^{b_0}-Y_{t_{k}}^{b_0}\big)\widetilde{\E}_{t_k,Y_{t_{k}}^{b_0}}^{b}\Big[1-{\bf 1}_{\widetilde{N}_{\geq1,k}(\upsilon_n)}\big\vert X_{t_{k+1}}^{b}=Y_{t_{k+1}}^{b_0}\Big]\Big)^2\Big]\notag\\
	&=\widehat{\E}_{t_k,Y_{t_k}^{b_0}}^{b_0}\Big[{\bf 1}_{\widehat{N}_{\geq1,k}(\upsilon_n)}\Big(\big(Y_{t_{k+1}}^{b_0}-Y_{t_{k}}^{b_0}\big)\widetilde{\E}_{t_k,Y_{t_{k}}^{b_0}}^{b}\Big[{\bf 1}_{\widetilde{N}_{0,k}(\upsilon_n)}\big\vert X_{t_{k+1}}^{b}=Y_{t_{k+1}}^{b_0}\Big]\Big)^2\Big]\notag\\
	&=\widehat{\E}_{t_k,Y_{t_k}^{b_0}}^{b_0}\Big[{\bf 1}_{\widehat{N}_{\geq1,k}(\upsilon_n)}\Big(\widetilde{\E}_{t_k,Y_{t_{k}}^{b_0}}^{b}\Big[{\bf 1}_{\widetilde{N}_{0,k}(\upsilon_n)}(X_{t_{k+1}}^{b}-X_{t_{k}}^{b})\big\vert X_{t_{k+1}}^{b}=Y_{t_{k+1}}^{b_0}\Big]\Big)^2\Big]\notag\\
	&=\widehat{\E}_{t_k,Y_{t_k}^{b_0}}^{b_0}\Big[{\bf 1}_{\widehat{N}_{\geq1,k}(\upsilon_n)}\Big(\widetilde{\E}_{t_k,Y_{t_{k}}^{b_0}}^{b}\Big[{\bf 1}_{\widetilde{N}_{0,k}(\upsilon_n)}\Big(\int_{t_k}^{t_{k+1}}(a-bX_s^{b})ds+\sigma\int_{t_k}^{t_{k+1}}\sqrt{X_s^{b}}dB_s\notag\\
	&\qquad+\int_{t_k}^{t_{k+1}}\int_{z\leq\upsilon_n}z\widetilde{M}(ds,dz)+\Delta_n\int_{z\leq\upsilon_n}zm(dz)\Big)\big\vert X_{t_{k+1}}^{b}=Y_{t_{k+1}}^{b_0}\Big]\Big)^2\Big]\notag\\
	&\leq 4\sum_{i=1}^{4}M_{\geq1,1,i,k,n}^b,\label{M11}
	\end{align}
	where
	\begin{align*}
	M_{\geq1,1,1,k,n}^b&=\widehat{\E}_{t_k,Y_{t_k}^{b_0}}^{b_0}\Big[{\bf 1}_{\widehat{N}_{\geq1,k}(\upsilon_n)}\Big(\widetilde{\E}_{t_k,Y_{t_{k}}^{b_0}}^{b}\Big[{\bf 1}_{\widetilde{N}_{0,k}(\upsilon_n)}\int_{t_k}^{t_{k+1}}(a-bX_s^{b})ds\big\vert X_{t_{k+1}}^{b}=Y_{t_{k+1}}^{b_0}\Big]\Big)^2\Big],\\
	M_{\geq1,1,2,k,n}^b&=\widehat{\E}_{t_k,Y_{t_k}^{b_0}}^{b_0}\Big[{\bf 1}_{\widehat{N}_{\geq1,k}(\upsilon_n)}\Big(\widetilde{\E}_{t_k,Y_{t_{k}}^{b_0}}^{b}\Big[{\bf 1}_{\widetilde{N}_{0,k}(\upsilon_n)}\sigma\int_{t_k}^{t_{k+1}}\sqrt{X_s^{b}}dB_s\big\vert X_{t_{k+1}}^{b}=Y_{t_{k+1}}^{b_0}\Big]\Big)^2\Big],\\
	M_{\geq1,1,3,k,n}^b&=\widehat{\E}_{t_k,Y_{t_k}^{b_0}}^{b_0}\Big[{\bf 1}_{\widehat{N}_{\geq1,k}(\upsilon_n)}\Big(\widetilde{\E}_{t_k,Y_{t_{k}}^{b_0}}^{b}\Big[{\bf 1}_{\widetilde{N}_{0,k}(\upsilon_n)}\int_{t_k}^{t_{k+1}}\int_{z\leq\upsilon_n}z\widetilde{M}(ds,dz)\big\vert X_{t_{k+1}}^{b}=Y_{t_{k+1}}^{b_0}\Big]\Big)^2\Big],\\
	M_{\geq1,1,4,k,n}^b&=\widehat{\E}_{t_k,Y_{t_k}^{b_0}}^{b_0}\Big[{\bf 1}_{\widehat{N}_{\geq1,k}(\upsilon_n)}\Big(\widetilde{\E}_{t_k,Y_{t_{k}}^{b_0}}^{b}\Big[{\bf 1}_{\widetilde{N}_{0,k}(\upsilon_n)}\Delta_n\int_{z\leq\upsilon_n}zm(dz)\big\vert X_{t_{k+1}}^{b}=Y_{t_{k+1}}^{b_0}\Big]\Big)^2\Big].
	\end{align*}
	Using H\"older's inequality with $\frac{1}{p}+\frac{1}{q}=1$, \eqref{for1} of Lemma \ref{change}, \eqref{for3} of Lemma \ref{deviation1} with $q=q_0>1$, we get
	\begin{align} 
	&M_{\geq1,1,1,k,n}^b\leq \big(\widehat{\E}_{t_k,Y_{t_k}^{b_0}}^{b_0}\big[\widetilde{\E}_{t_k,Y_{t_{k}}^{b_0}}^{b}\big[\big\vert\int_{t_k}^{t_{k+1}}(a-bX_s^{b})ds\big\vert^{2p}\big\vert X_{t_{k+1}}^{b}=Y_{t_{k+1}}^{b_0}\big]\big]\big)^{\frac{1}{p}}\big(\widehat{\P}_{t_k,Y_{t_{k}}^{b_0}}^{b}\big(\widehat{N}_{\geq1,k}(\upsilon_n)\big)\big)^{\frac{1}{q}}\notag\\
	&\leq \left(\lambda_{\upsilon_n}\Delta_n\right)^{\frac{1}{q}}\bigg(\widetilde{\E}_{t_k,Y_{t_{k}}^{b_0}}^{b}\Big[\Big\vert\int_{t_k}^{t_{k+1}}(a-bX_s^{b})ds\Big\vert^{2p}\Big]+C_1\vert u\vert\sqrt{\Delta_n}\varphi_{n\Delta_n}(b_0) e^{C_2u^2(\varphi_{n\Delta_n}(b_0))^2\Delta_nY_{t_k}^{b_0}}\notag \\
	&\qquad\times\Big(\widetilde{\E}_{t_k,Y_{t_{k}}^{b_0}}^{b}\Big[\Big\vert\int_{t_k}^{t_{k+1}}(a-bX_s^{b})ds\Big\vert^{2pq_0}\Big]\Big)^{\frac{1}{q_0}} \big(1+\sqrt{Y_{t_k}^{b_0}}+\sqrt{\Delta_n}\vert u\vert\varphi_{n\Delta_n}(b_0) Y_{t_k}^{b_0}\big)\bigg)^{\frac{1}{p}}\notag\\
	&\leq C(1+(Y_{t_k}^{b_0})^{2})\Delta_n^2\left(\lambda_{\upsilon_n}\Delta_n\right)^{\frac{1}{q}}\bigg(1+(\sqrt{\Delta_n}\varphi_{n\Delta_n}(b_0))^{\frac{1}{p}}\notag\\
	&\qquad \times e^{C_2u^2(\varphi_{n\Delta_n}(b_0))^2\Delta_nY_{t_k}^{b_0}} \big(1+\sqrt{Y_{t_k}^{b_0}}+\sqrt{\Delta_n}\vert u\vert\varphi_{n\Delta_n}(b_0) Y_{t_k}^{b_0}\big)^{\frac{1}{p}}\bigg).
	\end{align}
	Proceeding as for $M_{\geq1,1,1,k,n}^b$, we get 
	\begin{align} 
	M_{\geq1,1,2,k,n}^b&\leq  C(1+Y_{t_k}^{b_0})\Delta_n\left(\lambda_{\upsilon_n}\Delta_n\right)^{\frac{1}{q}}\bigg(1+(\sqrt{\Delta_n}\varphi_{n\Delta_n}(b_0))^{\frac{1}{p}}\notag\\
	&\qquad \times e^{C_2u^2(\varphi_{n\Delta_n}(b_0))^2\Delta_nY_{t_k}^{b_0}} \big(1+\sqrt{Y_{t_k}^{b_0}}+\sqrt{\Delta_n}\vert u\vert\varphi_{n\Delta_n}(b_0) Y_{t_k}^{b_0}\big)^{\frac{1}{p}}\bigg).
	\end{align}
	Proceeding as for $D_{2,k,n}$, we get for $q_1\in (1,2]$,
	\begin{align} 
	M_{\geq1,1,3,k,n}^b&\leq \widehat{\E}_{t_k,Y_{t_k}^{b_0}}^{b_0}\Big[\widetilde{\E}_{t_k,Y_{t_{k}}^{b_0}}^{b}\Big[\Big(\int_{t_k}^{t_{k+1}}\int_{z\leq\upsilon_n}z\widetilde{M}(ds,dz)\Big)^2\big\vert X_{t_{k+1}}^{b}=Y_{t_{k+1}}^{b_0}\Big]\Big]\notag\\
	&\leq C\Delta_n\int_{z\leq\upsilon_n}z^2m(dz)+C_1\varphi_{n\Delta_n}(b_0)\Delta_n^{\frac{1}{2}+\frac{1}{q_1}}\Big(\int_{z\leq\upsilon_n}z^{2q_1}m(dz)\Big)^{\frac{1}{q_1}}\notag\\ &\qquad\times e^{C_2u^2(\varphi_{n\Delta_n}(b_0))^2\Delta_nY_{t_k}^{b_0}}\big(1+\sqrt{Y_{t_k}^{b_0}}+\sqrt{\Delta_n}\vert u\vert\varphi_{n\Delta_n}(b_0) Y_{t_k}^{b_0}\big).
	\end{align} 	
	Now, observe that
	\begin{align} 
	M_{\geq1,1,4,k,n}^b\leq \Big(\Delta_n\int_{z\leq\upsilon_n}zm(dz)\Big)^2.\label{M111}
	\end{align} 
	Thus, from \eqref{M11}-\eqref{M111}, we have shown that for $p, q>1$ satisfying $\frac{1}{p}+\frac{1}{q}=1$ and $q_1\in (1,2]$,
	\begin{align}\label{M11estimate}
	M_{\geq1,1,k,n}^b&\leq C(1+(Y_{t_k}^{b_0})^2)\Delta_n\left(\lambda_{\upsilon_n}\Delta_n\right)^{\frac{1}{q}}\bigg(1+(\sqrt{\Delta_n}\varphi_{n\Delta_n}(b_0))^{\frac{1}{p}} e^{C_2u^2(\varphi_{n\Delta_n}(b_0))^2\Delta_nY_{t_k}^{b_0}}\notag\\
	&\qquad \times  \big(1+\sqrt{Y_{t_k}^{b_0}}+\sqrt{\Delta_n}\vert u\vert\varphi_{n\Delta_n}(b_0) Y_{t_k}^{b_0}\big)^{\frac{1}{p}}\bigg)+C\Delta_n\bigg(\int_{z\leq\upsilon_n}z^2m(dz)\notag\\
	&\qquad+\Delta_n\Big(\int_{z\leq\upsilon_n}zm(dz)\Big)^2\bigg)+C\varphi_{n\Delta_n}(b_0)\Delta_n^{\frac{1}{2}+\frac{1}{q_1}}\Big(\int_{z\leq\upsilon_n}z^{2q_1}m(dz)\Big)^{\frac{1}{q_1}} \notag\\ 
	&\qquad\times e^{C_2u^2(\varphi_{n\Delta_n}(b_0))^2\Delta_nY_{t_k}^{b_0}} \big(1+\sqrt{Y_{t_k}^{b_0}}+\sqrt{\Delta_n}\vert u\vert\varphi_{n\Delta_n}(b_0) Y_{t_k}^{b_0}\big).
	\end{align} 
	Using H\"older's inequality with $\frac{1}{p}+\frac{1}{q}=1$,
	\begin{align}
	M_{\geq1,2,k,n}^b&\leq C(1+(Y_{t_k}^{b_0})^{2})\Delta_n^2\left(\lambda_{\upsilon_n}\Delta_n\right)^{\frac{1}{q}},\label{M2}
	\end{align}
	and
	\begin{align}
	M_{\geq1,3,k,n}^b&\leq C(1+Y_{t_k}^{b_0})\Delta_n\left(\lambda_{\upsilon_n}\Delta_n\right)^{\frac{1}{q}}.\label{M3}
	\end{align}
	Notice that
	\begin{align}
	M_{\geq1,4,k,n}^b\leq C\Delta_n\int_{z\leq\upsilon_n}z^2m(dz).\label{M4}
	\end{align}
	Observe that
\begin{align}\label{M9}
	M_{\geq1,5,k,n}^b+M_{\geq1,9,k,n}^b\leq \Big(\Delta_n\int_{z\leq\upsilon_n}zm(dz)\Big)^2.
\end{align} 
	As for the term $M_{\geq1,1,1,k,n}^b$, we get for $p, q>1$ satisfying $\frac{1}{p}+\frac{1}{q}=1$,
	\begin{align}
	M_{\geq1,6,k,n}^b&\leq C(1+(Y_{t_k}^{b_0})^{2})\Delta_n^2\left(\lambda_{\upsilon_n}\Delta_n\right)^{\frac{1}{q}}\bigg(1+(\sqrt{\Delta_n}\varphi_{n\Delta_n}(b_0))^{\frac{1}{p}}\notag\\
	&\qquad \times e^{C_2u^2(\varphi_{n\Delta_n}(b_0))^2\Delta_nY_{t_k}^{b_0}} \big(1+\sqrt{Y_{t_k}^{b_0}}+\sqrt{\Delta_n}\vert u\vert\varphi_{n\Delta_n}(b_0) Y_{t_k}^{b_0}\big)^{\frac{1}{p}}\bigg), \label{M6}
	\end{align}
	and
	\begin{align}
	M_{\geq1,7,k,n}^b&\leq C(1+Y_{t_k}^{b_0})\Delta_n\left(\lambda_{\upsilon_n}\Delta_n\right)^{\frac{1}{q}}\bigg(1+(\sqrt{\Delta_n}\varphi_{n\Delta_n}(b_0))^{\frac{1}{p}}\notag\\
	&\qquad \times e^{C_2u^2(\varphi_{n\Delta_n}(b_0))^2\Delta_nY_{t_k}^{b_0}} \big(1+\sqrt{Y_{t_k}^{b_0}}+\sqrt{\Delta_n}\vert u\vert\varphi_{n\Delta_n}(b_0) Y_{t_k}^{b_0}\big)^{\frac{1}{p}}\bigg). \label{M7}
	\end{align}
	As for $D_{2,k,n}$, we get for $q_1\in (1,2]$,
	\begin{align} 
	M_{\geq1,8,k,n}^b&\leq \widehat{\E}_{t_k,Y_{t_k}^{b_0}}^{b_0}\Big[\widetilde{\E}_{t_k,Y_{t_{k}}^{b_0}}^{b}\Big[\Big(\int_{t_k}^{t_{k+1}}\int_{z\leq\upsilon_n}z\widetilde{M}(ds,dz)\Big)^2\big\vert X_{t_{k+1}}^{b}=Y_{t_{k+1}}^{b_0}\Big]\Big]\notag\\
	&\leq C\Delta_n\int_{z\leq\upsilon_n}z^2m(dz)+C_1\varphi_{n\Delta_n}(b_0)\Delta_n^{\frac{1}{2}+\frac{1}{q_1}}\Big(\int_{z\leq\upsilon_n}z^{2q_1}m(dz)\Big)^{\frac{1}{q_1}}\notag\\ &\qquad\times e^{C_2u^2(\varphi_{n\Delta_n}(b_0))^2\Delta_nY_{t_k}^{b_0}}\big(1+\sqrt{Y_{t_k}^{b_0}}+\sqrt{\Delta_n}\vert u\vert\varphi_{n\Delta_n}(b_0) Y_{t_k}^{b_0}\big). \label{M8}
	\end{align} 
	Consequently, from \eqref{M1} and \eqref{M11estimate}-\eqref{M8}, we conclude for $p, q>1$ satisfying $\frac{1}{p}+\frac{1}{q}=1$ and $q_1\in (1,2]$,
	\begin{align} 
	M_{\geq1,k,n}^b&\leq C(1+(Y_{t_k}^{b_0})^2)\Delta_n\left(\lambda_{\upsilon_n}\Delta_n\right)^{\frac{1}{q}}\bigg(1+(\sqrt{\Delta_n}\varphi_{n\Delta_n}(b_0))^{\frac{1}{p}} e^{C_2u^2(\varphi_{n\Delta_n}(b_0))^2\Delta_nY_{t_k}^{b_0}}\notag\\
	&\qquad \times  \big(1+\sqrt{Y_{t_k}^{b_0}}+\sqrt{\Delta_n}\vert u\vert\varphi_{n\Delta_n}(b_0) Y_{t_k}^{b_0}\big)^{\frac{1}{p}}\bigg)+C\Delta_n\bigg(\int_{z\leq\upsilon_n}z^2m(dz)\notag\\
	&\qquad+\Delta_n\Big(\int_{z\leq\upsilon_n}zm(dz)\Big)^2\bigg)+C\varphi_{n\Delta_n}(b_0)\Delta_n^{\frac{1}{2}+\frac{1}{q_1}}\Big(\int_{z\leq\upsilon_n}z^{2q_1}m(dz)\Big)^{\frac{1}{q_1}} \notag\\ 
	&\qquad\times e^{C_2u^2(\varphi_{n\Delta_n}(b_0))^2\Delta_nY_{t_k}^{b_0}} \big(1+\sqrt{Y_{t_k}^{b_0}}+\sqrt{\Delta_n}\vert u\vert\varphi_{n\Delta_n}(b_0) Y_{t_k}^{b_0}\big). \label{c3m2}
	\end{align}	
	Thus, from \eqref{D}, \eqref{D1}, \eqref{D2}, \eqref{D3}, \eqref{c3m1} and \eqref{c3m2}, the result follows.	
\end{proof}

\section{Appendix B: Useful results}
First, we recall a convergence result for triangular arrays of random variables. For each $n\in\mathbb{N}$, let $(\zeta_{k,n})_{k\geq 1}$ be a sequence of random variables defined on the filtered probability space $(\Omega, \mathcal{F}, \{\mathcal{F}_t\}_{t\in\R_+}, \P)$, and assume that $\zeta_{k,n}$ are $\mathcal{F}_{t_{k+1}}$-measurable for all $k$.
\begin{lemma}\label{zero} \textnormal{\cite[Lemma 3.4]{J12}} Assume that as $n  \rightarrow \infty$,  
	\begin{equation*} 
	\textnormal{(i)}\;  \sum_{k=0}^{n-1}\E\left[\zeta_{k,n}\vert \mathcal{F}_{t_k}\right] \overset{\P}{\longrightarrow} 0, \quad \text{ and } \quad \textnormal{(ii)} \,  \sum_{k=0}^{n-1}\E\left[\zeta_{k,n}^2\vert \mathcal{F}_{t_k} \right]\overset{\P}{\longrightarrow} 0.
	\end{equation*}
	Then 
	$
	\sum_{k=0}^{n-1}\zeta_{k,n}\overset{\P}{\longrightarrow} 0
	$ as $n  \rightarrow \infty$.
\end{lemma}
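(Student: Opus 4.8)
The plan is to reduce the assertion to a martingale convergence statement by centering, and then to tame the resulting square-integrable martingale through a localization at the level of the predictable quadratic variation furnished by hypothesis (ii). First I would put $\zeta'_{k,n} := \zeta_{k,n} - \E[\zeta_{k,n}\mid\mathcal{F}_{t_k}]$; the conditional expectations are well defined because (ii) forces $\E[\zeta_{k,n}^2\mid\mathcal{F}_{t_k}]<\infty$ $\P$-a.s., so $\zeta_{k,n}$ is conditionally $L^2$, a fortiori conditionally $L^1$. By (i), $\sum_{k=0}^{n-1}\E[\zeta_{k,n}\mid\mathcal{F}_{t_k}]\overset{\P}{\longrightarrow}0$, hence it suffices to show $\sum_{k=0}^{n-1}\zeta'_{k,n}\overset{\P}{\longrightarrow}0$. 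The key observation is that $(M^n_j)_{0\le j\le n}$, $M^n_j:=\sum_{k=0}^{j-1}\zeta'_{k,n}$, is a martingale for $(\mathcal{F}_{t_j})_{j}$ with martingale differences $\zeta'_{k,n}$, and $\E[(\zeta'_{k,n})^2\mid\mathcal{F}_{t_k}]\le\E[\zeta_{k,n}^2\mid\mathcal{F}_{t_k}]=:\alpha_{k,n}$, where $A^n_j:=\sum_{k=0}^{j-1}\alpha_{k,n}$ is a predictable increasing process (each $\alpha_{k,n}$ being $\mathcal{F}_{t_k}$-measurable).

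Next, fix $\delta>0$ and set $\tau_n:=\inf\{j\ge0:A^n_{j+1}>\delta\}\wedge n$; since $A^n_{j+1}$ is $\mathcal{F}_{t_j}$-measurable, $\tau_n$ is an $(\mathcal{F}_{t_j})$-stopping time, and by construction $A^n_{\tau_n}\le\delta$ surely. Writing the stopped martingale as $M^n_{n\wedge\tau_n}=\sum_{k=0}^{n-1}\zeta'_{k,n}\mathbf{1}_{\{k<\tau_n\}}$ and using that $\mathbf{1}_{\{k<\tau_n\}}=\mathbf{1}_{\{\tau_n\ge k+1\}}$ is $\mathcal{F}_{t_k}$-measurable (so the summands remain martingale differences), orthogonality gives $\E[(M^n_{n\wedge\tau_n})^2]=\E\big[\sum_{k=0}^{n-1}\mathbf{1}_{\{k<\tau_n\}}\E[(\zeta'_{k,n})^2\mid\mathcal{F}_{t_k}]\big]\le\E[A^n_{\tau_n}]\le\delta$. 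Hence Markov's (or Doob's) inequality yields $\P(|M^n_{n\wedge\tau_n}|>\varepsilon)\le\delta/\varepsilon^2$ for every $\varepsilon>0$, uniformly in $n$.

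Finally, since $A^n$ is nondecreasing one has $\{\tau_n<n\}=\{A^n_n>\delta\}$, and $A^n_n=\sum_{k=0}^{n-1}\E[\zeta_{k,n}^2\mid\mathcal{F}_{t_k}]\overset{\P}{\longrightarrow}0$ by (ii), so $\P(\tau_n<n)\to0$. Splitting $\P(|M^n_n|>\varepsilon)\le\P(|M^n_{n\wedge\tau_n}|>\varepsilon)+\P(\tau_n<n)$ and taking $\limsup_{n\to\infty}$ gives $\limsup_n\P(|M^n_n|>\varepsilon)\le\delta/\varepsilon^2$; letting $\delta\downarrow0$ shows $\sum_{k=0}^{n-1}\zeta'_{k,n}\overset{\P}{\longrightarrow}0$, and combined with the centering reduction, $\sum_{k=0}^{n-1}\zeta_{k,n}\overset{\P}{\longrightarrow}0$.

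I do not expect a genuine obstacle here: this is the standard negligibility lemma for triangular arrays, and the argument above is the usual localization trick for the ``convergence in probability'' version of the hypotheses. The points demanding the most care are purely measure-theoretic bookkeeping — verifying that the centering is legitimate (the conditional $L^2$-integrability hidden in (ii)), that $\tau_n$ is really an $(\mathcal{F}_{t_j})$-stopping time (predictability, i.e. $\mathcal{F}_{t_j}$-measurability, of $A^n_{j+1}$), and that the truncated differences $\zeta'_{k,n}\mathbf{1}_{\{k<\tau_n\}}$ remain orthogonal — none of which involves any feature of the jump-type CIR process. Since the lemma is a purely probabilistic tool, it is natural that the authors simply invoke it from \cite{J12}.
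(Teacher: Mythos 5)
Your argument is correct and complete. The paper does not prove this lemma at all---it is simply quoted from Jacod's lecture notes \cite[Lemma 3.4]{J12}---so there is no in-paper proof to compare against; your localization via the stopping time $\tau_n=\inf\{j:A^n_{j+1}>\delta\}\wedge n$, the $L^2$ bound $\E[(M^n_{n\wedge\tau_n})^2]\le\E[A^n_{\tau_n}]\le\delta$, and the split on $\{\tau_n<n\}=\{A^n_n>\delta\}$ is precisely the standard Lenglart-domination argument that underlies Jacod's statement. The one bookkeeping point worth keeping explicit in a write-up is the one you flagged: $\mathbf{1}_{\{k<\tau_n\}}$ is $\mathcal{F}_{t_k}$-measurable because $\{\tau_n\le k\}=\{A^n_{k+1}>\delta\}\cup\{k\ge n\}\in\mathcal{F}_{t_k}$, which is what makes the truncated increments remain orthogonal martingale differences and the bound $\E[A^n_{\tau_n}]\le\delta$ legitimate.
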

Next, we recall a so called stable central limit theorem for continuous local martingales. 
\begin{lemma}{\cite[Theorem 4.1]{Zan}}\label{THM_Zanten}
	Let $\bigl(\Omega, \mathcal{F}, \{\mathcal{F}_t\}_{t\in\R_+}, \P\bigr)$ be a filtered probability space satisfying the usual conditions.	Let $M=(M_t)_{t\in\R_+}$ be a square-integrable continuous local martingale w.r.t. the filtration $\{\mathcal{F}_t\}_{t\in\R_+}$ such that $\P(M_0 = 0) = 1$.
	Suppose that there exists a function $q : [t_0, \infty) \to \R$ with some $t_0 \in \R_+$ such that $q(t) \ne 0$ for all $t \in \R_+$, $\lim_{t\to\infty} q(t) = 0$ and
	\begin{equation}\label{Zanten1}
	q(t)^2 \langle M \rangle_t \overset{\P}{\longrightarrow} \eta^2 \qquad \text{as \ $t \to \infty$,}
	\end{equation}
	where $\eta$ is a random variable, and $(\left<M\right>_t)_{t\in\R_+}$ denotes the quadratic variation process of $M$.	Then, for each random variable $v$ defined on $(\Omega, \mathcal{F}, \P)$, we have
	\begin{equation}\label{Zanten2}
	\bigl(q(t) M_t, v\bigr)\overset{\mathcal{L}(\P)}{\longrightarrow} (\eta Z, v) \qquad \text{as \ $t \to \infty$,}
	\end{equation}
	where $Z$ is a standard normally distributed random variable independent of
	$(\eta, v)$.
	\ Moreover,
	\begin{equation}\label{Zanten3}
	\bigl(q(t) M_t, q(t)^2 \langle M \rangle_t\bigr) \overset{\mathcal{L}(\P)}{\longrightarrow} (\eta Z, \eta^2) \qquad
	\text{as \ $t \to \infty$.}
	\end{equation}
\end{lemma}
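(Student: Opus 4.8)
This is the classical stable central limit theorem for continuous local martingales, and the plan is to reconstruct its standard proof. First I would note that \eqref{Zanten3} follows at once from \eqref{Zanten2} and \eqref{Zanten1}: $q(t)^2\langle M\rangle_t\overset{\P}{\longrightarrow}\eta^2$ while $q(t)M_t$ converges in law, so the pair converges in law to $(\eta Z,\eta^2)$. Hence the whole content is \eqref{Zanten2}, and since it is required for an arbitrary $v$ it amounts to the $\mathcal F$-stable convergence of $q(t)M_t$ to $\eta Z$ on an extension of the space carrying a standard normal $Z$ independent of $\mathcal F$. As $q(t)M_t$ is $\mathcal F_\infty$-measurable and $\eta$ may be taken $\mathcal F_\infty$-measurable and nonnegative (being a limit in probability), the usual criteria for stable convergence reduce everything to proving
\[
\E\!\left[G\,e^{\mathrm{i}\theta q(t)M_t}\right]\longrightarrow \E\!\left[G\,e^{-\theta^2\eta^2/2}\right]\qquad\text{as }t\to\infty
\]
for every $\theta\in\R$, every $t_0\ge 0$ and every bounded $\mathcal F_{t_0}$-measurable $G$ (using $\E[e^{\mathrm{i}\theta\eta Z}\mid\mathcal F]=e^{-\theta^2\eta^2/2}$), together with tightness of $(q(t)M_t)_t$, which follows from \eqref{Zanten1} by a Lenglart-type stopping argument on $M$.

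For the displayed limit I would fix $\theta$, $t_0$, $G$ and a continuity point $K>0$ of the law of $\eta^2$, set $\rho=\rho_{t,K}:=\inf\{s\ge 0:\,q(t)^2\langle M\rangle_s\ge K\}$, and consider $E^{(t)}_s:=\exp\!\bigl(\mathrm{i}\theta q(t)M_{s\wedge\rho}+\tfrac12\theta^2 q(t)^2\langle M\rangle_{s\wedge\rho}\bigr)$. It\^o's formula shows the finite-variation part cancels, so $E^{(t)}$ is a complex local martingale, and $q(t)^2\langle M\rangle_{s\wedge\rho}\le K$ makes it bounded, hence a true $\{\mathcal F_s\}$-martingale with $E^{(t)}_0=1$; writing $r_t:=q(t)^2\langle M\rangle_{t\wedge\rho}=(q(t)^2\langle M\rangle_t)\wedge K$ one has $e^{\mathrm{i}\theta q(t)M_{t\wedge\rho}}=E^{(t)}_t e^{-\theta^2 r_t/2}$. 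I would then use: (a) $\E[HE^{(t)}_t]=\E[HE^{(t)}_s]$ for bounded $\mathcal F_s$-measurable $H$ and $s\le t$, while for fixed $s$, $E^{(t)}_s\to 1$ in probability and boundedly as $t\to\infty$ (since $q(t)\to 0$ while $\langle M\rangle_s$ is a.s.\ finite, so $\P(\rho_{t,K}\le s)\to0$), hence $\E[HE^{(t)}_s]\to\E[H]$; and (b) with $\phi_s:=\E[e^{-\theta^2(\eta^2\wedge K)/2}\mid\mathcal F_s]$, $\phi_s\to e^{-\theta^2(\eta^2\wedge K)/2}$ in $L^1$ by martingale convergence. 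Taking $H=G\phi_s$ in (a), invoking $\|E^{(t)}_t\|_\infty\le e^{\theta^2K/2}$, $r_t\overset{\P}{\longrightarrow}\eta^2\wedge K$ and (b), a double limit ($t\to\infty$, then $s\to\infty$) gives $\E[G\,e^{\mathrm{i}\theta q(t)M_{t\wedge\rho}}]\to\E[G\,e^{-\theta^2(\eta^2\wedge K)/2}]$. Since on $\{\rho>t\}=\{q(t)^2\langle M\rangle_t<K\}$ one has $M_{t\wedge\rho}=M_t$, the modulus of the difference between $\E[Ge^{\mathrm{i}\theta q(t)M_t}]$ and $\E[Ge^{\mathrm{i}\theta q(t)M_{t\wedge\rho}}]$ is at most $2\|G\|_\infty\P(q(t)^2\langle M\rangle_t\ge K)\to 2\|G\|_\infty\P(\eta^2\ge K)$; letting $K\uparrow\infty$ through continuity points then yields the displayed convergence.

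The only genuinely delicate issue, which this scheme is built to circumvent, is that $e^{\frac12\theta^2 q(t)^2\langle M\rangle_t}$ need not be uniformly integrable — so the exponential-martingale identity cannot be applied directly at time $t$ — and that the limiting random variance $\eta^2$ is in general not $\mathcal F_{t_0}$-measurable, so it cannot simply be pulled out of the relevant conditional expectations. Truncating $\langle M\rangle$ at the deterministic level $K$ makes the exponential martingale bounded, while the martingale-convergence approximation $\phi_s$ transfers the identity from $\mathcal F_s$-measurable test functions to the $\eta^2$-dependent target; I expect the careful bookkeeping of the nested limits $t\to\infty$, $s\to\infty$, $K\uparrow\infty$, rather than any single estimate, to be the main work.
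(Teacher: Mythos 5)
The paper does not prove this lemma; it is imported verbatim as Theorem~4.1 of van Zanten \cite{Zan}, and the only argument given in the text is the remark, which you also make first, that \eqref{Zanten3} follows from \eqref{Zanten2} (applied with $v=\eta^2$) together with \eqref{Zanten1} and a Slutsky-type result from \cite{Vaart}. Your self-contained proof of \eqref{Zanten2} is the classical exponential-martingale argument for the stable CLT for continuous local martingales, and it is correct: the stopped process $E^{(t)}$ is a bounded complex martingale, so the identity $\E[HE^{(t)}_t]=\E[HE^{(t)}_s]$ for bounded $\mathcal{F}_s$-measurable $H$ is legitimate; $\P(\rho_{t,K}\le s)=\P(\langle M\rangle_s\ge K/q(t)^2)\to 0$ because $\langle M\rangle_s$ is a.s.\ finite and $q(t)\to 0$; Lenglart's inequality gives tightness of $q(t)M_t$ from the tightness of $q(t)^2\langle M\rangle_t$; and since $\E[G\phi_s]=\E[Ge^{-\theta^2(\eta^2\wedge K)/2}]$ once $s\ge t_0$, your nested limits $t\to\infty$, then $s\to\infty$, then $K\uparrow\infty$ are coherent and give the convergence of the partial characteristic functions. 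The only step you leave tacit is the passage from $\mathcal{F}_\infty$-stable convergence, which is what the criterion over $\mathcal{F}_{t_0}$-measurable $G$ delivers, to joint convergence with arbitrary $\mathcal{F}$-measurable $v$ as asserted in \eqref{Zanten2}; this is routine here since $q(t)M_t$ and $\eta$ are $\mathcal{F}_\infty$-measurable and $Z$ is independent of all of $\mathcal{F}$, so one conditions on $\mathcal{F}_\infty$.
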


Note that \eqref{Zanten3} follows from \eqref{Zanten2} applied for $v = \eta^2$ and from
\eqref{Zanten1} by Theorem 2.7 (iv) of van der Vaart \cite{Vaart}.

We next present a comparison theorem given in Proposition A.1 in the ArXiv preprint of \cite{BBKP17}. 
\begin{lemma}\label{comparisontheorem} Let $a \in \R_+$, $b \in \R$, $\sigma \in \R_{++}$, and let $m$ be a L\'evy measure on $\R_{++}$ satisfying condition {\bf(A1)}. Let $\eta_0$ and $\overline{\eta}_0$ be random variables independent of $W$ and $J$ satisfying $\widehat{\P}(\eta_0\in \R_{+})=1$ and $\widehat{\P}(\overline{\eta}_0\in \R_{+})=1$. Let $(Y_t^b)_{t\in\R_+}$ be a pathwise unique strong solution of the SDE \eqref{eq1} such that $\widehat{\P}(Y_0^b = \eta_0) = 1$. Let $(\overline{Y}_t^b)_{t\in\R_+}$ be a pathwise unique strong solution of the SDE
	\begin{equation}\label{eq0}
		d\overline{Y}_t^b=\left(a-b\overline{Y}_t^b\right)dt +\sigma\sqrt{\overline{Y}_t^b}dW_t,
	\end{equation}	
	such that $\widehat{\P}(\overline{Y}_0^b = \overline{\eta}_0) = 1$. Then $\widehat{\P}(\eta_0 \geq \overline{\eta}_0)=1$ implies $\widehat{\P}(\text{$Y_t^b \geq \overline{Y}_t^b$ for all $t \in \R_+$}) = 1$.
\end{lemma}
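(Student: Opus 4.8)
The plan is to reduce the statement to the case of a finite L\'evy measure by truncating the small jumps, to handle that case by alternating the classical comparison theorem for one-dimensional diffusions on the (then finitely many) inter-jump intervals with the elementary fact that the jumps of $J$ are positive, and finally to pass to the limit. First, conditioning on $\sigma(\eta_0,\overline{\eta}_0)$, which is independent of $(W,J)$, I would reduce to the case where $\eta_0\equiv y_0$ and $\overline{\eta}_0\equiv\overline{y}_0$ are deterministic with $y_0\ge\overline{y}_0\ge0$. Then, for $j\in\mathbb N$, I set $m_j:=m(\,\cdot\cap(1/j,\infty))$, a finite L\'evy measure still satisfying {\bf(A1)}, and let $J^j:=\bigl(\int_0^t\int_{1/j}^{\infty}z\,N(ds,dz)\bigr)_{t\in\R_+}$ be the corresponding compound Poisson subordinator, so that $J-J^j$ is increasing with $\widehat{\E}[J_T-J_T^j]=T\int_0^{1/j}z\,m(dz)\to0$ as $j\to\infty$ by {\bf(A1)} and dominated convergence. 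By Proposition \ref{Pro_jump_CIR} applied with $m_j$, there is a pathwise unique strong solution $Y^{b,j}=(Y_t^{b,j})_{t\in\R_+}$ of \eqref{eq1} with $J$ replaced by $J^j$, driven by the same $W$ and with $Y_0^{b,j}=y_0$, satisfying $\widehat{\P}(Y_t^{b,j}\in\R_+\text{ for all }t)=1$.

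For the finite-L\'evy-measure comparison, I would use that $J^j$, being compound Poisson, has jump times $0=:T_0<T_1<T_2<\cdots$ that are almost surely discrete with $T_i\uparrow\infty$. On each $[T_i,\infty)$ let $\widehat{Y}^{(i)}$ be the pathwise unique strong solution of the jump-free CIR equation \eqref{eq0} driven by $W$ and started from $Y_{T_i}^{b,j}$ at time $T_i$; by pathwise uniqueness $\widehat{Y}^{(i)}$ coincides with $Y^{b,j}$ on $[T_i,T_{i+1})$. The coefficients of \eqref{eq0} -- the Lipschitz drift $y\mapsto a-by$ and the $\tfrac12$-H\"older diffusion coefficient $y\mapsto\sigma\sqrt{y}$, for which Yamada's condition $\int_{0+}\sigma^{-2}u^{-1}\,du=+\infty$ holds -- satisfy the hypotheses of the classical Ikeda--Watanabe comparison theorem for one-dimensional SDEs. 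I then argue by induction on $i$: if $Y_{T_i}^{b,j}\ge\overline{Y}_{T_i}^b$ (true for $i=0$ since $y_0\ge\overline{y}_0$), then, using the strong Markov property to restart at $T_i$ the Brownian motion common to $\widehat{Y}^{(i)}$ and $\overline{Y}^b$, the comparison theorem gives $\widehat{Y}_t^{(i)}\ge\overline{Y}_t^b$ for all $t\ge T_i$; in particular $Y_{T_{i+1}-}^{b,j}=\widehat{Y}_{T_{i+1}-}^{(i)}\ge\overline{Y}_{T_{i+1}-}^b=\overline{Y}_{T_{i+1}}^b$, and, since $J^j$ has only positive jumps, $Y_{T_{i+1}}^{b,j}=Y_{T_{i+1}-}^{b,j}+\Delta J_{T_{i+1}}^j\ge\overline{Y}_{T_{i+1}}^b$. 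As $\bigcup_{i\ge0}[T_i,T_{i+1})=\R_+$, this yields $\widehat{\P}(Y_t^{b,j}\ge\overline{Y}_t^b\text{ for all }t\in\R_+)=1$.

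To pass to the limit $j\to\infty$, I would show $Y^{b,j}\to Y^b$ uniformly on compacts. The two processes solve the same SDE up to the driving subordinator, and $\sup_{t\le T}|J_t-J_t^j|=J_T-J_T^j\to0$ in $L^1(\widehat{\P})$. A stability estimate for this square-root SDE -- obtained by the Yamada--Watanabe approximation (with $C^2$ functions $\phi_n\uparrow(\cdot)^+$ whose second derivatives are supported on intervals $[a_n,a_{n-1}]$ chosen so that $\int_{a_n}^{a_{n-1}}u^{-1}\,du=n$) together with a localization by $\tau_R:=\inf\{t:Y_t^b\vee Y_t^{b,j}\ge R\}$, the integrability being uniform in $j$ thanks to Proposition \ref{Pro_moments} and the Burkholder--Davis--Gundy inequality -- would give $\widehat{\E}\bigl[\sup_{t\le T}|Y_t^b-Y_t^{b,j}|\bigr]\to0$, the truncated small jumps entering this bound linearly through $\widehat{\E}[J_T-J_T^j]$. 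Along a subsequence the convergence is almost surely uniform on $[0,T]$, so the previous step gives $\widehat{\P}(Y_t^b\ge\overline{Y}_t^b\text{ for all }t\in[0,T])=1$; letting $T\to\infty$ finishes the argument.

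The hard part will be exactly this last stability estimate: since $y\mapsto\sigma\sqrt{y}$ is only H\"older continuous of order $\tfrac12$, no direct Gronwall bound is available and one must run the Yamada--Watanabe scheme while simultaneously keeping track of the small (but only $L^1$) additive discrepancy $J-J^j$ coming from the truncated jumps, controlling the moments that appear through the stopping times $\tau_R$. The truncation reduction and the finite-jump comparison are routine once this estimate is in hand.
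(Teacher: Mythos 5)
First, note that the paper does not actually prove this lemma; it is stated as a citation of \cite[Proposition A.1]{BBKP17}, and no proof of it appears in the appendix, so there is no in-paper argument for you to match. Your plan --- truncate to a compound-Poisson subordinator, alternate the classical one-dimensional comparison theorem on the inter-jump intervals with the observation that positive jumps only push $Y^{b,j}$ up, and then remove the truncation by an $L^1$-stability estimate --- is correct in outline, and you have rightly identified the delicate step (the stability estimate for a diffusion with H\"older-$\tfrac12$ dispersion). One simplification: you do not need the supremum estimate $\widehat{\E}\bigl[\sup_{t\le T}|Y_t^b-Y_t^{b,j}|\bigr]\to 0$; the fixed-$t$ bound $\widehat{\E}\bigl[|Y_t^b-Y_t^{b,j}|\bigr]\le e^{|b|t}\,\widehat{\E}\bigl[J_T-J_T^j\bigr]$ coming from the Yamada--Watanabe approximation plus Gronwall already yields $L^1$-convergence at each fixed $t$, hence a.s.\ convergence along a diagonal subsequence at every rational $t$, and right-continuity finishes.

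More to the point, the entire truncation-and-limit detour is avoidable, and a single Yamada--Watanabe pass proves the lemma directly. Set $\xi_t:=\overline{Y}_t^b-Y_t^b$, so that $\xi_0\le 0$ a.s.\ and the jumps of $\xi$ are $-\Delta J_s\le 0$. Apply It\^o's formula to $\phi_n(\xi_t)$ with a $C^2$, non-decreasing Yamada--Watanabe approximant $\phi_n\uparrow(\cdot)^+$. The jump contribution $\sum_{s\le t}\bigl[\phi_n(\xi_s)-\phi_n(\xi_{s-})\bigr]$ is $\le 0$ because $\phi_n$ is non-decreasing and $\xi_s\le\xi_{s-}$; the bracket term is $\le\sigma^2 t/n$ since $\bigl(\sqrt{\overline{Y}_s^b}-\sqrt{Y_s^b}\bigr)^2\le|\xi_s|$ and $\phi_n''(u)\le 2/(nu)$ on $[a_n,a_{n-1}]$; the drift term is bounded by $|b|\int_0^t\bigl(\phi_n(\xi_s)+a_{n-1}\bigr)ds$; and $\phi_n(\xi_0)=0$ a.s. Taking expectations (localizing if needed, using Proposition \ref{Pro_moments} for integrability), applying Gronwall, and sending $n\to\infty$ gives $\widehat{\E}\bigl[\xi_t^+\bigr]=0$ for each $t$, and right-continuity finishes. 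Your plan effectively runs this very Yamada--Watanabe computation twice --- once hidden inside the Ikeda--Watanabe comparison theorem on the jump-free blocks, once again in the stability estimate --- whereas one direct application to $\xi$ handles the (positive, additive) jumps in the same breath. What your route buys is that it packages the hard analytic core into the ready-made diffusive comparison theorem; what it costs is the limiting argument, which is exactly where the technical work you flag migrates.
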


\begin{theorem}\label{ruleproNualart}\textnormal{(Proposition 1.3.3 of Nualart \cite{N})} Let $F\in \mathbb{D}^{1,2}$ and $u$ be in the domain of $\delta$ such that $Fu\in L^2(\Omega;H)$. Then $Fu$ belongs to the domain of $\delta$ and the following equality is true 
	\begin{equation}\label{ruleproduct}
		\delta(Fu)=F\delta(u)-\left<DF,u\right>_H,
	\end{equation} 
	provided the right-hand side of \eqref{ruleproduct}	is square integrable.
\end{theorem}

\begin{theorem}\label{existenceMallideri} \textnormal{(Lemma 2.1 of Le{\'o}n, Navarro and Nualart \cite{LNN03})} Let $\varphi\in \mathcal{C}^1(\mathbb{R})$ be a continuously differentiable function and let $F\in \mathbb{D}^{1,2}$. Then $\varphi(F)\in \mathbb{D}^{1,2}$ if and only if $\varphi(F)\in L^2(\Omega)$ and $\varphi'(F)DF\in L^2(\Omega\times [0,T])$, and under these hypotheses 
	$$
	D(\varphi(F))=\varphi'(F)DF.
	$$
\end{theorem}

\end{document}